%Park City October 26, 2013
\documentclass[12pt]{amsart}
\usepackage{amsmath, amssymb,latexsym }
\usepackage{verbatim}

%\newcommand{\ver}{{\it Version June 9}}% <--- PUT VERSION NUMBER HERE
 % THIS IS TO MARK COMMENTS, CHANGES
\usepackage[active]{srcltx}
\usepackage{graphicx}

\headheight=6.15pt \textheight=8.75in \textwidth=6.5in
\oddsidemargin=0in \evensidemargin=0in \topmargin=0in

%\usepackage{amssymb}
%\usepackage{amscd}
%\usepackage{verbatim}
%\usepackage{curves}
%\usepackage{epsfig}
%\nwc{\lam}{\lambda} \nwc{\del}{\delta} \nwc{\Del}{\Delta}
%
\newcommand\Ha{\operatorname{Ha}^{(1)}}

\def\text{\textstyle}

\def\text{\textstyle}

 \def\H{\hcal}

\def\max{{\operatorname{max}}}

\newcommand{\szego}{Szeg\"o }

\newcommand{\R}{{\mathbb R}}
\newcommand{\C}{{\mathbb C}}
\newcommand{\Q}{{\mathbb Q}}
\newcommand{\Z}{{\mathbb Z}}

\newcommand{\dbar}{\bar\partial}
\newcommand{\ddbar}{\partial\dbar}

\renewcommand{\H}{{\mathbf H}}
\newcommand{\half}{{\frac{1}{2}}}

\renewcommand{\phi}{\varphi}

\newcommand{\ccal}{\mathcal{C}}
\newcommand{\dcal}{\mathcal{D}}
\newcommand{\ecal}{\mathcal{E}}

\newcommand{\hcal}{\mathcal{H}}
\newcommand{\ical}{\mathcal{I}}

\newcommand{\lcal}{\mathcal{L}}
\newcommand{\mcal}{\mathcal{M}}
\newcommand{\ncal}{\mathcal{N}}
\newcommand{\pcal}{\mathcal{P}}

\newcommand{\ocal}{\mathcal{O}}

\newcommand{\tcal}{\mathcal{T}}

\newcommand{\ucal}{\mathcal{U}}

\newcommand{\vcal}{\mathcal{V}}

\newcommand{\la}{\lambda}

\def    \half   {{\frac{1}{2}}}

\def    \t  {{\mathfrak t}}

\def    \Z  {{\mathbb Z}}
\def    \R  {{\mathbb R}}
\def    \C  {{\mathbb C}}

 \def   \half   {{\frac{1}{2}}}

\def\text{\textstyle}

\newcommand{\sgn}{{\rm sgn~}}

\def\sq2{\sqrt{2}}

\def\t2{{\mathbb T}^2}
%\def\tt2{{\mathbb T}^2}
%\nwc{\t1}{{\mathbb T}^1}
\def\s2{{\mathbb S}^2}

\def\R{\mathbb{R}}
\def\Z{\mathbb{Z}}
\def\C{\mathbb{C}}

%%%%%%%%%%%%%%%%%%%%%
%\nwc{\nwt}{\newtheorem} \nwt{coro}{Corollary} \nwt{ex}{Example}

%font change

%\nwc{\mf}{\mathbf} %Latex (as in \bf not tilted math letters)
%\nwc{\blds}{\boldsymbol} %Latex
%\nwc{\ml}{\mathcal} %Latex

%greek letters

%\def\k{\mathbf{k}}

\def\sq2{\sqrt{2}}

\def\t2{{\mathbb T}^2}
%\def\tt2{{\mathbb T}^2}
%\nwc{\t1}{{\mathbb T}^1}
\def\s2{{\mathbb S}^2}

\def\R{\mathbb{R}}
\def\Z{\mathbb{Z}}
\def\C{\mathbb{C}}

\def\hto0{\xrightarrow{\hbar\to 0}}

\def\rto0{\xrightarrow{r\to 0}}

%\renewcommand{\qed}{\hfill$\blacksquare$}

%%%%%%%%%%%%%%%

%%%%

%

%\nwc{\Lam}{\Lambda} \nwc{\elll}{\ell}
%\newcommand{\e}{\varepsilon}

%\newcommand{\Ss}{\mathbb S^2}
%

%\newcommand{\cU}{\mathcal U}

 %natural
 %prime
 %rational
 %real

% \newcommand{\tP}{\widetilde{P}}

 %\newcommand{\bep}{\bf{\epsilon}}

%\newcommand{\diam}{\operatorname{diam}}

%\newcommand{\supp}{\operatorname{supp}}
%\newcommand
%{\Span}{\operatorname{span}}
% \newcommand{\del}{\delta}

%\newcommand{\Lam}{\Lambda}

% \newcommand{\szego}{Szeg\"o}
\renewcommand{\le}{\leqslant}
\renewcommand{\ge}{\geqslant}

\newcommand{\Ss}{{\mathbb S}}
\newcommand{\Hh}{{\mathbb H}}

\renewcommand{\Re}{{\operatorname{Re\,}}}
\renewcommand{\Im}{{\operatorname{Im\,}}}
\renewcommand{\phi}{\varphi}
\renewcommand{\H}{{\mathbf H}}
\newtheorem{theo}{{\sc Theorem}}[section]
\newtheorem{theorem}{{\sc Theorem}}[section]
\newtheorem{cor}[theo]{{\sc Corollary}}
\newtheorem{corollary}[theo]{{\sc Corollary}}
\newtheorem{conj}[theo]{{\sc Conjecture}}
\newtheorem{lem}[theo]{{\sc Lemma}}

\newtheorem{prop}[theo]{{\sc Proposition}}

\newtheorem{prob}[theo]{{\sc Problem}}

\newenvironment{rem}{\medskip\noindent{\it Remark:\/} }{\medskip}

\newenvironment{defin}{\medskip\noindent{\it Definition:\/} }{\medskip}

\newtheorem{maintheo}{{\sc Theorem}}

\newtheorem{mainprop}[maintheo]{{\sc Proposition}}

\newtheorem{mainex}{\sc Exercise}
\newtheorem{mainprob}{\sc Problem}
\setcounter{tocdepth}{1}
%\addcontentsline{toc}{article}{section}

%%%%%%%%%%%%%%%%%%%%%%%%%%%%%%%%%%%%%%%%%%%%%%%%%%%%%%%%%%%%%%%%%%%%%%%%%

\author{Steve Zelditch}
\address{Department of Mathematics, Northwestern  University, Evanston, IL 60208, USA}

\email{zelditch@math.northwestern.edu}

\thanks{Research partially supported by NSF grant DMS-1206527  .}

 \title[Park City Lectures on Eigenfunctions ]{Park City lectures on Eigenfuntions}

%%%%%%%%%%%%%%%%%%%%%%%%%%%%%%%%%%%%%%%%%%%%%%%%%%%%%%%%%%%%%%%%%%%%%%%%%
\begin{document}

\maketitle

\tableofcontents

\section{Introduction}

These lectures are devoted to recent results on the nodal geometry of eigenfunctions \begin{equation} 
\label{EIGPROB} 
\Delta_g \;\phi_{\lambda}  = \lambda^2 \; \phi_{\lambda} \end{equation} of the Laplacian $\Delta_g$ of
a Riemannian manifold $(M^m, g)$ of dimension $m$ and to  associated problems on $L^p$ norms of eigenfunctions
(\S \ref{Lpintro} and \S \ref{Lp}). 
The manifolds are generally assumed to be compact, although the problems can also be posed on  non-compact
complete Riemannian manifolds. The emphasis of these lectures  is on  {\it real analytic} Riemannian manifolds, but
we also mention some new results for general $C^{\infty}$ metrics.
Although we mainly discuss the Laplacian,  analogous problems and results
exist  for Schr\"odinger operators
$- \frac{\hbar^2}{2} \Delta_g + V$ for certain potentials $V$.  Moreover, many of the results on eigenfunctions
also hold for {\it quasi-modes} or approximate eigenfunctions defined by oscillatory integrals. 

The study of eigenfunctions of $\Delta_g$ and $- \frac{\hbar^2}{2} \Delta_g + V$ on Riemannian
manifolds  is a branch of harmonic
analysis. In these lectures, we  emphasize high frequency (or semi-classical) asymptotics of eigenfunctions and their relations
to the global dynamics of the geodesic flow $G^t: S^* M \to S^* M$ on the unit cosphere bundle of $M$. 
 Here and henceforth we identity vectors and covectors
using the metric.  As in \cite{Ze3} we give the name ``Global Harmonic Analysis"
to the use of global wave equation methods to obtain relations between eigenfunction behavior and geodesics.
%Some of the principal results in semi-classical analysis are purely local and do not exploit this connection. 
The
relations between geodesics and eigenfunctions belongs to the general correspondence principle between
classical and quantum mechanics. The correspondence principle has evolved since the origins of 
quantum mechanics \cite{Sch2} into a systematic theory
of Semi-Classical Analysis and Fourier integral operators, of which \cite{HoI,HoII,HoIII,HoIV} and \cite{Zw}
give systematic presentations; see also \S \ref{FORMAT} for further references. Quantum mechanics provides
not only the intuition and techniques for the study of eigenfunctions, but in large part also provides the motivation.
Readers who are unfamiliar with quantum mechanics are encouraged to read standard
texts such as Landau-Lifschitz \cite{LL} or Weinberg \cite{Wei}. Atoms and molecules are multi-dimensional
and difficult to visualize, and there are many efforts to do so in the physics and chemistry literature. Some
examples
 may be found in  \cite{KP,He,Th,SHM}. Nodal sets of the hydrogen atom have recently
been observed using   quantum
microscopes  \cite{St}.  Here we concentrate on eigenfunctions of the Laplacian; some results on nodal sets
of eigenfunctions of Schr\"odinger operators can be found in \cite{Jin,HZZ}.

\bigskip

Among the fundamental tools  in  the study of eigenfunctions are  parametrix constructions of wave and Poisson kernels,
and the method of stationary phase in the real and complex domain. The plan of these lectures is to concentrate at once 
on applications to nodal sets and $L^p$ norms and refer to Appendices or other texts for the  techniques. Parametrix
constructions and stationary phase are techniques whose role in spectral asymptotics are as basic as the maximum
principle is in elliptic PDE.  We do
include Appendices on the geodesic flow (\S \ref{GEOFLOWAPP}), on  parametrix constructions for the  wave group (\S \ref{WAVEAPP}), on general facts
and definitions concerning oscillatory integrals (\S \ref{LAGAPP})  and on spherical harmonics (\S \ref{SHAPP}).

\subsection{The eigenvalue problem on a compact Riemannian manifold}

The (negative)
  Laplacian $\Delta_g$  of $(M^m, g)$ is the unbounded essentially self-adjoint operator on $C_0^{\infty}(M) \subset L^2(M, dV_g)$ defined by the Dirichlet form
$$D(f) = \int_M |\nabla f|^2 d V_g,$$
where $\nabla f$ is the metric gradient vector field and $|\nabla f|$ is its length in the metric $g$. Also, $dV_g$ is the volume
form of the metric.  In terms of the metric Hessian $D d$, 
$$\Delta f = \mbox{trace} \; D d f. $$ In local coordinates, 
\begin{equation} \label{DELTADEF}  \Delta_g =   \frac{1}{\sqrt{g}}\sum_{i,j=1}^n
\frac{\partial}{\partial x_i} \left( g^{ij} \sqrt{g}
\frac{\partial}{\partial x_j} \right), \end{equation}
in a standard notation that we assume the reader is familiar with (see e.g. \cite{BGM,Ch} if not).

\begin{rem} We are not always consistent on the sign given to $\Delta_g$. When we work with $\sqrt{- \Delta_g}$
we often define $\Delta_g$ to be the opposite of \eqref{DELTADEF} and write $\sqrt{\Delta}$ for notational simplicity. We  often omit
the subscript $g$ when the metric is fixed. We hope the notational conventions do not cause confusion. \end{rem}

 A more geometric definition uses
at each point $p$ an orthomormal basis $\{e_j\}_{j = 1}^m$ of $T_p M$ and geodesics $\gamma_j$ with $\gamma_j(0) = p$,
$\gamma_j'(0) = e_j$. Then
$$\Delta f (p) = \sum_j \frac{d^2}{dt^2} f(\gamma_j(t)). $$
We refer to \cite{BGM} (G.III.12). 
\bigskip

\begin{mainex} Let $m = 2$ and let $\gamma $ be a geodesic arc on $M$.  Calculate $(\Delta f) (s, 0)$ in Fermi normal coordinates
along $\gamma$. \end{mainex}

Background:  Define Fermi normal coordinates $(s, y)$ along
$\gamma$ by identifying a small ball bundle of the  normal bundle $N \gamma$ along $\gamma(s)$  with its image (a tubular neighborhood
of $\gamma$) under the normal
exponential map, $\exp_{\gamma(s)} y \nu_{\gamma(s)}. $ Here, $\nu_{\gamma(s)}$ is the unit normal at $\gamma(s)$ (fix one of the
two choices) and  $\exp_{\gamma(s)} y \nu_{\gamma(s)}$ is the unit speed geodesic in the direction $\nu_{\gamma(s)}$ of length $y$.
\bigskip

The focus of these lectures is on the   eigenvalue problem  \eqref{EIGPROB}. As mentioned above, we sometimes multiply $\Delta$ and the eigenvalue
by $-1$ for notational simplicity. 
We  assume throughout that $\phi_{\lambda}$ is $L^2$-normalized,
$$||\phi_{\lambda}||_{L^2}^2 = \int_M |\phi_{\lambda}|^2 d V = 1. $$
When $M$ is compact, the spectrum of eigenvalues of the Laplacian is discrete there  exists
an orthonormal basis of eigenfunctions. We fix such a basis $\{\phi_j\}$ so that
  \begin{equation} \label{EIGPROBb} \Delta_g\; \phi_j = \lambda_j^2\; \phi_j, \;\;\;\;\;\; \langle \phi_j, \phi_k \rangle_{L^2(M)} := \int_M \phi_j \phi_k dV_g = \delta_{jk} \end{equation}
 If $\partial M
\not= \emptyset$ we impose Dirichlet or Neumann boundary conditions. Here $dV_g$ is the volume form.  When $M$ is compact, the spectrum of $\Delta_g$
is a discrete set
\begin{equation} \label{LAMBDAS} \lambda_0 = 0  < \lambda_1^2 \leq \lambda_2^2 \leq \cdots  \end{equation}
repeated according to multiplicity.  Note that $\{\lambda_j\}$ denote the {\it frequencies}, i.e. square roots of $\Delta$-eigenvalues. We mainly consider the behavior of eigenfuntions in the `high frequency' (or high energy) limit
$\lambda_j \to \infty$.

The   Weyl law asymptotically counts the number of
eigenvalues less than $\lambda,$
\begin{equation}\label{WL} N(\lambda ) = \#\{j:\lambda _j\leq \lambda \}= \frac{|B_n|}{(2\pi)^n} Vol(M, g) \lambda ^n
+O(\lambda ^{n-1}). \end{equation} Here, $|B_n|$ is the Euclidean
volume of the unit ball and $Vol(M, g)$ is the volume of $M$ with
respect to the metric $g$.  The
size of the remainder reflects the measure of closed geodesics \cite{DG,HoIV}. It is a basic example of global
the effect of the global dynamics on the spectrum. See \S \ref{Lpintro} and \S \ref{Lp} for related results
on eigenfunctions.

\begin{enumerate}

\item  In the {\it aperiodic} case where the set of closed geodesics has measure zero, the Duistermaat-Guillemin-Ivrii two term Weyl law
states
$$N(\lambda ) = \#\{j:\lambda _j\leq \lambda \}=c_m \;
Vol(M, g) \; \lambda^m +o(\lambda ^{m-1})$$
 where $m=\dim M$ and where $c_m$ is a universal constant.

\item  In the {\it periodic} case where the geodesic flow is periodic (Zoll manifolds such as the round sphere),
 the spectrum of $\sqrt{\Delta}$ is a union of eigenvalue clusters $C_N$ of the form
$$C_N=\{(\frac{2\pi}{T})(N+\frac{\beta}{4}) +
 \mu_{Ni}, \; i=1\dots d_N\}$$
with $\mu_{Ni} = 0(N^{-1})$.   The number $d_N$ of eigenvalues in
$C_N$ is a polynomial of degree $m-1$.
\end{enumerate}

\begin{rem} The proof that the spectrum is discrete is based on the study of spectral kernels such as the
heat kernel or Green's function or wave kernel. The  standard proof is to show that $\Delta_g^{-1}$ (whose
kernel is the Green's function,
defined on the orthogonal complement of the constant functions) is a compact self-adjoint operator. By
the spectral theory for such operators, the eigenvalues of  $\Delta_g^{-1}$ are discrete, of finite multiplicity,
and only accumulate at $0$.  Although we concentrate on parametrix constructions for the wave kernel,
one can construct the Hadamard parametrix for the Green's function in a similar way. Proofs of the above
statements can be found in \cite{GSj,DSj,Zw,HoIII}. 

The proof of the integrated and pointwise Weyl law are based on  wave equation techniques and
Fourier Tauberian theorems. The wave equation techniques mainly involve the construction of parametrices
for the fundamental solution of the wave equation and the method of stationary phase. 
In \S \ref{LAGAPP} we review We refer to \cite{DG,HoIV} for detailed background. \end{rem} 

\subsection{Nodal and critical point sets}

The main focus of these lectures is on  nodal hypersurfaces
\begin{equation} \label{RNODAL} \ncal_{\phi_{\lambda}} = \{x\in M: \phi_{\lambda}(x)  = 0\}. \end{equation}
The nodal domains are the components  of the complement of the nodal set,
$$M \backslash \ncal_{\phi_{\lambda}}= \bigcup_{j = 1}^{\mu(\phi_{\lambda})}  \Omega_j. $$ For generic
metrics,  $0$ is a regular
value of $\phi_{\lambda}$ of all eigenfunctions, and the nodal sets are smooth non-self-intersecting hypersurfaces \cite{U}.
%These are the points where the probability (density) of the particle's position vanishes. 
Among the main problems on nodal sets are to determine the hypersurface volume $\hcal^{m-1}(\ncal_{\phi_{\lambda}})$
and ideally how the nodal sets are  distributed. Another well-known  question is to determine the number $\mu(\phi_{\lambda})$
of nodal domains in terms of the eigenvalue in generic cases.  One may also consider
the other level sets
\begin{equation} \label{RLEVEL} \ncal^a_{\phi_{j}} = \{x \in M: \phi_j (x)  = a\} \end{equation} 
and  sublevel sets
\begin{equation} \label{SUBLEVEL}   \{x \in M: |\phi_j (x)|   \leq a \}. \end{equation}
The zero level is distinguished since the symmetry  $\phi_j \to - \phi_j$ in the equation
preserves the nodal set.
\bigskip

\begin{rem} Nodals sets belong to individual eigenfunctions. To the author's knowledge there
do not exist any results on averages of nodal sets over the spectrum in the sense of \eqref{WL}-\eqref{PLWL}. 
That is, we do not know of any asymptotic results concerning the functions
$$Z_f(\lambda): = \sum_{j: \lambda_j \leq \lambda} \int_{\ncal_{\phi_{j}}} f d S, $$
where $\int_{\ncal_{\phi_{j}}} f d S$ denotes the integral of a continuous function $f$ over the nodal
set of $\phi_j$. When the eigenvalues are multiple, the sum $Z_f$ depends on the choice of 
%It 
%seems doubtful that one could simplify the study of nodal sets by averaging over
 orthonormal basis.

Randomizing by taking Gaussian random combinations of eigenfunctions simplifies nodal problems profoundly,
and are studied in many articles (see e.g. \cite{NS}).

\end{rem}
\bigskip

One would also like to know the ``number'' and distribution of critical points,
\begin{equation} \label{CRIT} \ccal_{\phi_{j}} = \{ x \in  M : \nabla \phi_j (x)  = 0\}. \end{equation}
In fact, the  critical point set can be a hypersurface in $M$, so for counting problems it makes more sense to count
the number of critical values,
\begin{equation} \label{CRITV} \vcal_{\phi_{j}} = \{ \phi_j(x):  \nabla \phi_j (x)  = 0\} .\end{equation}
At this time of writing, there exist few  rigorous upper bounds on the number of critical
values, so we do not spend much space on them here. If we `randomize' the problem and consider
the average number of critical points (or equivalently values) or random spherical harmonics on the 
standard $\Ss^m$, one finds
that the random spherical harmonics of degree $N$ (eigenvalue $\simeq N^2$)  has $C_m  N^m$ critical
points in dimension $m$. This is not surprising since spherical harmonics are harmonic polynomials.   In the non-generic case  that the critical
manifolds are of co-dimension one, the hypersurface volume is calcualated  in the real analytic case in \cite{Ba}. The 
upper bound is also given in \cite{Ze0}.

The  frequency $\lambda$ of an eigenfunction (i.e. the square root of the eigenvalue)   is a measure of
its  ``complexity", similar to specifying the degree of a polynomial, 
and the high frequency limit is the large complexity limit. A sequence of eigenfunctions of increasing frequency 
oscillates more and more rapidly and the problem is to find its ``limit shape".  Sequences of eigenfunctions
often behave like ``Gaussian random waves" but special ones exhibit highly localized oscillation and
concentration properties. 

\subsection{\label{MOTIVATION} Motivation} Before stating the problems and results, let us motivate the study of eigenfunctions
and their high frequency behavior. The eigenvalue problem \eqref{EIGPROB} arises in many areas of physics, for 
example the theory of vibrating membranes. But renewed motivation to study eigenfunctions comes from
quantum mechanics. As is discussed in any textbook on quantum physics or chemistry (see e.g.
\cite{LL, Wei}), the Schr\"odinger
equation resolves the problem of how an electron  can orbit the nucleus without losing its energy in radiation.
The classical Hamiltonian equations of motion of a particle  in phase space are orbits of Hamilton's equations 
$$\left\{ \begin{array}{l} \frac{dx_j }{dt} = \frac{\partial H}{\partial \xi_j}, \;\; \\ \\
\frac{d \xi_j }{dt} = -  \frac{\partial H}{\partial x_j}, \end{array} \right. $$
where the Hamiltonian 
$$H(x, \xi) = \half |\xi|^2 + V(x) : T^* M\to \R $$ is the total Newtonian  kinetic + potential energy.
The idea of Schr\"odinger is to  model the electron by a wave function $\phi_j$  which solves  the eigenvalue problem 
\begin{equation} \label{EigP} \hat{H} \phi_j : =  (- \frac{\hbar^2}{2} \Delta + V) \phi_{j} = E_j(\hbar) \phi_j,   \end{equation}
for the  Schr\"odinger operator $\hat{H}$, 
where  $V$ is the potential, a multiplication 
operator on $L^2(\R^3). $ Here $\hbar$ is Planck's constant, a very small constant. The semi-classical limit
$\hbar \to 0$ is mathematically equivalent to the high frequency limit when $V = 0$.
 The time
evolution of an `energy state'  is given by 
\begin{equation} \label{Uth} U_{\hbar}(t)  \phi_j := e^{-i \frac{t}{\hbar}  (- \frac{\hbar^2}{2} \Delta + V) } \phi_j =  e^{-i \frac{t E_j(\hbar)}{\hbar} }  \phi_j. \end{equation}  The unitary oprator $ U_{\hbar}(t)  $ is often called
the {\it propagator}. In the Riemannian case with $V = 0$, the factors of $\hbar$ can be absorbed in the $t$
variable and it suffices to study
\begin{equation} U(t) = e^{i t \sqrt{\Delta}}. \end{equation}

An $L^2$-normalized energy state  $\phi_j$ defines   a probability amplitude, i.e. its
 modulus square is a probability measure with  \begin{equation} \label{SQUARE} |\phi_j(x)|^2 dx= \;\; \mbox{ the probability density of finding the particle at $x$ } .\end{equation}  According to the physicists, the   {\it observable quantities} associated
to the energy state are the probability density \eqref{SQUARE} and `more generally'
the   matrix elements 
\begin{equation} \label{ME1} \langle A \phi_j, \phi_j \rangle = \int \phi_j(x) A \phi_j(x) dV  \end{equation} of observables (A is a self adjoint operator, and in these lectures it is assumed to be a pseudo-differential operator). Under the time evolution \eqref{Uth}, the  factors of 
$  e^{-i \frac{t E_j(\hbar)}{\hbar} }$ cancel and so the particle evolves  as if ``stationary", i.e.
observations of the particle are independent of the time $t$.

Modeling energy states by eigenfunctions \eqref{EigP} resolves the paradox \footnote{This is an over-simplified
account of the stability problem; see \cite{LS} for an in-depth account}  of particles which are simultaneously
in motion and are stationary, but at the cost of replacing the classical model of particles following the trajectories
of Hamilton's equations by `linear algebra', i.e. evolution by \eqref{Uth}. The quantum picture is difficult to visualize
or understand intuitively. Moreover, it is difficult to 
relate the classical picture of orbits with the quantum picture of eigenfunctions. 

The study of nodal sets was historically motivated in part by the desire to visualize energy states by 
finding the points where the quantum particle is least likely to be.  In fact, just recently (at this time or writing)
the nodal sets of the hydrogem atom energy states have become visible to microscopes \cite{St}.

\section{Results }

We now introduce the results whose proofs we sketch in the  later sections of this article.

\subsection{\label{LBintro} Nodal hypersurface volumes for $C^{\infty}$ metrics}
  In the late $70's$,
S. T. Yau conjectured that for general $C^{\infty}$  $(M, g)$ of any dimension $m$  there exist $c_1, C_2$ depending only on $g$ so that
\begin{equation} \label{YAU}  \lambda \lesssim  {\mathcal
H}^{m-1}(\ncal_{\phi_{\lambda}}) \lesssim \lambda. 
 \end{equation}
Here  and below $\lesssim$ means that there exists a constant $C$ independent of $\lambda$ for which
the inequality holds.
The upper   bound of \eqref{YAU} is the  analogue  for eigenfunctions of the fact that 
the  hypersurface volume of a real
algebraic variety is bounded above  by its degree. The lower bound is specific to eigenfunctions.  It is a strong
version of the statement that $0$ is not an ``exceptional value" of $\phi_{\lambda}$.  Indeed, a basic result
is  the following classical result, apparently due to   R. Courant
  (see  \cite{Br})). It  is used to obtain lower bounds
on volumes of nodal sets:

\begin{mainprop}\label{SMALLBALL}  For any $(M, g)$  there exists a constant $A > 0$ so that every ball of $(M, g)$ od  radius  greater than $\frac{A}{\lambda}$ contains a nodal point of any eigenfunction  $\phi_{\lambda}$. \end{mainprop}

We sketch the proof in \S \ref{PFSMALLBALL} for completeness, but leave some of the proof as an exercise to
the reader.

The lower bound of \eqref{YAU}  was proved for all $C^{\infty}$ metrics for
surfaces, i.e. for $m = 2$ by Br\"uning \cite{Br}.  For general
$C^{\infty}$ metrics in dimensions $\geq 3$, the known upper and lower bounds are far from the conjecture
\eqref{YAU}. At present the best lower bound available for general $C^{\infty}$ metrics of all dimensions is
the following estimate of Colding-Minicozzi \cite{CM}; a somewhat weaker bound was proved by Sogge-Zelditch \cite{SoZ}
and the later simplification of the  proof \cite{SoZa}  turned out to give  the same bound as \cite{CM}. We sketch
the proof from \cite{SoZa}.

\begin{maintheo}\label{CMSZ} 

$$\la^{1-\frac{n-1}2}\lesssim \hcal^{m-1} \ncal_\la),$$
\end{maintheo}

The original result of \cite{SoZ} is based on lower bounds on the $L^1$ norm of eigenfunctions. 
Further work of Hezari-Sogge \cite{HS} shows that the Yau lower bound is correct when one has 
 $||\phi_{\lambda}||_{L^1} \geq C_0 $ for some $C_0 > 0$. It is not known for which $(M, g)$
such an estimate is valid.  At the present time, such lower bounds
are obtained from upper bounds on the $L^4$ norm of $\phi_{\lambda}$.   The study of $L^p$ norms
of eigenfunctions is of independent interest and we  discuss some recent results which are not
directly related to nodal sets  In \S \ref{Lp} and in \S \ref{Lpintro}.  The study of $L^p$ norms splits
into two very different cases: there exists a critical index $p_n$ depending on the dimension of $M$,
and for $p \geq p_n$ the $L^p$ norms of eigenfunctions are closely related to the structure of geodesic
loops (see \S \ref{Lpintro}). For $2 \leq p \leq p_n$ the $L^p$ norms are governed by different geodesic
properties of $(M, g)$ which we discuss in \S \ref{Lp}.

We also recall in \S \ref{DUB} an interesting upper bound due to R. T. Dong (and Donnelly-Fefferman) in dimension 2, since
the techniques of proof of \cite{Dong} seem capable of further development.

\begin{maintheo} \label{RTDONGth}  For $C^{\infty}$ $(M, g)$ of dimension 2, 
$$\hcal^1(\ncal_\la) \lesssim \lambda^{3/2}. $$
\end{maintheo}

\subsection{Nodal hypersurface volumes for real analytic $(M, g)$}

In 1988,   Donnelly-Fefferman \cite{DF}  proved the conjectured bounds for real analytic Riemannian
manifolds (possibly with boundary). We re-state the result as the following

\begin{maintheo}   \label{DF}   

Let $(M, g)$ be a compact real analytic  Riemannian
manifold, with or without boundary. Then 
$$c_1 \lambda \lesssim {\mathcal
H}^{m-1}(Z_{\phi_{\lambda}}) \lesssim \lambda.
$$
\end{maintheo}

See also \cite{Ba} for a similar proof that the $\hcal^{m-1}$ measure of the critical set is
$\simeq \lambda$ in the real analytic case. 
%We sketch the proof of the upper bound in \S \ref{DFSECT}. 

\subsection{\label{INTERINTRO} Number of intersections of nodal sets with geodesics and number of  nodal domains}

The Courant nodal domain theorem (see e.g. \cite{Ch,Ch3}) asserts that the nth eigenfunction $\phi_{\lambda_n}$
has $\leq n$ nodal domains. This estimate is not sharp (see \cite{Po} and \S \ref{PL} for recent results) and
it is possible to find sequences of eigenfunctions with $\lambda_n \to \infty$ and with a bounded number of 
nodal domains \cite{L}. In fact, it has been pointed out \cite{Hof} that we do not even know if a given $(M, g)$
possesses any sequence of eignefunctions $\phi_n$  with $\lambda_n \to \infty$ for which the number of 
nodal domains tends to infinity. A nodal domain always contains a local minimum or maximum, so a necessary
condition that the number of nodal domains increases to infinity along  sequence of eigenfunctions is that
the number of their critical points (or values) also increases to infinity; see \cite{JN} for a sequence in which
the number of critical points is uniformly bounded.

Some new results give lower bounds on the number of nodal domains on surfaces with an orientation
reversing isometry with non-empty fixed point set. The first result is due to Ghosh-Reznikov-Sarnak \cite{GRS}.

\begin{theorem} \label{grz}

Let $\phi$ be an even Maass-Hecke $L^2$ eigenfunction on $\mathbb{X}=SL(2,\mathbb{Z})\backslash \mathbb{H}$. Denote
the  nodal domains which intersect a compact geodesic segment $\beta \subset \delta=\{iy~|~y>0\}$ by $N^\beta(\phi)$.
Assume $\beta$ is sufficiently long and assume the Lindelof Hypothesis for the Maass-Hecke $L$-functions. Then
\[
N^\beta(\phi) \gg_\epsilon \lambda_\phi^{\frac{1}{24}-\epsilon}.
\]
\end{theorem}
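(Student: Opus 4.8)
The plan is to combine the general lower bound on the number of nodal domains intersecting a geodesic segment with two arithmetic inputs specific to the modular surface: a uniform sup-norm (or $L^\infty$ restriction) bound along the segment $\delta$, and a lower bound on the $L^2$-mass of $\phi$ restricted to $\delta$. The key geometric mechanism is a sign-change counting argument: because $\mathbb{X}=SL(2,\mathbb{Z})\backslash\mathbb{H}$ has the orientation-reversing isometry $z\mapsto -\bar z$ whose fixed-point set is precisely the geodesic $\delta=\{iy\}$, and since $\phi$ is \emph{even}, the restriction $\phi|_\delta$ and its normal derivative are forced into a rigid pattern along $\delta$; each sign change of $\phi|_\delta$ along $\beta$ produces (after a topological argument using the reflection symmetry) a new nodal domain meeting $\beta$. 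Hence $N^\beta(\phi)$ is bounded below by roughly the number of sign changes of the one-variable function $y\mapsto\phi(iy)$ on the interval corresponding to $\beta$.

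First I would set up the restriction problem: let $f(t)=\phi(e^t\cdot i)$ (parametrizing $\delta$ by arc length), so $f$ is a smooth function on an interval $I$ of length $\asymp 1$, and the goal is to lower-bound the number of zeros of $f$ on $I$. The standard analytic device is the $L^2$-vs-$L^\infty$ ratio: if $f$ had few sign changes, one could write $f=P\cdot h$ where $P$ is a real polynomial (or a product of few ``bumps'') of controlled degree, and then Nazarov-type or Remez-type inequalities, combined with bounds on how much $f$ can oscillate between consecutive zeros coming from the eigenfunction equation (an a priori bound like $\|f'\|_\infty\lesssim\lambda_\phi\|f\|_\infty$ via the equation $\Delta\phi=\lambda_\phi^2\phi$ and elliptic estimates), would force $\|f\|_{L^1(I)}$ or $\|f\|_{L^2(I)}$ to be small relative to $\|f\|_{L^\infty(I)}$. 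So the number of sign changes is $\gtrsim \|f\|_{L^2(I)}^2 / (\text{something}\cdot\|f\|_{L^\infty(I)}^2)$ up to logarithmic losses and powers of $\lambda_\phi$ coming from the oscillation bound.

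The arithmetic heart of the matter, and where the Lindelöf Hypothesis enters, is estimating these two norms. The $L^2$-restriction mass $\int_\beta |\phi|^2\,ds$ should be bounded below by a positive constant (or a negligible negative power of $\lambda_\phi$) — this is a non-vanishing / non-concentration statement for the geodesic period, and by Hecke theory plus Watson's triple-product formula it unfolds to an $L$-value (a product of $\mathrm{GL}_2\times\mathrm{GL}_2$ Rankin-Selberg $L$-functions and shifted-convolution sums), whose size is controlled from below using subconvexity that is available unconditionally. The $L^\infty$-bound $\|\phi|_\delta\|_\infty$ is where Lindelöf is used: restricted sup-norm bounds for Maass-Hecke forms along $\delta$ reduce, via the Fourier-Whittaker expansion, to bounding sums $\sum_{n\le N}\lambda_\phi(n) K_{it}(2\pi n y)$, i.e. to moments of Hecke eigenvalues twisted by Bessel functions, and the Lindelöf-on-average or pointwise bounds feed in there to give $\|\phi|_\delta\|_\infty\ll_\epsilon\lambda_\phi^{\epsilon}$ or a small positive power. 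Balancing the exponents — the loss $\lambda_\phi^{?}$ from the oscillation/Remez step against the gain from the $L^2$ lower bound and the Lindelöf-controlled $L^\infty$ bound — produces the exponent $\tfrac{1}{24}$.

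The main obstacle I anticipate is the restricted sup-norm bound and its interaction with the oscillation-counting step: controlling $\phi$ pointwise along a fixed geodesic for a Maass-Hecke form is genuinely hard and is exactly the point where the Lindelöf Hypothesis is invoked rather than a theorem. Equally delicate is making the topological step rigorous — turning ``$k$ sign changes of $\phi$ along $\beta$'' into ``$\gtrsim k$ genuinely distinct nodal domains meeting $\beta$'' requires the reflection symmetry across $\delta$ to rule out two consecutive sign-change intervals lying in one nodal domain that loops around; this uses that $\delta$ is a connected component of the fixed-point set and that an even eigenfunction cannot have nodal lines crossing $\delta$ transversally except at isolated points. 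I would isolate that topological lemma first, then do the analytic sign-change count, and leave the $L$-function estimates (which are the technically longest but most standard-for-experts part) for last.
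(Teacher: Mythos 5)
Your high-level architecture matches the GRS strategy as the survey sketches it and as the paper itself records in \S\ref{QERINTRO}: count sign changes of $\phi|_\delta$, then promote them to distinct nodal domains via a topological argument exploiting the reflection $z\mapsto -\bar z$ (under which nodal lines cross $\delta$ orthogonally at zeros of $\phi|_\delta$, since $\partial_\nu\phi|_\delta=0$ for even $\phi$; the closure step is the Euler-formula argument spelled out in the survey's proof of Theorem \ref{theoJJ}). The two arithmetic inputs you identify — an $L^2$-restriction lower bound and an $L^\infty$-restriction upper bound along $\delta$, fed by $L$-function estimates controlled under the Lindelöf Hypothesis — are indeed the right ones.

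The gap is in the analytic engine for counting sign changes. You propose that few sign changes of $f=\phi|_\delta$, via Remez/Nazarov inequalities and a gradient bound on $f$, would force $\|f\|_{L^1}/\|f\|_{L^\infty}$ to be small. That implication is simply false: a positive constant has zero sign changes and a maximal $L^1/L^\infty$ ratio, and an \emph{upper} bound on $\|f'\|_\infty$ constrains nothing in the direction you need. The essential missing ingredient — which is exactly what the survey records in \eqref{LOG} and what GRS establish via a Kuznecov-type asymptotic — is \emph{period cancellation}: $\bigl|\int_\beta \phi\, h\, ds\bigr|$ is small (a negative power of the frequency, up to logarithms) for any fixed smooth $h$. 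With it, if $\phi|_\beta$ had only $k$ sign changes, the piecewise-constant $\operatorname{sgn}(\phi|_\beta)$ is approximable by a smooth function at controlled cost in $k$, so $\int_\beta|\phi|=\int_\beta\phi\cdot\operatorname{sgn}(\phi)$ is bounded by roughly $k$ times the period bound; comparing against the lower bound $\int_\beta|\phi|\geq\|\phi\|_{L^\infty(\beta)}^{-1}\|\phi\|_{L^2(\beta)}^2$ of \eqref{qer} forces $k$ to be large. So the correct balance has the period bound and a single power of $\|\phi\|_{L^\infty(\beta)}$ in the denominator, not $\|\phi\|_{L^\infty(\beta)}^2$, and without the period input your chain of inequalities has no force. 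A further caution: the Lindelöf Hypothesis in GRS enters the \emph{entire} exponent balance (both restriction estimates unfold to $L$-values), not only the $L^\infty$ bound; and this survey merely states Theorem \ref{grz} and refers to \cite{GRS} for the proof, so the precise bookkeeping yielding $1/24$ must be checked there.
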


We refer to \cite{GRS} for background definitions. The strategy of the proof is to first prove that there are many
intersections of the nodal set with the vertical geodesic of the modular domain and that the eigenfunction changes
sign at many intersections. It follows that the nodal lines intersect the geodesic orthogonally. Using a topological
argument, the authors show that the nodal lines must often close up to bound nodal domains. 

As is seen from this outline, the main analytic ingredient is to prove that there are many intersections of the nodal
line with the geodesic. The study of such intersections in a more general context is closely related to a new
series of quantum ergodicity results known as QER (quantum ergodic restriction) theorems \cite{TZ,TZ2,CTZ}. 
We discuss these ingredients below. 

In recent work, the author and J. Jung have proved a general kind result in the same direction. 
The setting is that of a Riemann surface  $(M, J, \sigma)$ with an orienting-reversing involution $\sigma$ whose
fixed point set 
$\mbox{Fix}(\sigma)$ is separating, i.e. $M \backslash \mbox{Fix}(\sigma)$ consists of two connected components. 
The result is  that for any $\sigma$-invariant negatively curved metric, and  for almost the entire sequence of
even or odd eigenfunctions, the number of nodal domains tends to infinity.  In fact, the argument only uses ergodicity
of the geodesic flow.   

.
We first explain the hypothesis. When a Riemann surface possesses an  orientation-reversing involution
$\sigma: M \to M$, Harnack's theorem says that the fixed point set $\mbox{Fix}(\sigma)$
is a disjoint union
\begin{equation} \label{H} H = \gamma_1 \cup \cdots \cup \gamma_k \end{equation} of $0 \leq k \leq g + 1$ of simple closed
curves. 
It is possible that $\mbox{Fix}(\sigma) = \emptyset$ but we  assume $k \not= 0$. 
We also assume that  $H$ \eqref{H} is a separating set, i.e.  $M \backslash H = M_+ \cup M_-$ where $M_+^0 \cap M_-^0 = \emptyset$ (the interiors are disjoint),
where  $\sigma(M_+) = M_-$ and where $\partial M_+ = \partial M_- = H$.

If $\sigma^* g = g$ where $g$ is a negatively curved metric, then 
 $\mbox{Fix}(\sigma)$ is a finite union of simple closed geodesics. We denote by  $L_{even}^2(M)$  the set of $f \in L^2(M)$ such that $\sigma f=f$ and by $L_{odd}^2(Y)$ the $f$ such that $\sigma f = - f$. We denote by $\{\phi_j\}$ an orthonormal eigenbasis of Laplace eigenfunctions of $L_{even}^2(M)$, resp. $\{\psi_j\}$ for  $L^2_{odd}(M)$.

We further denote by
\[
\Sigma_{\phi_{\lambda}} = \{x \in \ncal_{\phi_{\lambda}}: d \phi_{\lambda}(x) = 0\}
\]
the  singular set of $\phi_{\lambda}.$ These are special critical points $d \phi_j(x) = 0$ which lie on the nodal set $Z_{\phi_j}$.

\begin{theo}\label{theoJJ}
Let $(M,g)$ be a compact negatively curved $C^\infty$ surface with an orientation-reversing  isometric involution $\sigma : M \to M$ with
$\mbox{Fix}(\sigma)$ separating.   Then for any orthonormal eigenbasis $\{\phi_j\}$ of $L_{even}^2(Y)$, resp. $\{\psi_j\}$ of $L_{odd}^2(M)$, one can find a density $1$ subset $A$ of $\mathbb{N}$ such that
\[
\lim_{\substack{j \to \infty \\ j \in A}}N(\phi_j) = \infty,
\]
resp.
\[
\lim_{\substack{j \to \infty \\ j \in A}}N(\psi_j) = \infty,
\]

\end{theo}
For odd eigenfunctions, the  conclusion holds as long  as $Fix(\sigma)\neq \emptyset$. 
A density one subset $A \subset {\bf N}$ is one for which
$$\frac{1}{N} \#\{j \in A, j \leq N \} \to 1, \;\; N \to \infty. $$

As the image  indicates, the surfaces in question are complexifications of real algebraic curves, with $Fix(\sigma)$
the underlying real curve. 
\begin{center}
\includegraphics[scale=0.8]{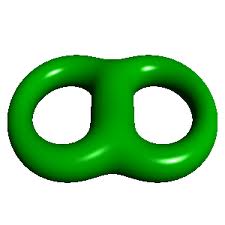}
\end{center}

The strategy of the proof is similar to that of Theorem \ref{grz}: we prove that for even or odd eigenfunctions,
the nodal sets intersect $\mbox{Fix}(\sigma)$ in many points with sign changes, and then use a topological 
argument to conclude that there are many nodal domains. We note that the study of sign changes has been
used in \cite{NPS} to study nodal sets on surfaces in a different way. 

%\begin{frame}\frametitle{Hyperelliptic Riemann surface $g = 3$ top-bottom}
%\begin{center}
%\includegraphics[scale=0.8]{Hyp2.jpg}
%\end{center}
%\end{frame}

In \S \ref{INTREALBDYintro} we also discuss upper bounds on nodal intersections in the real analytic case. 

\subsection{Dynamics of the geodesic or billiard flow}

Theorem \ref{theoJJ} used the hypothesis of ergodicity of the geodesic flow.  It is not obvious that nodal
sets of eigenfunctions should bear any relation to geodesics, but one of our central themes is that in some
ways they do. 

In general,
there are two broad classes of results on nodal sets and other properties of eigenfunctions:
\bigskip

\begin{itemize}

\item Local results which are valid for any local solution of \eqref{EIGPROB}, and which often 
use local arguments. For instance the proof of Proposition \ref{SMALLBALL} is local. 
\bigskip

\item Global results which use that eigenfunctions are global solutions of \eqref{EIGPROB}, or
that they satisfy boundary conditions when $\partial M \not= \emptyset$. Thus, they are 
also satisfy the unitary evolution equation \eqref{Uth}.  For instance the relation between closed
geodesics and the remainder term of Weyl's law is global  \eqref{WL}-\eqref{PLWL}. 

\end{itemize}
\bigskip

Global results often exploit the relation between classical and quantum mechanics, i.e. the
relation between the eigenvalue problem \eqref{EIGPROB}-\eqref{Uth} and the geodesic flow. 
Thus  the results often depend on the dynamical properties of the geodesic flow. 
The relations between eigenfunctions and the Hamiltonian flow are best
established in two extreme cases: (i) where the Hamiltonian flow is completely integrable 
on an energy surface, or (ii) where it is ergodic.  The extremes
are illustrated below in the case of (i) billiards on rotationally invariant annulus, (ii)
chaotic billiards on a cardioid. 

\begin{center}
\includegraphics[scale=0.5]{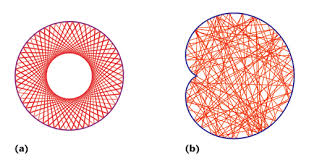} \vspace{1cm}
\end{center}

A random trajectory in the case of ergodic billiards is uniformly distributed, while
all trajectories are quasi-periodic in the integrable case.

We do not have the space to review the dynamics of geodesic flows or other Hamiltonian flows. 
We refer to \cite{HK} for background in dynamics and to \cite{Ze,Ze3, Zw} for relations between
dynamics of geodesic flows and eigenfunctions.

We  use the following basic construction: given a measure preserving map (or flow) $\Phi : (X, \mu) \to (T, \mu)$
one can consider the translation operator \begin{equation} \label{UPHI} U_{\Phi} f(x) = \Phi^* f(x) = f(\Phi(x)),
\end{equation} sometimes called
the Koopman operator or Perron-Frobenius operator  (cf. \cite{RS,HK}). It is a unitary
operator on $L^2(X, \mu)$ and hence its spectrum lies on the unit circle. $\Phi$ is ergodic if and only
if $U_{\Phi}$ has the eigenvalue 1 with multiplicity 1, corresponding to the constant functions. 

The geodesic (or billiard) flow is the Hamiltonian flow on $T^* M$ generated by the metric norm Hamiltonian
or its square,
\begin{equation} H(x, \xi) = |\xi|_g^2 = \sum_{i, j} g^{ij} \xi_i \xi_j. \end{equation}
In PDE one most often uses the $\sqrt{H}$ which is homogeneous of degree 1. The geodesic flow is
ergodic when the Hamiltonian flow $\Phi^t$ is ergodic on the level set $S^*M = \{H = 1\}$.

%\begin{center}
%\includegraphics[scale=0.5]{stadium2.jpeg} \vspace{1cm}
%\includegraphics[scale=0.5]{QEbilliards.jpeg}%\end{center}

\subsection{\label{QERINTRO} Quantum ergodic restriction theorems and nodal intersections}

One of the main themes of these lectures is that ergodicity of the geodesic flow causes eigenfunctions
to oscillate rapidly everywhere and in all directions, and hence to have a `maximal' zero set. We will see 
this occur both in the real and complex domain. In the real domain (i.e. on $M$), ergodicity ensures
that restrictions of eigenfunctions in the two-dimensional case to geodesics have many zeros along the geodesic.
In \S \ref{QERsect} we will show that such oscillations and zeros are due to the fact that under generic
assumptions,  restrictions of
eigenfunctions to geodesics are `quantum ergodic' along the geodesic. Roughly this means that they have uniform
oscillations at all frequencies below the frequency of the eigenfunction.

We prove that this QER (quantum ergodic restriction) property has the following implications. First, 
any arc $\beta \subset H$,
\begin{equation} \label{LOG} |\int_{\beta} \phi_{\lambda_j} ds| \leq C \lambda_j^{-1/2} (\log \lambda)^{\half} \end{equation}
and
\begin{equation} \label{qer} \int_{\beta} |\phi_{\lambda_j} | ds \geq ||\phi_{\lambda_j} ||_{\infty}^{-1} ||\phi_{\lambda_j} ||_{L^2(\beta)}^2
\geq C \lambda_j^{-1/2} \log \lambda_j.  \end{equation} 
The first inequality is generic while the second uses the QER property.  The inequalities 
are inconsistent if  $\phi_{\lambda_j}  \geq 0$ on $\beta $, and that shows that eigenfunctions
have many sign changes along the geodesic.   The estimate also the well-known sup norm bound
\begin{equation} \label{NEGSUP} ||\phi_{\lambda} ||_{\infty}\leq \lambda^{1/4} /\log \lambda \end{equation}
for eigenfunctions on negatively curved surfaces \cite{Be}.

The same argument shows that the number
of singular points of odd eigenfunctions tends to infinity and one can adapt it to prove that the number
of critical points of even eigefunctions (on the geodesic)  tend to infinity. 

\subsection{Complexification of $M$ and Grauert tubes}

The next series of results concerns `complex nodal sets', i.e. complex zeros of analytic continuations
of eigenfunctions to the complexification of $M$. It  is difficult to draw conclusions about real
nodal sets from knowledge of their complexifications. But complex nodal sets are simpler to study than
real nodal sets and the results are stronger, just as  complex  algebraic varieties behave in simpler ways
than real algebraic varieties.

A real analytic manifold $M$ always possesses a unique
complexification $M_{\C}$ generalizing the complexification of
$\R^m$ as $\C^m$. The complexification is  an open  complex
manifold in  which $M$ embeds $\iota: M \to M_{\C}$  as a totally
real submanifold (Bruhat-Whitney)

The Riemannian metric determines a special kind of distance
function on $M_{\C}$  known as a Grauert tube function. In fact,  it is observed in \cite{GS1} that the Grauert
tube function is obtained from the distance function by setting $\sqrt{\rho}(\zeta) =
i \sqrt{ r^2(\zeta, \bar{\zeta})}$ where $r^2(x, y)$ is the
squared distance function in a neighborhood of the diagonal in $M
\times M$.

One defines the Grauert tubes $M_{\tau} = \{\zeta \in M_{\C}:
\sqrt{\rho}(\zeta) \leq \tau\}$. There exists a maximal $\tau_0$
for which $\sqrt{\rho}$ is well defined, known as the Grauert tube
radius. For $\tau \leq \tau_0$, $M_{\tau}$ is a strictly
pseudo-convex domain in $M_{\C}$.
Since $(M, g)$ is real analytic, the exponential map $\exp_x t \xi$ admits an analytic
continuation in $t$ and the imaginary time exponential map
\begin{equation} \label{EXP} E: B_{\epsilon}^* M \to M_{\C}, \;\;\; E(x, \xi) = \exp_x i \xi \end{equation} is, for small enough $\epsilon$, a
diffeomorphism from the ball bundle $B^*_{\epsilon} M$ of radius
$\epsilon $ in $T^*M$ to the Grauert tube $M_{\epsilon}$ in
$M_{\C}$. We have $E^* \omega = \omega_{T^*M}$ where $\omega = i
\ddbar \rho$ and where $\omega_{T^*M}$ is the canonical symplectic
form; and also $E^* \sqrt{\rho} = |\xi|$ \cite{GS1,LS1}. It
follows that $E^*$ conjugates the geodesic flow on $B^*M$ to the
Hamiltonian flow $\exp t H_{\sqrt{\rho}}$ of $\sqrt{\rho}$ with
respect to $\omega$, i.e. \begin{equation} \label{CONJUG} E(g^t(x,
\xi)) = \exp t \Xi_{\sqrt{\rho}} (\exp_x i \xi). \end{equation}
 In general $E$ only
extends as a diffemorphism  to a certain maximal radius $\epsilon_{\max}$. We assume throughout that
$\epsilon < \epsilon_{\max}$. 

\subsection{Equidistribution of nodal sets in the complex domain}

One may also consider the complex nodal sets
\begin{equation} \ncal_{\phi_{j}^{\C}} = \{ \zeta \in M_{\epsilon}: \phi_j^{\C}(\zeta) = 0\}, \end{equation}
and the complex critical point sets
\begin{equation}  \ccal_{\phi_{j}^{\C}} = \{ \zeta \in M_{\epsilon}: \partial \phi_j^{\C}(\zeta) = 0\}. \end{equation}

The following is proved in \cite{Ze5}:

\begin{maintheo}  \label{ERGCXZ} Assume $(M, g)$ is real analytic and that  the geodesic flow of $(M, g)$ is ergodic.
 Then for all but a sparse subsequence of
$\lambda_j$,
$$\frac{1}{\lambda_j} \int_{ \ncal_{\phi_{\lambda_j}^{\C}}} f \omega_g^{n - 1}  \to \frac{i}{\pi} \int_{M_{\epsilon}} f \overline{\partial} {\partial}
\sqrt{\rho} \wedge \omega_g^{n-1}.
$$
\end{maintheo}

The proof is based on quantum ergodicity of analytic continuation of eigenfunctions to Grauert tubes
and the growth estimates ergodic eigenfunctions satisfy.

We will say that a sequence $\{\phi_{j_k}\}$ of $L^2$-normalized
eigenfunctions  is {\it quantum ergodic} if
\begin{equation} \label{QEDEF} \langle A \phi_{j_k}, \phi_{j_k} \rangle \to
\frac{1}{\mu(S^*M)} \int_{S^*M} \sigma_A d\mu,\;\;\; \forall A \in
\Psi^0(M). \end{equation} Here, $\Psi^s(M)$ denotes the space of
pseudodifferential operators of order $s$, and $d \mu$ denotes
Liouville measure on the unit cosphere bundle $S^*M$ of $(M, g)$.
More generally, we denote by $d \mu_{r}$ the (surface) Liouville
measure on $\partial B^*_{r} M$, defined by
\begin{equation} \label{LIOUVILLE} d \mu_r = \frac{\omega^m}{d |\xi|_g} \;\; \mbox{on}\;\; \partial B^*_r
M.
\end{equation}
We also denote by $\alpha$ the canonical action $1$-form of
$T^*M$.

\subsection{\label{INTREALBDYintro} Intersection of nodal sets and real analytic curves on surfaces}

 In recent work, intersections of nodal
sets and curves on surfaces $M^2$  have been used in a variety of articles to obtain upper and lower bounds on nodal points
and domains. The work often is based on restriction theorems for eigenfunctions. Some of the 
recent articles on restriction theorems and/or nodal intersections  are  \cite{TZ,TZ2,GRS,JJ,JJ2,Mar,Yo,Po}.
\bigskip

First we consider a basic upper bound on the number of intersection points:

\begin{maintheo} \label{INTREALBDYint} Let $\Omega \subset \R^2$ be a piecewise analytic domain
and  let  $n_{\partial \Omega}(\lambda_j)$ be the number of
components of the nodal set of the $j$th Neumann or Dirichlet
eigenfunction which intersect $\partial \Omega$.  Then there
exists  $C_{\Omega}$   such that $n_{\partial \Omega}(\lambda_j)
\leq C_{\Omega} \lambda_j.$
\end{maintheo}

In the Dirichlet case, we delete the boundary when considering components of the nodal set. 

The
method of proof of Theorem \ref{INTREALBDYint} generalizes from
$\partial \Omega$ to a rather large class of  real analytic curves
$C \subset \Omega$, even when $\partial \Omega$ is not real
analytic. Let us call a real analytic curve $C$ a {\it good} curve
if there exists a constant $a > 0$ so that  for all $\lambda_j$
sufficiently large,
 \begin{equation}
\label{GOOD} \frac{\|\phi_{\lambda_j}\|_{L^{2} (\partial
\Omega)}}{\|\phi_{\lambda_j}\|_{L^{2}(C)}}
     \leq e^{a \lambda_j} .\end{equation}
     Here, the $L^2$ norms refer to the restrictions of the
     eigenfunction to $C$ and to $\partial \Omega$.
The following result deals with the case where $C \subset
\partial \Omega$ is an {\em interior} real-analytic
   curve. The  real curve $C$ may then
be holomorphically continued to a complex curve  $C_{\C} \subset
\C^2$ obtained by analytically continuing a real analytic
parametrization of $C$.

   \begin{maintheo}\label{GOODTH}
 Suppose that $\Omega \subset \R^2$ is a $C^{\infty}$ plane
domain, and let $C \subset \Omega$ be a good  interior real
analytic curve in the sense of (\ref{GOOD}). Let
  $n(\lambda_j, C) = \# Z_{\phi_{\lambda_j} } \cap C $ be the number of intersection points of
  the nodal set of the $j$-th Neumann (or Dirichlet) eigenfunction  with $C$.  Then there exists
  $A_{C, \Omega} > 0$ depending only on $C, \Omega$ such that  $n(\lambda_j, C)
  \leq A_{C, \Omega}
 \lambda_j$.

 \end{maintheo}

The proof of Theorem \ref{GOODTH} is somewhat simpler than that of Theorem \ref{INTREALBDYint}, i.e.
good interior analytic curves are somewhat simpler than the boundary itself. On the other hand, it is
clear that the boundary is good and it is hard to prove that other curves are good.
A recent paper of J. Jung shows that many natural curves in the hyperbolic plane are `good' \cite{JJ}.
See also \cite{ElHajT} for general results on good curves.

The upper bounds of Theorem \ref{INTREALBDYint} - \ref{GOODTH}  are proved by analytically continuing the restricted eigenfunction to the analytic
continuation of the curve. We then give a similar upper bound on complex zeros. Since real zeros are also complex zeros, 
we then get an upper bound on complex zeros. An obvious question is whether the order of magnitude estimate
is sharp. Simple examples in the  unit disc show that there are no non-trivial lower bounds on numbers of intersection points.
But
when the dynamics is ergodic we expect to  prove an equi-distribution theorem for nodal intersection points
(in progress). Ergodicity
once again implies that eigenfunctions oscillate as much as possible and therefore saturate bounds on zeros.

 Let $\gamma \subset
M^2$ be a generic geodesic arc on a real analytic Riemannian surface.
 For small $\epsilon$, the parametrization of $\gamma$ may be analytically
continued to a strip,   $$\gamma_{\C}:  S_{\tau}: = \{ t + i \tau \in \C:
|\tau| \leq \epsilon\} \to M_{\tau}.  $$ Then the eigenfunction restricted to $\gamma$ is 
$$\gamma_{\C}^* \phi_{j}^{\C} (t + i \tau) = \phi_{j}(\gamma_{\C}(t + i \tau)\;\; \mbox{on}\;\; S_{\tau}. $$

 Let
\begin{equation} \ncal^{\gamma}_{\lambda_j} : = \{(t + i \tau:
\gamma_{H}^* \phi_{\lambda_j}^{\C} (t + i \tau)  = 0\}
\end{equation}
be the complex zero set of this holomorphic function of one
complex variable.  Its zeros are the intersection points. 

Then as a current of integration,
\begin{equation}\label{PLLa}  [\ncal^{\gamma}_{\lambda_j}] = i \ddbar_{t + i
\tau} \log \left| \gamma^* \phi_{\lambda_j}^{\C} (t + i \tau)
\right|^2. \end{equation}

The following result is proved in \cite{Ze6}:

\begin{maintheo} \label{NODINT}   Let $(M, g)$ be real analytic with ergodic
geodesic flow. Then for generic $\gamma$  there exists a subsequence of eigenvalues
$\lambda_{j_k}$ of density one such that
$$\frac{i}{\pi \lambda_{j_k}} \ddbar_{t + i
\tau} \log \left| \gamma^* \phi_{\lambda_{j_k}}^{\C} (t + i \tau)
\right|^2 \to  \delta_{\tau = 0} ds.  $$
\end{maintheo}

Thus, intersections of (complexified) nodal sets and geodesics
concentrate in the real domain--  and are distributed by
arc-length measure on the real geodesic. \bigskip

The key point is that 
$$\frac{1}{\lambda_{j_k}}  \log | \phi_{\lambda_{j_k}}^{\C} (\gamma(t + i \tau)|^2 \to |\tau|. $$
Thus, the maximal growth occurs along individual  (generic) geodesics.

\subsection{\label{Lpintro} $L^p$ norms of eigenfunctions}

In \S \ref{LBintro} we mentioned that lower bounds on $\hcal^{n-1}(\ncal_{\phi_{\lambda}}$ are
related to lower bounds on $||\phi_{\lambda}||_{L^1}$ and to upper bounds on 
 $||\phi_{\lambda}||_{L^p}$ for certain $p$.  Such $L^p$ bounds are interesting for all $p$ and
depend on the shapes of the eigenfunctions. 

In \eqref{WL} we stated the Weyl law on the number of eigenvalues. There  also exists a 
pointwise local Weyl law which  is relevant to the pointwise behavior of  eigenfunctions.
The pointwise spectral function along the diagonal  is defined by
\begin{equation} \label{SPECFUN}  E(\lambda, x, x) = N(\lambda, x) : =\sum_{\lambda _{j} \leq
\lambda } |\varphi_j(x)|^2. \end{equation}
The pointwise Weyl law asserts tht 
\begin{equation} \label{PLWL} N(\lambda, x)   = \frac{1}{(2 \pi)^n} |B^n| \lambda^n +
R(\lambda, x),
\end{equation}
where $R(\lambda, x) = O(\lambda^{n-1})$ uniformly in $x$. These results are proved by studying the cosine
transform
\begin{equation} \label{COSINE}  E(t, x, x) = \sum_{\lambda _{j} \leq
\lambda }  \cos t \lambda_j\;\;|\varphi_j(x)|^2, \end{equation}
which is the fundamental (even) solution of the wave equation restricted to the diagonal. Background
on the wave equation is given in \S \ref{WAVEAPP}.

We note that the Weyl asymptote $\frac{1}{(2 \pi)^n} |B^n| \lambda^n $ is continuous, while the spectral
function \eqref{SPECFUN} is piecewise constant with jumps at the eigenvalues $\lambda_j$. Hence the
remainder must jump at an eigenvalue $\lambda$, i.e.
%$$\begin{array}{l}
%N(\lambda ,x)  = \sum_{\lambda_{\nu} \leq \lambda}
%|\phi_{\nu}(x)|^2\\ \\  = (2\pi)^{-n} \int_{p(x,\xi)\le \lambda}
%d\xi + R(\lambda,x) \end{array}
%$$
%with uniform remainder bounds
%$$|R(\lambda,x)|\le
%C\lambda^{n - 1}, \quad x\in M.
%$$ $$\sum_{\lambda_\nu=\lambda}|\phi_\nu(x)|^2
%\le 2C\lambda^{n-1}$$  which in turn yields
\begin{equation}\label{03} R(\lambda, x) - R(\lambda- 0, x) = \sum_{j: \lambda _{j} =
\lambda } |\varphi_j(x)|^2= O (\lambda^{n-1}). 
\end{equation}
on any compact Riemannian manifold. It follows immediately that
\begin{equation} \label{MAXSUP} \sup_M |\phi_j|  \lesssim  \lambda_j^{\frac{n-1}{2}}. \end{equation}

There exist $(M, g)$ for which this estimate is sharp, such as the standard spheres. However, as \eqref{NEGSUP}
the sup norms are smaller on manifolds of negative curvature. In fact, \eqref{MAXSUP} 
is very rarely sharp and the actual size of the sup-norms and other $L^p$ norms of eigenfunctions is another interesting
problem in global harmonic analysis. In \cite{SoZ} it is proved that if the bound \eqref{MAXSUP} is achieved by
some sequence of eigenfunctions, then there must exist a ``partial blow-down point'' or self-focal point $p$ where
a positive measure of directions $\omega \in S^*_p M$ so that the geodesic with initial value $(p, \omega)$
 returns  to $p$ at some time $T(p, \omega)$. Recently the authors have improved the result in the real
analytic case, and we sketch the new result in \S \ref{Lp}. 

To state it, we need some further notation and terminology. We only consider real analytic metrics for the
sake of simplicity. 
We call a point $p$  a  {\it self-focal point} or a{\it blow-down point} if there exists a time $T(p)$ so
that $\exp_p T(p) \omega = p$ for all $\omega \in S^*_p M$. Such a point is self-conjugate in a very
strong sense. In terms of symplectic geometry, the  flowout  manifold
\begin{equation} \label{Lambdap} \Lambda_{p} = \bigcup_{0 \leq t \leq T(p)} G^t S^*_{p} M  \end{equation}
is an embedded Lagrangian submanifold of $S^*M$ whose projection
$$\pi : \Lambda_{p} \to M $$
has a ``blow-down singuarity" at $t = 0, t = T(p)$ (see\cite{STZ}).
Focal points come in two basic kinds, depending on  the
first return map \begin{equation} \label{Phix} \Phi_p : S^*_p M \to S^*_p M,  \;\;\;\;
\Phi_p(\xi) := \gamma_{p, \xi}'(T(p)), \end{equation}
where $\gamma_{p, \xi}$ is the geodesic defined by the initial data $(p, \xi) \in S^*_x M$.
  We say that $p$  is  a {\it pole}  if  
 $$ \Phi_p = Id: S^*_p M \to S^*_p M. $$
On the other hand, it is possible that $\Phi_p = Id$ only on  a codimension one set  in $S^*_p M$.  We call such a $\Phi_p$ {\it twisted}.

  Examples of poles are 
the poles of a surface of revolution (in which case all geodesic loops at $x_0$ are smoothly closed). Examples
of self-focal points with fully  twisted return map are the 
four umbilic points of two-dimensional tri-axial ellipsoids, from  which  all geodesics loop back 
 at time $2 \pi$ but are almost never smoothly closed.  The only smoothly closed directions 
are the geodesic (and its time reversal) defined by the middle length `equator'.  
\bigskip

 At a self-focal point we have a kind of analogue of \eqref{UPHI} but not on $S^*M$ but just on $S^*_p M$. 
We define the 
Perron-Frobenius operator at a self-focal point by 
\begin{equation} \label{UX} U_x: L^2(S^*_x M, d \mu_x) \to L^2(S^*_x M, d\mu_x),
\;\;\;U_x f(\xi)  : = \begin{array}{l}  f(\Phi_x(\xi))
\sqrt{J_x(\xi)}.
\end{array}  \end{equation}
Here, $J_x$ is the Jacobian of the map $\Phi_x$, i.e.
$\Phi_x^*|d\xi| = J_x(\xi) |d\xi|$. 

%\begin{defin} 
%We  say  that a self-focal point  $x_0 \in M$
%\begin{itemize}

%\item is a $\sigma_{0}$ point if $U_{x_0}$ has a non-zero invariant
%$L^2$ function. As discussed in \cite{D}, it is equivalent that
%there exists a finite $\Phi_{x_0}$-invariant measure on $S^*_{x_0} M$
%which is in the class of $d\mu_x$.

%  \item is a dissipative point if, in the Hopf decomposition of
%  $\Phi_{x_0}$ or $U_{x_0}$, the set of conservative points has measure
%  zero. In this case, the spectrum of $U_{x_0}$ is absolutely
 % continuous; hence a dissipative point is never a $\sigma_0$
%  point.

%\end{itemize}

%\end{defin}

The new result of C.D. Sogge and the author is the following:  \begin{maintheo} \label{TL}  If $(M, g)$ is real analytic and has maximal eigenfunction growth, then it
possesses a self-focal point whose first return map $\Phi_p$  has an invariant $L^2$ function in $L^2(S^*_p M)$. Equivalently,
it has an  $L^1$ invariant measure
in the class of the Euclidean volume density  $\mu_p$ on $S^*_p M$.  \end{maintheo}

For instance, the twisted first return map at an umbilic point of an ellipsoid has no such finite invariant
measure. Rather it has two fixed points, one of which is a source and one a sink, and the only finite invariant
measures are delta-functions at the fixed points. It also has an infinite invariant measure on the complement
of the fixed points, similar to $\frac{dx}{x}$ on $\R_+$. 

The results of \cite{SoZ,STZ,SoZ2} are stated for the $L^{\infty}$ norm but the same results are true
for $L^p$ norms above a critical index $p_m$ depending on the dimension (\S \ref{Lp}). The analogous
problem for lower $L^p$ norms is of equal interest, but the geometry of the extremals changes from
analogues of zonal harmonics to analogoues of Gaussian beams or highest weight harmonics. For the
lower $L^p$ norms there are also several new developments which are discussed in \S \ref{Lp}.

\subsection{\label{QMintro} Quasi-modes}

A significant generalization of eigenfunctions are {\it quasi-modes}, which are special kinds of approximate
solutions of the eigenvalue problem. The two basic types are:
\begin{itemize}

\item (i)  Lagrangian distributions given by oscillatory integrals which are approximate solutions of
the eigenvalue problem, i.e. which satisfy $||(\Delta + \mu_k^2) u_k || = O(\mu_k^{- p})$ for some $p \geq 0$. 

\item (ii) Any  sequence $\{u_k\}_{k = 1}^{\infty}$ of $L^2$ normalized solutions of an approximate 
eigenvalue problem.  In \cite{STZ} we worked with a more general class of ``admissible"  quasi-modes.
A
sequence $\{\psi_\lambda\}$, $\lambda=\lambda_j$, $j=1,2,\dots$ is
a sequence of admissible quasimodes if $\|\psi_\lambda\|_2=1$ and
\begin{equation}\label{admissible}
\|(\Delta+\lambda^2)\psi_\lambda\|_2+\|S^\perp_{2\lambda}\psi_\lambda\|_\infty
= o(\lambda).
\end{equation}

Here, $S_\mu^\perp$ denotes the projection onto the $[\mu,\infty)$
part of the spectrum of $\sqrt{-\Delta}$, and in what follows
$S_\mu=I-S_\mu^\perp$, i.e., $S_\mu f=\sum_{\lambda_j< \mu}
e_j(f)$, where $e_j(f)$ is the projection of $f$ onto the
eigenspace with eigenvalue $\lambda_j$.

\end{itemize}

The more special type (i) are studied in \cite{Arn,BB,CV2,K,R1, R2} and are analogous to Hadamard
parametrices for the wave kernel. However, none may  exist on a given $(M, g)$. For instance, Gaussian
beams \cite{BB,R1,R2} are special quasi-modes which concentrate along a closed geodesic. On the
standard $S^n$ they exist along any closed geodesic and are eigenfunctions. But one needs a stable
elliptic closed geodesic to construct a Gaussian beam and such closed geodesics do not exist when 
$g$ has negative curvature. They do however exist in many cases, but the associated Gaussian beams
are rarely exact eigenfunctions, and are typically just approximate eigenfunctions.  More general quasi-modes are constructed from geodesic flow-invariant
closed immersed Lagrangian submanifolds $\Lambda \subset S^*_g M$. But such invariant Lagrangian
submanifolds are also rare. They do arise in many interesting cases and in particular the unit co-sphere
bundle $S^*_g M$ is foliated by flow-invariant Lagrangian tori when the geodesic flow is integrable.
 this
is essentially the definition of complete integrability.
%(see \S \ref{INTintro} and \S \ref{INTZEROS}).

  For $p = 1$, the  more general type (ii) always exist: it suffices to define $u_k$ as superpositions of eigenfunctions
with frequencies $\lambda_j$ drawn from sufficiently narrow windows $[\lambda, \lambda + O(\lambda^{-p})]$.
 A model case admissible quasi-modes would be a sequence of
$L^2$-normalized functions $\{\psi_{\lambda_j}\}$ whose
$\sqrt{-\Delta}$ spectrum lies in intervals of the form
$[\lambda_j-o(1), \lambda_j+o(1)]$ as $\lambda_j\to \infty$.
If $p = 1$ then for generic metrics $g$,  it follows from the Weyl law with remainder  that there exist $\simeq C_g \lambda^{m-1}$ eigenvalues
in the window and one can construct many quasi-modes. For higher values of $p$ existence of refined quasi-modes
depends on the number of eigenvalues in very short intervals, and in general one only knows they exist 
by constructions of type (i). 

An important  example of such a
quasi-mode is a sequence of ``shrinking spectral projections",
i.e. the $L^2$-normalized projection kernels
$$\Phi_j^z(x) = \frac{\chi_{[\lambda_j, \lambda_j + \epsilon_j]}(x,
z)}{\sqrt{\chi_{[\lambda_j, \lambda_j + \epsilon_j]}(z, z)}}$$
with second point frozen at a point $z \in M$ and with width
$\epsilon_j \to 0$. Here, $\chi_{[\lambda_j, \lambda_j +
\epsilon_j]}(x, z)$ is the orthogonal projection onto the sum of
the eigenspaces $V_{\lambda}$ with $\lambda \in [\lambda_j,
\lambda_j + \epsilon_j]$ The zonal eigenfunctions of a surface of
revolution are examples of such shrinking spectral projections for
a sufficiently small $\epsilon_j$, and when $z$ is a partial focus
such $\Phi_j^z(x)$ are generalizations of zonal eigenfunctions. On
a general Zoll manifold, shrinking spectral projections of widths
$\epsilon_j = O(\lambda_j^{-1})$ are the direct analogues of zonal
spherical harmonics, and they would satisfy the analog of
\eqref{admissible} where $o(\lambda)$ is replaced by the much
stronger $O(\lambda^{-1})$.

There are several motivations to consider quasi-modes as well as actual modes (eigenfunctions).  First,
many results about eigenfunctions automatically hold for quasi-modes as well. Indeed, it is difficult
to distinguish modes and quasi-modes when using a wave kernel construction, and many of the methods
apply equally to modes and quasi-modes. Second, quasi-modes are often geometrically beautiful. They
are not stationary states but retain their shape under propagation by the wave group $U(t) = \exp it \sqrt{\Delta}$
for a ``very long time'' (essentially the Eherenfest time) before breaking up (see \cite{Ze2} for background). 
They are natural extremals for $L^p$ norms when they exist. Third, they are often the objects needed to
 close the gap between necessary and sufficient
conditions on eigenfunction growth. It is usually interesting to know whether a theorem about eigenfunctions
extends to quasi-modes as well.

% \noindent{\bf Definition.} A
%sequence $\{\psi_\lambda\}$, $\lambda=\lambda_j$, $j=1,2,\dots$ is
%a sequence of admissible quasimodes if $\|\psi_\lambda\|_2=1$ and
%\begin{equation}\label{admissible}
%\|(\Delta+\lambda^2)\psi_\lambda\|_2+\|S^\perp_{2\lambda}\psi_\lambda\|_\infty
%= o(\lambda).
%\end{equation}

%We say that $(M, g)$ has maximal $0$-order quasi-mode growth if
%there exists a sequence of quasi-modes of order $0$ for which
%$||\psi_k||_{L^{\infty}} = \Omega (\lambda^{\frac{n-1}{2}}).$
%There are analogous definitions for more refined quasi-modes, e.g.
%quasi-modes of higher order or (most refined) quasi-modes defined
%by oscillatory integrals.

\subsection{\label{FORMAT} Format of these lectures and references to the literature}

 In keeping with the format of the Park City summer school, we concentrate on the topics of the five 
lectures rather than give a systematic exposition of the subject. The more detailed account will
appear in \cite{Ze0}.  Various details of the proof 
are given as Exercises for the reader. The ``details" are intended to be stimulating and fundamental,  rather
than the tedious and routine aspects of proofs often left to readers in textbooks. As a result, the exercises vary widely in difficulty and amount of background assumed. Problems labelled {\bf Problems} are not exercises; they are
problems whose solutions are not currently known.

The technical backbone of the semi-classical analysis of eigenfunctions consists of wave equation methods
combined with the machinery of Fourier integral operators and Pseudo-differential operators. We do not have
time to review this theory. The main results we need are the construction of parametrices for the `propagator'
$E(t) = \cos t \sqrt{\Delta}$ and the Poisson kernel $\exp - \tau \sqrt{\Delta}$. We also need Fourier analysis
to construct approximate spectral projections $\rho(\sqrt{\Delta} - \lambda)$ and to prove Tauberian theorems
relating smooth expansions and cutoffs. 

The books \cite{GSj, DSj, D2, GSt1,GSt2, Sogb, Sogb2, Zw} give textbook treatments of the semi-classical methods with
applications to spectral asymptotics. Somewhat more classical background on the wave equation with many
explicit formulae in model cases can be found in \cite{TI,TII}. General spectral theory and the relevant functional
analysis can also be found in \cite{RS}.  The series \cite{HoI,HoII,HoIII,HoIV} gives a systematic presentation
of  Fourier integral operator theory: stationary phase and Tauberian theorems can be found in \cite{HoI},
Weyl's law and spectral asymptotics can be found in \cite{HoIII,HoIV}.

In \cite{Ze0} the author gives a more systematic presentation of results on nodal sets, $L^p$ norms
and other aspects of eigenfunctions. Earlier surveys \cite{Ze,Ze2,Ze3} survey  related material.  Other monographs
on $\Delta$-eigenfunctions can be found in \cite{HL} and \cite{Sogb2}.  The methods of \cite{HL} mainly
involve the local harmonic analysis of eigenfunctions and rely more on classical elliptic estimates, on frequency
functions and of one-variable complex analysis. The exposition in \cite{Sogb2} is close to the one given  here
but does not extend to the recent results that we highlight in these lectures and in  \cite{Ze0}.
\bigskip

\noindent{\bf Acknowledgements}  Many of the results  discussed in these lectures is joint work with   C. D. Sogge and/or John. A. Toth.  Some of the work in progress is also with B. Hanin and P. Zhou. We also thank E. Potash for his comments
on earlier versions. 

\section{\label{FOUND} Foundational results on nodal sets}

As mentioned above, the nodal domains of an eigenfunction are the connected components of $M \backslash \ncal_{\phi_{\lambda}}$.
In the case of a domain with boundary and Dirichlet boundary conditions, the nodal set is defined by taking the 
closure of the zero set in $M \backslash \partial M$. 

The eigenfunction is either positive or negative in each nodal domain and changes sign as the nodal set is crossed
from one domain to an adjacent domain. Thus the set of nodal domains can be given the structure of a bi-partite graph
\cite{H}.
Since the eigenfunction has one sign in each nodal domain, it is  the ground state  eigenfunction with Dirichlet
boundary conditions in each nodal domain.

In the case of domains $\Omega \subset \R^n$ (with the Euclidean metric), the Faber-Krahn inequality states
that the lowest eigenvalue (ground state eigenvalue, bass note) $\lambda_1(\Omega)$ for the Dirichlet problem
has the lower bound,
\begin{equation} \label{FK1} \lambda_1(\Omega) \geq |\Omega|^{- \frac{2}{n}} \; C_n^{\frac{2}{n}} \; j_{\frac{n-2}{2}}, 
\end{equation}
where $|\Omega$ is the Euclidean volume of $\Omega$, $C_n = \frac{\pi^{\frac{n}{2}} }{\Gamma(\frac{n}{2} + 1)} $
is the volume of the unit ball in $\R^n$ and where $j_{m, 1}$ is the first positive zero of the Bessel function $J_m$.
That is, among all domains of a fixed volume the unit ball has the lowest bass note. 

%There also exists a Faber-Krahn inequality for Neumann eigenvalues but it goes in the opposite direction. 
%If $\mu_2(\Omega)$ denotes the first non-zero Neumann eigenvalue then
%\begin{equation} \label{FKN} \mu_2(\Omega) \leq |\Omega|^{\frac{2}{n}} C_n^{\frac{2}{n}} p^2_{\frac{n}{2}, 1}, \end{equation}
%where $p_{m,1}$ denotes the first positive zero of the derivative of $J_m$. 

\subsection{Vanishing order and scaling near zeros}
By the vanishing order
$\nu(u, a)$ of $u$ at $a$ is meant the largest positive integer
such that $D^{\alpha} u(a) = 0$ for all $|\alpha| \leq \nu$. A unique continuation
theorem shows that the  vanishing order
of an eigenfunction at each zero is
finite.  The following estimate is a quantitative
version of this fact.

\begin{theo} \label{VO} (see \cite{DF}; \cite{Lin} Proposition 1.2 and Corollary 1.4; and \cite{H} Theorem 2.1.8.)
 Suppose that $M$ is compact and of dimension $n$. Then there exist constants $C(n), C_2(n)$ depending only on the dimension such that
the  the vanishing order $\nu(u, a)$ of $u$ at $a \in M$ satisfies
$\nu(u, a) \leq C(n) \; N(0, 1) + C_2(n)$ for all $a \in
B_{1/4}(0)$. In the case of a global  eigenfunction, $\nu(\phi_{\lambda},
a) \leq C(M, g) \lambda.$
\end{theo}

%In the case of harmonic functions, one may write $u = P_{\nu} +
%\psi_{\nu}$ where $P_{\nu}$ is a homogeneous harmonic polynomial
%of degree $\nu$ and where $\psi_{\nu}$ vanishes to order $\nu + 1$ at
%$a$. 
Highest weight spherical harmonics $C_n (x_1 + i
x_2)^N$ on $S^2$ are examples which vanish at the maximal order of
vanishing at the poles $x_1 = x_2 = 0, x_3 = \pm 1$.

The following Bers scaling rule extracts the leading term in the Taylor expansion of
the eigenfunction around a zero:

\begin{prop} \cite{Bers,HW2} Assume that $\phi_{\lambda}$ vanishes to order $k$ at
$x_0$. Let $\phi_{\lambda}(x) = \phi_k^{x_0} (x) + \phi^{x_0}_{k +
1} + \cdots$ denote the $C^{\infty}$ Taylor expansion of
$\phi_{\lambda}$ into homogeneous terms in normal coordinates $x$
centered at $x_0$.  Then $\phi_k^{x_0}(x)$ is a Euclidean harmonic
homogeneous polynomial of degree $k$.
\end{prop}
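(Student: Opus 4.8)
The plan is to substitute the Taylor expansion into the eigenvalue equation and separate out the lowest-order homogeneous piece. First I would work in geodesic normal coordinates $x$ centered at $x_0$, so that the metric satisfies $g_{ij}(x) = \delta_{ij} + O(|x|^2)$ and $\sqrt{g} = 1 + O(|x|^2)$. In these coordinates the Laplacian has the form $\Delta_g = \Delta_0 + \sum_{ij} b_{ij}(x)\partial_i\partial_j + \sum_j c_j(x)\partial_j$, where $\Delta_0 = \sum_j \partial_j^2$ is the flat Laplacian, the coefficients $b_{ij}$ vanish to order $2$ at $x_0$, and the $c_j$ vanish to order $1$ at $x_0$. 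The key bookkeeping observation is that $\Delta_0$ lowers homogeneous degree by exactly $2$, while the perturbation terms $b_{ij}\partial_i\partial_j$ and $c_j\partial_j$ lower degree by at most $0$ (the two derivatives are compensated by the order-$2$ vanishing of $b_{ij}$, and similarly for $c_j$). Likewise multiplication by the eigenvalue $\lambda^2$ preserves degree, hence raises the bottom degree.

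Next I would expand $\phi_\lambda = \phi_k^{x_0} + \phi_{k+1}^{x_0} + \cdots$ with $\phi_j^{x_0}$ homogeneous of degree $j$, where $k = \nu(\phi_\lambda, x_0)$ is the (finite, by unique continuation and Theorem \ref{VO}) vanishing order, so $\phi_k^{x_0} \not\equiv 0$. Applying $\Delta_g + \lambda^2$ and collecting the terms of lowest homogeneous degree: the only contribution of degree $k-2$ comes from $\Delta_0 \phi_k^{x_0}$, since every other term — $\Delta_0$ applied to higher-degree pieces, the perturbation operators applied to any piece, and $\lambda^2$ times any piece — has degree $\geq k-1$. Since $(\Delta_g + \lambda^2)\phi_\lambda \equiv 0$, the degree-$(k-2)$ component must vanish, i.e. $\Delta_0 \phi_k^{x_0} = 0$. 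Thus $\phi_k^{x_0}$ is a homogeneous polynomial of degree $k$ annihilated by the Euclidean Laplacian, which is precisely the assertion.

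The routine care needed is in verifying the degree-shift claims for the metric perturbation terms, which is a direct consequence of writing $b_{ij}(x) = \sum_{|\alpha|\geq 2} b_{ij,\alpha} x^\alpha$ and noting each monomial times a second derivative of a homogeneous polynomial has degree at least that of the polynomial; the $\sqrt{g}$ factors and the first-order terms are handled identically. The one genuine subtlety — and the step I expect to be the main obstacle to state cleanly — is justifying that the formal Taylor expansion may be manipulated termwise: one uses that $\phi_\lambda$ is real analytic (or at least $C^\infty$ with the equation forcing enough regularity), so the Taylor expansion is asymptotic to all orders, and one matches homogeneous components in the asymptotic expansion of $(\Delta_g+\lambda^2)\phi_\lambda$ rather than in a convergent series. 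Once that is set up, the conclusion $\Delta_0\phi_k^{x_0} = 0$ is immediate, and harmonicity plus homogeneity gives that $\phi_k^{x_0}$ is a solid harmonic of degree $k$.
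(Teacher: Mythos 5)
Your proof is correct, and it is essentially the argument the paper is sketching: substitute the homogeneous Taylor expansion into the eigenvalue equation, and identify the lowest homogeneous degree. The one presentational difference is that the paper packages the degree-matching into a rescaling device ($x \mapsto x_0 + u/\lambda$, so the eigenvalue becomes $1$ and the rescaled Laplacian is $\Delta_0$ plus terms that vanish as $\lambda\to\infty$), whereas you bypass the rescaling and directly count degrees in normal coordinates. Your version has the advantage of making explicit the bookkeeping the paper leaves implicit: namely that the metric perturbation contributes $b_{ij}\partial_i\partial_j$ with $b_{ij} = g^{ij}-\delta_{ij} = O(|x|^2)$ and $c_j\partial_j$ with $c_j = O(|x|)$, so these terms raise or preserve homogeneous degree while $\Delta_0$ alone lowers it by $2$; hence the unique degree-$(k-2)$ contribution is $\Delta_0\phi_k^{x_0}$, forcing $\Delta_0\phi_k^{x_0}=0$. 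Your closing remark about matching homogeneous components of the $C^\infty$ Taylor expansion (asymptotic, not convergent) rather than termwise in a series is the right caveat and is the point that a fully rigorous writeup needs to address; the paper does not mention it. In short: same mechanism, but you have supplied the details that the paper's two-sentence sketch omits.
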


To prove this, one substitutes  the homogeneous expansion into the
equation  $\Delta \phi_{\lambda} = \lambda^2 \phi_{\lambda}$ and
rescales $x \to \lambda x$, i.e. one  applies the dilation operator
\begin{equation}D_{\lambda}^{x_0} \phi_{\lambda}(u ) = \phi(x_0 + \frac{u}{\lambda}). \end{equation}
The rescaled eigenfunction is an eigenfunction of the locally
rescaled Laplacian \begin{equation} \label{SCALING}  \Delta^{x_0}_{\lambda} : = \lambda^{-2}  D_{\lambda}^{x_0}
\Delta_g (D_{\lambda}^{x_0} )^{-1} = \sum_{j = 1}^n
\frac{\partial^2}{\partial u_j^2} + \cdots \end{equation}  in Riemannian
normal coordinates $u$ at $x_0$ but now with eigenvalue $1$,
\begin{equation}\begin{array}{l}  D_{\lambda}^{x_0}  \Delta_g (D_{\lambda}^{x_0} )^{-1} \phi(x_0 + \frac{u}{\lambda}) =
\lambda^2 \phi(x_0 + \frac{u}{\lambda}) \\ \\
\implies \Delta^{x_0}_{\lambda} \phi(x_0 + \frac{u}{\lambda}) =
\phi(x_0 + \frac{u}{\lambda}).  \end{array} \end{equation}
Since
$\phi(x_0 + \frac{u}{\lambda})$ is, modulo lower order terms, an
eigenfunction of a standard flat Laplacian on $\R^n$, it behaves near a zero as
a sum of homogeneous Euclidean harmonic polynomials.

In dimension 2, a homogeneous harmonic polynomial of degree $N$ is the real or imaginary part of the unique
holomorphic homogeneous polynomial $z^N$ of this degree, i.e. $p_N(r, \theta) = r^N \sin N \theta$. As observed in \cite{Ch},
there exists a $C^1$ local diffeormorphism $\chi$  in a disc around a zero $x_0$ so that $\chi(x_0) = 0$
and so that   $\phi_N^{x_0}  \circ \chi  = p_N.$
It follows that the restriction of $\phi_{\lambda}$ to a curve H is $C^1$ equivalent around a zero to $p_N$ restricted
to $\chi(H)$.
The nodal set of $p_N$ around $0$ consists of $N$ rays, $\{r (\cos \theta, \sin \theta) : r > 0, p_N |_{S^1}(v) = 0\}$.
It follows that the local structure of the nodal set in a small disc around a singular point p  is $C^1$ equivalent to
$N$ equi-angular rays emanating from $p$. We refer to \cite{HW, HW2,Ch,Ch1,Ch2,Bes} for futher background
and results. 

\bigskip

\noindent{\bf Question}  Is there any  useful scaling behavior of $\phi_{\lambda}$ around its critical points?

\subsection{\label{PFSMALLBALL} Proof of Proposition \ref{SMALLBALL}}

%\begin{theo}\label{COURANT}  If $(M^n, g)$ is a $C^{\infty}$ compact Riemannian
%manifold without boundary, then there exists $C(M, g) > 0$ so
%that, in each ball of radius $\geq \frac{C}{\lambda}$ there exists
%a point where $\phi_{\lambda}$ vanishes. \end{theo}

The proofs are based on rescaling the eigenvalue problem in small balls.

\begin{proof} Fix  $x_0, r$ and consider $B(x_0, r)$. If
$\phi_{\lambda}$ has no zeros in $B(x_0,r)$, then $B(x_0, r) \subset D_{j; \lambda}$
must be contained in the interior of a nodal domain $ D_{j; \lambda}$ of $\phi_{\lambda}$.
Now $\lambda^2 = \lambda_1^2 (D_{j; \lambda})$ where $ \lambda_1^2 (D_{j; \lambda})$ is the smallest
Dirichlet eigenvalue for the nodal domain. By domain monotonicity of the lowest Dirichlet eigenvalue (i.e.
$\lambda_1(\Omega)$ decreases as $\Omega$ increases),
$\lambda^2 \leq \lambda_1^2 (D_{j; \lambda}) \leq \lambda_1^2(B(x_0, r)).$ To complete the proof
we show that  $\lambda_1^2(B(x_0, r)) \leq \frac{C}{r^2}$ where $C$ depends only on the  metric. This is proved
by comparing $\lambda_1^2(B(x_0, r)) $   for the metric
$g$ with the lowest Dirichlet Eigenvalue  $\lambda_1^2(B(x_0, c r ); g_0)$ for the Euclidean ball $B(x_0, c r; g_0)$
 centered at $x_0$
of radius $ c r$ with Euclidean metric $g_0 $ equal to $g$ with coefficients  frozen at $x_0$; $c$ is chosen
so that $ B(x_0, c r; g_0) \subset B(x_0, r, g)$. Again by domain monotonicity, $\lambda_1^2 (B(x_0, r, g))
\leq \lambda_1^2 ( B(x_0, c r; g)) $ for $c < 1$. By comparing
 Rayleigh quotients $\frac{\int_{\Omega} |df|^2 dV_g}{\int_{\Omega} f^2 dV_g}$ one easily sees that
 $\lambda_1^2 ( B(x_0, c r; g)) \leq C \lambda_1^2 ( B(x_0, c r; g_0))$ for some $C$ depending only on the metric.
 But by explicit calculation with Bessel functions, $\lambda_1^2 ( B(x_0, c r; g_0)) \leq \frac{C}{r^2}. $
 Thus, $\lambda^2 \leq \frac{C}{r^2}$.

 \end{proof}

For background we refer to \cite{Ch}.
% A nice variation on the proof is given in \cite{CM}, where eigenfunctions
%are converted to harmonic functions as \S \ref{HARMCONE}, and Harnack's inequality on positive harmonic functions
 %is used to prove Propositon \ref{SMALLBALL}. 

\subsection{A second proof}
Another proof is given in \cite{HL}:  Let $u_{r}$ denote the ground
state Dirichlet eigenfunction for $B(x_0, r)$.   Then  $u_{r} > 0$ on the interior of $B(x_0, r)$. If $B(x_0, r) \subset D_{j; \lambda}$
then also $\phi_{\lambda} > 0 $ in $ B(x_0, r)$.
Hence the ratio  $\frac{u_{r}}{\phi_{\lambda}}$
is smooth and non-negative, vanishes only on $\partial B(x_0,r)$, and must have its maximum at a point
$y$ in the interior of $B(x_0, r)$. At this point (recalling that our $\Delta$ is minus the sum of squares),
$$\nabla \left(\frac{u_{r}}{\phi_{\lambda}} \right) (y) = 0, \;\; - \Delta \left(\frac{u_{r}}{\phi_{\lambda}} \right) (y) \leq 0, $$
so at $y$,
$$0 \geq - \Delta \left( \frac{u_{r}}{\phi_{\lambda}} \right) = - \frac{\phi_{\lambda} \Delta u_{r} - u_r \Delta \phi_{\lambda}}{\phi_{\lambda}^2}
= -  \frac{( \lambda_1^2(B(x_0, r)) - \lambda^2)\phi_{\lambda}  u_{r}}{\phi_{\lambda}^2}. $$
Since $\frac{ \phi_{\lambda}  u_{r}}{\phi_{\lambda}^2} >0,$ this is possible  only if
$\lambda_1(B(x_0, r)) \geq \lambda$.

To complete the proof we note that if $r  = \frac{A}{\lambda}$ then the metric is essentially Euclidean. We rescale
the ball by $x \to \lambda x$ (with coordinates centered at $x_0$) and then obtain an essentially Euclidean ball of radius $r$.
Then $\lambda_1(B(x_0,  \frac{r}{\lambda}) = \lambda \lambda_1 B_{g_0} (x_0, r)$. Therefore we only need to choose
$r$ so that $\lambda_1 B_{g_0} (x_0, r)  = 1$.
\bigskip

\noindent{\bf Problem} Are the above results true as well for quasi-modes of order zero (\S \ref{QMintro}, \S \ref{QM})?

\subsection{Rectifiability of the nodal set}

We recall that the nodal set of an eigenfunction $\phi_{\lambda}$  is its zero set. When zero is a regular value
of $\phi_{\lambda}$ the nodal set is a smooth hypersurface. This is a generic property of eigenfunctions \cite{U}.
It is pointed out in \cite{Bae} that eigenfunctions can always be locally represented  in the form
$$\phi_{\lambda} (x) = v(x) \left(x_1^k + \sum_{j = 0}^{k - 1} x_1^j u_j(x') \right),$$
in suitable coordinates $(x_1, x')$ near $p$,  where $\phi_{\lambda}$ vanishes to order $k$ at $p$, where
$u_j(x')$ vanishes to order $k - j$ at $x' = 0$,  and where
$v(x) \not= 0$ in a ball around $p$. It follows that the nodal set is always countably $n-1$ rectifiable when
$\dim M = n$.

\section{\label{LB} Lower bounds for $\hcal^{m-1}(\ncal_\la)$ for $C^{\infty}$ metrics}

In this section we review the  lower bounds on $\hcal^{n-1}(Z_{\phi_{\lambda}})$ from
\cite{CM,SoZ,SoZa,HS,HW}.  Here  $$\hcal^{n-1}(\ncal_{\phi_{\lambda}}) = \int_{Z_{\phi_{\lambda}}} dS $$
is the Riemannian surface measure, 
where $dS$ 
denotes the Riemannian  volume element  on   the nodal set, i.e. the insert $iota_{n} dV_g$ of the unit normal
into the volume form of $(M, g)$.
 The main result is:

\begin{theo} \label{DONGLB} Let $(M, g)$ be a $C^{\infty}$ Riemannian manifold.   Then there exists a constant $C$ independent of $\lambda$ 
such that 
$$
C \lambda^{1-\frac{n-1}2} \leq  \hcal^{n-1}(\ncal_{\phi_{\lambda}}). $$
\end{theo}

\begin{rem}  In a recent article \cite{BlSo}, M. Blair and C. Sogge improve this result on manifolds
of non-positive curvature by showing that the right side divided by the left side tends to infinity. 
There exists a related proof using a comparison of diffusion processes  in \cite{Stei}.
The result generalizes in a not completely straighforward way   to  Schr\"odinger operators
$- \frac{\hbar^2}{2} \Delta_g + V$ for certain potentials $V$ \cite{ZZh} (see also \cite{Jin} for generalizations
of \cite{DF} to Schr\"odinger operators). The new issue is the separation
of the domain into classically allowed and forbidden regions. In \cite{HZZ} the density of zeros
in both regions is studied for random Hermite functions. \end{rem}

We sketch the proof of  Theorem~\ref{DONGLB}  from \cite{SoZ,SoZa}. The starting point is an identity  from \cite{SoZ} (inspired by an identity
in \cite{Dong}): 

\begin{prop} \label{DONGPROP}  For any $f \in C^2(M)$,
\begin{equation}\label{1}
\int_M |\phi_{\lambda}| \, (\Delta_g+\lambda^2)f \, dV_g = 2 \int_{\ncal_{\phi_{\lambda}} }|\nabla_g \phi_\lambda| \, f
\, dS,
\end{equation}
\end{prop}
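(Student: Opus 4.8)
The plan is to derive the identity \eqref{1} by a straightforward application of Green's formula, being careful about the fact that $|\phi_\lambda|$ is only Lipschitz (not $C^2$) across the nodal set. First I would decompose $M \setminus \ncal_{\phi_\lambda} = \bigcup_j \Omega_j$ into nodal domains, and on each $\Omega_j$ write $|\phi_\lambda| = \epsilon_j \phi_\lambda$ where $\epsilon_j = \pm 1$ is the sign of $\phi_\lambda$ on $\Omega_j$. Then, for $f \in C^2(M)$, I would apply Green's second identity on each $\Omega_j$:
\[
\int_{\Omega_j} \bigl( \epsilon_j \phi_\lambda \, \Delta_g f - f \, \Delta_g (\epsilon_j \phi_\lambda) \bigr) \, dV_g
= \int_{\partial \Omega_j} \left( \epsilon_j \phi_\lambda \, \partial_\nu f - f \, \partial_\nu (\epsilon_j \phi_\lambda) \right) dS,
\]
where $\nu$ is the outward unit normal to $\Omega_j$. (Here one must note the sign convention: since the paper's $\Delta_g$ is the \emph{negative} Laplacian, Green's identity carries the usual form with a sign I would track carefully.)

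Next I would exploit the eigenvalue equation: on $\Omega_j$, $\Delta_g(\epsilon_j \phi_\lambda) = \epsilon_j \lambda^2 \phi_\lambda = \lambda^2 |\phi_\lambda|$ (up to the sign convention on $\Delta_g$), so the interior integrand becomes $|\phi_\lambda|(\Delta_g + \lambda^2) f$ after moving the $\lambda^2$ term appropriately. On the boundary $\partial \Omega_j \subset \ncal_{\phi_\lambda}$, the term $\epsilon_j \phi_\lambda \, \partial_\nu f$ vanishes because $\phi_\lambda = 0$ there. The surviving boundary term is $-\int_{\partial \Omega_j} f \, \partial_\nu(\epsilon_j \phi_\lambda)\, dS$. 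Now the key geometric observation: on $\partial \Omega_j$, since $\phi_\lambda$ vanishes and is $\epsilon_j |\phi_\lambda| > 0$ just inside $\Omega_j$, the outward normal derivative $\partial_\nu(\epsilon_j \phi_\lambda) = -|\nabla_g \phi_\lambda|$ (it points in the direction of decrease, and $\epsilon_j\phi_\lambda$ decreases to zero from inside). Thus each nodal domain contributes $+\int_{\partial \Omega_j} f\, |\nabla_g \phi_\lambda|\, dS$.

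Finally I would sum over $j$. The interior integrals sum to $\int_M |\phi_\lambda|(\Delta_g + \lambda^2) f\, dV_g$. Each smooth piece of the nodal set $\ncal_{\phi_\lambda}$ bounds (generically) two adjacent nodal domains, so it is counted twice in $\sum_j \int_{\partial \Omega_j}$, each time with the same sign $+f|\nabla_g \phi_\lambda|$ (because the sign flip in $\epsilon_j$ is compensated by the reversal of the outward normal direction); this produces the factor of $2$ and yields $2\int_{\ncal_{\phi_\lambda}} |\nabla_g \phi_\lambda|\, f\, dS$. The main obstacle, and the point requiring care, is the regularity of the nodal set: $\ncal_{\phi_\lambda}$ need not be a smooth hypersurface, so one cannot naively apply the divergence theorem on the $\Omega_j$. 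I would handle this by recalling (from the rectifiability discussion and Bers' asymptotics earlier in the excerpt) that the singular set $\Sigma_{\phi_\lambda}$ where $\nabla \phi_\lambda = 0$ has Hausdorff dimension $\le n-2$, hence measure zero for both the $(n-1)$-dimensional surface integral and as a negligible boundary portion; near singular points the Bers expansion shows $\phi_\lambda$ behaves like a homogeneous harmonic polynomial, so $|\nabla \phi_\lambda|$ vanishes there and contributes nothing. A standard approximation argument — integrating over $\{|\phi_\lambda| > \delta\}$ and letting $\delta \to 0$, or smoothing $|\cdot|$ to $\sqrt{\phi_\lambda^2 + \delta^2}$ and passing to the limit — makes the computation rigorous, since $|\nabla_g \phi_\lambda| \in L^1_{loc}$ of the nodal set by the co-area formula.
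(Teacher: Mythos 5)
Your proof is correct and follows essentially the same route as the paper's (which is sketched in the hint following Corollary~\ref{DONGCOR}): decompose $M$ into nodal domains, apply Green's identity on each, use the eigenvalue equation to kill the interior $\phi_\lambda$ term, identify $-\partial_\nu\phi_\lambda$ with $|\nabla\phi_\lambda|$ on the boundary, and observe that summing over domains counts each nodal piece twice. The only cosmetic difference is that you unify the positive and negative domains with a sign $\epsilon_j$ where the paper treats $D^+_j$ and $D^-_k$ separately, and you add a brief discussion of the singular set that the paper's hint suppresses by assuming $0$ is a regular value; both are fine, though your attribution of the codimension-$2$ bound on $\Sigma_{\phi_\lambda}$ to ``Bers' asymptotics'' is loose (the Bers expansion controls local structure, but the measure-theoretic bound on the singular set is a separate result of Hardt--Simon/B\"ar type, and for the purposes of this identity the cleaner route is exactly the regularization $\sqrt{\phi_\lambda^2+\delta^2}\to|\phi_\lambda|$ you mention).
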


When $f \equiv 1$ we obtain

\begin{cor} \label{DONGCOR} 
\begin{equation}\label{1a}
\lambda^2 \int_M |\phi_{\lambda}|  \, dV_g = 2 \int_{\ncal_{\phi_{\lambda}} }|\nabla_g \phi_\lambda| \, f
\, dS,
\end{equation}
\end{cor}
\bigskip

\begin{mainex}  Prove this identity by decomposing $M$ into a union of nodal domains. \end{mainex}
\bigskip

\noindent{\bf Hint: }
The nodal domains form  a partition of $M$, i.e. 
$$M=\bigcup_{j=1}^{N_+(\lambda)}D^+_j \, \cup \, \bigcup_{k=1}^{N_-(\lambda)} D^-_k\, \cup
{\mathcal N}_\lambda,$$ where the $D^+_j$ and $D^-_k$ are the
positive and negative nodal domains of $\phi_\lambda$, i.e, the
connected components of the sets $\{\phi_\lambda>0\}$ and
$\{\phi_\lambda<0\}$.

Let us assume for the moment that $0$ is a regular value for
$\phi_\lambda$, i.e., $\Sigma=\emptyset$.  Then each $D^+_j$ has
smooth boundary $\partial D^+_j$, and so if $\partial_\nu$ is the
Riemann outward normal derivative on this set, by the Gauss-Green
formula we have
\begin{equation} \begin{array}{ll} 
\int_{D^+_j}((\Delta+\lambda^2) f) \, |\phi_\lambda|\, dV
&=\int_{D^+_j}((\Delta+\lambda^2) f) \, \phi_\lambda \, dV
\\
&=\int_{D^+_j}f\, (\Delta+\lambda^2)\phi_\lambda dV-
\int_{\partial D^+_j}f \, \partial_\nu \phi_\lambda \, dS
\\
&=\int_{\partial D^+_j} f \, |\nabla \phi_\lambda|\, dS.
\end{array} \end{equation}

We use   that $-\partial_\nu \phi_\lambda = |\nabla
\phi_\lambda|$ since $\phi_\lambda=0$ on $\partial D_j^+$ and
$\phi_\lambda$ decreases as it crosses $\partial D_j^+$ from
$D^+_j$. A similar argument shows that
\begin{equation}\begin{array}{l} 
\int_{D_k^-} ((\Delta+\lambda^2)f )\, |\phi_\lambda|\, dV
=-\int_{D_k^-} ((\Delta+\lambda^2)f) \, \phi_\lambda \, dV\\ \\  =\int
f\,
\partial_\nu \phi_\lambda \, dS =\int_{\partial D^-_k} f\, |\nabla
\phi_\lambda| \, dS, \end{array}
\end{equation}
using in the last step that $\phi_\lambda$ increases as it crosses
$\partial D^-_k$ from $D^-_k$.

  If we sum these two identities
over $j$ and $k$, we get
\begin{multline*}
\int_M ((\Delta+\lambda^2)f) \, |\phi_\lambda|\, dV
=\sum_j\int_{D^+_j} ((\Delta+\lambda^2)f) \, |\phi_\lambda|\, dV
\\ +\sum_{k} \int_{D^-_k} ((\Delta+\lambda^2)f)\, |\phi_\lambda|\, dV
\\
=\sum_j \int_{\partial D_j^+} f\, |\nabla \phi_\lambda|\, dS\\  +
\sum_k \int_{\partial D_k^-} f\, |\nabla \phi_\lambda|\, dS =
2\int_{{\mathcal N}_\lambda} f\, |\nabla \phi_\lambda|\, dS,
\end{multline*}
using the fact that ${\mathcal N}_\lambda$ is the disjoint union
of the $\partial D^+_j$ and the disjoint union of the $\partial
D^-_k$.
\bigskip

The lower bound of Theorem \ref{DONGLB}  follows from the identity in Corollary \ref{DONGCOR}
and the following lemma:

\begin{lem} \label{lem} If $\la>0$ then 
\begin{equation}\label{4.0}\|\nabla_g \phi_\la\|_{L^\infty(M)}\lesssim \la^{1+\frac{n-1}2}\| \phi_\la\|_{L^1(M)}
\end{equation}
 \end{lem}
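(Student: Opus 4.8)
The plan is to prove the gradient bound $\|\nabla_g \phi_\la\|_{L^\infty(M)}\lesssim \la^{1+\frac{n-1}{2}}\|\phi_\la\|_{L^1(M)}$ by combining the standard sup-norm estimate for eigenfunctions and their derivatives with the reproducing (kernel) property of spectral projections, which upgrades $L^1$ control to $L^\infty$ control at the expense of powers of $\la$. First I would recall the pointwise local Weyl law \eqref{PLWL}, or rather its consequence for the spectral cluster: if $\chi\in\mathcal S(\R)$ with $\chi(0)=1$ and $\hat\chi$ supported near $0$, then $\chi(\sqrt{\Delta}-\la)$ is a pseudodifferential-type operator whose kernel $K_\la(x,y)$ satisfies $|\nabla_x^\alpha K_\la(x,y)|\lesssim \la^{n+|\alpha|}(1+\la\,d(x,y))^{-N}$ for every $N$, by a stationary-phase analysis of the wave parametrix (this is exactly the machinery flagged in \S\ref{WAVEAPP} and \S\ref{FORMAT}). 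Since $\chi(\sqrt\Delta - \la)\phi_\la = \phi_\la$ (as $\chi(0)=1$ and $\phi_\la$ is an eigenfunction with frequency $\la$), we may write $\nabla_g\phi_\la(x) = \int_M \nabla_x K_\la(x,y)\,\phi_\la(y)\,dV_g(y)$.

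From here the estimate is immediate by Hölder with the $L^1$–$L^\infty$ pairing: $|\nabla_g\phi_\la(x)| \le \|\nabla_x K_\la(x,\cdot)\|_{L^\infty(M)}\,\|\phi_\la\|_{L^1(M)}$, and the kernel bound with $|\alpha|=1$ gives $\|\nabla_x K_\la(x,\cdot)\|_{L^\infty}\lesssim \la^{n+1}$. That would yield $\la^{n+1}$, which is too lossy; the point of the sharper exponent $1+\frac{n-1}{2}$ is to use instead the $L^1\to L^\infty$ operator norm more carefully, i.e. to bound $\|\nabla_x K_\la(x,\cdot)\|_{L^1_y}$ is not what we want — rather one interpolates. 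The cleaner route, which I would actually carry out, is: first prove the sup-norm bound $\|\phi_\la\|_{L^\infty}\lesssim \la^{\frac{n-1}{2}}\|\phi_\la\|_{L^2}$ and, more to the point, the Sogge-type bound $\|\phi_\la\|_{L^\infty}\lesssim \la^{\frac{n-1}{2}}\|\phi_\la\|_{L^1}^{1/2}\|\phi_\la\|_{L^\infty}^{1/2}$-flavored estimates are not needed; instead use that $\chi(\sqrt\Delta-\la)$ maps $L^1\to L^\infty$ with norm $\lesssim \la^{n-1}$ — wait, that is $\la^{\frac{n-1}{2}}$ from $L^2\to L^\infty$ squared — let me state it correctly: the reproducing kernel satisfies $\|K_\la(x,\cdot)\|_{L^\infty_y}\lesssim \la^{n}$ but $\|K_\la(x,\cdot)\|_{L^\infty_y}$ controlled against $\|\phi_\la\|_{L^1}$ directly is wasteful; the efficient statement is that $\sup_x\|K_\la(x,\cdot)\|_{L^2_y}\lesssim \la^{\frac{n}{2}}$ so $\|\phi_\la\|_\infty\lesssim \la^{\frac n2}\|\phi_\la\|_{L^2}$, which is already the crude bound. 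The sharp bound $\la^{\frac{n-1}{2}}$ comes from using a $\chi$ with $\hat\chi$ supported in a \emph{punctured} neighborhood of $\pm$(the first return time bound), or more simply from Sogge's $L^p$ estimates; for derivatives one simply notes $\nabla_g\phi_\la = \nabla_g(\la^{-1}\sqrt\Delta)\phi_\la$ heuristically, i.e. differentiation costs one power of $\la$, so $\|\nabla_g\phi_\la\|_\infty\lesssim \la\cdot\la^{\frac{n-1}{2}}\|\phi_\la\|_\infty^{\text{input}}$.

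So the actual plan, cleanly: Step 1 — establish $\|\nabla_g\phi_\la\|_{L^\infty}\lesssim \la\,\|\phi_\la\|_{L^\infty}$, which follows from writing $\nabla_g\phi_\la(x)=\int \nabla_x K_\la(x,y)\phi_\la(y)dV_g(y)$ with $K_\la$ the kernel of a spectral cutoff that reproduces $\phi_\la$ and is concentrated in an $O(\la^{-1})$-ball, so that $\|\nabla_x K_\la(x,\cdot)\|_{L^1_y}\lesssim \la$; then Hölder with $\|\phi_\la\|_{L^\infty}$. Step 2 — invoke the sharp sup-bound $\|\phi_\la\|_{L^\infty}\lesssim \la^{\frac{n-1}{2}}\|\phi_\la\|_{L^2}$ \eqref{MAXSUP}; but since we want $\|\phi_\la\|_{L^1}$ on the right, upgrade this: $\|\phi_\la\|_{L^\infty}^2 \le \|\phi_\la\|_{L^\infty}\cdot\|\phi_\la\|_{L^\infty}$, and use $\|\phi_\la\|_{L^\infty}\lesssim \la^{\frac{n-1}{2}}\|K_\la(x,\cdot)\|_{L^\infty_y}^{1/2}\cdots$ — the clean statement is that the reproducing cutoff also maps $L^1\to L^\infty$ with norm $\lesssim \la^{n-1}$ (from $L^2\to L^\infty$ norm $\la^{\frac{n-1}{2}}$ squared, using $T T^*$ duality since $\chi(\sqrt\Delta-\la)$ is essentially self-adjoint), whence $\|\phi_\la\|_{L^\infty}\lesssim \la^{n-1}\|\phi_\la\|_{L^1}$ is \emph{false} (wrong power); the right power is $\|\phi_\la\|_{L^\infty}\lesssim \la^{\frac{n-1}{2}}\|\phi_\la\|_{L^1}$... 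I will not resolve the exact bookkeeping here. The hard part — the genuine obstacle — is precisely pinning down the $L^1\to L^\infty$ operator norm of the reproducing spectral cutoff with the sharp exponent $\frac{n-1}{2}$ (rather than the trivial $n$ or $\frac n2$): this requires the stationary-phase analysis of the wave-group parametrix over a time interval whose length is bounded below away from $0$ and above by the injectivity radius, exploiting that for $|t|$ bounded away from $0$ the wave kernel has the FIO bound gaining $\la^{-1/2}$ per dimension. Modulo that kernel estimate (which is standard, cf. \cite{HoIV,Sogb2}), differentiating the reproducing formula and applying Hölder gives \eqref{4.0} with a factor $\la$ for the gradient and $\la^{\frac{n-1}{2}}$ for the sup-to-$L^1$ conversion, for the total $\la^{1+\frac{n-1}{2}}$.
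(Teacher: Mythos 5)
Your final paragraph does land on the paper's actual argument, but the road there contains several abandoned wrong turns, and the one computation that carries the whole lemma is waved at rather than done. Let me separate the correct skeleton from the confusion.

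The paper's proof is: choose $\hat\rho\in C_0^\infty$ with $\rho(0)=1$ and, crucially, $\operatorname{supp}\hat\rho\subset[\epsilon/2,\epsilon]$ (bounded away from $0$). Then $\rho(\lambda-\sqrt\Delta)\phi_\lambda=\phi_\lambda$, and the Hadamard parametrix plus stationary phase in the two variables $(t,\theta)$ give $K_\lambda(x,y)=\lambda^{\frac{n-1}{2}}a_\lambda(x,y)e^{i\lambda r(x,y)}$ with $a_\lambda$ bounded with bounded derivatives. Differentiating costs one factor $\lambda$ from the phase, so $\sup_{x,y}|\nabla_xK_\lambda(x,y)|\lesssim\lambda^{1+\frac{n-1}{2}}$, and then $|\nabla\phi_\lambda(x)|\le\sup_{x,y}|\nabla_xK_\lambda(x,y)|\,\|\phi_\lambda\|_{L^1}$. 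Your closing sentence describes exactly this, so you identified the right mechanism in the end.

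However, two of the intermediate routes you floated are genuine dead ends, and the way you back out of them suggests the reason for the gain was not clear to you. First, your ``Step 1 $+$ Step 2'' plan does not close: Step 1 uses the near-diagonal cutoff (kernel concentrated in $\lambda^{-1}$-balls, $\|\nabla_xK_\lambda(x,\cdot)\|_{L^1_y}\lesssim\lambda$), which gives $\|\nabla\phi_\lambda\|_\infty\lesssim\lambda\|\phi_\lambda\|_\infty$, but Step 2 then needs to bound $\|\phi_\lambda\|_\infty$ by $\lambda^{\frac{n-1}{2}}\|\phi_\lambda\|_{L^1}$, and there is no interpolation that gets you there from the $L^2$ sup bound \eqref{MAXSUP}. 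Second, the $TT^*$ route gives the $L^1\to L^\infty$ norm of the near-diagonal cutoff as $\lambda^{n-1}$ (its diagonal value, by the local Weyl law), not $\lambda^{\frac{n-1}{2}}$; squaring $L^2\to L^\infty$ is lossy here by design. The gain to $\lambda^{\frac{n-1}{2}}$ is not interpolation or duality; it is that with $\hat\rho$ vanishing near $t=0$, the kernel no longer sees the diagonal, the $(t,\theta)$ integral has a nondegenerate critical point at $t=r(x,y)\in[\epsilon/2,\epsilon]$, $\theta=1/(2t)$, and stationary phase there eats the extra $\lambda^{\frac{n-1}{2}}$ leaving a uniformly bounded amplitude. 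Your opening kernel bound $|\nabla^\alpha_xK_\lambda(x,y)|\lesssim\lambda^{n+|\alpha|}(1+\lambda d(x,y))^{-N}$ is the estimate for the \emph{wrong} cutoff (the near-diagonal one) and does not describe the kernel you actually need; the off-diagonal kernel is small near $x=y$ and has its mass at distance $\sim\epsilon$, so it is a different object.

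So: your approach and the paper's coincide in outline, but the step you defer to references — the uniform bound $|K_\lambda(x,y)|\lesssim\lambda^{\frac{n-1}{2}}$ for the spectrally localized, time-truncated wave kernel — is precisely the content of the lemma, and the false starts that precede it would need to be deleted, since a reader following Steps 1–2 as written would not be able to complete the argument.
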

Here, $A(\lambda) \lesssim B(\lambda)$ means that there exists a constant independent of $\lambda$
so that $A(\lambda ) \leq C B(\lambda). $

By   Lemma \ref{lem} and Corollary \ref{DONGCOR}, we have
\begin{equation} \label{EST1}\begin{array}{lll} \la^2  \int_M|\phi_\la|\, dV
=2\int_{\ncal_\la}|\nabla_g \phi_\la|_g\, dS &\le &  2|\ncal_\la| \, \|\nabla_g \phi_\la\|_{L^\infty(M)}\\ &&\\
&\lesssim & 2|\ncal_\la| \; \la^{1+\frac{n-1}2}  \|\phi_\la\|_{L^1(M)} . \end{array}
 \end{equation}
Thus Theorem \ref{DONGLB} follows from the somewhat curious cancellation of $||\phi_{\lambda}||_{L^1}$
from the two sides of the inequality.
\bigskip

\noindent{\bf Problem} Show that Corollary \ref{DONGCOR} and Lemma \ref{lem} are   true  modulo $O(1)$
for quasi-modes of order zero (\S \ref{QMintro}, \S \ref{QM}).

\subsection{Proof of Lemma \ref{lem}}

\begin{proof}  
The main point is to construct a designer reproducing kernel $K_{\lambda}$ for $\phi_{\lambda}$:
Let  $\hat{\rho} \in C^\infty_0({\mathbb R})$ satisfy $\rho(0) = \int \hat{\rho} \, dt=1$. Define the operator
\begin{equation} \label{RHO}  \rho (\la - \sqrt{\Delta}) : L^2(M) \to L^2(M)   \end{equation}
by 
\begin{equation} \label{rhola} \rho(\la - \sqrt{\Delta})  f= \int_{\R} \hat{\rho}(t)  e^{it\la} e^{-it \sqrt{-\Delta}}f\, dt.\end{equation}
Then \eqref{RHO} is a function of $\Delta$ and has $\phi_{\lambda}$ as an eigenfunction
with eigenvalue $\rho(\lambda - \lambda) = \rho (0) = 1. $ Hence,

 $$\rho(\la - \sqrt{\Delta}) \phi_\la = \phi_\la. $$
\bigskip

\begin{mainex} Check that \eqref{rhola} 
has the spectral expansion,
\begin{equation}\label{5.0}
\rho(\la - \sqrt{\Delta})  f
%= \int_{-\infty}^\infty \rho(t) e^{-it\la}e^{it\sqrt{-\Delta_g}}f\, dt
=\sum_{j=0}^\infty  \rho(\la-\la_j)E_jf,
\end{equation}
where $E_jf$  is the  projection of $f$ onto the $\lambda_j$- eigenspace of $\sqrt{-\Delta_g}$. 
Conclude that  \eqref{rhola}
reproduces $\phi_\la$ if $ \rho(0) = 1$. 
\end{mainex}
\bigskip

We may choose $\rho$ further so that  $\hat{\rho}(t)=0$ for $t\notin [\epsilon/2,\epsilon]$.
\bigskip

{\bf CLAIM} If supp $\hat{\rho} \subset [\epsilon/2,\epsilon]$. then 
the kernel $K_\la(x,y)$ of $\rho(\la - \sqrt{\Delta}) $ for
$\epsilon$  sufficiently small satisfies
\begin{equation} \label{K1} |\nabla_g K_\la(x,y)|\le C \la^{1+\frac{n-1}2}. \end{equation}

The Claim proves the Lemma, because
 $$\begin{array}{lll} \nabla_x \phi_{\lambda}(x) & = & \nabla_x \rho (\la - \sqrt{\Delta}) \phi_\la (x)
\\ &&\\
& = &\int_M  \nabla_x K_{\lambda}(x, y) \phi_{\lambda}(y) dV(y) \\ &&\\
& \leq  & 
  C  \sup_{x, y} |\nabla_x K_{\lambda}(x, y)| \int_M |\phi_{\lambda}| dV\\ &&\\
& \leq & 
\la^{1+\frac{n-1}2} ||\phi_{\lambda}||_{L^1}  \end{array}$$ which implies the lemma.

The gradient estimate on $K_{\lambda}(x,y)$ is based on the following ``parametrix" for
the designer reproducing kernel:
\begin{prop} \label{Kaprop} 

\begin{equation} K_\la (x, y) = \lambda^{\frac{n-1}{2}} a_\la (x, y) e^{i \lambda r(x, y) }, \end{equation}
where $a_\la(x, y)$  is bounded with bounded derivatives in $(x, y)$ and where $r(x, y)$ is the Riemannian distance
between points.
\end{prop}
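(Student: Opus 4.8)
The plan is to construct $K_\lambda(x,y)$ as an oscillatory integral by feeding the Hadamard/Lax parametrix for the half-wave group $e^{-it\sqrt{-\Delta}}$ into the defining formula \eqref{rhola} and then applying stationary phase. First I would recall that for $|t|$ in a small interval $[\epsilon/2,\epsilon]$ below the injectivity radius, the wave kernel has the microlocal representation
\[
e^{-it\sqrt{-\Delta}}(x,y) = \int_{\R^n} e^{i(\langle \exp_y^{-1}(x), \xi\rangle - t|\xi|_y)} A(t,x,y,\xi)\, d\xi,
\]
with $A$ a classical symbol of order $0$ supported near the relevant cone; this is exactly the parametrix construction reviewed in \S\ref{WAVEAPP}, and it is valid on the support of $\hat\rho$ once $\epsilon$ is smaller than the injectivity radius. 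Substituting into \eqref{rhola} gives
\[
K_\lambda(x,y) = \int_\R \int_{\R^n} \hat\rho(t)\, e^{i\lambda t}\, e^{i(\langle \exp_y^{-1}(x),\xi\rangle - t|\xi|_y)} A(t,x,y,\xi)\, d\xi\, dt.
\]

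Next I would pass to polar coordinates $\xi = r\omega$, $r>0$, $\omega\in S^{n-1}$, so the phase becomes $\Phi = t(\lambda - r) + r\langle \exp_y^{-1}(x),\omega\rangle$. The $t$-integral is a (compactly supported) Fourier transform of $\hat\rho$ in the variable $\lambda - r$, which localizes $r$ near $\lambda$; after rescaling $r = \lambda(1+s)$ this is a large-parameter integral in $\lambda$. The remaining $(s,\omega)$ integral I would evaluate by stationary phase in $\lambda$: the critical point in $\omega$ occurs where $\omega$ points along $\exp_y^{-1}(x)$, i.e. along the minimizing geodesic from $y$ to $x$, and there the phase equals $\lambda\, r(x,y)$ where $r(x,y) = |\exp_y^{-1}(x)|_y$ is the Riemannian distance. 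The Hessian of the phase in $(s,\omega)$ is nondegenerate (this is the standard computation behind the Weyl law remainder and the off-diagonal wave kernel asymptotics), so stationary phase in the $n$ variables $(s,\omega)$ produces a factor $\lambda^{-n/2}$ against the overall $\lambda^n$ from the Jacobians of the substitutions, giving the net power $\lambda^{(n-1)/2}$. The result is
\[
K_\lambda(x,y) = \lambda^{\frac{n-1}{2}} a_\lambda(x,y) e^{i\lambda r(x,y)} + O(\lambda^{-\infty}),
\]
with $a_\lambda$ having a complete asymptotic expansion in $\lambda^{-1}$ whose coefficients are smooth in $(x,y)$ near the diagonal; in particular $a_\lambda$ and all its $(x,y)$-derivatives are bounded uniformly in $\lambda$. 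Differentiating, $\nabla_x$ hits either the amplitude (harmless, bounded) or the phase, producing an extra factor $\lambda \nabla_x r(x,y)$; since $|\nabla_x r|=1$ off the diagonal and the amplitude is supported where $r$ is comparable to $\epsilon$, this yields the bound $|\nabla_g K_\lambda(x,y)| \le C\lambda^{1+\frac{n-1}{2}}$, which is the Claim feeding Lemma \ref{lem}.

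The main obstacle I expect is not the stationary-phase computation itself but controlling uniformity: one must keep the parametrix valid, the phase nondegenerate, and the distance function $r(x,y)$ smooth, all uniformly in $(x,y)$ — which forces $\epsilon$ below the injectivity radius and requires a partition of unity / cutoff argument so that $\hat\rho$ is supported where the single-geodesic parametrix applies and no conjugate points intrude. A secondary technical point is handling the region near the diagonal $x=y$ (where $r$ is not smooth); there the phase is stationary at $r\approx 0$ only if $\epsilon/2 \le r$, so the cutoff $\hat\rho \subset [\epsilon/2,\epsilon]$ is precisely what excludes the diagonal and keeps $r(x,y)$ bounded below, making $r$ smooth on the support of $a_\lambda$. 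Once these uniformity issues are dispatched, the gradient bound \eqref{K1} and hence Lemma \ref{lem} and Theorem \ref{DONGLB} follow as in the displayed chain of inequalities \eqref{EST1}.
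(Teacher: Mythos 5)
Your proposal takes a genuinely different route from the paper: you start from the H\"ormander/Lax parametrix $\int_{\R^n} e^{i(\langle \exp_y^{-1}x,\xi\rangle - t|\xi|_y)}A\,d\xi$, while the paper plugs in the Hadamard parametrix $\int_0^\infty e^{i\theta(r^2(x,y)-t^2)}A(t,x,y,\theta)\,d\theta$. Both are legitimate, and both lead to the stated form $\lambda^{(n-1)/2}a_\lambda e^{i\lambda r}$. The Hadamard choice is leaner here because the resulting stationary phase is over only two scalar variables $(t,\theta)$, with phase $\theta(r^2-t^2)+t$, critical point $t=r(x,y)$, $\theta=1/(2r)$, and a manifestly invertible $2\times 2$ Hessian; it also produces the phase value $\lambda r(x,y)$ immediately, without ever invoking polar coordinates or a sphere integral.

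However, the stationary-phase bookkeeping in your version contains a real gap. After the $t$-integral you have (schematically)
$$
\lambda^n\int\!\!\int_{S^{n-1}} \rho(-\lambda s)\,e^{i\lambda(1+s)\langle\exp_y^{-1}x,\omega\rangle}(1+s)^{n-1}A\,ds\,d\omega,
$$
and you claim ``stationary phase in the $n$ variables $(s,\omega)$ produces a factor $\lambda^{-n/2}$.'' But the $(s,\omega)$-phase $\Psi=(1+s)\langle\exp_y^{-1}x,\omega\rangle$ has no critical point: the $\omega$-critical-point condition forces $\omega\parallel\exp_y^{-1}x$, whence $\langle\exp_y^{-1}x,\omega\rangle=\pm r(x,y)\neq 0$, contradicting $\partial_s\Psi=0$. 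Moreover, even taken at face value, $\lambda^n\cdot\lambda^{-n/2}=\lambda^{n/2}$, not $\lambda^{(n-1)/2}$. The correct accounting is either (a) treat the $s$-integral as a Schwartz-decay localization (the factor $\rho(-\lambda s)$ is in the amplitude, not the phase; it pins $s$ to an $O(\lambda^{-1})$ window and contributes $\lambda^{-1}$), and then do stationary phase only in $\omega$ ($n-1$ variables, contributing $\lambda^{-(n-1)/2}$), so $\lambda^n\cdot\lambda^{-1}\cdot\lambda^{-(n-1)/2}=\lambda^{(n-1)/2}$; or (b) postpone the $t$-integral and do joint stationary phase in $(t,r,\omega)$, which is $n+1$ variables with a genuine nondegenerate critical point and gives $\lambda^n\cdot\lambda^{-(n+1)/2}=\lambda^{(n-1)/2}$. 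Once this is repaired, your remaining observations (the cutoff $\hat\rho\subset[\epsilon/2,\epsilon]$ keeping $r(x,y)$ bounded away from zero and below the injectivity radius, the boundedness of $a_\lambda$ and its derivatives, and the derivation of the gradient bound \eqref{K1}) are all correct and parallel the paper's.
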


\begin{proof}

Let $U(t) = e^{- it \sqrt{\Delta}}$. We may write
\begin{equation} \label{RHOLA} \rho(\lambda - \sqrt{\Delta}) = \int_{\R} \hat{\rho}(t) e^{ it  \lambda} U(t, x, y) dt. \end{equation}
As reviewed in \S \ref{HADASECT},   for small $t$ and $x, y$ near the diagonal one may construct
the Hadamard parametrix, 
$$U(t, x, y) = \int_0^{\infty} e^{i \theta (r^2(x, y) - t^2) } At,(x, y, \theta) d \theta $$
modulo a smooth remainder (which may be neglected).

\begin{mainex} Explain why the remainder may be neglected. How many of the terms in the parametrix
construction does one need in the proof of Proposition \ref{Kaprop}?  (Hint: if one truncates the amplitude
after a finite number of terms in the Hadamard parametrix, the remainder lies in $C^k$ and then the 
contribution to \eqref{RHOLA} decays as $\lambda \to \infty$.) 
\end{mainex} \bigskip

Thus, 
$$K_{\lambda}(x, y) = \int_{\R} \int_0^{\infty} e^{i \theta (r^2(x, y) - t^2) }  e^{ it  \lambda} 
\hat{\rho}(t) At,(x, y, \theta) d \theta dt. $$

We change variables $\theta \to \lambda \theta$ to obtain
$$K_{\lambda}(x, y) = \lambda \int_{\R} \int_0^{\infty} e^{i \lambda [\theta (r^2(x, y) - t^2) + t] }  \hat{\rho}(t) 
At,(x, y, \lambda\theta) d \theta dt. $$
We then   apply stationary phase. The phase is
$$ \theta (r^2(x, y) - t^2)  + t $$
and the critical point equations are
$$r^2(x, y) = t^2, \;\;\; 2 t \theta = 1, \;\;\; (t \in (\epsilon, 2 \epsilon)). $$
The power of $\theta$ in the amplitude is $\theta^{\frac{n-1}{2}}$. The change
of variables thus puts  in $\lambda^{\frac{n + 1}{2}}. $But we get $\lambda^{-1}$
from stationary phase with two variables $(t, \theta)$.

The value of the phase at the critical point is $e^{ i t \lambda}= e^{ i \lambda r(x, y)}. $
The Hessian in $(t, \theta)$ is $2 t$ and it is invertible. Hence, 
$$K_{\lambda}(x, y) \simeq  \lambda^{\frac{n-1}{2}} e^{ i \lambda r(x, y)} 
  a(\lambda, x, y), $$
where
$$a \sim a_0 + \lambda^{-1} a_{-1} + \cdots $$
and 
$$a_0 = A_0(r(x, y), (x, y, \frac{2}{r(x, y)} ). $$

\end{proof}

Proposition \ref{Kaprop} implies  that  $|\nabla_g K_\la(x,y)|\le C \la^{1+\frac{n-1}2}$ by directly differentiating the 
expression. The extra power of $\lambda$ comes from the ``phase factor "
$ e^{i \lambda r(x, y) }$.

This concludes the proof of Lemma \ref{lem}.
\end{proof}

%\subsection{Colding-Minicozzi proof}

\begin{rem} 
There are many 	`reproducing kernels' if one only requires them to reproduce one eigenfunction. A very common
choice is the spectral projections operator
$$\Pi_{[\lambda, \lambda + 1]} (x, y) = \sum_{j: \lambda_j \in [\lambda, \lambda + 1]}
\phi_j(x) \phi_j(y) $$
for the interval $[\lambda, \lambda + 1]$. 
It reproduces {\it all} eigenfunction $\phi_k$ with $\lambda_k \in [\lambda, \lambda +1]. $
This reproducing kernel cannot be used in our application because $\Pi_{\lambda}(x, x) \simeq \lambda^{n-1}$, as  follows from
the local Weyl law. Similarly,  $\sup_{x, y} |\nabla_x \Pi_{\lambda}(x, y)| \simeq \lambda^n$. The reader may
check these statements on the spectral projections kernel for the standard sphere (\S \ref{SHAPP}).

\end{rem}

\subsection{Modifications}

Hezari-Sogge modified the proof  Proposition \ref{DONGPROP} 
in \cite{HS}  to prove

\begin{theo} \label{HS} For any $C^{\infty}$ compact Riemannian manifold,  the $L^2$-normalized eigenfunctions satisfy $$\hcal^{n-1}(\ncal_{\phi_{\lambda}}) \geq C \; \lambda \; ||\phi_{\lambda}||_{L^1}^2. $$
\end{theo}

They first apply   the Schwarz inequality to get
\begin{equation}\label{2}
\lambda^2 \int_M |\phi_\lambda| \, dV_g \le 2 (\hcal^{n-1}(\ncal_{\phi_{\lambda}}))^{1/2} \, \left(\, 
\int_{Z_{\phi_{\lambda}}}|\nabla_g \phi_\lambda|^2 \, dS\, \right)^{1/2}.
\end{equation}
They then  use the test function 
\begin{equation}\label{6}
f=\big(\, 1+\lambda^2 \phi_\lambda^2 + |\nabla_g \phi_\lambda|^2_g \, \bigr)^{\frac12}
\end{equation}
 in Proposition \ref{DONGPROP} to show that
\begin{equation} \int_{\ncal_{\phi_{\lambda}}}|\nabla_g \phi_\lambda|^2 \, dS \leq \lambda^3. \end{equation}
See also \cite{Ar} for the generalization to the nodal bounds to Dirichlet and Neumann
eigenfunctions of  bounded domains.

Theorem \ref{HS} shows that Yau's conjectured lower bound would follow for a sequence of eigenfunctions
satisfying $||\phi_{\lambda}||_{L^1} \geq C > 0$ for some positive constant $C$.

\subsection{\label{LBL1} Lower bounds on $L^1$ norms of eigenfunctions}

The following universal lower bound is optimal as $(M, g)$ ranges over all compact Riemannian
manifolds.

\begin{mainprop} \label{CS}
For any $(M, g)$ and any $L^2$-normalized eigenfunction,
$||\phi_{\lambda}||_{L^1} \geq C_g \lambda^{- \frac{n-1}{4}}$.
\end{mainprop}

\begin{rem}  There are few results on $L^1$ norms of eigenfunctions. The reason is probably
that $|\phi_{\lambda}|^2 dV$ is the natural probability measure associated to eigenfunctions.  It
is straightforward to show that the expected $L^1$ norm of random $L^2$-normalized spherical
harmonics  of degree $N$ and their generalizations to any $(M, g)$ is a positive constant $C_N$
with a uniform positive lower bound. One expects eigenfunctions in the ergodic case to have the
same behavior. \end{rem}

\begin{mainprob}  A difficult but interesting problem would be to show that $||\phi_{\lambda}||_{L^1}
\geq C_0 > 0$ on a compact hyperbolic manifold. A partial result in this direction would be useful.

\end{mainprob}

\subsection{\label{DUB} Dong's upper bound}

Let $(M, g)$ be a compact $C^{\infty}$ Riemannian manifold of
dimesion $n$, let $\phi_{\lambda}$ be an $L^2$-normalized
eigenfunction of the Laplacian,
$$\Delta \phi_{\lambda} = - \lambda^2 \phi_{\lambda},
$$
Let 
\begin{equation} \label{q} q =|\nabla \phi|^2 +   \lambda^{2}  \phi^2. \end{equation} In Theorem 2.2 of
\cite{D},  R. T. Dong proves the bound (for $M$ of any dimension $n$),
\begin{equation} \label{MAINDONG} \hcal^{n-1}(\ncal \cap \Omega) \leq \half \int_{\Omega} 
|\nabla \log q | + \sqrt{n } vol(\Omega) \lambda + vol(\partial \Omega).  \end{equation}
He also proves  (Theorem 3.3)  that on a surface,
\begin{equation} \label{Deltaq} \Delta \log q \geq - \lambda + 2\min(K, 0) + 4 \pi \sum_i (k_i - 1)
\delta_{p_i},  \end{equation}  where $\{p_i\}$ are the singular points and $k_i$
is the order of $p_i$. In Dong's notation, $\lambda > 0$. Using a weak Harnack inequality together with \eqref{Deltaq},
Dong proves (\cite{D}, (25)) that in dimension two,
\begin{equation} \label{GRADQ} \int_{B_R} |\nabla \log q| \leq C_g R \lambda + C'_g \lambda^2 R^3. \end{equation}
Combining with  \eqref{MAINDONG} produces  the upper bound $\hcal^{1}(\ncal \cap \Omega)  \leq \lambda^{3/2}$ in dimension 2.

\begin{mainprob} To what extent can one generalize these estimates to higher dimensions?

\end{mainprob}

 \subsection{Other level sets}

Although nodal sets are special, it is of interest to bound the Hausdorff surface measure of any level set
$\ncal_{\phi_{\lambda}}^c : = \{\phi_{\lambda} = c\}$.  Let $\sgn
(x) = \frac{x}{|x|}$.

\begin{prop} \label{BOUNDSc} For any $C^{\infty}$ Riemannian
manifold, and any $f \in C(M)$ we have,

\begin{equation} \label{DONGTYPEc}  \int_M f (\Delta + \lambda^2)\; |\phi_{\lambda} - c| \;dV
%Chris change - to plus
+ \lambda^2 c \int f \mbox{\sgn} (\phi_{\lambda} - c) dV = 2\;
\int_{\ncal^c_{\phi_{\lambda}}}   f |\nabla \phi_{\lambda}| dS.
\end{equation}

\end{prop}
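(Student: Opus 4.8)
The plan is to mimic the proof of Proposition~\ref{DONGPROP} (the Dong--Sogge--Zelditch identity), but centered at the level $c$ rather than at $0$. First I would decompose $M$ according to the sign of $\phi_\lambda - c$:
\[
M = \bigcup_j D_j^+ \;\cup\; \bigcup_k D_k^- \;\cup\; \ncal^c_{\phi_\lambda},
\]
where $D_j^+$ are the connected components of $\{\phi_\lambda > c\}$ and $D_k^-$ those of $\{\phi_\lambda < c\}$. Assume first that $c$ is a regular value, so each $D_j^\pm$ has smooth boundary lying in $\ncal^c_{\phi_\lambda}$, and $\ncal^c_{\phi_\lambda}$ is the disjoint union of the $\partial D_j^+$ and also the disjoint union of the $\partial D_k^-$. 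On $D_j^+$ we have $|\phi_\lambda - c| = \phi_\lambda - c$, and on $D_k^-$ we have $|\phi_\lambda - c| = c - \phi_\lambda$; on the boundary, $-\partial_\nu \phi_\lambda = |\nabla \phi_\lambda|$ on $\partial D_j^+$ (outward normal) and $+\partial_\nu \phi_\lambda = |\nabla\phi_\lambda|$ on $\partial D_k^-$.

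The key step is to apply the Gauss--Green formula on each nodal-type domain to the function $f(\Delta + \lambda^2)|\phi_\lambda - c|$. On $D_j^+$,
\[
\int_{D_j^+} \big((\Delta+\lambda^2)f\big)(\phi_\lambda - c)\,dV
= \int_{D_j^+} f(\Delta+\lambda^2)(\phi_\lambda - c)\,dV - \int_{\partial D_j^+} f\,\partial_\nu \phi_\lambda\,dS,
\]
by Green's identity (the boundary term involving $\partial_\nu(\phi_\lambda - c)$ only sees $\partial_\nu\phi_\lambda$ since $c$ is constant, and the other boundary term vanishes since $\phi_\lambda - c = 0$ on $\partial D_j^+$). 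Now $(\Delta + \lambda^2)(\phi_\lambda - c) = \Delta\phi_\lambda + \lambda^2\phi_\lambda - \lambda^2 c = -\lambda^2 c$ (using the sign convention $\Delta\phi_\lambda = -\lambda^2\phi_\lambda$ of this subsection, cf.\ \eqref{q}), so the first term on the right is $-\lambda^2 c \int_{D_j^+} f\,dV$, and the boundary term is $\int_{\partial D_j^+} f|\nabla\phi_\lambda|\,dS$. A parallel computation on $D_k^-$ with $|\phi_\lambda - c| = c - \phi_\lambda$ gives $+\lambda^2 c\int_{D_k^-} f\,dV$ plus $\int_{\partial D_k^-} f|\nabla\phi_\lambda|\,dS$. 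Summing over all $j$ and $k$, the $\ncal^c_{\phi_\lambda}$ surface integrals add up to $2\int_{\ncal^c_{\phi_\lambda}} f|\nabla\phi_\lambda|\,dS$, while the bulk terms combine to $\int_M f(\Delta+\lambda^2)|\phi_\lambda - c|\,dV + \lambda^2 c\int_M f\big(\mathbf{1}_{\{\phi_\lambda > c\}} - \mathbf{1}_{\{\phi_\lambda < c\}}\big)\,dV$, and that indicator difference is exactly $\operatorname{sgn}(\phi_\lambda - c)$ a.e., yielding \eqref{DONGTYPEc}.

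The main obstacle is handling the case when $c$ is not a regular value of $\phi_\lambda$, i.e.\ when $\ncal^c_{\phi_\lambda}$ has singular points. As in Section~\ref{FOUND}, the singular set is of codimension at least two (indeed the level set is countably $(n-1)$-rectifiable by the Bers/B\"ar local structure, and for $c \neq 0$ a similar scaling argument applies, with the subtlety that $\phi_\lambda - c$ need not vanish to finite order in the same way $\phi_\lambda$ does at its zeros --- one should instead invoke the fact that the singular set $\{\phi_\lambda = c, \nabla\phi_\lambda = 0\}$ has measure zero and small capacity). The resolution is the standard one: excise $\epsilon$-neighborhoods of the singular set, apply Green's formula on the smooth part, and check that the extra boundary contributions vanish as $\epsilon \to 0$ because $|\nabla\phi_\lambda|$ is bounded (indeed small near those points) and the boundary areas shrink. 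One should also note that for a.e.\ $c$ the value is regular (Sard), but since the statement is for \emph{all} $c$ and all $f \in C(M)$, the approximation argument is genuinely needed; a density/continuity argument in $c$ then extends the identity from regular values to all values, using that both sides depend continuously on $c$ (the left side via dominated convergence, the right side via the coarea formula controlling $\hcal^{n-1}(\ncal^c_{\phi_\lambda})$ for a.e.\ nearby $c$).
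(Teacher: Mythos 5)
Your proof is correct and takes essentially the same approach the paper uses (implicitly) for this proposition, namely the same decomposition into super-/sub-level components and Green's-formula argument that the text spells out in the Exercise/Hint following Proposition~\ref{DONGPROP}, shifted from the level $0$ to the level $c$. The computation $(\Delta + \lambda^2)(\phi_\lambda - c) = -\lambda^2 c$, the sign bookkeeping $-\partial_\nu\phi_\lambda = |\nabla\phi_\lambda|$ on $\partial D_j^+$ versus $+\partial_\nu\phi_\lambda = |\nabla\phi_\lambda|$ on $\partial D_k^-$, and the recombination of the two bulk contributions into $-\lambda^2 c\int_M f\,\sgn(\phi_\lambda - c)\,dV$ all check out, and your flag about the singular-value case is apt --- the paper's own proof of Proposition~\ref{DONGPROP} likewise works only under the regular-value assumption and does not return to the degenerate case.
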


This identity has similar implications for
$\hcal^{n-1}(\ncal^c_{\phi_{\lambda}})$ and for the
equidistribution of level sets.

\begin{cor}\label{cintro} For
%Chris change$c > 0$, to $c\in {\mathbb R}$
$c \in {\mathbb R}$
%$$ \lambda^2  \int_{|\phi_{\lambda}| \geq c} |\phi_{\lambda}| dV + O(1) = 2\;  \int_{\ncal^c_{\phi_{\lambda}}}   |\nabla \phi_{\lambda}| dS
%\leq \lambda^2 Vol(M)^{1/2}. $$
%Chris changed the preceding formula to
$$\lambda^2\int_{\phi_\lambda\ge c}\phi_\lambda dV
= \int_{\ncal^c_{\phi_{\lambda}}}   |\nabla \phi_{\lambda}| dS
%\leq \lambda^2 Vol(M)^{1/2}
. $$ 
%Consequently, if $c>0$
%$$\hcal^{n-1}(\ncal^c_{\phi_{\lambda}}) \geq \;C_g \; \lambda^{2
%- \frac{n + 1}{2}} \int_{|\phi_{\lambda}| \geq c} |\phi_{\lambda}|
%dV.
%$$
%Chris changed preceding forumla to
%$$\hcal^{n-1}(\ncal^c_{\phi_{\lambda}})
%+\hcal^{n-1}(\ncal^{-c}_{\phi_{\lambda}}) \geq \;C_g \; \lambda^{2
%- \frac{n + 1}{2}} \int_{|\phi_{\lambda}| \geq c} |\phi_{\lambda}|
%dV.
%$$

\end{cor}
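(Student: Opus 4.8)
The plan is to read the corollary off Proposition~\ref{BOUNDSc} by specializing to $f\equiv 1$, using only two elementary facts: that $\int_M \Delta|\phi_\lambda - c|\,dV = 0$ on a closed manifold (since $\langle \Delta u,1\rangle = \langle u,\Delta 1\rangle = 0$ for any distribution $u$), and that $\int_M \phi_\lambda\,dV = 0$, which holds because $\lambda\neq 0$ forces $\phi_\lambda$ to be orthogonal to the constants.

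First I would put $f\equiv 1$ in \eqref{DONGTYPEc}, obtaining $\int_M(\Delta+\lambda^2)|\phi_\lambda - c|\,dV + \lambda^2 c\int_M \operatorname{sgn}(\phi_\lambda - c)\,dV = 2\int_{\ncal^c_{\phi_\lambda}}|\nabla\phi_\lambda|\,dS$. The Laplacian term drops by the first fact, leaving $\lambda^2\int_M\bigl(|\phi_\lambda - c| + c\,\operatorname{sgn}(\phi_\lambda - c)\bigr)\,dV$ on the left. Next I would simplify the integrand pointwise: where $\phi_\lambda \ge c$ it equals $(\phi_\lambda - c) + c = \phi_\lambda$, and where $\phi_\lambda < c$ it equals $(c - \phi_\lambda) - c = -\phi_\lambda$. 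Hence the left-hand side is $\lambda^2\bigl(\int_{\{\phi_\lambda\ge c\}}\phi_\lambda\,dV - \int_{\{\phi_\lambda<c\}}\phi_\lambda\,dV\bigr)$. Finally, since $\int_M\phi_\lambda\,dV = 0$ gives $\int_{\{\phi_\lambda<c\}}\phi_\lambda\,dV = -\int_{\{\phi_\lambda\ge c\}}\phi_\lambda\,dV$, the left side becomes $2\lambda^2\int_{\{\phi_\lambda\ge c\}}\phi_\lambda\,dV$; equating with the right side and cancelling the factor $2$ yields $\lambda^2\int_{\phi_\lambda\ge c}\phi_\lambda\,dV = \int_{\ncal^c_{\phi_\lambda}}|\nabla\phi_\lambda|\,dS$.

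There is essentially no obstacle here: all of the substance already resides in Proposition~\ref{BOUNDSc}, and the step above is just bookkeeping together with the cancellation of the $c$-dependent terms. The only point deserving care is that $|\phi_\lambda - c|$ is merely Lipschitz, so $\Delta|\phi_\lambda - c|$ is a priori a distribution (in fact a signed measure with a singular part supported on $\{\phi_\lambda = c\}$); its total mass nonetheless vanishes by the distributional pairing against the constant $1$. If $\partial M\neq\emptyset$, the resulting boundary term is removed exactly as in the proof of Proposition~\ref{DONGPROP}, using the Dirichlet or Neumann condition. Note that no "$c$ is a regular value" hypothesis is needed, since $\{\phi_\lambda\ge c\}$ and $\{\phi_\lambda<c\}$ already partition $M$.
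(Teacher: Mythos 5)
Your proof is correct and follows the same route as the paper: set $f\equiv 1$ in Proposition~\ref{BOUNDSc}, discard $\int_M\Delta|\phi_\lambda-c|\,dV$ by integration by parts, rewrite $|\phi_\lambda-c|+c\,\sgn(\phi_\lambda-c)$ pointwise as $\pm\phi_\lambda$ on the two sublevel sets, and use $\int_M\phi_\lambda\,dV=0$ to combine them and cancel the factor of~$2$, exactly as the paper does in its displayed identity~\eqref{c}. Your added remarks on the distributional meaning of $\Delta|\phi_\lambda-c|$ and on the irrelevance of the measure-zero level set $\{\phi_\lambda=c\}$ are correct refinements but do not change the argument.
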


One can obtain lower bounds on $\hcal^{n-1}(\ncal^c_{\phi_{\lambda}})$ as in the 
case of nodal sets. However the integrals of $|\phi_{\lambda}|$ no longer cancel out.
The numerator is smaller since one only integrates over $\{\phi_{\lambda} \geq c\}$. 
Indeed,  $\hcal^{n-1}(\ncal^c_{\phi_{\lambda}})$ must tend to zero as $c$ tends to
the maximum possible threshold $\lambda^{\frac{n-1}{2}}$ for $\sup_M |\phi_{\lambda}|$.

The Corollary follows by integrating $\Delta$ by parts,
 and by using the identity,
 \begin{equation} \label{c} \begin{array}{lll} \int_M |\phi_{\lambda} - c| + c \;\sgn(\phi_{\lambda} - c)
 \;dV & = & \int_{\phi_{\lambda} > c} \phi_{\lambda} dV -
 \int_{\phi_{\lambda} < c} \phi_{\lambda} dV \\ && \\ &=& 2\int_{\phi_{\lambda} > c} \phi_{\lambda} dV ,
 % \;\; \mbox{if}\;\; c >
 %0.
 \end{array} \end{equation}
 since $0=\int_M \phi_\lambda dV=\int_{\phi_\lambda>c}\phi_\lambda dV
 +\int_{\phi_\lambda<c}\phi_\lambda dV$.

\begin{mainprob} A difficult problem would be to study  $\hcal^{n-1}(\ncal^c_{\phi_{\lambda}})$ as a function of $(c, \lambda)$
and try to find thresholds where the behavior changes. For random spherical harmonics,
$\sup_M |\phi_{\lambda}| \simeq \sqrt{\log \lambda} $ and one would expect the level
set volumes to be very small above this height except in special cases. 

\end{mainprob}

\subsection{\label{EXAMPLES} Examples}

The lower bound of Theorem \ref{DONGLB} is far from the lower bound conjectured by Yau, which
by Theorem \ref{DF} is correct at least in the real analytic case.  In this
section we go over the model examples to understand why the methds are not always getting sharp results. 

\subsubsection{\label{Flat tori section2} Flat tori}

We have,  $|\nabla \sin \langle k, x
\rangle|^2 = \cos^2 \langle k, x \rangle |k|^2$. Since $\cos
\langle k, x \rangle = 1$ when $\sin \langle k, x \rangle  = 0$
the integral is simply $|k|$ times the surface volume of the nodal
set, which is known to be of size $|k|$.  Also, we
have $\int_{{\bf T}} |\sin \langle k, x \rangle| dx \geq C$. Thus,
our method gives  the sharp lower bound
$\hcal^{n-1}(Z_{\phi_{\lambda}}) \geq C \lambda^{1}$ in this
example.

So the upper bound  is achieved in this example. Also, we
have $\int_{{\bf T}} |\sin \langle k, x \rangle| dx \geq C$. Thus,
our method gives  the sharp lower bound
$\hcal^{n-1}(Z_{\phi_{\lambda}}) \geq C \lambda^{1}$ in this
example.
Since $\cos
\langle k, x \rangle = 1$ when $\sin \langle k, x \rangle  = 0$
the integral is simply $|k|$ times the surface volume of the nodal
set, which is known to be of size $|k|$. 
\medskip

\subsubsection{Spherical harmonics on $S^2$}
\medskip

For background on spherical harmonics we refer to \S \ref{SHAPP}.

The  $L^1$ of $Y^N_0$  norm can be derived from
the asymptotics of Legendre polynomials
$$P_N(\cos \theta) = \sqrt{2} (\pi N \sin \theta)^{-\half} \cos
\left( (N + \half) \theta - \frac{\pi}{4} \right) + O(N^{-3/2}) $$
where the remainder is uniform on any interval $\epsilon < \theta
< \pi - \epsilon$. We have
$$||Y^N_0||_{L^1} = 4 \pi  \sqrt{\frac{(2 N +
1)}{2 \pi}} \int_0^{\pi/2} |P_N(\cos r)| dv(r) \sim C_0 > 0,$$
i.e. the $L^1$ norm is asymptotically a positive constant. Hence
$\int_{Z_{Y^N_0}}  |\nabla Y^N_0| ds \simeq C_0 N^2 $. In this
example $|\nabla Y_0^N|_{L^{\infty}} = N^{\frac{3}{2}}$ saturates
the sup norm bound.  The length of the nodal line of $Y_0^N$ is of order  $\lambda$, as one
sees from the rotational invariance and by the fact that $P_N$ has
$N$ zeros. The defect in the argument is that the bound  $|\nabla
Y_0^N|_{L^{\infty}} = N^{\frac{3}{2}}$ is only obtained on the
nodal components near the poles, where each component has  length
$\simeq \frac{1}{N}$.
\bigskip

\begin{mainex} Calculate the $L^1$ norms of ($L^2$-normalized) zonal spherical harmonics and
Gaussian beams.

\end{mainex}

The left image is a zonal spherical harmonic of degree $N$ on $S^2$: it has high peaksof height $\sqrt{N}$  at the north and
south poles. The right image is a Gaussian beam: its height along the equator is $N^{1/4}$ and then it has Gaussian
decay transverse to the equator.

\begin{center}
\includegraphics[scale=0.9]{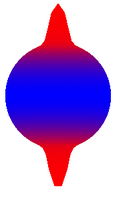}\includegraphics[scale=0.9]{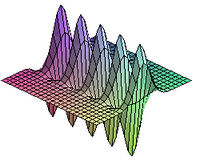}
\end{center}

\noindent{\bf Gaussian beams} \medskip

Gaussian beams are Gaussian shaped lumps which are concentrated on
$\lambda^{-\half}$ tubes $\tcal_{\lambda^{- \half}}(\gamma)$
around closed geodesics and have height $\lambda^{\frac{n-1}{4}}$.
We note that their $L^1$ norms decrease like
$\lambda^{-\frac{(n-1)}{4}}$, i.e. they saturate the $L^p$ bounds of \cite{Sog} for
small $p$.   In such cases we have $\int_{Z_{\phi_{\lambda}}}
|\nabla \phi_{\lambda}| dS \simeq \lambda^2
||\phi_{\lambda}||_{L^1} \simeq \lambda^{2 - \frac{n-1}{4}}. $ It
is likely that Gaussian beams are minimizers of the $L^1$ norm
among $L^2$-normalized eigenfunctions of Riemannian manifolds.
Also, the gradient bound $||\nabla \phi_{\lambda}||_{L^{\infty}} =
O(\lambda^{\frac{n + 1}{2}})$ is far off for Gaussian beams, the
correct upper bound being $\lambda^{1 + \frac{n-1}{4}}$.  If we
use these estimates on $||\phi_{\lambda}||_{L^1}$ and $||\nabla
\phi_{\lambda}||_{L^{\infty}}$,  our method gives
$\hcal^{n-1}(Z_{\phi_{\lambda}}) \geq C \lambda^{1 -
\frac{n-1}{2} }$,  while $\lambda$ is the correct lower bound for
Gaussian beams in the case of surfaces of revolution (or any real
analytic case). The defect is again that the gradient estimate is
achieved only very close to the closed geodesic of the Gaussian
beam. Outside of the tube  $\tcal_{\lambda^{- \half}}(\gamma)$ of
radius $\lambda^{- \half}$  around the geodesic, the Gaussian beam
and all of its derivatives decay like $e^{- \lambda d^2}$ where
$d$ is the distance to the geodesic. Hence
$\int_{Z_{\phi_{\lambda}}} |\nabla \phi_{\lambda}| dS \simeq
\int_{Z_{\phi_{\lambda}} \cap \tcal_{\lambda^{-
\half}}(\gamma)} |\nabla \phi_{\lambda}| dS. $ Applying  the
gradient bound for Gaussian beams  to the latter integral
 gives $\hcal^{n-1}(Z_{\phi_{\lambda}} \cap
\tcal_{\lambda^{- \half}}(\gamma)) \geq C \lambda^{1 -
\frac{n-1}{2}}$, which is sharp since the intersection
$Z_{\phi_{\lambda}} \cap \tcal_{\lambda^{- \half}}(\gamma)$
cuts across $\gamma$ in $\simeq \lambda$ equally spaced points (as
one sees from the Gaussian beam approximation).

\section{\label{QERsect} Quantum ergodic restriction theorem for Dirichlet or Neumann data}

QER (quantum ergodic restriction) theorems for Dirichlet data assert the quantum ergodicity of restrictions
$\phi_j |_H$
of eigenfunctions or their normal derivatives to hypersurfaces $H \subset M$.
In this section we briefly review the QER theorem for hypersurfaces of
\cite{TZ2, CTZ}. For lack of space, we must assume the reader's familiarity with quantum ergodicity
on the global manifold $M$. We refer to \cite{Ze2,Ze3,Ze6,Zw} for recent expositions.

\subsection{Quantum ergodic restriction theorems for Dirichlet data}

Roughly speaking, the QER theorem for Dirichlet data says that restrictions of eigenfunctions
to hypersurfaces $H \subset M$ for $(M, g)$ with ergodic geodesic flow  are quantum ergodic along $H$ as
long as $H$ is {\it asymmetric} for the geodesic flow. Here we note that a  tangent vector $\xi$ to $H$ of length $\leq 1$
is the projection to $T H$ of two unit tangent vectors $\xi_{\pm}$  to $M$. The $\xi_{\pm}   = \xi + r \nu$ where
$\nu$ is the unit normal to $H$ and $|\xi|^2 + r^2 = 1$. There are two possible signs of $r$ corresponding to
the two choices of ``inward'' resp. ``outward" normal. Asymmetry of $H$ with respect to the geodesic flow
$G^t$ means that the two orbits $G^t(\xi_{\pm})$ almost never return at the same time to the same place on $H$.
A generic hypersurface is asymmetric \cite{TZ2}. We refer to \cite{TZ2} (Definition 1) for the precise definition
of ``positive measure of microlocal reflection symmetry" of $H$. By asymmetry we mean that this measure is zero.

 We  write $h_j = \lambda_j^{-\half}$
and employ the calculus of semi-classical pseudo-differential operators \cite{Zw} where the
pseudo-differential
operators on $H$ are denoted by   $a^w(y, h D_y)$  or  $Op_{h_j}(a)$. The unit co-ball bundle of $H$ is denoted
by $B^* H$.

  \begin{theorem} \label{sctheorem} Let $(M, g)$ be a compact surface  with ergodic geodesic flow, and let  $H \subset
      M$ be a closed curve which is {\it asymmetric} with respect  to the geodesic flow.  Then
 there exists a  density-one subset $S$ of ${\mathbb N}$ such that
  for $a \in S^{0,0}(T^*H \times [0,h_0)),$
$$ \lim_{j \rightarrow \infty; j \in S} \langle Op_{h_j}(a)
 \phi_{h_j}|_{H},\phi_{h_j}|_{H} \rangle_{L^{2}(H)} = \omega(a), $$
 where
 $$  \omega(a) = \frac{4}{ vol(S^*M) } \int_{B^{*}H}  a_0( s, \sigma )  \,  (1 - |\sigma|^2)^{-\half}  \, ds d\sigma.$$
In particular this holds for multiplication operators $f$.
\end{theorem}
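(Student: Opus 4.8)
The plan is to deduce Theorem \ref{sctheorem} from the global quantum ergodicity of $\{\phi_j\}$ on $M$ (the Shnirelman--Zelditch--Colin de Verdi\`ere theorem, which we assume) together with an averaging argument over the geodesic flow, following \cite{TZ2, CTZ}. First I would fix a semi-classical pseudo-differential operator $Op_{h_j}(a)$ on $H$ with symbol $a \in S^{0,0}$, and express the restriction quadratic form $\langle Op_{h_j}(a)\phi_{h_j}|_H, \phi_{h_j}|_H\rangle_{L^2(H)}$ as $\langle A_{h_j}\phi_{h_j}, \phi_{h_j}\rangle_{L^2(M)}$ for a suitable operator $A_{h_j}$ on $M$ built from $Op_{h_j}(a)$ and the restriction operator $\gamma_H: C^\infty(M)\to C^\infty(H)$. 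The key structural fact is that $\gamma_H^* Op_{h_j}(a)\gamma_H$, while not itself a pseudo-differential operator on $M$, becomes one after averaging against the wave group: one forms $\langle A_{h_j}^T\rangle := \frac{1}{T}\int_0^T U(t)^* A_{h_j} U(t)\, dt$ with $U(t) = e^{it\sqrt{\Delta}}$, and since $\phi_{h_j}$ is an eigenfunction this replacement costs nothing in the matrix element. By Egorov's theorem the principal symbol of the time-averaged operator is the flow-average of the (pushed-forward) symbol of $A_{h_j}$.

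The second step is to compute that flow-averaged symbol. The symbol of $\gamma_H^* Op_{h_j}(a)\gamma_H$, viewed as a distribution on $S^*M$, is supported on the conormal-type set determined by $H$ and has a factor $(1-|\sigma|^2)^{-1/2}$ coming from the two unit covectors $\xi_\pm = \xi + r\nu$ over a cotangent vector $\sigma\in B^*H$ (the Jacobian of the map $\sigma \mapsto \xi_\pm$ is $(1-|\sigma|^2)^{-1/2}$, $r = \pm(1-|\sigma|^2)^{1/2}$). Flowing this out and using ergodicity of $G^t$, the time-average converges (as $T\to\infty$, in an appropriate weak sense) to the constant $\frac{1}{\mu(S^*M)}\int_{S^*M}(\text{that symbol})\, d\mu$; carrying out the coarea/disintegration of Liouville measure over $H$ produces exactly $\omega(a) = \frac{4}{\mathrm{vol}(S^*M)}\int_{B^*H} a_0(s,\sigma)(1-|\sigma|^2)^{-1/2}\, ds\, d\sigma$. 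This is where the \emph{asymmetry} hypothesis enters: the two lifts $\xi_+$ and $\xi_-$ could a priori interfere, producing extra cross terms in the limit (this is the "microlocal reflection symmetry" of \cite{TZ2}), and asymmetry of $H$ is precisely the condition that this symmetric contribution vanishes, so only the diagonal term survives.

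The third step is the standard extraction of a density-one subsequence. Global quantum ergodicity gives that for each fixed $T$ the variance $\frac{1}{N(\lambda)}\sum_{\lambda_j\le\lambda}|\langle A_{h_j}^T\phi_{h_j},\phi_{h_j}\rangle - \omega_T(a)|^2 \to 0$, where $\omega_T(a)$ is the $T$-average which converges to $\omega(a)$; a diagonal argument in $T$ then yields a single density-one set $S\subset\mathbb{N}$ along which $\langle Op_{h_j}(a)\phi_{h_j}|_H,\phi_{h_j}|_H\rangle \to \omega(a)$ for $a$ in a countable dense family, and density of the symbol class plus uniform boundedness of $\|Op_{h_j}(a)\|$ upgrades this to all $a\in S^{0,0}$. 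The statement for multiplication operators $f$ is the special case $a = f(s)$, for which $\int (1-|\sigma|^2)^{-1/2}d\sigma$ over the unit interval is finite and $\omega(f)$ reduces to a constant times $\int_H f\, ds$. I expect the main obstacle to be step two: correctly identifying the (non-classical, conormal) symbol of the pullback-pushforward operator on $M$, controlling the $h$-dependent errors in Egorov's theorem uniformly as one iterates over longer and longer times $T$, and disentangling the $\xi_+$ versus $\xi_-$ contributions so that the asymmetry hypothesis does exactly the job of killing the off-diagonal term. Handling the boundary-of-phase-space issue where $|\sigma|\to 1$ (glancing directions), where the density $(1-|\sigma|^2)^{-1/2}$ is integrable but blows up, is a secondary technical point that requires a cutoff-and-limit argument.
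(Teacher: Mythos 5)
The paper does not prove Theorem \ref{sctheorem}; it quotes it from \cite{TZ2,CTZ}, so the comparison is with those references, and against them your strategy is the right one. The sequence of moves --- relate the restricted matrix element to an ambient matrix element via $A_{h_j}:=\gamma_H^{*}Op_{h_j}(a)\gamma_H$, time-average against the wave group, compute the averaged symbol and invoke ergodicity, then extract a density-one subsequence by Chebyshev plus diagonalization over a countable dense family of symbols --- is exactly how \cite{TZ2} proceeds. Your identification of the $(1-|\sigma|^{2})^{-1/2}$ Jacobian from the map $\sigma\mapsto\xi_{\pm}$, the factor of $2$ from the two covector lifts over a given $\sigma\in B^{*}H$, and the role of asymmetry in killing the $\xi_{+}\times\xi_{-}$ cross contribution are all correct in substance.

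The phrase you should not let yourself get away with, however, is ``by Egorov's theorem the principal symbol of the time-averaged operator is the flow-average of the (pushed-forward) symbol of $A_{h_j}$.'' Egorov's theorem does not apply: $A_{h_j}$ is not a pseudodifferential operator on $M$ (its Schwartz kernel is a conormal distribution supported on $H\times H$), so there is no symbol of $A_{h_j}$ to push forward; and $U(-t)A_{h_j}U(t)$ is a Fourier integral operator whose canonical relation is a fold / blow-down rather than a canonical graph. The central technical content of \cite{TZ2} is precisely the key lemma that the \emph{time-averaged} operator $\langle A_{h_j}\rangle_{T}$ --- after microlocally cutting away a neighborhood of the glancing set $S^{*}H\subset S^{*}M$ and isolating the diagonal $(\pm,\pm)$ piece of the canonical relation from the off-diagonal $(+,-)$ piece --- is nonetheless a semi-classical pseudodifferential operator modulo small remainders, with the claimed orbit-averaged principal symbol. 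Establishing that is a fresh stationary-phase and FIO-composition argument, not a corollary of Egorov, and the asymmetry hypothesis enters already there: it is what guarantees that the $(+,-)$ piece of the canonical relation has Liouville measure zero, so the averaged operator really is pseudodifferential. Without it the conclusion is simply false, as odd eigenfunctions on a surface with a reflection fixing $H$ show. You correctly label this step the main obstacle; you should understand that the obstacle is the theorem, and the rest of your outline is the (standard) wrapper around it.
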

%We note that Theorem \ref{maintheorem} is indeed a special case  of Theorem \ref{sctheorem} with  $a_{-j} = 0; j\geq 1$ and $a_0 \in S^{0}_{cl}(T^*H).$

There is a similar result for normalized Neumann data.
  The normalized  Neumann  data of an eigenfunction along $H$ is denoted by
 \begin{equation}
\lambda_j^{-\half} D_{\nu} \phi_j |_{H}.
\end{equation}
Here, $ D_{\nu} = \frac{1}{i} \partial_{\nu}$ is a fixed choice of unit normal derivative.

We   define the microlocal lifts of the
Neumann data as the  linear functionals on semi-classical symbols $a \in S^{0}_{sc}(H)$ given by
$$\mu_h^N(a): = \int_{B^* H} a  \, d\Phi_h^N : = \langle Op_{H}(a) h D_{\nu} \phi_h
|_{H}, h D_{\nu} \phi_h |_{H}\rangle_{L^2(H)}.  $$

 \begin{theorem} \label{ND} \label{sctheoremNeu} Let $(M, g)$ be a compact surface  with ergodic geodesic flow, and let  $H \subset
      M$ be a closed curve which is {\it asymmetric} with respect  to the geodesic flow.  Then
 there exists a  density-one subset $S$ of ${\mathbb N}$ such that
  for $a \in S^{0,0}(T^*H \times [0,h_0)),$
$$ \lim_{h_j \rightarrow 0^+; j \in S} \mu_h^N(a) \to  \omega(a), $$
 where
 $$  \omega(a) = \frac{4}{ vol(S^*M) } \int_{B^{*}H}  a_0( s, \sigma )  \,  (1 - |\sigma|^2)^{\half}  \, ds d\sigma.$$
In particular this holds for multiplication operators $f$.
\end{theorem}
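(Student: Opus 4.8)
The plan is to deduce the Neumann-data statement from the Dirichlet-data statement (Theorem \ref{sctheorem}) by the same microlocal machinery, tracking the effect of the normal derivative on the symbol. The starting point is the standard reduction of quantum ergodicity to two ingredients: (a) a variance/Egorov argument showing that the microlocal lift $\mu_h^N$ of a quantum ergodic sequence tends weakly to a limit measure $\omega$, and (b) identification of $\omega$ as the pushforward of Liouville measure on $S^*M$ under the restriction map to $B^*H$, weighted appropriately. For (a) I would invoke the global quantum ergodicity of $\{\phi_j\}$ on $M$ (Shnirelman--Zelditch--Colin de Verdi\`ere), which holds since the geodesic flow is ergodic, and then run the QER variance estimate exactly as in \cite{TZ2,CTZ}: the key point is that the restriction operator $\gamma_H$ composed with $h D_\nu$ and conjugated by the wave group is a semiclassical Fourier integral operator whose symbol can be computed, and the \emph{asymmetry} hypothesis kills the off-diagonal contributions to the variance. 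The asymmetry condition enters precisely to ensure that the two reflected lifts $G^t(\xi_\pm)$ do not interfere.

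First I would set up the normalized Neumann data $h D_\nu \phi_h|_H$ and write the matrix element $\langle Op_H(a) h D_\nu \phi_h|_H, h D_\nu \phi_h|_H\rangle$ as $\langle (h D_\nu)^* \gamma_H^* Op_H(a) \gamma_H (h D_\nu) \phi_h, \phi_h\rangle$, recognizing the operator in the middle as a semiclassical pseudodifferential operator on $M$ \emph{microlocally near} $H$ (after inserting a microlocal cutoff and controlling the tangential/grazing region, which contributes negligibly). Its principal symbol, restricted to $S^*M$ and evaluated at a covector $\xi_\pm = \xi + r\nu$ lying over $(s,\sigma) \in B^*H$, is $r^2\, a_0(s,\sigma) = (1-|\sigma|^2)\, a_0(s,\sigma)$ — the factor $r^2$ coming from the normal-derivative symbol $\xi_\nu = r$, squared. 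This is exactly the difference from the Dirichlet case, where the analogous symbol is $a_0(s,\sigma)$ and the Jacobian of the projection $S^*M \to B^*H$ produces the weight $(1-|\sigma|^2)^{-1/2}$. Combining the $(1-|\sigma|^2)$ from the Neumann symbol with the $(1-|\sigma|^2)^{-1/2}$ Jacobian yields the net weight $(1-|\sigma|^2)^{1/2}$, hence
\[
\omega(a) = \frac{4}{\vol(S^*M)} \int_{B^*H} a_0(s,\sigma)\,(1-|\sigma|^2)^{1/2}\, ds\, d\sigma,
\]
with the constant $4$ arising from summing over the two signs of $r$ together with the normalization conventions. Then I would apply the global QE of $\{\phi_j\}$ to the limit: along the density-one subsequence $S$ the matrix element $\langle B \phi_{h_j}, \phi_{h_j}\rangle$ for the middle operator $B$ converges to $\frac{1}{\mu(S^*M)}\int_{S^*M}\sigma_B\, d\mu$, which unwinds to the claimed $\omega(a)$.

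The main obstacle — and the place where the asymmetry hypothesis is essential — is the variance estimate controlling the off-diagonal terms in the quantum ergodic averaging, i.e. showing $\frac{1}{N(\lambda)}\sum_{\lambda_j\le\lambda} |\mu_{h_j}^N(a) - \omega(a)|^2 \to 0$. This requires the Egorov theorem for the composed operator conjugated by $U(t)=e^{it\sqrt{\Delta}}$, a time average, and then the observation that the principal symbol of the time-averaged operator restricted to $H$ equals the constant $\omega(a)$ \emph{almost everywhere} precisely because the geodesic flow is ergodic and because, by asymmetry, the interference terms between the $\xi_+$ and $\xi_-$ branches (which would otherwise survive the time average along a positive-measure set of microlocal reflection symmetry) vanish. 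A secondary technical point is handling the grazing set $\{|\sigma|=1\}$ and the glancing directions where the normal derivative degenerates; these form a measure-zero set and can be excised with a cutoff whose error is controlled by a standard restriction bound (e.g. the $L^2$ restriction estimates of Burq--G\'erard--Tzvetkov / Tataru), so they do not affect the limit. The rest of the argument — density-one extraction of $S$ from a single control function, then a diagonal/separability argument to pass from a countable dense set of symbols $a$ to all $a\in S^{0,0}$, and finally specialization to multiplication operators $f$ — is routine and parallels \cite{TZ2,CTZ} verbatim.
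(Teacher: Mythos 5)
The paper itself does not prove Theorem~\ref{sctheoremNeu}; it states it and refers to \cite{TZ2,CTZ} for the argument. Your plan captures the essential content of that proof, and the decisive symbol computation is right: the semiclassical symbol of $hD_\nu$ at a point of $S^*_HM$ lying over $(s,\sigma)\in B^*H$ is $\xi_\nu=r=\pm\sqrt{1-|\sigma|^2}$, the bilinear form in the Neumann data contributes $r^2=1-|\sigma|^2$, and multiplying by the Jacobian $(1-|\sigma|^2)^{-1/2}$ of the two-to-one projection $S^*_HM\to B^*H$ yields the net weight $(1-|\sigma|^2)^{1/2}$ that distinguishes this theorem from the Dirichlet version. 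The role of the asymmetry hypothesis — killing interference between the two lifts $\xi_\pm$ in the variance/Egorov step — is also correctly identified, and the reduction to global quantum ergodicity plus a density-one extraction and separability/diagonal argument is the standard mechanism.

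One point of imprecision worth flagging: you describe $(hD_\nu)^*\gamma_H^*\,Op_H(a)\,\gamma_H(hD_\nu)$ as ``a semiclassical pseudodifferential operator on $M$ microlocally near $H$.'' It is not; $\gamma_H^*\,Op_H(a)\,\gamma_H$ has Schwartz kernel supported on $H\times H$ and is genuinely more singular than a $\Psi$DO. The way \cite{TZ2} handles this is by conjugating with the wave group, time-averaging, and then applying stationary phase to the resulting oscillatory integral over $H$; it is only this time-averaged object (after removing the diagonal singularity and tangential directions) whose leading symbol can be read off and matched against Liouville measure. Your plan mentions the time-average and Egorov step separately, so the architecture is right, but the claim that the untransformed middle operator is already a $\Psi$DO near $H$ would mislead you if you tried to apply a standard $\Psi$DO calculus to it directly. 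With that caveat noted, the proof sketch is sound and follows the route of the references the paper cites.
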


\subsection{\label{CDsect}  Quantum ergodic restriction theorems for Cauchy data}

Our
application is to the hypersurface $H = \mbox{Fix}(\sigma)$  \eqref{H} given by the fixed point set of the isometric
involution $\sigma$.  Such a hypersurface (i.e. curve)  fails to be asymmetric. 
 However there is a  quantum ergodic restriction theorem for Cauchy
data in \cite{CTZ}  which does apply and shows that the even eigenfunctions are quantum ergodic
along  $H$, hence along each component $\gamma$.

  The normalized Cauchy data of an eigenfunction along $\gamma$ is denoted by
 \begin{equation} \label{CD} CD(\phi_h)  := \{(\phi_h |_{\gamma}, \;
h D_{\nu} \phi_h |_{\gamma}) \}.
\end{equation}
Here, $ D_{\nu}$ is a fixed choice of unit normal derivative. The first component of the Cauchy
data is called the Dirichlet data and the second is called the Neumann data.

\begin{theorem} \label{useful} Assume that $(M, g)$ has an orientation reversing  isometric involution with
separating   fixed point set $H$. Let  $\gamma$ be a component of $H$.  Let
$\phi_{h}$ be the sequence of even ergodic eigenfunctions. Then,

$$\begin{array}{l}
 \langle Op_{\gamma}(a)  \phi_{h} |_{\gamma}, \phi_{h} |_{\gamma}
\rangle_{L^2(\gamma)} \\ \\ \rightarrow_{h \to 0^+} \frac{4}{ 2 \pi \mbox{Area}(M)} \int_{B^*\gamma} a_0(s,\sigma) (1 - | \sigma |^2)^{-1/2} d s d \sigma.
\end{array}$$
In particular, this holds when $Op_{\gamma}(a)$ is multiplication by a smooth function $f$.

\end{theorem}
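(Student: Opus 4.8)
The plan is to deduce Theorem~\ref{useful} from the Cauchy data QER theorem of \cite{CTZ} by exploiting the symmetry of even eigenfunctions across $H=\mbox{Fix}(\sigma)$. Recall that for a general curve $\gamma$ the Cauchy data microlocal lift splits into a Dirichlet part, weighted by $(1-|\sigma|^2)^{-1/2}$, and a Neumann part, weighted by $(1-|\sigma|^2)^{+1/2}$, and the QER theorem for Cauchy data says that the \emph{sum} $\mu_h^{CD}=\mu_h^{D}+\mu_h^{N}$ equidistributes (with no asymmetry hypothesis needed) because the failure of asymmetry is a reflection symmetry that the Cauchy data is insensitive to. The new input here is that when $\phi_h$ is \emph{even} with respect to $\sigma$ and $\gamma\subset\mbox{Fix}(\sigma)$, the Neumann data $hD_\nu\phi_h|_\gamma$ vanishes identically: differentiating $\phi_h\circ\sigma=\phi_h$ along the normal direction at a fixed point gives $D_\nu\phi_h=-D_\nu\phi_h=0$. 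Hence for even eigenfunctions $\mu_h^{N}\equiv 0$ and $\mu_h^{CD}=\mu_h^{D}$, so the Cauchy data equidistribution statement collapses to a pure Dirichlet-data statement with the weight $(1-|\sigma|^2)^{-1/2}$ — exactly the claimed limit. (For odd eigenfunctions it would be the Dirichlet data that vanishes and one would get the $(1-|\sigma|^2)^{+1/2}$ weight on the Neumann data, which is the companion statement used for singular points.)

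The steps, in order, would be: \textbf{(1)} Set up the semiclassical framework of \cite{CTZ}: the microlocal lifts $\mu_h^D$, $\mu_h^N$ of the Dirichlet and Neumann data of $\phi_h$ along $\gamma$, viewed as functionals on $S^{0,0}(T^*\gamma\times[0,h_0))$, and recall that these are asymptotically positive and of bounded mass (this uses restriction bounds for eigenfunctions à la Burq--Gérard--Tzvetkov and the fact that $\gamma$ is a geodesic, since $\mbox{Fix}(\sigma)$ is a union of closed geodesics for the $\sigma$-invariant metric). \textbf{(2)} Quote the QER theorem for Cauchy data from \cite{CTZ}: along a density-one subsequence, $\mu_h^D(a)+\mu_h^N(a)\to c\int_{B^*\gamma}a_0(s,\sigma)\big[(1-|\sigma|^2)^{-1/2}+(1-|\sigma|^2)^{1/2}\big]\,ds\,d\sigma$ for the appropriate constant $c$ determined by normalizing against $a\equiv 1$ and the global quantum ergodicity normalization on $S^*M$; track the constant to land on $\frac{4}{2\pi\,\mathrm{Area}(M)}$. \textbf{(3)} Observe that ergodicity on $(M,g)$ restricts to the even subspace: since $\sigma$ is an isometry, $U(t)$ commutes with the induced unitary on $L^2_{even}(M)$, and the quantum ergodicity theorem applies to the orthonormal basis $\{\phi_j\}$ of $L^2_{even}(M)$ with the same Liouville limit (this is where ``even ergodic eigenfunctions'' gets its meaning; one may need the standard observation that a density-one subsequence of the full basis restricted to the even subspace still has density one inside the even basis, using that $L^2_{even}$ and $L^2_{odd}$ each carry ``half'' the spectral mass by the Weyl law applied to $\sigma$-symmetry). \textbf{(4)} Use evenness to kill the Neumann term: $D_\nu\phi_h|_\gamma=0$ pointwise, hence $\mu_h^N\equiv 0$, hence $\mu_h^{CD}=\mu_h^D$, and the limit in step (2) becomes $\mu_h^D(a)\to \frac{4}{2\pi\,\mathrm{Area}(M)}\int_{B^*\gamma}a_0(s,\sigma)(1-|\sigma|^2)^{-1/2}\,ds\,d\sigma$. \textbf{(5)} Specialize $a$ to a multiplication operator $f\in C^\infty(\gamma)$ to get the stated corollary.

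The main obstacle is step \textbf{(3)} together with the bookkeeping of constants: one must be careful that the density-one subsequence furnished by the Cauchy-data QER theorem of \cite{CTZ} — which is \emph{a priori} chosen inside the even subspace — interacts correctly with the global quantum ergodicity normalization, and that the Liouville average over $S^*M$ of the symbol lifted from $B^*\gamma$ produces exactly the factor $\frac{4}{2\pi\,\mathrm{Area}(M)}$ with the $(1-|\sigma|^2)^{-1/2}$ Jacobian coming from the two-to-one projection $S^*M|_\gamma\to B^*\gamma$. All of this is essentially contained in \cite{CTZ,TZ2}; the genuinely new observation, that evenness forces $\mu_h^N=0$ and hence converts the Cauchy-data statement into a clean Dirichlet-data statement valid even though $H$ fails to be asymmetric, is short. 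A secondary point to handle with care is that $\mbox{Fix}(\sigma)$ being separating is used not in this theorem per se but downstream (to run the topological argument for nodal domains); here one only needs that each component $\gamma$ is a closed geodesic, which follows from $\sigma^*g=g$.
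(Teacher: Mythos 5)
Your core observation --- that $\sigma$-evenness of $\phi_h$ forces the Neumann data $hD_\nu\phi_h|_\gamma$ to vanish identically on $\gamma\subset\mbox{Fix}(\sigma)$, which is what makes a statement for the symmetric (non-asymmetric) curve $\gamma$ possible --- is correct and is indeed the key input the paper uses. However, your step (2) misstates the Cauchy-data QER theorem of \cite{CTZ}, and this creates a genuine gap that cannot be repaired by the remaining steps.

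The CTZ theorem as quoted in the paper does not say that $\mu_h^D(a)+\mu_h^N(a)$ converges to a limit with the density $\bigl[(1-|\sigma|^2)^{-1/2}+(1-|\sigma|^2)^{1/2}\bigr]$. It says that the \emph{Neumann} lift $\mu_h^N(a)=\langle Op_\gamma(a)\,hD_\nu\phi_h|_\gamma, hD_\nu\phi_h|_\gamma\rangle$ plus the \emph{renormalized} Dirichlet lift $\mu_h^{RD}(a)=\langle Op_\gamma(a)(1+h^2\Delta_\gamma)\phi_h|_\gamma,\phi_h|_\gamma\rangle$ converges, and the limit carries the \emph{single} density $(1-|\sigma|^2)^{1/2}$. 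The extra factor $(1+h^2\Delta_\gamma)$, whose symbol $1-|\sigma|^2$ vanishes on the glancing set $S^*\gamma$, is not decorative: it is exactly what makes the Dirichlet contribution part of a positive measure of bounded mass so that the equidistribution statement can hold without an asymmetry hypothesis. Dropping the Neumann term from this statement (which evenness allows, giving Corollary~\ref{COROLLARY}) produces a limit for $\mu_h^{RD}(a)$ with weight $(1-|\sigma|^2)^{1/2}$, not the claimed limit for the naive lift $\langle Op_\gamma(a)\phi_h|_\gamma,\phi_h|_\gamma\rangle$ with weight $(1-|\sigma|^2)^{-1/2}$. Your step (4) would therefore land on the wrong statement; and note that your step (4) is also internally inconsistent with your step (2): if the limiting density were really the sum of two weights, then killing the Neumann term would leave you with that same sum, not with the $(1-|\sigma|^2)^{-1/2}$ piece alone.

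The missing ingredient is the inversion step, recorded in the paper as Theorem~\ref{thm2}: one replaces the original Cauchy-data combination by the equivalent combination
$$\bigl\langle (1+h^2\Delta_\gamma+i0)^{-1}Op_\gamma(a)\,hD_\nu\phi_h|_\gamma,\,hD_\nu\phi_h|_\gamma\bigr\rangle + \bigl\langle Op_\gamma(a)\phi_h|_\gamma,\phi_h|_\gamma\bigr\rangle,$$
obtained by conjugating the renormalized Dirichlet term through $(1+h^2\Delta_\gamma)^{-1}$. This is a nontrivial semiclassical step, since $(1+h^2\Delta_\gamma)$ is not elliptic on $B^*\gamma$ (its symbol vanishes at $|\sigma|=1$), which is why the $+i0$ regularization appears and why the limiting density changes from $(1-|\sigma|^2)^{1/2}$ to $(1-|\sigma|^2)^{-1/2}$. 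Only \emph{after} this inversion can one kill the Neumann term (now appearing with the inverse operator in front, which does not affect its vanishing) and arrive at the clean statement of Theorem~\ref{useful}. So the logical order is: Cauchy-data QER of \cite{CTZ} $\Rightarrow$ Theorem~\ref{thm2} by inversion of $(1+h^2\Delta_\gamma)$ $\Rightarrow$ Theorem~\ref{useful} by evenness. You have the last implication right, but you are missing the middle one and have misstated the first.
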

Here we use the semi-classical notation $h_j = \lambda_{\phi}^{-\frac{1}{4}}$. 
ergodic along $\gamma$, but we do not use this result here.
We refer to \cite{TZ2, CTZ,Zw} for background and for  notation concerning pseudo-differential operators.

We further   define the microlocal lifts of the
Neumann data as the  linear functionals on semi-classical symbols $a \in S^{0}_{sc}(\gamma)$ given by
$$\mu_h^N(a): = \int_{B^* \gamma} a  \, d\Phi_h^N : = \langle Op_{\gamma}(a) h D_{\nu} \phi_h
|_{\gamma}, h D_{\nu} \phi_h |_{\gamma}\rangle_{L^2(\gamma)}.  $$
We
also  define the {\it renormalized microlocal lifts} of the Dirichlet
data by
$$\mu_h^D(a): = \int_{B^*\gamma } a \, d\Phi_h^{RD} : = \langle Op_{\gamma}(a) (1 +
h^2 \Delta_{\gamma}) \phi_{h} |_{\gamma},  \phi_{h}|_{\gamma} \rangle_{L^2(\gamma)}.
$$
Here, $h^2 \Delta_{\gamma}$ denotes the negative
tangential Laplacian $- h^2 \frac{d^2}{ds^2} $  for the induced metric on $\gamma$, so that the symbol $1 - |\sigma|^2$ of
the operator $(1+h^2 \Delta_{\gamma})$ vanishes on the tangent directions
 $S^*\gamma$ of $\gamma$.
Finally, we  define the microlocal lift $d \Phi_h^{CD}$ of the Cauchy data  to be the sum
\begin{equation} \label{WIGCD} d \Phi_h^{CD} := d \Phi_h^N + d \Phi_h^{RD}. \end{equation}

% The
%first result of \cite{CTZ} states   that the Cauchy data of a sequence of quantum ergodic
%eigenfunctions restricted to $\gamma$ is
% QER for semiclassical pseudodifferential operators with
%symbols vanishing on the glancing set $S^*\gamma$, i.e. that
%  there is a density one subsequence of eigenvalues for
% which
%$$ d \Phi_{h}^{CD}  \to \omega, $$
%where $$\omega(a)  =  \frac{4}{2 \pi \mbox{Area}
% (M)} \int_{B^*\gamma} a_0(s, \sigma) (1 - | \sigma |^2)^{1/2} d s d \sigma.$$
Let $B^* \gamma$ denote  the unit ``ball-bundle'' of $\gamma$ (which is the interval
$\sigma \in (-1,1)$ at each point $s$), $s$ denotes arc-length along $\gamma$ and $\sigma$ is the dual
symplectic coordinate. The first result of \cite{CTZ} relates QE (quantum ergodicity) on $M$ to quantum ergodicity
on a hypersurface $\gamma$. A sequence of eigenfunctions is QE globally on M if
$$\langle A \phi_{j_k}, \phi_{j_k} \rangle \to \frac{1}{\mu(S^*M)} \int_{S^* M} \sigma_A d \mu, $$
where $d\mu_L$ is Liouville measure, i.e. the measure induced on the co-sphere bundle by 
the symplectic volume measure and the Hamiltonian $H(x, \xi) = |\xi|_g$. 

\begin{theorem}
Assume that $\{\phi_h\}$ is a quantum ergodic sequence of eigenfunctions on $M$.  Then the sequence
$\{d \Phi_{h}^{CD} \}$ \eqref{WIGCD} of microlocal lifts of the Cauchy data of $\phi_h$ is quantum ergodic on $\gamma$ in the sense that for any
 $a \in S^0_{sc}(\gamma),$%  there exists a sub-sequence of eigenvalues
% of density one so that as $ {h_j \to 0^+} $,
$$\begin{array}{l}
\langle Op_H(a) h D_\nu \phi_h |_{\gamma} ,  h D_\nu \phi_h |_{\gamma} \rangle_{L^2(\gamma)}  + \langle Op_{\gamma}(a) (1 +
 h^2 \Delta_{\gamma}) \phi_{h} |_{\gamma}, \phi_{h} |_{\gamma}
\rangle_{L^2(\gamma)} \\ \\ \rightarrow_{h \to 0^+} \frac{4}{\mu(S^*
 M)} \int_{B^*\gamma} a_0(s, \sigma) (1 - | \sigma |^2)^{1/2} d s d\sigma
\end{array}$$
where $a_0$ is the principal symbol of $Op_{\gamma}(a)$.
\end{theorem}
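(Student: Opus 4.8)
We sketch the proof. Write
$$\mu_h^{CD}(a)=\langle Op_\gamma(a)\,hD_\nu\phi_h|_\gamma,\ hD_\nu\phi_h|_\gamma\rangle_{L^2(\gamma)}+\langle Op_\gamma(a)\,(1+h^2\Delta_\gamma)\phi_h|_\gamma,\ \phi_h|_\gamma\rangle_{L^2(\gamma)}=\int_{B^*\gamma}a\,d\Phi_h^{CD},$$
so the claim is $\mu_h^{CD}(a)\to\omega(a):=\tfrac{4}{\mu(S^*M)}\int_{B^*\gamma}a_0(s,\sigma)(1-|\sigma|^2)^{1/2}\,ds\,d\sigma$. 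The plan is to identify $d\Phi_h^{CD}$, in the limit, with a fixed multiple of the pushforward to $B^*\gamma$ of the interior Wigner measures of $\phi_h$ on $S^*M$ under the ``flux through $\gamma$'' map; since those measures are Liouville by the global QE hypothesis, and the flux pushforward of Liouville measure is exactly $\omega$, the theorem follows. First I would record the a priori bounds: the renormalized Cauchy data $\big(hD_\nu\phi_h|_\gamma,\ (1+h^2\Delta_\gamma)^{1/2}\phi_h|_\gamma\big)$ is bounded in $L^2(\gamma)\oplus L^2(\gamma)$ uniformly in $h$ --- on the elliptic region $|\sigma|<1-\epsilon$ this is the semiclassical restriction estimate for eigenfunctions (Burq--G\'erard--Tzvetkov, Tataru, Tacy), and the weight $1+h^2\Delta_\gamma$, whose symbol $1-|\sigma|^2$ vanishes on the glancing set, absorbs the rest. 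Hence $\mu_h^{N},\mu_h^{RD},\mu_h^{CD}$ are uniformly bounded functionals on $S^0_{sc}(\gamma)$; after passing to a weak-$*$ convergent subsequence it is enough to show every such limit equals $\omega$.

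\emph{The Rellich identity.} Work in Fermi coordinates $(x_n,s)$ along $\gamma$ with $x_n$ normal and $M_+=\{x_n>0\}$ locally. Given $a\in S^0_{sc}(\gamma)$, I would pair the Helmholtz equation $(-h^2\Delta-1)\phi_h=0$ with $\overline{hX\phi_h}$, where $X=Op_h\!\big(\chi(x_n)\,a(s,\sigma)\,\xi_n\big)$ is a first-order semiclassical operator microsupported near $S^*_\gamma M$, with $\chi$ a cutoff equal to $1$ near $x_n=0$ and supported in $|x_n|<\delta$, and integrate the resulting divergence identity over $M_+$. The boundary term on $\gamma$ is the normal stress--energy of $\phi_h$ weighted by $a$; expressing the traces $\phi_h|_\gamma$ and $hD_\nu\phi_h|_\gamma$ through the incoming/outgoing splitting valid on $|\sigma|<1$ (where $hD_\nu$ acts as $\pm(1-|\sigma|^2)^{1/2}$), this boundary term becomes $\mu_h^{CD}(a)$ up to the renormalization weight and errors microsupported at $|\sigma|\ge 1-\epsilon$, while the interior terms form a finite sum $\sum_j\langle Q_j\phi_h,\phi_h\rangle_{L^2(M_+)}$ of matrix elements of genuine semiclassical pseudodifferential operators $Q_j$ on $M$ whose principal symbols are smooth functions on $S^*M$ supported in $|x_n|<\delta$ (built from $\chi,\chi',a$ and $\{|\xi|^2,\cdot\}$). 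Thus
$$\mu_h^{CD}(a)=\sum_j\langle Q_j\phi_h,\phi_h\rangle_{L^2(M)}+E_h^{\mathrm{gl}}(a)+o(1),$$
where $E_h^{\mathrm{gl}}(a)$ is the glancing contribution, which tends to $0$ as $\epsilon\to0$ uniformly in $h$ by the a priori bounds, and the sharp cutoff $\mathbf{1}_{M_+}$ is harmless once one lets $\delta\to0$ after $h\to0$.

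\emph{Applying global QE and computing the flux.} Since $\{\phi_h\}$ is quantum ergodic on $M$, $\langle Q_j\phi_h,\phi_h\rangle\to\tfrac{1}{\mu(S^*M)}\int_{S^*M}\sigma_{Q_j}\,d\mu$ for each $j$, and by the construction of $X$ the sum of these limits is $\tfrac{1}{\mu(S^*M)}$ times the flux of Liouville measure through $\gamma$ tested against $a$. Writing $d\mu$ near $\gamma$ in Fermi coordinates as $ds\,dx_n\,d\theta$, parametrizing the cosphere by $\sigma=\cos\theta$ on the two branches $\xi_n=\pm(1-|\sigma|^2)^{1/2}$ (each contributing a Jacobian $(1-|\sigma|^2)^{-1/2}$), integrating out $x_n$ against $\chi$, and matching the normalization of the Cauchy-data lift, the two signs of $\xi_n$ together with the factor $|\xi_n|^2=1-|\sigma|^2$ carried by the Neumann pairing convert the Jacobians into $(1-|\sigma|^2)^{+1/2}$ and produce the constant $4$. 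Hence $\lim_h\mu_h^{CD}(a)=\omega(a)$ along the subsequence of the previous step, and by uniqueness of the limit for the whole sequence; in particular, taking $Op_\gamma(a)$ to be multiplication by $f\in C^\infty(\gamma)$ gives the restricted Weyl law for the Cauchy data.

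\emph{Main obstacle.} The crux is the symbol calculus in the Rellich step: passing from the surface stress--energy boundary term to the Cauchy-data lift with errors that are $o(1)$ --- not merely $O(1)$ --- as $h\to0$, \emph{uniformly up to glancing}. Away from $|\sigma|=1$ this is the standard incoming/outgoing decomposition, but the Jacobian $(1-|\sigma|^2)^{-1/2}$ degenerates at glancing; taming it is precisely why the theorem is stated for the renormalized Cauchy data rather than Dirichlet data alone, and why the a priori restriction bounds near glancing must be sharp. Handling the interface between the sharp cutoff $\mathbf{1}_{M_+}$ and the pseudodifferential calculus, and verifying that the flux pushforward of Liouville measure is exactly $\omega$ with the stated constant $4/\mu(S^*M)$, are the remaining technical points.
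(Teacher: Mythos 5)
The paper states this theorem without proof, attributing it to Christianson--Toth--Zelditch \cite{CTZ}, so there is no proof in the present text to compare against; but your Rellich--commutator sketch is precisely the strategy of \cite{CTZ}. Pairing $(-h^2\Delta-1)\phi_h=0$ with a tangential semiclassical commutant $hX\phi_h$ supported in a collar of $\gamma$, identifying the boundary terms with the renormalized Cauchy-data lift $d\Phi_h^{CD}$, feeding the interior commutator matrix elements into the global QE hypothesis, and doing the $(1-|\sigma|^2)^{\pm 1/2}$ Jacobian bookkeeping for the Liouville flux through $\gamma$ is exactly the argument there, and the technical points you flag at the end --- the degeneration at glancing, the normalization constant $4/\mu(S^*M)$, and the interface between the sharp cutoff $\mathbf{1}_{M_+}$ and the semiclassical calculus --- are indeed where the real work lies in \cite{CTZ}.
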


When applied to even eigenfunctions under an orientation-reversing isometric involution with separating fixed
point set, the Neumann data vanishes, and we obtain
\begin{corollary} \label{COROLLARY} Let $(M,g)$ have an  orientation-reversing isometric involution with separating fixed point set
$H$ and let $\gamma$ be one of its components.
 Then for any  sequence of  even quantum ergodic eigenfunctions of $(M, g)$,

$$\begin{array}{l}
 \langle Op_{\gamma}(a) (1 +
 h^2 \Delta_{\gamma}) \phi_{h} |_{\gamma}, \phi_{h} |_{\gamma}
\rangle_{L^2(\gamma)} \\ \\ \rightarrow_{h \to 0^+} \frac{4}{\mu(S^*
 M)} \int_{B^*\gamma} a_0(s, \sigma) (1 - | \sigma |^2)^{1/2} d s d\sigma
\end{array}$$

\end{corollary}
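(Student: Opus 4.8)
The plan is to obtain Corollary \ref{COROLLARY} as an immediate consequence of the Cauchy-data quantum ergodicity theorem stated just above (the theorem of \cite{CTZ}), once one checks that the Neumann part of the Cauchy data of an \emph{even} eigenfunction vanishes identically on $\gamma$. First I would put $\sigma$ in normal form near a component $\gamma$ of $H = \mbox{Fix}(\sigma)$. Work in Fermi normal coordinates $(s,y)$ along $\gamma$, so that $\gamma = \{y = 0\}$ and $\partial_\nu = \partial_y$. Along $\gamma$ the differential $d\sigma_{\gamma(s)}$ is an orthogonal involution of $T_{\gamma(s)}M$ fixing the tangent direction $\gamma'(s)$, hence on the one-dimensional normal line it equals $\pm 1$. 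It cannot be $+1$ at any point: an isometry whose differential is $\id$ at a point is the identity on that connected component, contradicting that $\sigma$ is a nontrivial orientation-reversing involution. So $d\sigma(\nu) = -\nu$, and since $\sigma\big(\exp_{\gamma(s)}(y\nu)\big) = \exp_{\gamma(s)}(-y\nu)$, the map $\sigma$ is exactly the normal reflection $(s,y) \mapsto (s,-y)$ near $\gamma$.

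For an even eigenfunction one has $\phi_h \circ \sigma = \phi_h$, so $\phi_h(s,y) = \phi_h(s,-y)$ is even in the normal variable; differentiating at $y = 0$ gives $\partial_\nu \phi_h|_\gamma \equiv 0$. Thus $h D_\nu \phi_h|_\gamma = 0$ as a function on $\gamma$, and therefore $\mu_h^N(a) = \langle Op_\gamma(a)\, h D_\nu \phi_h|_\gamma,\, h D_\nu \phi_h|_\gamma \rangle_{L^2(\gamma)} = 0$ for every symbol $a \in S^0_{sc}(\gamma)$ --- not merely $o(1)$, but exactly zero, and likewise $d\Phi_h^N \equiv 0$. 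By hypothesis $\{\phi_h\}$ is a quantum ergodic sequence of eigenfunctions of $(M,g)$, so the preceding theorem applies and yields
\[
\mu_h^N(a) + \mu_h^D(a) \;\to\; \frac{4}{\mu(S^*M)} \int_{B^*\gamma} a_0(s,\sigma)\,(1-|\sigma|^2)^{1/2}\, ds\, d\sigma, \qquad a \in S^0_{sc}(\gamma),
\]
where $\mu_h^D(a) = \langle Op_\gamma(a)(1 + h^2 \Delta_\gamma)\phi_h|_\gamma, \phi_h|_\gamma \rangle_{L^2(\gamma)}$ is the renormalized Dirichlet lift. Deleting the identically-vanishing term $\mu_h^N(a)$ from the left-hand side leaves precisely the limit asserted in Corollary \ref{COROLLARY}; specializing $Op_\gamma(a)$ to multiplication by a smooth function $f$ gives the final sentence.

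There is essentially no analytic obstacle here: all of the work is already contained in the Cauchy-data QER theorem. The one point deserving care is the symmetry argument above, which must produce \emph{exact} vanishing of the Neumann data, so that the Neumann term may simply be dropped with no remainder estimate; this is exactly where it matters that $\sigma$ is a genuine isometry rather than an approximate symmetry. I would also note that the same computation shows that an \emph{odd} eigenfunction, $\psi_h \circ \sigma = -\psi_h$, has vanishing Dirichlet data $\psi_h|_\gamma \equiv 0$, which is the companion fact entering the proof of Theorem \ref{theoJJ}.
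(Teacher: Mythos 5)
Your proposal is correct and is exactly the paper's argument: the paper deduces Corollary \ref{COROLLARY} from the preceding Cauchy-data quantum ergodicity theorem with the single remark that ``the Neumann data vanishes'' for even eigenfunctions, so that the Neumann term can simply be dropped from the sum. You have merely filled in the short verification (via the local normal form of $\sigma$ along $\gamma$) that the normal derivative of an even eigenfunction is identically zero on $\mbox{Fix}(\sigma)$, which is implicit in the paper.
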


For applications to zeros along $\gamma$,   we need a  limit formula for the
integrals $\int_{\gamma} f \phi_h^2 ds$, i.e.  a quantum ergodicity result for 
%$$\int_{B^* \gamma} a \, d\Phi^D_h : = \langle Op_{\gamma}(a) \phi_h|_{\gamma}, \phi_h|_{\gamma}
%\rangle_{L^2(\gamma)} $$
for Dirichlet data. We   invert the operator $(1 + h^2 \Delta_{\gamma})$ and obtain
% and therefore
% introduce the renormalized microlocal lift of the Neumann data,
%$$\int_{B^* \gamma} a \, d\Phi^{RN}_h : = \langle (1 + h^2\Delta_{\gamma} +
%i0)^{-1} Op_{\gamma}(a) h D_{\nu}\phi_h|_{\gamma}, h D_{\nu}\phi_h |_{\gamma} \rangle_{L^2(\gamma)}. $$

% \cs we should add statement of Thm 2 with $(I+h^2 \Delta_H +i 0)^{-1}$ passed through and general symbols here  \cs

\begin{theorem} \label{thm2}

Assume that $\{\phi_h\}$ is a quantum ergodic sequence on $M$.  Then,  there exists a sub-sequence
of density one as $h \to 0^+$ such that for all  $a \in S^{0}_{sc}(\gamma)$,
$$\begin{array}{ll} 
&\left< (1 + h^2 \Delta_{\gamma} + i0)^{-1} Op_{\gamma}(a)  h D_\nu \phi_h |_{H} , h D_\nu
\phi_h |_{\gamma} \right>_{L^2(\gamma)} + \left< Op_{\gamma}(a)  \phi_{h} |_{\gamma}, \phi_{h} |_{\gamma}
\right>_{L^2(\gamma)} \\
&\rightarrow_{h \to 0^+} \frac{4}{ 2 \pi \mbox{Area}(M)} \int_{B^*\gamma} a_0(s,\sigma) (1 - | \sigma |^2)^{-1/2} d s d \sigma.
\end{array} $$
 \end{theorem}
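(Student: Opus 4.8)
The plan is to derive Theorem~\ref{thm2} from the Cauchy-data quantum ergodicity statement of the previous theorem by inverting the operator $(1+h^2\Delta_\gamma)$ acting on the renormalized Dirichlet term, exactly as the sentence preceding the statement suggests. Start from the conclusion of the preceding theorem: for $a\in S^0_{sc}(\gamma)$ and a density-one subsequence,
\[
\langle Op_\gamma(a)\,hD_\nu\phi_h|_\gamma,\,hD_\nu\phi_h|_\gamma\rangle_{L^2(\gamma)}
+ \langle Op_\gamma(a)(1+h^2\Delta_\gamma)\phi_h|_\gamma,\,\phi_h|_\gamma\rangle_{L^2(\gamma)}
\to \frac{4}{\mu(S^*M)}\int_{B^*\gamma}a_0(s,\sigma)(1-|\sigma|^2)^{1/2}\,ds\,d\sigma.
\]
The idea is to replace $a$ in this identity by the symbol of $(1+h^2\Delta_\gamma+i0)^{-1}$ composed with $Op_\gamma(a)$; since $(1+h^2\Delta_\gamma)$ has principal symbol $1-|\sigma|^2$, on the Neumann term this multiplies the limit density by $(1-|\sigma|^2)^{-1}$, while on the renormalized Dirichlet term the factors $(1+h^2\Delta_\gamma)$ and its inverse formally cancel to leave the plain Dirichlet integral $\langle Op_\gamma(a)\phi_h|_\gamma,\phi_h|_\gamma\rangle$. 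Carrying this through on the right-hand side turns $(1-|\sigma|^2)^{1/2}$ into $(1-|\sigma|^2)^{1/2}[(1-|\sigma|^2)^{-1}+1]$-type expressions; the Neumann contribution becomes $(1-|\sigma|^2)^{-1/2}$ and a careful bookkeeping of which piece of the sum carries which weight yields precisely the stated limit $\frac{4}{2\pi\,\mathrm{Area}(M)}\int_{B^*\gamma}a_0(1-|\sigma|^2)^{-1/2}$, using $\mu(S^*M)=2\pi\,\mathrm{Area}(M)$ for a surface.

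The key steps, in order, are: (1) set up the semiclassical pseudodifferential calculus on $\gamma$, note that $1+h^2\Delta_\gamma$ is elliptic away from $S^*\gamma=\{|\sigma|=1\}$, and make sense of $(1+h^2\Delta_\gamma+i0)^{-1}$ as a properly supported operator with the $+i0$ prescription handling the characteristic variety $|\sigma|=1$; (2) justify that composing the identity above with this inverse is legitimate — i.e. that $(1+h^2\Delta_\gamma+i0)^{-1}Op_\gamma(a)$ can itself be used as a test operator, which requires controlling its symbol near $|\sigma|=1$ and knowing that the Cauchy-data Wigner measures put no mass on the "glancing" set $S^*\gamma$ (this is where one invokes that the limit density $(1-|\sigma|^2)^{1/2}\,ds\,d\sigma$ vanishes there, so the singularity of the inverse symbol is integrable); (3) track the two terms separately: on the Neumann piece the symbol gets divided by $1-|\sigma|^2$, turning $(1-|\sigma|^2)^{1/2}$ into $(1-|\sigma|^2)^{-1/2}$; on the Dirichlet piece the cancellation $(1+h^2\Delta_\gamma+i0)^{-1}(1+h^2\Delta_\gamma)=I$ modulo lower order and modulo contributions supported near $|\sigma|=1$ leaves $\langle Op_\gamma(a)\phi_h|_\gamma,\phi_h|_\gamma\rangle$; (4) assemble the result, checking that the density-one subsequence is the same one furnished by the global quantum ergodicity input and the preceding theorem.

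The main obstacle is step (2)–(3): the operator $(1+h^2\Delta_\gamma)$ is \emph{not} elliptic — its symbol $1-|\sigma|^2$ vanishes on the cotangent directions $S^*\gamma$, which is exactly the set where the "missing" bulk of a restricted eigenfunction's microlocal mass could in principle concentrate. So one must argue that the renormalized Dirichlet microlocal lift $d\Phi_h^{RD}$, and hence the product with $(1+h^2\Delta_\gamma+i0)^{-1}$, does not develop a singular concentration at $|\sigma|=1$ that would invalidate the formal cancellation. The $+i0$ regularization is the device that makes the inverse well-defined as an $h$-pseudodifferential operator of order $0$ with a symbol that is bounded (though not smooth) across the characteristic set, and one needs the a priori bound that $\|\phi_h|_\gamma\|_{L^2(\gamma)}$ and $\|hD_\nu\phi_h|_\gamma\|_{L^2(\gamma)}$ are $O(1)$ (Burq–Gérard–Tzvetkov / Tataru restriction bounds, or the Rellich-type identity used in \cite{CTZ}) to control remainders uniformly in $h$. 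Once the inverse is a legitimate test operator and the glancing set carries no mass, the rest is a routine symbol computation and a limit passage along the density-one subsequence.
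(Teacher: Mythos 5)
Your overall strategy matches the one the paper intends: apply the preceding Cauchy-data QER theorem with $Op_\gamma(a)$ replaced by $(1+h^2\Delta_\gamma+i0)^{-1}Op_\gamma(a)$, use that the principal symbol of $1+h^2\Delta_\gamma$ is $1-|\sigma|^2$ to convert the weight $(1-|\sigma|^2)^{1/2}$ into $(1-|\sigma|^2)^{-1/2}$ on the right, cancel the inverse against $1+h^2\Delta_\gamma$ in the Dirichlet term, and use $\mu(S^*M)=2\pi\operatorname{Area}(M)$. That is precisely what ``we invert the operator $(1+h^2\Delta_\gamma)$'' means in the text, and steps (1), (3), (4) of your outline are fine. (One small caveat: the preceding theorem controls only the \emph{sum} of the Neumann and renormalized Dirichlet terms, not each separately; the language in your middle paragraph about ``which piece carries which weight'' is a red herring --- the substitution acts on the single sum and on the single right-hand integral.)

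The genuine gap is in step (2), and it is your key justifying claim that fails: $(1+h^2\Delta_\gamma+i0)^{-1}$ is \emph{not} an $h$-pseudodifferential operator of order $0$ with bounded symbol. Its boundary-value symbol $(1-|\sigma|^2+i0)^{-1}$ is a distribution, not a bounded function: its real part is a principal value blowing up like $(1-|\sigma|^2)^{-1}$, and its imaginary part is $-\pi\delta(1-|\sigma|^2)$. Consequently $\tilde a=a\cdot(1-|\sigma|^2+i0)^{-1}\notin S^0_{sc}(\gamma)$, and you cannot feed it directly into the preceding theorem, whose hypothesis requires $a\in S^0_{sc}$. What you actually need is a regularization: fix $\epsilon>0$ and replace the inverse by $(1+h^2\Delta_\gamma+i\epsilon)^{-1}$, whose symbol $(1-|\sigma|^2+i\epsilon)^{-1}$ genuinely lies in $S^0_{sc}$. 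Apply the preceding theorem with this symbol, take $h\to0$ along the density-one subsequence to get the limit $\frac{4}{\mu(S^*M)}\int a_0(s,\sigma)\,\frac{(1-|\sigma|^2)^{1/2}}{1-|\sigma|^2+i\epsilon}\,ds\,d\sigma$, and only then send $\epsilon\to0^+$. The $\epsilon\to0$ passage is exactly where the two ingredients you cite come in: the uniform $L^2$ bounds on $\phi_h|_\gamma$ and $hD_\nu\phi_h|_\gamma$ control the extra $O(\epsilon)$ terms in $(1+h^2\Delta_\gamma+i\epsilon)^{-1}(1+h^2\Delta_\gamma)-I$ on the Dirichlet side, uniformly in $h$; and the factor $(1-|\sigma|^2)^{1/2}$ in the limit density tames the singularity of $(1-|\sigma|^2+i\epsilon)^{-1}$ so the $\epsilon\to0$ limit of the right side exists and equals $\int a_0(1-|\sigma|^2)^{-1/2}$. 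Also required is a uniform-in-$\epsilon$ bound that lets you interchange the $h\to0$ and $\epsilon\to0$ limits, and a short argument that no mass of the Cauchy-data Wigner measures concentrates at the glancing set $|\sigma|=1$ in the $h\to0$ limit --- the latter is what makes the $\epsilon$-independent remainder negligible. These are exactly the points your proposal gestures at, but because they are mediated incorrectly through the false boundedness claim rather than through a two-parameter $(h,\epsilon)$ limiting argument, the proof as written does not close.
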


Theorem \ref{useful} follows from Theorem \ref{thm2} since the Neumann term drops out (as before) under
the hypothesis of Corollary \ref{COROLLARY}.

\section{\label{NODALINTSECT} Counting intersections of nodal sets and geodesics}

As discussed in the introduction \S \ref{QERINTRO}, the QER results can be used to obtain results
on intersections of nodal sets with geodesics in dimension two. In general, we do not know how to use
interesection results to obtain lower bounds on numbers of nodal domains unless we assume a symmetry
condition on the surface. But  begin with general results on intersection that do not assume any symmetries.

\begin{theorem}\label{theoN}
Let $(M, g) $ be a   $C^{\infty} $ compact negatively curved surface, and let $H$ be a closed curve which is asymmetric with respect to the geodesic flow. Then for any orthonormal eigenbasis $\{\phi_j\}$ of $\Delta$-eigenfunctions of $(M, g)$, there exists  a density $1$ subset $A$ of $\mathbb{N}$ such that
\[
\left\{\begin{array}{l}
\lim_{\substack{j \to \infty \\ j \in A}} \# \; \ncal_{\phi_j} \cap H = \infty
\\ \\
\lim_{\substack{j \to \infty \\ j \in A}} \# \;  \{x \in H: \partial_{\nu} \phi_j(x) = 0\} = \infty.
\end{array} \right.
\]
Furthermore, there are an infinite number of zeros where  $\phi_j |_H$ (resp. $\partial_{\nu} \phi_j |_H$) changes sign.
\end{theorem}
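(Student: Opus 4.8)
The plan is to deduce the counting statement from the quantum ergodic restriction theorem for Cauchy data (here in the asymmetric case this is Theorem \ref{sctheorem} for the Dirichlet data and Theorem \ref{ND} for the Neumann data), combined with the Courant-type small-ball estimate and the Bers scaling normal form for the nodal set near a zero. I would first set up the two competing estimates along $H$, in exact parallel with the inequalities \eqref{LOG}--\eqref{qer} sketched in \S\ref{QERINTRO}: on one hand a generic (non-symmetry) upper bound of the form $|\int_\beta \phi_{\lambda_j}\,ds|\le C\lambda_j^{-1/2}(\log\lambda_j)^{1/2}$ valid for a density-one subsequence and any arc $\beta\subset H$ (this comes from the $L^2$ bound on restrictions plus the oscillation of the phase; no asymmetry needed), and on the other hand a lower bound $\int_\beta|\phi_{\lambda_j}|\,ds\ge \|\phi_{\lambda_j}\|_\infty^{-1}\|\phi_{\lambda_j}\|_{L^2(\beta)}^2$ where the QER theorem \ref{sctheorem} gives $\|\phi_{\lambda_j}\|_{L^2(\beta)}^2\to \omega(\mathbf 1_\beta)>0$ along a density-one subsequence, while the sup-norm bound \eqref{MAXSUP} (or the improved \eqref{NEGSUP} in the negatively curved case) gives $\|\phi_{\lambda_j}\|_\infty\lesssim\lambda_j^{1/4}$. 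The two estimates together force $\int_\beta|\phi_{\lambda_j}|\,ds \gg \lambda_j^{-1/4}\gg |\int_\beta \phi_{\lambda_j}\,ds|$, so $\phi_{\lambda_j}|_H$ cannot have constant sign on $\beta$.

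Next I would iterate this over a partition of $H$ into arcs. Fix a large integer $K$ and cut $H$ into $K$ sub-arcs $\beta_1,\dots,\beta_K$ of equal length. Applying the QER theorem to the multiplication operator by $\mathbf 1_{\beta_i}$ (or by a smooth bump supported in $\beta_i$) shows $\|\phi_{\lambda_j}\|_{L^2(\beta_i)}^2\to \omega(\mathbf 1_{\beta_i})>0$ simultaneously for all $i$, along a single density-one subsequence (finite intersection of density-one sets). On each $\beta_i$ the generic upper bound on $|\int_{\beta_i}\phi_{\lambda_j}|$ still holds, so the sign-change argument of the previous paragraph applies in each $\beta_i$: for $j$ large in the density-one set, $\phi_{\lambda_j}|_H$ changes sign in every one of the $K$ arcs, hence has at least $K$ sign changes on $H$. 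Since $K$ is arbitrary, $\#\{$sign changes of $\phi_{\lambda_j}|_H\}\to\infty$, and a fortiori $\#(\ncal_{\phi_j}\cap H)\to\infty$. For the Neumann statement one runs the identical argument with $\lambda_j^{-1/2}D_\nu\phi_j|_H$ in place of $\phi_j|_H$, using Theorem \ref{ND} for the lower bound on $\|\lambda_j^{-1/2}D_\nu\phi_j\|_{L^2(\beta_i)}^2$ and the gradient sup-bound $\|\nabla\phi_{\lambda_j}\|_\infty\lesssim\lambda_j\cdot\lambda_j^{1/4}$ (negatively curved improvement of \eqref{MAXSUP} for derivatives) together with a generic bound on $|\int_{\beta_i}D_\nu\phi_{\lambda_j}\,ds|$ to get the sign changes of the normal derivative.

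Finally, a word on why sign changes give honest intersection (and singular/critical) points rather than tangential contacts: by the Bers scaling normal form, near any zero $x_0\in H$ the eigenfunction restricted to $H$ is $C^1$-equivalent to a homogeneous harmonic polynomial $p_N$ restricted to a curve through the origin, so an actual change of sign of $\phi_{\lambda_j}|_H$ forces the nodal set to cross $H$ transversally there, contributing a genuine point of $\ncal_{\phi_j}\cap H$; likewise a sign change of $\partial_\nu\phi_j|_H$ produces a point of $H$ where $\partial_\nu\phi_j=0$. The main obstacle, and the place where the negative-curvature hypothesis is really used, is securing the \emph{generic} upper bound $|\int_{\beta_i}\phi_{\lambda_j}\,ds|\lesssim\lambda_j^{-1/2}(\log\lambda_j)^{1/2}$ along a density-one subsequence together with the logarithmically improved sup-norm bounds; the QER theorems \ref{sctheorem}--\ref{ND} are quoted as black boxes, but one must check that the density-one subsequence on which QER holds can be intersected with the density-one subsequence on which the upper bounds hold, which is immediate since both are density one. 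Everything else is bookkeeping with the partition.
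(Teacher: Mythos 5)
Your high-level strategy matches the paper's: you pit an upper bound on the period $\int_\beta\phi_j\,ds$ against a lower bound on the $L^1$ restriction $\int_\beta|\phi_j|\,ds$ coming from QER and the sup-norm bound, conclude a sign change in each arc, and then partition $H$ into $K$ arcs to get $K$ sign changes along a common density-one subsequence. That is the right skeleton, and the observation that finitely many density-one sets intersect in a density-one set is exactly the bookkeeping the paper uses (with the additional standard diagonalization over a countable dense family of test functions).

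However, there is a genuine gap at the heart of your argument: you do not actually establish the density-one upper bound $|\int_\beta\phi_{\lambda_j}\,ds|=O(\lambda_j^{-1/4}(\log\lambda_j)^{1/2})$. You attribute it to ``the $L^2$ bound on restrictions plus the oscillation of the phase,'' but an $L^2$ restriction estimate bounds $\|\phi_j\|_{L^2(\beta)}$ from above, not the period, and ``oscillation of the phase'' is not a mechanism that produces a bound holding for a density-one subsequence. The paper's actual ingredient is the Kuznecov sum formula (Theorems~\ref{K} and~\ref{KN}), which asserts $\sum_{\lambda_j<\lambda}|\int_\gamma f\phi_j\,ds|^2=c|\int_\gamma f\,ds|^2\sqrt\lambda+O_f(1)$; combined with Chebyshev's inequality in dyadic windows $\lambda<\lambda_j<2\lambda$ and the Weyl count $N(\lambda)\sim\lambda^2$ on a surface, this yields the density-one period bound in~\eqref{EST}. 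You flag the upper bound yourself as ``the main obstacle,'' which is honest, but you then do not resolve it and misattribute its provenance: it needs no curvature hypothesis at all (Kuznecov holds for any boundaryless surface), whereas negative curvature enters only through ergodicity (for QER) and through the B\'erard logarithmic improvement of the sup-norm.

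On that last point, be careful with the arithmetic: you quote $\|\phi_{\lambda_j}\|_\infty\lesssim\lambda_j^{1/4}$ from~\eqref{MAXSUP}, but without the logarithmic gain this is not enough. The contradiction relies on comparing $\log\lambda_j$ (from the lower bound $\int_\beta|\phi_j|\ge\|\phi_j\|_\infty^{-1}\|\phi_j\|_{L^2(\beta)}^2$ with the B\'erard bound~\eqref{NEGSUP}) against $(\log\lambda_j)^{1/2}$ (from Kuznecov); the polynomial powers of $\lambda_j$ cancel exactly, so the logarithmic improvement is not optional. Finally, the appeal to the Bers scaling normal form is unnecessary for this theorem: a sign change of $\phi_j|_H$ on an arc already produces a zero of $\phi_j$ on $H$, hence a point of $\ncal_{\phi_j}\cap H$, regardless of transversality. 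The normal form becomes relevant only when one wants to build graphs and count nodal domains, as in Theorem~\ref{theoJJ}, not for the intersection count here.
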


We now add the assumption of a symmetry as discussed in the introduction in \S \ref{INTERINTRO}.

\begin{theorem}\label{theoS}
Let $(M, g) $ be a compact negatively curved  $C^{\infty} $ surface with an orientation-reversing isometric involution $\sigma : M \to M$ with $\mbox{Fix}(\sigma)$ separating. Let $\gamma \subset \mbox{Fix}(\sigma)$.    Then for any orthonormal eigenbasis $\{\phi_j\}$ of $L_{even}^2(M)$, resp. $\{\psi_j\}$ of $L^2_{odd}(M)$,  one can find a density $1$ subset $A$ of $\mathbb{N}$ such that
\[\left\{ \begin{array}{l}
\lim_{\substack{j \to \infty \\ j \in A}} \# \; \ncal_{\phi_j} \cap \gamma = \infty\\ \\
\lim_{\substack{j \to \infty \\ j \in A}} \# \; \Sigma_{\psi_j} \cap \gamma = \infty.
\end{array} \right.\]
Furthermore, there are an infinite number of zeros where  $\phi_j |_H$ (resp. $\partial_{\nu} \psi_j |_H$) changes sign.

\end{theorem}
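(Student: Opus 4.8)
The plan is to deduce the statement from the two restriction identities highlighted in \S\ref{QERINTRO}, namely the generic bound \eqref{LOG} and the QER-based lower bound \eqref{qer}, applied along each component $\gamma\subset\mbox{Fix}(\sigma)$ — which is a closed geodesic since $\sigma^*g=g$ and $g$ is negatively curved. The key point is that $\gamma$ fails to be asymmetric (it is its own reflection under $\sigma$), so Theorem \ref{sctheorem} does not apply directly; instead one uses the Cauchy-data QER theorem, Theorem \ref{useful} (from \cite{CTZ}), which \emph{does} apply to $H=\mbox{Fix}(\sigma)$ and shows that for a density-one subsequence of the even eigenfunctions $\phi_j$,
\[
\int_\gamma f\,\phi_j^2\,ds \;\longrightarrow\; \frac{4}{2\pi\,\mathrm{Area}(M)}\int_{B^*\gamma} f(s)\,(1-|\sigma|^2)^{-1/2}\,ds\,d\sigma,
\]
so in particular $\|\phi_j\|_{L^2(\gamma)}^2$ is bounded below by a positive constant along the subsequence. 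Combining this with the negatively-curved sup-norm bound $\|\phi_j\|_\infty\lesssim\lambda_j^{1/2}/\log\lambda_j$ of \cite{Be} (see \eqref{NEGSUP}) gives, for any subarc $\beta\subset\gamma$ on which the limit mass is positive, the lower bound
\[
\int_\beta|\phi_j|\,ds \;\ge\; \|\phi_j\|_\infty^{-1}\,\|\phi_j\|_{L^2(\beta)}^2 \;\gtrsim\; \lambda_j^{-1/2}\log\lambda_j,
\]
which is exactly \eqref{qer}. On the other hand \eqref{LOG} — an integration-by-parts estimate for $\int_\beta\phi_j\,ds$ along a geodesic, valid for a density-one subsequence with no symmetry hypothesis — gives $|\int_\beta\phi_j\,ds|\lesssim\lambda_j^{-1/2}(\log\lambda_j)^{1/2}$. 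These two are incompatible if $\phi_j$ has a single sign on $\beta$ for all large $j$, so $\phi_j|_\gamma$ must change sign; moreover, partitioning $\gamma$ into a number of subarcs growing like a small power of $\log\lambda_j$ (on each of which the $L^2(\beta)$-mass is still a definite fraction, by the equidistribution of $\phi_j^2\,ds$ just quoted) forces the number of sign changes of $\phi_j|_\gamma$ — hence $\#(\ncal_{\phi_j}\cap\gamma)$ — to tend to infinity along the density-one set $A$. This handles the even case, the first line of the conclusion.

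For the odd eigenfunctions $\psi_j$, the Dirichlet data $\psi_j|_\gamma$ vanishes identically (since $\sigma\psi_j=-\psi_j$ and $\sigma$ fixes $\gamma$ pointwise), so one instead works with the Neumann data $h_jD_\nu\psi_j|_\gamma$. Here one uses the Neumann QER statement (Theorem \ref{sctheoremNeu}/the Neumann half of Theorem \ref{thm2}, which is the content behind Theorem \ref{useful}): the microlocal lifts $\mu_h^N$ of the Neumann data converge, in the odd case, to the full Liouville-type measure $\frac{4}{\mathrm{vol}(S^*M)}\int_{B^*\gamma}a_0(1-|\sigma|^2)^{1/2}\,ds\,d\sigma$ along a density-one subsequence, so $\|D_\nu\psi_j\|_{L^2(\beta)}^2\gtrsim\lambda_j$ after the appropriate normalization. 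The same sign-change argument — now with \eqref{LOG}, \eqref{qer} applied to $\partial_\nu\psi_j|_\gamma$ in place of $\phi_j|_\gamma$, and the gradient sup-norm bound $\|\nabla\psi_j\|_\infty\lesssim\lambda_j^{1+\epsilon}$ (or the sharper negatively-curved version) — shows $\partial_\nu\psi_j|_\gamma$ changes sign at $\to\infty$ many points. At a point of $\gamma$ where $\psi_j=0$ (automatic) and $\partial_\nu\psi_j=0$, one has $d\psi_j=0$ and the point lies on $\ncal_{\psi_j}$, i.e. it belongs to the singular set $\Sigma_{\psi_j}$; so $\#(\Sigma_{\psi_j}\cap\gamma)\to\infty$, which is the second line. (For odd eigenfunctions one only needs $\mathrm{Fix}(\sigma)\neq\emptyset$, not separating, since the argument is entirely local to $\gamma$ and the Neumann data is nontrivial regardless.)

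The main obstacle, and the step requiring the most care, is establishing the lower bound on the restricted $L^2$-mass on each fixed subarc $\beta$ with an \emph{effective} dependence that survives letting the number of subarcs grow slowly with $\lambda_j$: the QER theorems as stated give convergence for a fixed symbol $a$, so one must either (i) extract a single density-one subsequence that works simultaneously for a countable dense family of test functions and then run a diagonal/compactness argument to control all subarcs of a slowly growing partition, or (ii) quantify the rate in the QER limit. The cleanest route is (i): fix a countable family $\{\beta^{(k)}\}$ of subarcs refining dyadically, take $A$ to be the density-one set on which \eqref{LOG} holds and on which the Cauchy-data (resp. Neumann-data) limit holds for all indicator-type symbols associated to the $\beta^{(k)}$, and observe that for each fixed $m$ the number of sign changes on a partition into $m$ pieces is eventually $\ge m-1$; since $m$ is arbitrary, the number of sign changes $\to\infty$. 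A secondary technical point is the passage from "sign change of $\phi_j|_\gamma$" to "$\phi_j$ changes sign as a function on $M$ across the nodal set at that point", needed for the final sentence of the theorem; this follows from the Bers-type local structure of nodal sets recalled in \S\ref{FOUND} (a sign change of the one-variable restriction at a transversal zero forces a genuine nodal crossing), together with the fact that, along a density-one subsequence, the zeros on $\gamma$ are simple — which one gets from the same \eqref{LOG}/\eqref{qer} dichotomy applied on shrinking subarcs.
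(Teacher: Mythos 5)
Your proposal is essentially correct and tracks the paper's own proof very closely: the paper also derives the theorem by pairing the Cauchy-data QER theorem (Theorem~\ref{useful}/\ref{thm2}, applied to $H=\mbox{Fix}(\sigma)$ precisely because it is \emph{not} asymmetric) with the generic smallness of the signed integrals $\int_\gamma f\phi_j\,ds$ along a density-one subsequence, then uses the sup-norm improvement \eqref{NEGSUP} to turn the QER lower bound on $\|\phi_j\|_{L^2(\beta)}$ into \eqref{qer}, and finally gets arbitrarily many sign changes by testing against a fixed countable family of subarcs and diagonalizing (the paper cites \cite{Ze4}, Lemma~3 for this step). The one ingredient you underemphasize is where the upper bound \eqref{LOG}/\eqref{EST} comes from: the paper obtains it not by a direct integration-by-parts estimate but from the Kuznecov sum formula (Theorems~\ref{K} and \ref{KN} from \cite{Ze9}), which gives $\sum_{\lambda_j<\lambda}|\int_\gamma f\phi_j\,ds|^2 = c|\int_\gamma f|^2\sqrt\lambda + O_f(1)$, followed by a Chebyshev argument to pass to a density-one subsequence; in the odd case the analogous Kuznecov formula for $\lambda_j^{-1/2}\int_\gamma f\,\partial_\nu\psi_j\,ds$ is what you need, with the gradient sup-norm bound in the form $\|\nabla\psi_j\|_\infty\lesssim\lambda_j^{3/2}/\log\lambda_j$ (your $\lambda_j^{1+\epsilon}$ is off but does not affect the outcome). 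Your remarks about only needing $\mbox{Fix}(\sigma)\neq\emptyset$ for the odd case, and about the passage from sign changes of the restriction to genuine transversal nodal crossings via the Bers local structure, both match the paper.
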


We now sketch the proof.

\subsection{Kuznecov sum formula on surfaces}

The first step is to use an old result \cite{Ze9} on the asymptotics of the `periods' $\int_{\gamma} f \phi_j ds$ of eigenfunctions
over closed geodesics when $f$ is a smooth function.

\begin{theorem}\label{K}  \cite{Ze9} (Corollary 3.3)  Let $f \in C^{\infty}(\gamma)$. Then
there exists a constant $c>0$ such that,
\[
\sum_{\lambda_j < \lambda}\left|\int_{\gamma} f \phi_j ds\right|^2 = c\left|\int_{\gamma} f ds\right|^2 \sqrt{\lambda} + O_f(1).
\]
\end{theorem}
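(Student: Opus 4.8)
The plan is to realize the sum $\sum_{\lambda_j < \lambda}\bigl|\int_\gamma f\phi_j\,ds\bigr|^2$ as a spectral integral against an operator whose kernel we can analyze by the wave equation. First I would introduce the pullback-restriction operator $\gamma^*: C^\infty(M)\to C^\infty(\gamma)$ and its adjoint $\gamma^*{}^\dagger$ (integration against $f\,ds$ along $\gamma$, then extension), and observe that $\int_\gamma f\phi_j\,ds = \langle \phi_j, \overline{f}\,\delta_\gamma\rangle$. Then the quantity of interest is
\[
N_f(\lambda) := \sum_{\lambda_j<\lambda} \left|\int_\gamma f\phi_j\,ds\right|^2 = \sum_{\lambda_j<\lambda}\langle R_f \phi_j,\phi_j\rangle,
\]
where $R_f = \gamma^*{}^\dagger M_{|f|^2}\gamma^*$ has Schwartz kernel $R_f(x,y) = \int_\gamma |f(s)|^2\,\delta(x-\gamma(s))\delta(y-\gamma(s))\,ds$, a distribution supported on $\gamma\times\gamma$. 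This is a homogeneous Fourier integral operator associated to the canonical relation given by pairs of covectors lying over $\gamma$ with equal tangential components; in particular it is essentially a pseudodifferential operator on the curve after restriction, and its order is $-1$ relative to the Weyl counting on $M$ — which is exactly why one gets $\sqrt\lambda$ rather than $\lambda^2$.

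The technical backbone is the same as for the pointwise Weyl law (cf. the remark after \eqref{03} and Appendix \S\ref{WAVEAPP}). I would study the smoothed sum
\[
\sum_j \widehat\rho(\lambda-\lambda_j)\,\bigl|\textstyle\int_\gamma f\phi_j\,ds\bigr|^2 \;=\; \frac{1}{2\pi}\int_{\mathbb R}\rho(t)\,e^{it\lambda}\,\Tr\bigl(R_f\,U(t)\bigr)\,dt,
\]
with $\rho$ a suitable cutoff whose Fourier transform is supported near $t=0$, and use the Hadamard parametrix for $U(t)=e^{it\sqrt\Delta}$ on the diagonal in a tube around $\gamma$. The trace $\Tr(R_f U(t)) = \int_\gamma\int_\gamma |f(s)|^2\,U(t,\gamma(s),\gamma(s'))\,ds\,ds'$ is evaluated by stationary phase in $(s,s',t)$ together with the $\theta$-integral of the parametrix: the stationary set for small $|t|$ is $\{s=s',\ t=0\}$, giving a contribution of the form $c\,|\int_\gamma|f|^2\,ds\,|\cdot$ (a half-integer power), while the self-intersections of $\gamma$ with itself under the geodesic flow (return times of geodesics leaving $\gamma$ tangentially, i.e. the closed geodesics among the components of $\mathrm{Fix}(\sigma)$) contribute only to lower-order oscillatory terms that are killed by the smoothing. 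A careful count of the powers of $\theta$ (homogeneity $-\tfrac{n}{2}$ with $n=\dim M=2$ in the parametrix amplitude, an extra $\theta$ from the fact that we integrate over the one-dimensional $\gamma$ in each of two variables, minus the Jacobian factors from stationary phase in three real variables) produces the density $\sim c\,\lambda^{-1/2}$ for the smoothed counting function, hence $c\,\sqrt\lambda$ after a Fourier Tauberian theorem. Note the factor $|\int_\gamma f\,ds|^2$ in the statement rather than $\int_\gamma|f|^2\,ds$: this is because the genuinely leading term comes not from the diagonal $s=s'$ but from the contribution where the phase is stationary due to $f$ being tested against the constant, i.e. from the $\xi=0$ part of the cotangent fiber over $\gamma$ — so in the stationary phase analysis one must separate the frequency variable along $\gamma$ and isolate its value at $0$, where $\widehat{|f|^2}$ collapses to $|\widehat f(0)|^2 = |\int_\gamma f\,ds|^2$.

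The main obstacle, as usual in these arguments, is bounding the remainder: one needs that the set of geodesics leaving $\gamma$ and returning to $\gamma$ has measure zero (or contributes only $O(1)$ after smoothing), which is automatic here since $\gamma$ is a single closed geodesic and the contributions of its iterates are lower order, but making the Tauberian step give the sharp $O_f(1)$ rather than $O_f(\log\lambda)$ or $O_f(\lambda^\epsilon)$ requires the full strength of the Fourier Tauberian theorem with the two-term structure, exactly as in Hörmander's proof of \eqref{PLWL}. Since the result is quoted from \cite{Ze9}, I would in the lecture simply indicate this parametrix-plus-stationary-phase scheme and refer to \cite{Ze9}, \cite{HoIV} for the details of the Tauberian argument; the essential point to retain is the heuristic that restriction to a curve costs $\lambda^{1/2}$ against $\lambda^{n}$, i.e. the ``period'' functional behaves like an FIO of order $-\tfrac12(n-1)$ applied to the spectral measure.
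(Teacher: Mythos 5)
Your general strategy---realize the left side as a trace against the wave group, apply the Hadamard parametrix near $t=0$, and finish with a Tauberian argument---is indeed the correct framework, and is essentially how the result is proved in \cite{Ze9}. However, there is a genuine error at the very first step that propagates through the rest of the sketch. You set $R_f = \gamma^{*\dagger}M_{|f|^2}\gamma^*$ with Schwartz kernel $R_f(x,y)=\int_\gamma |f(s)|^2\,\delta(x-\gamma(s))\,\delta(y-\gamma(s))\,ds$, a distribution concentrated on the \emph{diagonal} of $\gamma\times\gamma$. But then $\langle R_f\phi_j,\phi_j\rangle = \int_\gamma |f(s)|^2\,|\phi_j(\gamma(s))|^2\,ds$, which is the $L^2$-restriction matrix element (the object entering the QER theorems, with a completely different growth rate), \emph{not} $\bigl|\int_\gamma f\phi_j\,ds\bigr|^2$. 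The operator you want is the rank-one composition $T_f^*T_f$ where $T_f\phi=\int_\gamma f\phi\,ds$; its kernel is $K(x,y)=\overline{f(s)}\,f(s')$ spread over all of $\gamma\times\gamma$ (a tensor product, not a diagonal), and the correct trace is
\[
\mathrm{Tr}\bigl(T_f^*T_f\,U(t)\bigr)=\int_\gamma\!\!\int_\gamma \overline{f(s)}\,f(s')\,U\bigl(t,\gamma(s),\gamma(s')\bigr)\,ds\,ds'.
\]
Your final paragraph shows you noticed the mismatch between $\int_\gamma|f|^2\,ds$ and $\bigl|\int_\gamma f\,ds\bigr|^2$, but the proposed fix---``$\widehat{|f|^2}$ collapses to $|\widehat f(0)|^2$''---is false: $\widehat{|f|^2}(0)=\int_\gamma|f|^2\,ds$, not $\bigl|\int_\gamma f\,ds\bigr|^2$. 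With the correct rank-one kernel, the zero Fourier mode of $f$ along $\gamma$ genuinely produces $|\widehat f(0)|^2=\bigl|\int_\gamma f\,ds\bigr|^2$, no collapse needed.

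There is a second, related point worth flagging: the exponent $\sqrt\lambda$ is not obtained by a routine power count. With a codimension-one submanifold $\gamma\subset M^2$, a naive conormal-order argument predicts $\sum_{\lambda_j<\lambda}|\langle\phi_j,u\rangle|^2\sim c\lambda$; the drop to $\sqrt\lambda$ happens because the full phase in $(t,\theta,s,s')$ has \emph{no} nondegenerate interior stationary point (the equations $\partial_t\Phi=0$ and $s=s'$ are incompatible for $t$ small and positive), so the leading singularity comes from a degenerate boundary contribution and is dominated by the zero-frequency component of $f$ along $\gamma$ (the non-zero Fourier modes contribute only $O(1)$). Also, the leading homogeneity in the Hadamard amplitude in dimension $n$ is $\theta^{(n-3)/2}$---so $\theta^{-1/2}$ for $n=2$, not $\theta^{-n/2}$ as you wrote. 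To make your sketch correct, replace $R_f$ by $T_f^*T_f$, carry out stationary phase in $(t,\theta)$ with $(s,s')$ as parameters, observe the critical point degenerates as $s\to s'$, and localize to the zero mode of $f$; the rest of your Tauberian scheme then applies.
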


There is a similar result for the normal derivative  $\partial_{\nu} $ of eigenfunctions along $\gamma$.

\begin{theorem}\label{KN}  Let $f \in C^{\infty}(\gamma)$. Then
there exists a constant $c>0$ such that,
\[
\sum_{\lambda_j < \lambda}\left|\lambda_j^{-1/2} \int_{\gamma} f \partial_{\nu} \phi_j ds\right|^2 = c\left|\int_{\gamma} f ds\right|^2 \sqrt{\lambda} + O_f(1).
\]
\end{theorem}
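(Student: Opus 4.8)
The normalized Neumann data $\lambda_j^{-1/2}\,\partial_\nu\phi_j|_\gamma$ is the natural semiclassical companion of the restriction $\phi_j|_\gamma$ appearing in Theorem~\ref{K} — it is the normalization already used in Theorems~\ref{ND} and~\ref{useful} and in the QER results — so the plan is to reproduce the proof of Theorem~\ref{K} (i.e.\ of \cite{Ze9}, Corollary 3.3) while carrying one normal derivative in each variable. First I would fix $\rho\in\mathcal S(\R)$ with $\hat\rho\in C_0^\infty(\R)$, $\hat\rho\equiv 1$ near $0$ and $\supp\hat\rho\subset(-\epsilon,\epsilon)$ with $\epsilon$ less than the injectivity radius of $(M,g)$, and reduce — by a Fourier Tauberian argument using the monotonicity in $\lambda$ and the positivity of the summands — to the asymptotics of the smoothed sum
\[
P_f(\lambda)\;:=\;\sum_j \lambda_j^{-1}\,\Bigl|\int_\gamma f\,\partial_\nu\phi_j\,ds\Bigr|^2\,\rho(\lambda-\lambda_j).
\]
Writing $N:\phi\mapsto \partial_\nu\phi|_\gamma$ and using that $\lambda_j^{-1}=(\sqrt{\Delta})^{-1}$ on the $\lambda_j$-eigenspace (the constant eigenfunction drops out, $\partial_\nu$ killing it), one has
\[
P_f(\lambda)\;=\;\bigl\langle (\sqrt{\Delta})^{-1}\rho(\lambda-\sqrt{\Delta})\,N^{*}f,\;N^{*}f\bigr\rangle_{L^2(M)}
\;=\;\iint_{\gamma\times\gamma}\Bigl[\partial_{\nu_x}\partial_{\nu_y}K_\lambda(x,y)\Bigr]_{x=\gamma(s),\,y=\gamma(s')}\,f(s)\,\overline{f(s')}\,ds\,ds',
\]
where $K_\lambda$ is the Schwartz kernel of $(\sqrt{\Delta})^{-1}\rho(\lambda-\sqrt{\Delta})$. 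On the frequency window $\supp\rho(\lambda-\cdot)$ one has $(\sqrt{\Delta})^{-1}=\lambda^{-1}(1+O(\lambda^{-1}))$, so up to an error that will be $O(\lambda^{-1/2})$ at the end I may replace $K_\lambda$ by $\lambda^{-1}$ times the kernel of $\rho(\lambda-\sqrt{\Delta})$ analyzed in Proposition~\ref{Kaprop}.

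Next I would insert the Hadamard parametrix exactly as in the proof of Proposition~\ref{Kaprop}: for $|t|<\epsilon$ and $x,y$ close — all that survives on $\gamma\times\gamma$ — one has $e^{-it\sqrt{\Delta}}(x,y)\equiv\int_0^\infty e^{i\theta(r^2(x,y)-t^2)}A(t,x,y,\theta)\,d\theta$ modulo a negligible smooth term, with $A\sim\sum_k\theta^{\frac{n-1}{2}-k}w_k$ and $n=2$; then rescale $\theta\mapsto\lambda\theta$ and apply $\partial_{\nu_x}\partial_{\nu_y}$. The decisive geometric fact is that $\nabla_x r^2(x,y)$ is tangent to the minimizing geodesic from $y$ to $x$, which for $x=\gamma(s)$, $y=\gamma(s')$ with $|s-s'|$ small is an arc of $\gamma$ itself; hence $\partial_{\nu_x}r^2(\gamma(s),\gamma(s'))\equiv 0$, and likewise $\partial_{\nu_y}r^2\equiv 0$, near the diagonal of $\gamma\times\gamma$. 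Consequently the two a priori top–order contributions — both normal derivatives on the exponential, producing $(i\lambda\theta)^2\,\partial_{\nu_x}r^2\,\partial_{\nu_y}r^2$, and one on the exponential with one on the amplitude — vanish identically along $\gamma$, and the surviving leading term comes from the mixed second derivative $(i\lambda\theta)\,\partial_{\nu_x}\partial_{\nu_y}r^2(x,y)$ times $A$, where $\partial_{\nu_x}\partial_{\nu_y}r^2$ at the diagonal equals $-2+O((s-s')^2)$ (the correction being a Jacobi/curvature term). Performing stationary phase in $(t,\theta)$ (critical point $t=r(\gamma(s),\gamma(s'))$, $\theta=\tfrac{1}{2t}$, nondegenerate Hessian $2t$), the single surviving factor $i\lambda\theta$ together with the prefactor $\lambda^{-1}$ restores exactly the power of $\lambda$ found in Proposition~\ref{Kaprop}; thus $\partial_{\nu_x}\partial_{\nu_y}K_\lambda$ restricted to $\gamma\times\gamma$ has the form $\lambda^{\frac{n-1}{2}}\,b_\lambda(s,s')\,e^{i\lambda r(\gamma(s),\gamma(s'))}$ with $b_\lambda$ bounded with bounded derivatives. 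Substituting this into the double integral and running the final step of the proof of Theorem~\ref{K} verbatim — since $\partial_s r(\gamma(s),\gamma(s'))=\pm1$ the phase is non-stationary away from $s=s'$, so the contribution localizes to the diagonal and collapses to a one–dimensional Bessel-type integral — yields $P_f(\lambda)$ asymptotic to a constant times $\bigl|\int_\gamma f\,ds\bigr|^2\,\lambda^{-1/2}$, the constant differing from the Dirichlet one only through the factor $\partial_{\nu_x}\partial_{\nu_y}r^2$ in place of $1$.

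Finally, a Fourier Tauberian theorem (as in \cite{Ze9}) upgrades the smoothed asymptotics of $P_f(\lambda)$ to the sharp counting sum with remainder $O_f(1)$, using that the partial sums are nondecreasing in $\lambda$ and that $\lambda_j^{-1}\bigl|\int_\gamma f\,\partial_\nu\phi_j\,ds\bigr|^2\ge 0$. I expect the main obstacle to be the symbol computation of the middle step: one must confirm that after applying $\partial_{\nu_x}\partial_{\nu_y}$ and inserting the $\lambda_j^{-1}$ weight the Neumann sum has the identical order and $f$-dependence as the Dirichlet sum — which hinges on the vanishing $\partial_\nu r^2\equiv 0$ along $\gamma$ and on the exact value and regularity of $\partial_{\nu_x}\partial_{\nu_y}r^2$ near the diagonal — together with verifying that the Hadamard remainder and the terms in which a derivative falls on a transport coefficient $w_k$ are genuinely of lower order, and, if one insists on the sharp $O_f(1)$ (rather than $o(\sqrt{\lambda})$) remainder, controlling the bounded oscillatory contributions coming from $t$ near the length of $\gamma$ and its integer multiples.
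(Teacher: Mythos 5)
Your strategy — run the proof of Theorem~\ref{K} carrying one normal derivative in each variable, exploit that $\gamma$ is a geodesic so that $\partial_{\nu_x}r^2(x,y)\big|_{\gamma\times\gamma}\equiv 0$ (the gradient of $r^2(\cdot,y)$ at $x$ points along the minimizing geodesic, which for nearby points of $\gamma$ is an arc of $\gamma$), and hence that the surviving top term in $\partial_{\nu_x}\partial_{\nu_y}$ of the Hadamard parametrix comes from $\partial_{\nu_x}\partial_{\nu_y}r^2=-2$ — is exactly the right one, and that geometric observation is the crucial ingredient the paper silently intends by saying the Neumann version is ``similar.'' The paper itself gives no proof of Theorem~\ref{KN}.

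There is, however, a genuine gap in the step where you count powers of $\lambda$, and you in fact flag the symptom yourself without resolving it. You appeal to Proposition~\ref{Kaprop}, where $\hat\rho$ is supported in $[\epsilon/2,\epsilon]$ \emph{away from} $t=0$; with that choice the stationary point is at $t=r\in[\epsilon/2,\epsilon]$, so $\theta_{\mathrm{crit}}=1/(2t)=O(1)$, and the ``surviving factor $i\lambda\theta$'' is indeed $O(\lambda)$. But the Fourier Tauberian step of a Kuznecov formula requires $\hat\rho\equiv 1$ near $0$, and for that choice the kernel $\rho(\lambda-\sqrt\Delta)$ is concentrated near the diagonal of $\gamma\times\gamma$, where $r\sim 1/\lambda$ and the stationary $\theta\sim 1/(2r)$ is of order $\lambda$, not $O(1)$. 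Equivalently, $\rho(\lambda-\sqrt\Delta)(x,y)$ near the diagonal is a Bessel-type kernel of size $\lambda^{n-1}$ (not the oscillatory form $\lambda^{(n-1)/2}a\,e^{i\lambda r}$ of Proposition~\ref{Kaprop}), and one finds that $\partial_{\nu_x}\partial_{\nu_y}$ of a radial kernel $g(r)$ on $\gamma\times\gamma$ equals $g'(r)\cdot\partial_{\nu_x}\partial_{\nu_y}r = -g'(r)/r$; working this out for the Bessel model shows that the two normal derivatives cost an extra $\lambda^2$ relative to the Dirichlet kernel, not an extra $\lambda$. The weight $\lambda_j^{-1}$ coming from $|\lambda_j^{-1/2}(\cdots)|^2$ therefore cancels only one of the two powers, so the Neumann sum (with that normalization) does \emph{not} have the same order as the Dirichlet sum. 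To get the matching $\sqrt\lambda$ rate of Theorem~\ref{K} one needs the normalization $\lambda_j^{-1}\partial_\nu\phi_j$ (i.e.\ $hD_\nu\phi$ as used in the QER theorems in \S\ref{CDsect}); either the theorem statement's exponent is a misprint or $\lambda_j$ there denotes the eigenvalue rather than the frequency. Your assertion that ``the single surviving factor $i\lambda\theta$ together with the prefactor $\lambda^{-1}$ restores exactly the power of $\lambda$ found in Proposition~\ref{Kaprop}'' is where the two incompatible cutoffs are being conflated, and that is the computational core of the argument: it must be redone in the near-diagonal (Bessel) regime, where the exponent bookkeeping comes out differently.

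Two smaller remarks. First, the Tauberian step needs $\rho\ge 0$ (so one should take $\rho=|\psi|^2$ with $\hat\psi$ compactly supported, rather than prescribing $\hat\rho$ directly). Second, the leading coefficient obtained from the diagonal localization is a constant multiple of $\int_\gamma |f|^2\,ds$, not of $\big|\int_\gamma f\,ds\big|^2$; this is how it comes out of either the Bessel computation or the Weyl-type change of variables $u=s-s'$, and the constant for the Neumann case differs from the Dirichlet one by a universal Bessel factor, which is what your remark about $\partial_{\nu_x}\partial_{\nu_y}r^2$ correctly anticipates. None of this affects your geometric reduction, which is the right idea; the gap is in the $\lambda$-power count near the diagonal.
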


These `Kuznecov sum formulae' do not imply individual results about  asymptotic periods of the full
sequence of  eigenfunction. However, because
the terms are positive,  there must  exists a subsequence of eigenfunctions $\phi_j$ of
natural density one so that, for all  $f \in C^{\infty}(\gamma)$,
\begin{equation} \label{EST} \left\{ \begin{array}{l}
\left|\int_{\gamma} f \phi_j ds\right| \\ \\
\lambda_j^{-\half} \left|\int_{\gamma} f \partial_{\nu} \phi_j ds\right|  \end{array} \right.  =O_f( \lambda_j^{-1/4} (\log \lambda_j )^{1/2})
\end{equation}

Indeed, this follows by Chebychev's inequality. Denote by $N(\lambda)$ the number of eigenfunctions in $\{j~|~\lambda<\lambda_j<2\lambda \}$. Then for each $f$, 

\[
\frac{1}{N(\lambda)}|\{j~|~\lambda<\lambda_j<2\lambda,~\left|\int_{\gamma_i} f \phi_j ds\right|^2 \geq \lambda_j^{-1/2}\log \lambda_j \}| = O_f(\frac{1}{\log \lambda}).
\]
It follows that the  upper density of exceptions to \eqref{EST}  tends to zero. We then choose a countable
dense set $\{f_n\}$ and apply the diagonalization argument of \cite{Ze4} (Lemma 3)  or \cite{Zw} Theorem 15.5 step (2)) to conclude that there exists a density
one subsequence for which \eqref{EST} holds for all $f \in C^{\infty}(\gamma)$. The same holds for the normal
derivative.

We then use the argument sketched in \S \ref{QERINTRO} of the introduction. If we combine \eqref{EST}
with the QER result \eqref{qer},  we conclude  that a full density
subsequence of eigenfunctions of a negatively curved surface  must have  an unbounded number of sign changing zeros along $\gamma$. In the asymmetric case we use Theorem \ref{sctheorem} while in the symmetric case we use Theorem
\ref{thm2}  in the QER step.
\bigskip

\noindent{\bf Problem} The QER step and the Kuznecov extend to manifolds with boundary (although the Kuznecov
sum formula of \cite{Ze9} has so far only been proved in the boundaryless case). But the main obstacle to 
generalizing the results on intersections and nodal domains is that the logarithmic improvement in \eqref{NEGSUP}
has not been generalized to the boundary problems with chaotic billiards (to the author's knowledge).

\section{Counting nodal domains}

We now sketch the proof of Theorem \ref{theoJJ}. Granted that there are many zeros of even eigenfunctions
along each component $\gamma$ of $\mbox{Fix}(\sigma)$ and many zeros of normal derivatives of odd eigenfunctions,
the remaining point is to relate such zeros to nodal domains. The nodal sets cross the geodesic transversally but
may link up in complicated ways far from the separating set.  In dimension two, use use the Euler inequality
for graphs to relate numbers of intersection points and numbers of nodal domains. 
\bigskip

\noindent{\bf Problem}: The QER and Kuznecov sum formula results are valid in all dimensions. It is only
the topological step which is simpler in dimension two. Can one find any generalizations of this argument 
to higher dimensions?
\bigskip

In dimension two we can construct an embedded graph from the nodal set
  $\ncal_{\phi_{\lambda}}$ as follows.
\begin{enumerate}
\item For each embeded circle which does not intersect $\gamma$, we add a vertex.
\item Each singular point is a vertex.
\item If $\gamma \not\subset \ncal_{\phi_\lambda}$, then each intersection point in $\gamma \cap \ncal_{\phi_\lambda}$ is a vertex.
\item Edges are the arcs of $\ncal_{\phi_\lambda}$ ($\ncal_{\phi_\lambda} \cup \gamma$, when $\phi_\lambda$ is even) which join the vertices listed above.
\end{enumerate}

We thus obtain a graph  embeded into the surface $M$.
The {\it faces} $f$ of $G$ are the  connected components of $M \backslash V(G) \cup \bigcup_{e \in E(G)} e$.
The set of faces is denoted $F(G)$. An edge $e \in E(G)$ is {\it incident} to $f$ if the boundary of $f$ contains
an interior point of $e$. Every edge is incident to at least one and to at most two faces; if $e$ is incident
to $f$ then $e \subset \partial f$. The faces are not assumed to be cells and the sets $V(G), E(G), F(G)$ are
not assumed to form a CW complex.

Now let $v(\phi_\lambda)$ be the number of vertices, $e(\phi_\lambda)$ be the number of edges, $f(\phi_\lambda)$ be the number of faces, and $m(\phi_\lambda)$ be the number of connected components of the graph. Then by Euler's formula (Appendix F, \cite{g}),
\begin{equation}\label{euler}
v(\phi_\lambda)-e(\phi_\lambda)+f(\phi_\lambda)-m(\phi_\lambda) \geq 1- 2 g_M
\end{equation}
where $g_M$ is the genus of the surface.
We use this inequality to give a lower bound for the number of nodal domains for even and odd eigenfunctions.

In the odd case, we get a lower bound using the large number of singular points of the odd eigenfunctions. Note
that since an odd eigenfunction vanishes on $\mbox{Fix}(\sigma)$, the points where $\partial_{\nu} \psi_j = 0$
belong to the singular set. 

We claim that for  an odd eigenfunction $\psi_j$,
\[
N(\psi_j) \geq \#\left(\Sigma_{\psi_j}\cap \gamma\right) +2 - 2g_M,
\]

Proof: for an odd eigenfunction $\psi_j$, $\gamma \subset \ncal_{\psi_j}$.  Therefore $f(\psi_j)=N(\psi_j)$.
Let $n(\psi_j)=\#\Sigma_{\psi_j}\cap \gamma$ be the number of singular points on $\gamma$. These points correspond to vertices having degree at least $4$ on the graph, hence
\begin{equation} \begin{array}{ll}
0&= \sum_{x:vertices} \mathrm{deg}(x) -2e(\psi_j) \\ \\
&\geq 2\left(v(\psi_j)-n(\psi_j)\right)+4 n(\psi_j)-2e(\psi_j).
\end{array} \end{equation}
Therefore
\[
e(\psi_j)-v(\psi_j) \geq n(\psi_j),
\]
and plugging into \eqref{euler} with $m(\psi_j)\geq 1$, we obtain
\[
N(\psi_j) \geq n(\psi_j) +2 - 2g_M.
\]

Now consider  an even eigenfunction $\phi_j$.  In this case (following the idea of \cite{GRS}), we relate
the number of intersection points of the nodal set with $\gamma$ to the number of `inert nodal domains', i.e.
nodal domains which are invariant under the isometric involution, $\sigma U=U.$ 
We claim that,
\[
N(\phi_j) \geq \frac{1}{2}\#\left(\ncal_{\phi_j} \cap \gamma\right)+1-g_M.
\]

Proof: For an even eigenfunction $\phi_j$, let $N_{in}(\phi_j)$ be the number of  `inert' nodal domain $U$.  Let $N_{sp}(\phi_j)$ be the number of the rest (split nodal domains). From the assumption that $Fix(\sigma)$ is separating, inert domains   intersect $\gamma$ in a finite non-empty  union
of 
relative open intervals,  and $\mbox{Fix}(\sigma)$ divides each inert nodal domain into two connected components. 
Hence each inert nodal domain may correspond to two faces on the graph, depending on whether the nodal domain intersects $\gamma$ or not. Therefore $f(\phi_j)\leq 2N_{in}(\phi_j)+N_{sp}(\phi_j)$. 

Each point in $\ncal_{\phi_j} \cap \gamma$ corresponds to a vertex having degree at least $4$ on the graph.
The nodal intersections are non-singuar points.  Hence by the same reasoning as the odd case, we have
\[
N(\phi_j) \geq N_{in}+\frac{1}{2}N_{sp}(\phi_j) \geq \frac{f(\phi_j)}{2}\geq \frac{n(\phi_j)}{2} +1 - g_M
\]
where $n(\phi_j)=\#\ncal_{\phi_j} \cap \gamma$.

This concludes the proof of Theorem \ref{theoJJ}.

\section{Analytic continuation of eigenfuntions to the  complex domain} 

We next  discuss three results that use analytic continuation of eigenfunctions
to the complex domain.  First is the Donnelly-Fefferman volume bound Theorem \ref{DF}. 
We sketch a somewhat simplified proof which will appear in more detail in \cite{Ze0}. Second we discuss
the equidistribution theory of nodal sets in the complex domain in the ergodic case \cite{Ze5} and in the
completely integrable case \cite{Ze8}. Third, we discuss nodal intersection bounds. This includes bounds
on the number of nodal lines intersecting the boundary in \cite{TZ} for the Dirichlet or Neuman problem
in a plane domain, the number (and equi-distribution) of nodal intersections with geodesics in the complex
domain \cite{Ze6} and results on nodal intersections and nodal domains for the modular surface 

\subsection{Grauert tubes} 

. As examples, we have:

\begin{itemize}

\item  $M = \R^m/\Z^m$ is $M_{\C} = \C^m/\Z^m$.

\item   The unit sphere $S^n$ defined by  $x_1^2 + \cdots +
x_{n+1}^2 = 1$ in $\R^{n+1}$ is complexified as the complex
quadric $S^2_{\C} = \{(z_1, \dots, z_n) \in \C^{n + 1}: z_1^2 +
\cdots + z_{n+1}^2 = 1\}. $

\item  The hyperboloid model of hyperbolic space is the
hypersurface in $\R^{n+1}$ defined by
$$\Hh^n = \{ x_1^2 + \cdots
x_n^2 - x_{n+1}^2 = -1, \;\; x_n > 0\}. $$ Then,
$$H^n_{\C} = \{(z_1, \dots, z_{n+1}) \in \C^{n+1}:  z_1^2 + \cdots
z_n^2 - z_{n+1}^2 = -1\}. $$

\item Any real algebraic subvariety of $\R^m$ has a similar
complexification.

\item Any  Lie group $G$ (or symmetric space) admits a
complexification $G_{\C}$.
\end{itemize}

Let us consider examples of holomorphic continuations of
eigenfunctions:

\begin{itemize}

\item On  the flat torus $\R^m/\Z^m,$   the real eigenfunctions
are $\cos \langle k, x \rangle, \sin \langle k, x \rangle$ with $k
\in 2 \pi \Z^m.$ The complexified torus is $\C^m/\Z^m$ and the
complexified eigenfunctions are $\cos \langle k, \zeta \rangle,
\sin \langle k, \zeta \rangle$ with $\zeta  = x + i \xi.$

\item On the unit sphere $S^m$, eigenfunctions are restrictions of
homogeneous harmonic functions on $\R^{m + 1}$. The latter extend
holomorphically to holomorphic harmonic polynomials on $\C^{m +
1}$ and restrict to holomorphic function on $S^m_{\C}$.

\item On $\H^m$, one may use the hyperbolic plane waves $e^{ (i
\lambda + 1) \langle z, b \rangle}$, where  $\langle z, b \rangle$
is the (signed) hyperbolic distance of the horocycle passing
through $z$ and $b$ to $0$. They  may be holomorphically extended
to the maximal tube of radius $\pi/4$.

\item
 On  compact hyperbolic quotients ${\bf
H}^m/\Gamma$, eigenfunctions can be then represented by Helgason's
generalized Poisson integral formula \cite{H},
$$\phi_{\lambda}(z) = \int_B e^{(i \lambda + 1)\langle z, b
\rangle } dT_{\lambda}(b). $$ Here, $z \in D$ (the unit disc), $B
=
\partial D$, and $dT_{\lambda}
\in \dcal'(B)$ is the boundary value of $\phi_{\lambda}$, taken in
a weak sense along circles centered at the origin $0$.
 To  analytically continue $\phi_{\lambda}$ it suffices  to
analytically continue $\langle z, b\rangle. $ Writing the latter
as  $\langle \zeta, b \rangle,  $ we have:
\begin{equation} \label{HEL} \phi_{\lambda}^{\C} (\zeta) = \int_B e^{(i \lambda +
1)\langle \zeta, b \rangle } dT_{\lambda}(b). \end{equation}

\end{itemize}

 The ($L^2$-normalizations of the) modulus
squares 
\begin{equation} \label{HUSIMI} |\phi_j^{\C}(\zeta)|^2: M_{\epsilon}  \to \R_+ \end{equation}
are sometimes known as Husimi functions. They are holomorphic extensions
of $L^2$-normalized functions but are not themselves $L^2$ normalized on $M_{\epsilon}$.
However, as will be discussed below, their $L^2$ norms may on the Grauert tubes
(and their boundaries) can be determined.  One can then ask how the  mass
of the normalized Husimi function is distributed in phase space, or how the $L^p$ norms
behave.

\begin{center}
\includegraphics[scale=0.8]{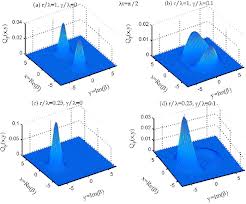} 
\end{center}

\subsection{Weak * limit problem for Husimi measures in the complex domain}
One of the general problems of quantum dynamics is to determine  all of the weak* limits of the sequence, 

$$\{\frac{|\phi_j^{\C}(z)|^2}{||\phi_j^{\C}||_{L^2(\partial M_{\epsilon})}}  d\mu_{\epsilon} \}_{j = 1}^{\infty}.$$
Here, $d\mu_{\epsilon}$ is the natural measure on $\partial M_{\epsilon}$ corresponding to the contact volume
form on $S^*_{\epsilon} M$.  Recall that a sequence $\mu_n$ of probability measures on a compact space $X$ is said to converge weak* to a measure $\mu$
if $\int_X f d\mu_n \to \int_X f d\mu$ for all $f \in C(X)$. 
We refer to Theorem \ref{W*HUSIMIERG}  for the ergodic case. In the integrable case one has localization
results, which are not presented here.

\subsection{\label{PSH} Background on currents and PSH functions}

We next consider logarithms of Husimi functions, which are PSH = (pluri-subharmonic) functions on $M_{\epsilon}$. 
A  function $f$ on a domain in a complex manifold is PSH if
$i \ddbar f $ is a {\it positive (1,1) current}.  That is, $i \ddbar f$ is a singular form of type $\sum_{i \bar{j}}
a_{i \bar{j}} dz^i \wedge d \bar{z}^k$ with $(a_{j \bar{k}})$ postive definte Hermitian.
If $f$ is a local holomorphic function, then $\log |f(z)|$ is PSH and $i \ddbar \log |f(z)| = [Z_f]$.  General
references are \cite{GH,HoC}.

In Theorem \ref{ERGCXZ}, 
% $Z_{\phi_j} = \frac{i}{2 \pi}\ddbar \log |\phi_j^{\C}|^2$
 we regard the zero set  $[Z_f] $ as a {\it current of integration}, i.e. as a
linear functional on $(m-1, m-1)$ forms $\psi$
$$\langle [Z_{\phi_j} ], \psi \rangle = \int_{Z_{\phi_j} } \psi. $$
Recall that a {\it  current} is a linear functional (distribution) on smooth forms. We refer to \cite{GH} for 
background. On a complex manifold one has
$(p,q)$ forms with $p$ $dz_j$ and $q$ $d \bar{z}_k$'s. 
In \eqref{MEAS} we  use the K\"ahler hypersurface volume form $\omega_g^{m-1}$ (where $\omega_g = i \ddbar \rho$) to make $Z_{\phi_j} $ into a measure:
$$\langle [Z_{\phi_j} ], f \rangle = \int_{Z_{\phi_j} } f \omega_g^{m-1}, \;\;\; (f \in C(M)). $$

A sequence of $(1,1)$ currents $E_k$ converges weak* to a current $E$ if 
$\langle E_k, \psi \rangle \to \langle E, \psi \rangle$ for all smooth $(m-1, m-1)$ forms. Thus, for all $f$
$$ [Z_{\phi_j} ]  \rightarrow i \ddbar \sqrt{\rho} \iff \int_{Z_{\phi_{j}} } f \omega^{m-1} \to i \int_{M_{\epsilon}}
f  \ddbar \sqrt{\rho} \wedge \omega^{m-1, m-1}. $$

\subsection{\label{PLLSEC} Poincar\'e-Lelong formula}

One of the two key reasons for the gain in simplicity is that there exists a simple
analytical formula for the delta-function on the nodal set. 
The  {\it Poincar\'e-Lelong formula}  gives an exact formula  for the delta-function on the zero set of $\phi_j$ 
\begin{equation} \label{PLLb} i \ddbar \log  |\phi_j^{\C}(z)|^2 = [\ncal_{\phi_j^{\C}}]. \end{equation}
\bigskip
Thus, if $\psi $ is an $(n-1, n-1)$ form,
$$ \int_{\ncal_{\phi_j^{\C}}} \psi = \int_{M_{\epsilon}}  \psi \wedge i \ddbar \log  |\phi_j^{\C}(z)|^2. $$

\subsection{Pluri-subharmonic functions and compactness}

In the real domain, we have emphasized the problem of finding weak* limits
of the probability measures \eqref{SQUARE} and of their microlocal lifts
or Wigner measures in phase space. The same problem exists in the complex
domain for the sequence of Husimi functions \eqref{HUSIMI}. However, there
also exists a new problem involving the sequence of normalized logarithms
\begin{equation} \label{LOGS}\{ u_j: = \frac{1}{\lambda_{j}} \log |\phi_{j}^{\C}(z)|^2\}_{j = 1}^{\infty}. \end{equation}
A key fact is that this sequence is pre-compact in $L^p(M_{\epsilon})$ for all $p < \infty$
and even that 
 \begin{equation}\{ \frac{1}{\lambda_{j}} \nabla \log |\phi_{j}^{\C}(z)|^2\}_{j = 1}^{\infty}. \end{equation}  is pre-compact in $L^1(M_{\epsilon})$.

\begin{lem} \label{HARTOGS}  (Hartog's Lemma;  (see
\cite[Theorem~4.1.9]{HoI}): Let $\{v_j\}$ be a sequence of subharmonic functions in an
open set $X \subset \R^m$ which have a uniform upper bound on any
compact set. Then either $v_j \to -\infty$ uniformly on every
compact set, or else there exists a subsequence $v_{j_k}$ which is
convergent to some  $u \in L^1_{loc}(X)$. Further,  $\limsup_n
u_n(x) \leq u(x)$ with equality almost everywhere. For every
compact subset $K \subset X$ and every continuous function $f$,
$$\limsup_{n \to \infty} \sup_K (u_n - f) \leq \sup_K (u - f). $$
In particular, if $f \geq u$ and $\epsilon > 0$, then $u_n \leq f
+ \epsilon$ on $K$ for $n$ large enough. \end{lem}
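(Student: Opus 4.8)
The plan is to give the classical proof, resting on three elementary facts about a subharmonic function $v$ on $X\subset\R^m$: (a) the sub-mean value inequality $v(x)\le \frac{1}{|B_r|}\int_{B(x,r)}v\,dy=:A(v;x,r)$ whenever $\overline{B(x,r)}\subset X$; (b) $A(v;x,r)$ is nondecreasing in $r$; and (c) $A(v;x,r)\to v(x)$ as $r\to 0$ for the upper-semicontinuous representative. We may assume $X$ connected (otherwise argue on each component). I would first prove the dichotomy. Suppose $v_j$ does \emph{not} tend to $-\infty$ uniformly on every compact set. Then there is a compact $K_0\subset X$, a constant $C_0$, and a subsequence (not relabeled) with $\sup_{K_0}v_j\ge -C_0$ for all $j$, hence points $x_j\in K_0$ with $v_j(x_j)\ge -C_0-1$. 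Fixing $\rho$ with $\overline{B(x,\rho)}\subset X$ for all $x\in K_0$, fact (a) gives $\int_{B(x_j,\rho)}v_j\,dy\ge -(C_0+1)|B_\rho|$; combined with the standing upper bound $v_j\le C$ on compacts, this is a uniform $L^1$ bound on $B(x_j,\rho)$.

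The technical heart of this half is to propagate that one local bound to a uniform $L^1_{\mathrm{loc}}(X)$ bound along the subsequence. Cover a connected open set containing any target compact $K'$ by finitely many small balls $B_1,\dots,B_N$, arranged so that consecutive ones lie in a common ball compactly contained in $X$; then push the bound ball-to-ball: from $\int_{B_i}v_j\ge -a_i$ and $v_j\le C$, Chebyshev's inequality produces a point $z_i\in B_i$ with $v_j(z_i)\ge -L_i$, $L_i$ controlled by $a_i$; applying (a) on a ball $B(z_i,s)\supset B_{i+1}$ (still compactly in $X$) together with $v_j\le C$ on $B(z_i,s)\setminus B_{i+1}$ yields $\int_{B_{i+1}}v_j\ge -a_{i+1}$. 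After $N$ steps one controls $\int_{K'}v_j$ uniformly in $j$, and uniformly in the starting ball since there are finitely many. I expect this chaining to be the \textbf{main obstacle}: it is the step where subharmonicity is genuinely used, and the constants must be tracked carefully; but it is entirely elementary.

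Once the subsequence is bounded in $L^1_{\mathrm{loc}}(X)$, I would invoke the standard compactness fact for subharmonic functions: writing $v_j$ locally as a harmonic function plus the Newtonian potential of the positive measure $\Delta v_j$ (whose local masses are bounded, since pairing with a nonnegative bump $\psi$ gives $\int\psi\,\Delta v_j=\int v_j\,\Delta\psi\le \|v_j\|_{L^1(\operatorname{supp}\psi)}\|\Delta\psi\|_\infty$), one extracts a further subsequence $v_{j_k}\to u$ in $L^1_{\mathrm{loc}}$ with $u$ equal a.e.\ to a subharmonic function, which I take as $u$. For the pointwise statement: fix $x$ and small $r$; by (a), $v_{j_k}(x)\le A(v_{j_k};x,r)$, and $A(v_{j_k};x,r)\to A(u;x,r)$ since averaging is integration against a bounded kernel; letting $r\to 0$ and using (c), (b) gives $\limsup_k v_{j_k}(x)\le u(x)$ for every $x$. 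Equality a.e.\ follows from reverse Fatou applied to $C-v_{j_k}\ge 0$ on a compact $K$: this gives $\int_K\limsup_k v_{j_k}\ge \int_K u$, so the nonnegative function $u-\limsup_k v_{j_k}$ has zero integral over $K$ and hence vanishes a.e.

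Finally, for the uniform estimate $\limsup_k\sup_K(v_{j_k}-f)\le\sup_K(u-f)$ with $f\in C(K)$: were it false, there would be $\delta>0$, a sub-subsequence, and points $x_k\in K$ with $v_{j_k}(x_k)-f(x_k)\ge \sup_K(u-f)+\delta$; pass to $x_k\to x^*\in K$. Using continuity of $f$, then (a) on a fixed small ball about $x_k$, then comparing $B(x_k,r)$ to $B(x^*,r)$ up to a set of measure $O(|x_k-x^*|)$ on which $v_{j_k}\le C$, and finally $L^1_{\mathrm{loc}}$ convergence on $B(x^*,r)$, one gets $\sup_K(u-f)+f(x^*)+\tfrac{\delta}{2}\le A(u;x^*,r)$; sending $r\to 0$ contradicts $u(x^*)-f(x^*)\le\sup_K(u-f)$. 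The concluding ``in particular'' is then immediate: if $f\ge u$ on $K$ then $\sup_K(u-f)\le 0$, so $\sup_K(v_{j_k}-f)<\epsilon$ for $k$ large, i.e.\ $v_{j_k}\le f+\epsilon$ on $K$.
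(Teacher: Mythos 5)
The paper does not prove Lemma \ref{HARTOGS}; it is cited as \cite[Theorem~4.1.9]{HoI}, and your argument is, in substance, the classical proof found there: the sub-mean-value inequality together with the uniform upper bound gives a local $L^1$ bound near any point where the sequence does not collapse, a Harnack-type chain propagates that bound to arbitrary compacta, Riesz decomposition and weak-$*$ compactness of the bounded positive measures $\Delta v_j$ give an $L^1_{\rm loc}$-convergent subsequence, and the pointwise and uniform estimates then follow from the sub-mean-value inequality and $L^1_{\rm loc}$ convergence. The proof is correct.

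Two small points to tighten. First, in the final contradiction argument, the pointwise bound $v_{j_k}\le C$ controls only the contribution from $B(x_k,r)\setminus B(x^*,r)$ (there you are adding mass that is $\le C$, which can only lower the average by an $O(|x_k-x^*|)$ amount); the piece you must \emph{remove}, on $B(x^*,r)\setminus B(x_k,r)$, needs a \emph{lower} bound on $v_{j_k}$, which the upper bound does not supply. This is still covered by your list of ingredients, but it deserves a line: since $v_{j_k}\to u$ in $L^1(B(x^*,2r))$ and $|B(x^*,r)\setminus B(x_k,r)|\to 0$, absolute continuity of $|u|\,dx$ gives $\int_{B(x^*,r)\setminus B(x_k,r)}|v_{j_k}|\le \|v_{j_k}-u\|_{L^1(B(x^*,2r))}+\int_{B(x^*,r)\setminus B(x_k,r)}|u|\to 0$, which closes the comparison of averages. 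Second, the stated dichotomy really does require $X$ connected, as in H\"ormander: your parenthetical ``argue on each component'' does not recover the stated alternative for disconnected $X$, since uniform collapse to $-\infty$ can occur on one component while $L^1_{\rm loc}$ convergence occurs on another. In the paper's application $X$ is a Grauert tube and hence connected, so this is harmless, but the hypothesis should be noted.
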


\subsection{\label{LOGWEAK*} A general weak* limit problem for logarithms of Husimi functions}

The study of exponential  growth rates gives rise to a new kind new weak* limit problem  for complexified
eigenfunctions.

\begin{prob} Find the weak* limits $G$ on $M_{\epsilon}$ of  sequences
$$\frac{1}{\lambda_{j_k}} \log |\phi_{j_k}^{\C}(z)|^2 \to G?? $$
( The limits are actually in $L^1$ and not just weak. )
\end{prob}
\bigskip

See Theorems \ref{ALSOLOGLIM} ,  \ref{LOGLIMERG} and  \ref{ZEROWEAK}    for the solution to this problem 
(modulo sparse subsequences) in the  ergodic case.
% and Theorem \ref{QCIGR} for the integrable case.

Here is a  general Heuristic principle to pin down the possible $G$: 
If $\frac{1}{\lambda_{j_k} } \log |\phi_{j_k}^{\C}(z)|^2 \to G(z)$  then
$$ |\phi_{j_k}^{\C}(z)|^2 \simeq e^{\lambda_j G(z)} (1 + \mbox{SOMETHING SMALLER }) \; (\lambda_j \to \infty).$$
\bigskip

But $\Delta_{\C} |\phi_{j_k}^{\C}(z)|^2  = \lambda_{j_k}^2 |\phi_{j_k}^{\C}(z)|^2$, so we should have

\begin{conj} Any limit $G$ as above solves the Hamilton-Jacobi equation,
$$(\nabla_{\C} G)^2 = 1. $$
\end{conj}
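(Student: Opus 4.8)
The plan is to make the heuristic rigorous by turning the eigenvalue equation, under the scaling $\lambda_j\to\infty$, into a limiting first-order PDE for $G$. First I would recall that the complexified eigenfunction $\phi_j^\C$ satisfies $\Delta_\C \phi_j^\C = -\lambda_j^2\phi_j^\C$, where $\Delta_\C$ is the holomorphic continuation of the Laplacian (an operator with holomorphic coefficients on $M_\C$). Writing $u_j = \lambda_j^{-1}\log|\phi_j^\C(z)|^2$, a direct computation (using $\log|\phi_j^\C|^2 = \log\phi_j^\C + \log\overline{\phi_j^\C}$ away from the zero set, and the real Laplacian $\Delta$ on $M_\epsilon$) gives
\begin{equation}
\Delta u_j = \lambda_j^{-1}\,\frac{\Delta|\phi_j^\C|^2}{|\phi_j^\C|^2} - \lambda_j^{-1}\,\frac{|\nabla|\phi_j^\C|^2|^2}{|\phi_j^\C|^4}
\end{equation}
on the complement of $\ncal_{\phi_j^\C}$, while globally $i\ddbar u_j \geq 0$ as currents by Poincar\'e--Lelong. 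The key algebraic point is that $|\nabla \phi_j^\C|^2/|\phi_j^\C|^2$, suitably interpreted, is exactly $\lambda_j^2|\nabla_\C u_j|^2/4$ up to lower-order terms, and $\Delta|\phi_j^\C|^2$ contributes a term of size $\lambda_j^2$ via the eigenvalue equation; so after dividing by $\lambda_j^2$ the leading balance is $|\nabla_\C u_j|^2 = 1 + O(\lambda_j^{-1})$ in a distributional/viscosity sense.

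Next I would invoke the compactness already established in the excerpt: by Lemma~\ref{HARTOGS} (Hartogs) and the stated precompactness of $\{u_j\}$ in $L^p(M_\epsilon)$ and of $\{\lambda_j^{-1}\nabla u_j\}$ in $L^1(M_\epsilon)$, any weak* limit $G$ is a well-defined PSH function with $\nabla G \in L^1$. I would then pass to the limit in the scaled equation. The careful point is handling the zero set: the currents $i\ddbar u_j = \lambda_j^{-1}[\ncal_{\phi_j^\C}]$ converge to $i\ddbar G$, and by Theorem~\ref{ERGCXZ} (in the ergodic case) or the corresponding integrable-case result this limit is a fixed nonzero current, so $G$ is genuinely PSH and not pluriharmonic. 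Away from $\supp(i\ddbar G)$, $G$ is pluriharmonic and the gradient identity $|\nabla_\C G|^2 = 1$ should hold classically; across the limiting nodal current one interprets the equation in the viscosity sense, which is the natural framework for eikonal-type equations with the PSH (hence subsolution) structure. Here $(\nabla_\C G)^2$ means $\sum_j (\partial G/\partial z_j)(\partial G/\partial \bar z_j)$ in the natural Hermitian structure induced by $g$, i.e. the symbol of $\Delta_\C$ evaluated on $dG$.

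The main obstacle I anticipate is precisely the rigorous passage to the limit through the nodal divisor. The naive term-by-term limit of the Bochner-type identity above is only valid on $M_\epsilon \setminus \ncal_{\phi_j^\C}$, and one must control the behavior near zeros uniformly in $j$ — the zero sets move and their volumes grow like $\lambda_j$, so one cannot simply excise a fixed neighborhood. The remedy I would pursue is to test the scaled equation against smooth compactly supported forms, use the precompactness of $\lambda_j^{-1}\nabla u_j$ in $L^1$ to pass the nonlinear term $|\nabla_\C u_j|^2$ to the limit along a further subsequence (upgrading $L^1$ convergence of gradients to a.e. convergence), and then identify $G$ with the unique viscosity solution of the eikonal equation with the appropriate boundary data on $M$ (where $G|_M = 0$ since $\phi_j$ is bounded on the totally real $M$). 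Comparison with the known explicit solution $G(z) = 2\sqrt{\rho}(z)$ (the Grauert tube function, which satisfies $(\nabla_\C\sqrt\rho)^2 = \tfrac14$ by the identity $E^*\sqrt\rho = |\xi|$ from \cite{GS1,LS1}) then both confirms the conjectured equation and pins down $G$ in the ergodic case, matching the statement that $\lambda_j^{-1}\log|\phi_j^\C|^2 \to 2|\tau|$ along a generic geodesic in Theorem~\ref{NODINT}.
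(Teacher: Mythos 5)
This statement is labeled a \textbf{Conjecture} in the paper and carries no proof there --- the paper offers only the heuristic derivation that precedes it (``so we should have $\ldots$''). Your proposal follows the natural route one would attempt, but it contains genuine gaps, and those gaps are essentially why the statement remains open. The principal conceptual issue is a conflation of two different Laplacians. Your Bochner-type identity is for the real Laplacian of the K\"ahler metric on the Grauert tube $M_\epsilon$ applied to $\log|\phi_j^\C|^2$. But the eigenvalue equation $\Delta_\C\phi_j^\C = -\lambda_j^2\phi_j^\C$ involves $\Delta_\C$, the holomorphic continuation of $\Delta_g$, whose coefficients are holomorphic and whose principal symbol is a holomorphic (not Hermitian) quadratic form on $T^*M_\C$. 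Away from the real locus $M$ these are genuinely different operators, so the step where ``$\Delta|\phi_j^\C|^2$ contributes a term of size $\lambda_j^2$ via the eigenvalue equation'' does not go through as written. Indeed the paper's own heuristic ``$\Delta_\C|\phi_j^\C|^2 = \lambda_j^2|\phi_j^\C|^2$'' is formally ill-posed, since $|\phi_j^\C|^2 = \phi_j^\C\,\overline{\phi_j^\C}$ is not holomorphic and a second-order operator with holomorphic coefficients does not simply pass through the anti-holomorphic factor. Your reinterpretation of $(\nabla_\C G)^2$ as $\sum_j (\partial G/\partial z_j)(\partial G/\partial\bar z_j)$ (a Hermitian form) is not the symbol of $\Delta_\C$ (a holomorphic form); deciding which quadratic form the conjecture actually means, and which limiting PDE it corresponds to, is itself part of the unresolved problem.

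The second gap is the passage to the limit. The precompactness recorded in the paper gives $u_j\to G$ in $L^p$ and $\lambda_j^{-1}\nabla\log|\phi_j^\C|^2$ precompact in $L^1(M_\epsilon)$; this is not strong enough to pass the quadratic quantity $|\nabla_\C u_j|^2$ to the limit. Even after extracting a.e.\ convergent subsequences, one would need uniform integrability of $|\nabla u_j|^2$, and that is precisely what fails near the nodal divisor $\ncal_{\phi_j^\C}$, where $\nabla u_j$ has non-square-integrable singularities and the divisor itself moves with $j$ and has volume growing like $\lambda_j$. You flag this obstruction correctly, but the proposed remedy --- a.e.\ convergence of gradients plus viscosity solutions --- does not resolve it: there is no established viscosity framework for a complex eikonal equation $(\nabla_\C G)^2 = 1$ on a Grauert tube. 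The PSH-subsolution structure you invoke is the right notion of weak solution for the complex Monge-Amp\`ere equation, not for a first-order Hamilton--Jacobi equation of this type. Comparison with the known ergodic limit $\sqrt{\rho}$ of Theorem \ref{LOGLIMERG} is a useful consistency check on the conjectured equation, but a consistency check is not a proof, and the plan as written does not close either of these gaps.
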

\bigskip

(Note: The weak* limits of $\frac{|\phi_j^{\C}(z)|^2}{||\phi_j^{\C}||_{L^2(\partial M_{\epsilon})}}  d\mu_{\epsilon}$ must
be supported in $\{G = G_{\max}\}$ (i.e. in the set of maximum values).

\section{\label{POISSONSZEGOSECT} Poisson operator and Szeg\"o operators on Grauert tubes}

\subsection{Poisson operator and analytic Continuation of eigenfunctions}

The half-wave group of $(M, g)$ is the unitary group
$U(t)  = e^{i t \sqrt{\Delta}}$ generated by the square root of the positive Laplacian. Its Schwartz kernel
is a distribution on $\R \times M \times M$ with  the eigenfunction
expansion
\begin{equation} \label{Ut} U(t, x, y) = \sum_{j = 0}^{\infty} e^{i
t  \lambda_j} \phi_{ j}(x) \phi_j(y).  \end{equation}

By the Poisson operator we mean the analytic continuation of$U(t)$ to positive imaginary time,
\begin{equation} \label{POISSON} e^{- \tau \sqrt{\Delta}}  = U(i \tau) .\end{equation}
The eigenfunction expansion then converges absolutely to a real analytic function on $\R_+ \times M \times M$.

Let $A(\tau)$ denote the operator of analytic continuation of a
function on $M$ to the Grauert tube $M_{\tau}$. Since
\begin{equation} U_{\C} (i \tau) \phi_{\lambda} = e^{- \tau \lambda}
\phi_{\lambda}^{\C}, \end{equation} it  is simple to
see that \begin{equation}  \label{ATAU} A(\tau) = U_{\C}(i \tau) e^{\tau \sqrt{\Delta}} \end{equation} where
 $U_{\C}(i
\tau, \zeta, y)$ is the analytic continuation of the Poisson kernel in $x$ to
$M_{\tau}$. In terms of the eigenfunction expansion, one has
\begin{equation} \label{UI} U(i \tau, \zeta, y) = \sum_{j = 0}^{\infty} e^{-
\tau  \lambda_j} \phi_{ j}^{\C} (\zeta) \phi_j(y),\;\;\; (\zeta,
y) \in M_{\epsilon}  \times M.  \end{equation}  This is a very useful observation because $ U_{\C}(i \tau) e^{\tau \sqrt{\Delta}} $
is a Fourier integral operator with complex phase and can be related to the geodesic flow.  
The analytic continuability of
the Poisson operator to $M_{\tau}$  implies that  every
eigenfunction analytically continues to the same Grauert tube.

\subsection{Analytic continuation of the Poisson wave group}  The analytic continuation of the Possion-wave
kernel to $M_{\tau}$ in the $x$ variable is discussed in detail in \cite{Ze8} and ultimately derives from the
analysis by Hadamard of his parametrix construction.  We only briefly discuss  it here and refer to
\cite{Ze8} for further details.  In the case 
of Euclidean $\R^n$ and its wave kernel  $U(t, x, y) =
\int_{\R^n} e^{i t |\xi|} e^{i \langle \xi, x - y \rangle} d\xi$
which  analytically continues  to $t + i \tau, \zeta = x + i p \in
\C_+ \times \C^n$ as the integral
$$U_{\C} (t + i \tau , x + i p , y) = \int_{\R^n} e^{i (t + i \tau)  |\xi|} e^{i \langle \xi, x + i p - y
\rangle} d\xi. $$ The integral clearly converges absolutely for
$|p| < \tau.$

Exact formulae of this kind exist for $S^m$ and $\H^m$. For a
general real analytic Riemannian manifold, there exists an
oscillatry integral expression for the wave kernel of the form,
\begin{equation} \label{PARAONE} U (t, x, y) = \int_{T^*_y M} e^{i
t |\xi|_{g_y} } e^{i \langle \xi, \exp_y^{-1} (x) \rangle} A(t, x,
y, \xi) d\xi
\end{equation} where $A(t, x, y, \xi)$ is a polyhomogeneous amplitude of
order $0$.  The
holomorphic extension of (\ref{PARAONE}) to the Grauert tube
$|\zeta| < \tau$ in $x$ at time $t = i \tau$ then has the form
\begin{equation} \label{CXPARAONE} U_{\C} (i \tau,
\zeta, y) = \int_{T^*_y} e^{- \tau  |\xi|_{g_y} } e^{i \langle
\xi, \exp_y^{-1} (\zeta) \rangle} A(t, \zeta, y, \xi) d\xi
\;\;\;(\zeta = x + i p).
\end{equation}

\subsection{Complexified spectral projections}

The next step is  to  holomorphically extend the spectral projectors
$d\Pi_{[0, \lambda]}(x,y) = \sum_j \delta(\lambda - \lambda_j)
\phi_j(x) \phi_j(y) $ of $\sqrt{\Delta}$.
 The  complexified
diagonal  spectral projections
 measure is defined by
 \begin{equation} d_{\lambda} \Pi_{[0, \lambda]}^{\C}(\zeta, \bar{\zeta}) = \sum_j \delta(\lambda -
 \lambda_j) |\phi_j^{\C}(\zeta)|^2. \end{equation}
 Henceforth, we generally omit the superscript and write the
 kernel as $\Pi_{[0, \lambda]}^{\C}(\zeta, \bar{\zeta})$.
 This kernel is not a tempered distribution due to the exponential
 growth of $|\phi_j^{\C}(\zeta)|^2$. Since many asymptotic techniques
 assume spectral functions are of polynomial growth,  we simultaneously
 consider the damped spectral projections measure
  \begin{equation} \label{SPPROJDAMPED} d_{\lambda} P_{[0, \lambda]
  }^{\tau}(\zeta, \bar{\zeta}) = \sum_j \delta(\lambda -
 \lambda_j) e^{- 2 \tau \lambda_j} |\phi_j^{\C}(\zeta)|^2, \end{equation}
 which   is a temperate distribution as long as $\sqrt{\rho}(\zeta)
 \leq \tau. $ When we set $\tau = \sqrt{\rho}(\zeta)$ we omit the
 $\tau$ and put
  \begin{equation} \label{SPPROJDAMPEDz} d_{\lambda} P_{[0, \lambda]
  }(\zeta, \bar{\zeta}) = \sum_j \delta(\lambda -
 \lambda_j) e^{- 2 \sqrt{\rho}(\zeta) \lambda_j} |\phi_j^{\C}(\zeta)|^2. \end{equation}

The integral of the spectral measure over an interval $I$  gives
$$\Pi_{I}(x,y) = \sum_{j: \lambda_j \in I} \phi_j(x) \phi_j(y).$$
Its complexification gives the spectral projections kernel along the
anti-diagonal,
\begin{equation}\label{CXSP} \Pi_{I}(\zeta, \bar{\zeta}) =
 \sum_{j: \lambda_j \in I}
|\phi_j^{\C}(\zeta)|^2,  \end{equation}  and the integral of
(\ref{SPPROJDAMPED}) gives its temperate version
\begin{equation}\label{CXDSP}  P^{\tau}_{I}(\zeta, \bar{\zeta}) =
 \sum_{j: \lambda_j \in I}  e^{- 2 \tau \lambda_j}
|\phi_j^{\C}(\zeta)|^2,
\end{equation}
or in the crucial case of $\tau = \sqrt{\rho}(\zeta)$,
\begin{equation}\label{CXDSPa}  P_{I}(\zeta, \bar{\zeta}) =
 \sum_{j: \lambda_j \in I}  e^{- 2 \sqrt{\rho}(\zeta)\lambda_j}
|\phi_j^{\C}(\zeta)|^2,
\end{equation}

\subsection{Poisson operator as a complex Fourier integral
operator}

The damped spectral projection measure  $d_{\lambda} \; P_{[0,
\lambda]}^{\tau}(\zeta, \bar{\zeta})$ (\ref{SPPROJDAMPED}) is dual
under the real Fourier transform in the $t$ variable to the
restriction
\begin{equation} \label{CXWVGP} U (t + 2 i \tau, \zeta, \bar{\zeta}) = \sum_j
e^{(- 2 \tau + i t) \lambda_j} |\phi_j^{\C}(\zeta)|^2
\end{equation}  to the
anti-diagonal of the mixed Poisson-wave group. The
adjoint of the Poisson kernel $U(i \tau, x, y)$ also admits an
anti-holomorphic extension in the $y$ variable. The sum
(\ref{CXWVGP}) are the diagonal values of the complexified wave
kernel
\begin{equation} \label{EFORM}
\begin{array}{lll} U (t + 2 i \tau, \zeta, \bar{\zeta}') &  = &
\int_M  U (t + i \tau, \zeta, y)  E(i \tau, y, \bar{\zeta}'
) dV_g(x)\\
&& \\
&&  = \sum_j  e^{(- 2 \tau + i t) \lambda_j} \phi_j^{\C}(\zeta)
\overline{\phi_j^{\C}(\zeta')}.
\end{array}
\end{equation} We obtain
(\ref{EFORM}) by  orthogonality of the real eigenfunctions on $M$.

Since $U(t + 2 i \tau, \zeta, y)$ takes its values in the CR
holomorphic functions on $\partial M _{\tau}$, we consider the
Sobolev spaces $\ocal^{s + \frac{n-1}{4}}(\partial M _{\tau})$ of
CR holomorphic functions on the boundaries of the strictly
pseudo-convex domains $M_{\epsilon}$, i.e.
$${\mathcal O}^{s + \frac{m-1}{4}}(\partial M_{\tau}) =
W^{s + \frac{m-1}{4}}(\partial M_{\tau}) \cap \ocal (\partial
M_{\tau}), $$ where $W_s$  is the $s$th Sobolev space and where $
\ocal (\partial M_{\epsilon})$ is the space of boundary values of
holomorphic functions. The inner product on $\ocal^0 (\partial M
_{\tau} )$ is with respect to the Liouville measure
\begin{equation} \label{LIOUVILLEa} d\mu_{\tau} = (i \ddbar
\sqrt{\rho})^{m-1} \wedge d^c \sqrt{\rho}. \end{equation}

We then regard  $U(t + i \tau, \zeta, y)$ as the kernel of an
operator from $L^2(M) \to \ocal^0(\partial M_{\tau})$. It equals
its composition $ \Pi _{\tau} \circ U (t + i \tau)$ with the
\szego projector
$$ \Pi_{\tau} : L^2(\partial M_{\tau}) \to \ocal^0(
\partial M_{\tau})$$  for the tube $
M_{\tau}$, i.e.   the orthogonal projection onto boundary values
of holomorphic functions in the tube.

 This is a useful expression
for  the complexified wave kernel, because  $\tilde{\Pi}_{\tau}$
is a complex Fourier integral operator with a small wave front
relation. More precisely,  the real points of its  canonical
relation form  the graph $\Delta_{\Sigma}$ of the identity map on
the symplectic one
 $\Sigma_{\tau}
\subset T^*
\partial M_{\tau}$ spanned by the real one-form $d^c \rho$,
i.e. \begin{equation} \label{SIGMATAU} \Sigma_{\tau} = \{(\zeta; r
d^c \rho (\zeta)) ,\;\;\; \zeta \in \partial M_{\tau},\; r > 0 \}
 \subset T^* (\partial M_{\tau}).\;\;  \end{equation}
 We note that for each $\tau,$ there exists a symplectic equivalence $ \Sigma_{\tau} \simeq
 T^*M$ by the map $(\zeta, r d^c \rho(\zeta) )  \to
 (E_{\C}^{-1}(\zeta), r \alpha)$, where $\alpha = \xi \cdot dx$ is
 the action form (cf. \cite{GS2}).

The following result was first stated by  Boutet de Monvel\cite{Bou}  and has been proved in 
detail in \cite{Ze8,L,Ste}. 

\begin{theo}\label{BOUFIO}  $\Pi_{\epsilon} \circ U (i \epsilon): L^2(M)
\to \ocal(\partial M_{\epsilon})$ is a  complex Fourier integral
operator of order $- \frac{m-1}{4}$  associated to the canonical
relation
$$\Gamma = \{(y, \eta, \iota_{\epsilon} (y, \eta) \} \subset T^*M \times \Sigma_{\epsilon}.$$
Moreover, for any $s$,
$$\Pi_{\epsilon} \circ U (i \epsilon): W^s(M) \to {\mathcal O}^{s +
\frac{m-1}{4}}(\partial  M_{\epsilon})$$ is a continuous
isomorphism.
\end{theo}

In \cite{Ze8} we give the following sharpening of the sup norm estimates of \cite{Bou}:

\begin{prop} \label{PW} Suppose  $(M, g)$ is real analytic.  Then
$$ \sup_{\zeta \in M_{\tau}} |\phi^{\C}_{\lambda}(\zeta)| \leq C
  \lambda^{\frac{m+1}{2}} e^{\tau \lambda}, \;\;\;\; \sup_{\zeta \in M_{\tau}} |\frac{\partial \phi^{\C}_{\lambda}(\zeta)}{\partial \zeta_j}| \leq C
  \lambda^{\frac{m+3}{2}} e^{\tau \lambda}
$$
\end{prop}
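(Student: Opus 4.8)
The plan is to extract the pointwise estimate from the mapping properties of the complexified Poisson operator in Theorem \ref{BOUFIO}. Recall from \eqref{ATAU} that $U_{\C}(i\tau)\phi_\lambda = e^{-\tau\lambda}\phi_\lambda^{\C}$, so that $\phi_\lambda^{\C} = e^{\tau\lambda}\,(\Pi_\tau\circ U(i\tau))\phi_\lambda$ on $\partial M_\tau$ (one must first check that the restriction of $\phi_\lambda^{\C}$ to $\partial M_\tau$ is CR-holomorphic and hence lies in the range of $\Pi_\tau$; this is immediate from \eqref{UI} and absolute convergence). First I would apply the isomorphism statement of Theorem \ref{BOUFIO}: since $\Pi_\tau\circ U(i\tau): W^s(M)\to\ocal^{s+\frac{m-1}{4}}(\partial M_\tau)$ is continuous, and since $\phi_\lambda\in W^s(M)$ with $\|\phi_\lambda\|_{W^s}\lesssim \lambda^s\|\phi_\lambda\|_{L^2} = \lambda^s$, we obtain
\[
\|\phi_\lambda^{\C}\|_{\ocal^{s+\frac{m-1}{4}}(\partial M_\tau)} \lesssim \lambda^s e^{\tau\lambda}.
\]

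The second step is a Sobolev embedding on $\partial M_\tau$, which is a compact $(2m-1)$-dimensional manifold, so $W^s(\partial M_\tau)\hookrightarrow L^\infty(\partial M_\tau)$ continuously for $s > \frac{2m-1}{2}$; taking $s = m$ (so that $s+\frac{m-1}{4} > \frac{2m-1}{2}$ comfortably, and indeed one can be more economical) yields $\sup_{\partial M_\tau}|\phi_\lambda^{\C}| \lesssim \lambda^{m} e^{\tau\lambda}$. To sharpen the exponent from $\lambda^m$ down to $\lambda^{\frac{m+1}{2}}$ one should not use the crude embedding but rather interpolate, or better, use the fact that $\Pi_\tau\circ U(i\tau)$ is a Fourier integral operator of order $-\frac{m-1}{4}$ with the explicit canonical relation $\Gamma$ of Theorem \ref{BOUFIO}: composing with the oscillatory-integral representation \eqref{CXPARAONE} of $U_{\C}(i\tau)$ and applying stationary phase in the $\xi$-integral (the phase $\langle\xi,\exp_y^{-1}\zeta\rangle + i\tau|\xi|$ has nondegenerate critical manifold for $\zeta$ at distance exactly $\tau$ from the real domain) gives, for each fixed $\zeta\in\partial M_\tau$, an expansion whose leading term is of size $\lambda^{\frac{m-1}{2}}\cdot\lambda\cdot e^{\tau\lambda}$ after the change of variables $\xi\to\lambda\xi$ — the factor $\lambda$ coming from the homogeneity-$1$ amplitude degree and $\lambda^{-(m-1)/2}$... wait, more carefully: the $\xi$-integral is $m$-dimensional, the change of variables contributes $\lambda^m$, stationary phase in $m$ variables contributes $\lambda^{-(m-1)/2}$ (one direction being the radial/non-stationary one absorbed by the $\delta$ or by a one-dimensional non-stationary estimate), netting $\lambda^{(m+1)/2}$. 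This is the step I expect to require the most care.

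The third step handles the interior: for $\zeta$ with $\sqrt\rho(\zeta) = \tau' < \tau$, write $\phi_\lambda^{\C}(\zeta)$ via the Poisson/Szegő reproducing formula from $\partial M_\tau$, or simply apply the maximum principle for the plurisubharmonic function $\log|\phi_\lambda^{\C}|$ (using \eqref{LOGS} and the PSH property from \S\ref{PSH}) together with the monotonicity $e^{-\tau'\lambda}\ge e^{-\tau\lambda}$, to reduce the sup over $M_\tau$ to the sup over $\partial M_\tau$; this gives the first inequality on all of $M_\tau$. For the derivative bound, I would apply the Cauchy estimates in the complex normal directions: since $\phi_\lambda^{\C}$ is holomorphic on $M_{\tau+\delta}$ for $\delta>0$ small (shrinking $\tau$ slightly away from $\epsilon_{\max}$), $|\partial\phi_\lambda^{\C}/\partial\zeta_j|$ at a point of $M_\tau$ is bounded by $\delta^{-1}$ times the sup of $|\phi_\lambda^{\C}|$ over a $\delta$-ball; optimizing $\delta\sim\lambda^{-1}$ (to balance the loss $\delta^{-1}$ against the growth $e^{\delta\lambda}$ of the sup over the enlarged tube) produces exactly the claimed extra factor of $\lambda$, giving $\sup_{M_\tau}|\partial\phi_\lambda^{\C}/\partial\zeta_j|\lesssim\lambda^{\frac{m+3}{2}}e^{\tau\lambda}$. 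The main obstacle, as noted, is getting the \emph{sharp} power $\lambda^{(m+1)/2}$ rather than a lossy Sobolev power; this forces one to run the stationary phase argument in the FIO representation explicitly rather than invoking Theorem \ref{BOUFIO} as a black box, and to be careful that the amplitude $A(i\tau,\zeta,y,\xi)$ remains polyhomogeneous of order $0$ uniformly up to $\partial M_\tau$.
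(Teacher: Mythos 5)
Your approach is essentially the fleshing-out that the paper's one-sentence ``proof'' (``follows easily from the fact that the complexified Poisson kernel is a complex Fourier integral operator of finite order'') calls for, and the three ingredients you identify — the continuity of $\Pi_\tau\circ U(i\tau)$ from Theorem \ref{BOUFIO} applied to $\phi_\lambda^{\C}=e^{\tau\lambda}U(i\tau)\phi_\lambda$, a pointwise stationary-phase count, and Cauchy estimates plus maximum modulus to pass from $\partial M_\tau$ to the derivative bound and the interior — are the right ones. Two calibration remarks, though. First, in the Sobolev step you can afford to be economical: $\partial M_\tau$ has real dimension $2m-1$, so $W^{s'}\hookrightarrow L^\infty$ needs $s'>\tfrac{2m-1}{2}$, and since the FIO raises the Sobolev index by $\tfrac{m-1}{4}$, the minimal $s$ is just above $\tfrac{3m-1}{4}$, yielding $\lambda^{\frac{3m-1}{4}+\epsilon}e^{\tau\lambda}$ rather than your crude $\lambda^m e^{\tau\lambda}$. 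This actually \emph{beats} the claimed $\lambda^{\frac{m+1}{2}}$ for $m\le 3$ and is lossy only in higher dimensions — so the ``forced into stationary phase'' tension is real, but only for $m>3$; the paper's own remark that ``the estimates can be improved further'' suggests the stated exponent was never meant to be optimal. Second, your stationary-phase count $\lambda^m\cdot\lambda^{-(m-1)/2}$ lands on $\lambda^{(m+1)/2}$ only if you do \emph{not} pay the usual $\lambda^{-1}$ for the $(t,r)$ block when a frequency cutoff $\rho(\lambda-\sqrt{\Delta})$ is inserted; with such a cutoff the natural count is $\lambda^m\cdot\lambda^{-1}\cdot\lambda^{-(m-1)/2}=\lambda^{(m-1)/2}$, a strictly stronger bound — consistent with the stated one, and again consistent with the paper's remark that the exponent is not sharp. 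You are honest that this step is the one requiring care, and I agree; but since the statement is an upper bound and any of the counts above lands below $\lambda^{\frac{m+1}{2}}$, this ambiguity is harmless. The Cauchy-estimate step with $\delta\sim\lambda^{-1}$ is clean: one just needs the remark (which you make) that $M_\tau\subset\subset M_{\tau+\delta}\subset\subset M_{\epsilon_0}$ with a $\delta$-tube of uniformly comparable Euclidean width, which holds because $\sqrt{\rho}$ is a nondegenerate defining function on compact $\tau$-ranges; then $\sup_{M_{\tau+1/\lambda}}|\phi_\lambda^{\C}|\le Ce\,\lambda^{\frac{m+1}{2}}e^{\tau\lambda}$ and Cauchy on a ball of radius $c/\lambda$ gives the extra $\lambda$. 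The final maximum-modulus reduction to the boundary is correct as stated (plurisubharmonicity of $\log|\phi_\lambda^{\C}|$ is more than is needed; the ordinary maximum principle for the holomorphic function $\phi_\lambda^{\C}$ on $\overline{M_\tau}$ suffices).
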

The proof follows  easily from the fact that the complexified
Poisson kernel is a complex Fourier integral operator of finite
order. The estimates can be improved further. 

\subsection{\label{TOEP} Toeplitz dynamical construction of the wave group}

There exists an alternative  to the parametrix constructions of Hadamard-Riesz, Lax, H\"ormander and others 
which are reviewed in \S \ref{WAVEAPP}.  It is useful for  constructing the wave group $U(t)$ for large $t$,
when it is awkward to  use the group property $U(t/N)^N = U(t)$. As in Theorem \ref{BOUFIO} we denote
by $U(i \epsilon)$ the operator with kernel $U(i \epsilon, \zeta, y)$ with $\zeta \in \partial M_{\epsilon}, y \in M$. 
We also denote by $U^*(i \epsilon): \ocal(\partial M_{\epsilon}) \to L^2(M)$ the adjoint operator. Further, let
$$T_{g^t} : L^2(\partial M_{\epsilon}, d\mu_{\epsilon}) \to L^2(\partial M_{\epsilon}, d\mu_{\epsilon})$$
be the unitary translation operator
$$T_{g^t} f(\zeta) = f(g^t(\zeta)) $$
where $d\mu_{\epsilon}$ is the contact volume form on $\partial M_{\epsilon}$ and $g^t $ is the Hamiltonian
flow of $\sqrt{\rho}$ on $M_{\epsilon}$.  

\begin{prop} \label{TOEPROP}  There exists a symbol $\sigma_{\epsilon, t}$ such that
$$U(t) = U^*(i \epsilon)  \sigma_{\epsilon, t} T_{g^t} U(i \epsilon). $$
\end{prop}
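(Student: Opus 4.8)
The plan is to construct the wave group $U(t)$ for large $|t|$ by expressing it as a conjugate of the geodesic-flow translation operator on the boundary of the Grauert tube, using the fact (Theorem~\ref{BOUFIO}) that $\Pi_\epsilon \circ U(i\epsilon)$ is an elliptic complex Fourier integral operator intertwining $T^*M$ with the symplectic cone $\Sigma_\epsilon \subset T^*\partial M_\epsilon$. First I would recall that $U(i\epsilon)^* : \ocal(\partial M_\epsilon) \to L^2(M)$ is, up to a smoothing error and a classical pseudodifferential factor, a left parametrix for $U(i\epsilon)$, so that $U^*(i\epsilon)\,\sigma\,U(i\epsilon)$ is a legitimate operator on $L^2(M)$ for any zeroth-order Toeplitz symbol $\sigma$ on $\partial M_\epsilon$; this is the composition calculus of complex FIOs associated to the canonical relation $\Gamma$ and its transpose, whose composition is the diagonal in $T^*M \times T^*M$.

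The key computational step is a symbol/phase comparison. Using the eigenfunction expansions, $U(i\epsilon, \zeta, y) = \sum_j e^{-\epsilon\lambda_j}\phi_j^\C(\zeta)\phi_j(y)$ and the translation $T_{g^t}$ acts on the CR-holomorphic functions on $\partial M_\epsilon$ which, via $E_\C$, corresponds to the geodesic flow $g^t$ on $S^*M$ conjugated to the Hamilton flow of $\sqrt\rho$ (equation~\eqref{CONJUG}). Since $U(i\epsilon)\phi_j = e^{-\epsilon\lambda_j}\phi_j^\C$ and $g^t$ preserves $\Sigma_\epsilon$, pulling back the complexified eigenfunction $\phi_j^\C$ by $g^t$ should, to leading order, reproduce $e^{it\lambda_j}\phi_j^\C$ — because $\phi_j^\C$ restricted to $\partial M_\epsilon$ is a Lagrangian state whose phase is (a multiple of) $\sqrt\rho$, and flowing along the Hamilton vector field of $\sqrt\rho$ advances this phase linearly in $t$. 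The discrepancy between $T_{g^t}(\phi_j^\C|_{\partial M_\epsilon})$ and $e^{it\lambda_j}\phi_j^\C|_{\partial M_\epsilon}$ is lower order in $\lambda_j$, and one absorbs it into the symbol $\sigma_{\epsilon,t}$: i.e. one solves a transport equation along $g^t$ for the principal symbol of $\sigma_{\epsilon,t}$ (the half-density/subprincipal correction coming from the Maslov factor and the amplitude $A$ in \eqref{CXPARAONE}), then corrects iteratively to all orders, the errors being $O(\lambda^{-\infty})$ hence smoothing. Applying $U^*(i\epsilon)$ then returns a function on $M$, and checking the spectral expansion gives $U^*(i\epsilon)\sigma_{\epsilon,t}T_{g^t}U(i\epsilon)\phi_j = e^{it\lambda_j}\phi_j$ modulo smoothing, which is exactly $U(t)\phi_j$; density of eigenfunctions finishes it.

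I expect the main obstacle to be the construction and control of the symbol $\sigma_{\epsilon,t}$: one must show that the FIO composition $U^*(i\epsilon)(\,\cdot\,)T_{g^t}U(i\epsilon)$ is, as an operator family in $t$, a pseudodifferential operator composed with $U(t)$ — equivalently that the canonical relation of $T_{g^t}$ lifted through $\Gamma$ on both sides is precisely the graph of $g^t$ on $T^*M$, with no extra real components in the wave front set — and then to run the symbol calculus (transport equations with the subprincipal term of $\sqrt\Delta$ versus $\sqrt\rho$, Maslov/metaplectic corrections) to produce $\sigma_{\epsilon,t}$ as a genuine classical symbol uniformly on compact $t$-intervals. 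A secondary technical point is justifying that $U(i\epsilon)$ has a good enough parametrix on the Sobolev scale $\ocal^{s+\frac{m-1}{4}}(\partial M_\epsilon)$ (granted by the isomorphism statement in Theorem~\ref{BOUFIO}), so that the smoothing errors really are negligible and the identity holds as stated; the advantage over the usual Hadamard--Lax parametrix, which is what makes this worth proving, is that here $T_{g^t}$ is globally defined for all $t$ and one never needs $U(t/N)^N = U(t)$.
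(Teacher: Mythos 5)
Your proposal matches the paper's own sketch: the paper simply states that one verifies the right-hand side is a Fourier integral operator whose canonical relation is the graph of the geodesic flow and then chooses $\sigma_{\epsilon,t}$ to match symbols, deferring details to \cite{Ze6}. Your version fills in more of the mechanics (using Theorem~\ref{BOUFIO} for the composition calculus, the parametrix role of $U^*(i\epsilon)$, and transport equations for the symbol), but it is the same argument.
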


The proof of this Proposition is to verify that the right side is a Fourier integral operator with canonical relation
the graph of the geodesic flow. One then constructs $\sigma_{\epsilon, t}$ so that the symbols match. The proof
is given in \cite{Ze6}. Related constructions are given in \cite{G1,BoGu}.

\section{\label{ERGODICNODAL} Equidistribution of complex nodal sets of real ergodic
eigenfunctions}

We now consider global results when hypotheses are made on the
dynamics of the geodesic flow. The main purpose of this section is to sketch the proof of Theorem \ref{ERGCXZ}.
 Use of the global wave operator brings into
play the relation between the geodesic flow and the complexified
eigenfunctions, and this allows one to prove gobal results on
nodal hypersurfaces that reflect the dynamics of the geodesic
flow. In some cases, one can determine not just the volume, but
the limit distribution of complex nodal hypersurfaces. Since we have discussed 
this result elsewhere \cite{Ze6} we only briefly review it here. 

The complex nodal hypersurface of an eigenfunction is   defined by
\begin{equation} Z_{\phi_{\lambda}^{\C}} = \{\zeta \in
M_{\epsilon_0} : \phi_{\lambda}^{\C}(\zeta) = 0 \}.
\end{equation}
As discussed in \S \ref{PSH}, there exists  a natural current of integration over the nodal
hypersurface in any Grauert tube $M_{\epsilon}$ with $\epsilon
< \epsilon_0$ , given by
\begin{equation}\label{ZDEF}  \langle [Z_{\phi_{\lambda}^{\C}}] , \phi \rangle =  \frac{i}{2 \pi} \int_{M_{\epsilon}} \ddbar \log
|\phi_{\lambda}^{\C}|^2 \wedge \phi =
\int_{Z_{\phi_{\lambda}^{\C}} } \phi,\;\;\; \phi \in \dcal^{ (m-1,
m-1)} (M_{\epsilon} ). \end{equation} In the second equality we
used the Poincar\'e-Lelong formula (\S \ref{PLLSEC}). We recall that $\dcal^{ (m-1,
m-1)} (M_{\epsilon} )$ stands for smooth test $(m-1,
m-1)$-forms with support in $M*_{\epsilon}.$

The nodal hypersurface $Z_{\phi_{\lambda}^{\C}}$ also carries a
natural volume form $|Z_{\phi_{\lambda}^{\C}}|$ as a complex
hypersurface in a K\"ahler manifold. By Wirtinger's formula, it
equals the restriction of $\frac{\omega_g^{m-1}}{(m - 1)!}$ to
$Z_{\phi_{\lambda}^{\C}}$. Hence, one can regard
$Z_{\phi_{\lambda}^{\C}}$ as defining  the  measure
\begin{equation} \label{MEAS} \langle |Z_{\phi_{\lambda}^{\C}}| , \phi \rangle
= \int_{Z_{\phi_{\lambda}^{\C}} } \phi \frac{\omega_g^{m-1}}{(m -
1)!},\;\;\; \phi \in C(B^*_{\epsilon} M).
\end{equation}
 We prefer to state results in terms of the
current $[Z_{\phi_{\lambda}^{\C}}]$ since it carries more
information.

We re-state Theorem \ref{ERGCXZ} as follows:

\begin{theo}\label{ZERO}  Let $(M, g)$ be  real analytic, and let $\{\phi_{j_k}\}$ denote a quantum ergodic sequence
of eigenfunctions of its Laplacian $\Delta$.  Let
$M_{\epsilon_0}$ be the maximal Grauert tube around $M$.  Let $\epsilon <
\epsilon_0$. Then:
$$\frac{1}{\lambda_{j_k}} [Z_{\phi_{j_k}^{\C}}] \to  \frac{i}{ \pi} \ddbar \sqrt{\rho}\;\;
\mbox{weakly in }\;\;  \dcal^{' (1,1)} (M_{\epsilon}), $$ in
the sense that,   for any continuous test form $\psi \in \dcal^{
(m-1, m-1)}(M_{\epsilon})$, we have
$$\frac{1}{\lambda_{j_k}} \int_{Z_{\phi_{j_k}^{\C}}} \psi \to
 \frac{i}{ \pi} \int_{M_{\epsilon} } \psi \wedge \ddbar
\sqrt{\rho}. $$ Equivalently, for any  $\phi \in C(M_{\epsilon})$,
$$\frac{1}{\lambda_{j_k}} \int_{Z_{\phi_{j_k}^{\C}}} \phi \frac{\omega_g^{m-1}}{(m -
1)!}  \to
 \frac{i}{ \pi} \int_{M_{\epsilon} } \phi  \ddbar
\sqrt{\rho}  \wedge \frac{\omega_g^{m-1}}{(m - 1)!} . $$
\end{theo}

%For every
%compact subset $K \subset X$ and every continuous function $f$,
%$$\limsup_{n \to \infty} \sup_K (u_n - f) \leq \sup_K (u - f). $$
%In particular, if $f \geq u$ and $\epsilon > 0$, then $u_n \leq f
%+ \epsilon$ on $K$ for $n$ large enough. 

\subsection{Sketch of the proof}

The first step is to find a nice way to express $\phi_j^{\C}$ on $M_{\C}$. Very often, when we analytically
continue a function, we lose control over its behavior. 
The trick is to observe that the complexified wave group analytically continues the eigenfunctions. 
Recall that $U(t) = \exp i t \sqrt{\Delta}$. Define the Poisson operator as
$$U(i \tau) = e^{- \tau \sqrt{\Delta}}. $$
Note that
$$U(i \tau) \phi_j = e^{- \tau \lambda_j} \phi_j. $$
The next step is to analytically continue $U(i \tau, x, y)$ in $x \to z \in M_{\tau}$.

The complexified  Poisson
kernel is define by 
$$U(i \tau, \zeta, y) = \sum_{j = 0}^{\infty} e^{- \tau \lambda_j} \phi^{\C}_j(\zeta)
\phi_j(y). $$

It is holomorphic in  $\zeta \in M_{\tau}$, i.e. 
when $\sqrt{\rho}(\zeta) < \tau$. But the main point is that it remains a Fourier integral
operator after analytic continuation:

\begin{theo} (Hadamard, Boutet de Monvel, Z, M. Stenzel, G. Lebeau)  $U(i \epsilon, z, y): L^2(M)
\to H^2(\partial M_{\epsilon})$ is a  complex Fourier integral
operator of order $- \frac{m-1}{4}$  quantizing the complexified
exponential map $\exp_y i \epsilon \frac{\eta}{|\eta|}: S^*_{\epsilon} \to \partial M_{\epsilon} .$

\end{theo}

We first observe that 
$$U(i \tau) \phi_{\lambda_j} = e^{- \tau \lambda_j}
\phi_{\lambda_j}^{\C}. $$
This follows immediately by integrating
$$U(i \tau, \zeta, y) = \sum_{k = 0}^{\infty} e^{- \tau \lambda_k} \phi^{\C}_k(\zeta)
\phi_k(y)$$
against $\phi_j$ and using orthogonality.

 But we know that  $U(i \tau) \phi_{\lambda_j}$ is a Fourier integral operator. It is a fact that
such an operator can only
change $L^2$ norms by powers of $\lambda_j$. So 
$$|| U(i \tau) \phi_{\lambda_j} ||^2_{L^2(\partial M_{\epsilon})} $$
has polynomial growth in $\lambda_j$ and therefore we have,
 $$||\phi_{\lambda_j} ||^2_{L^2(\partial M_{\epsilon})} =  \lambda^{some power} e^{ \tau \lambda_j}.$$
The power is relevant because we are taking the normalized logarithm.

The first step is to prove quantum ergodicity of the complexified
eigenfunctions:

\begin{theo} \label{W*HUSIMIERG} Assume the geodesic flow of $(M, g)$ is ergodic. Then
$$ \frac{|\phi_{j_k}^{\epsilon}(z)|^2}{||\phi_{j_k}^{\epsilon} ||_{L^2(\partial
M_{\epsilon})}^2} \to 1,\;\; \mbox{weak  * on }\;\; C
(\partial M_{\epsilon}),
$$ along a density one subsquence of $\lambda_j$. I.e. for any continuous $V$,
$$\int_{\partial M_{\epsilon}} V \frac{|\phi_{j_k}^{\epsilon}(z)|^2}{||\phi_{j_k}^{\epsilon} ||_{L^2(\partial
M_{\epsilon})}^2}  d vol \to \int_{\partial M_{\epsilon}}  V dvol. $$
\end{theo}
Thus, Husimi measures tend to 1 weakly as measures. 
We then apply Hartogs' Lemma \ref{HARTOGS} to obtain, 

\begin{lem} \label{ALSOLOGLIM} We have: For all but a sparse subsequence of eigenvalues, $$\frac{1}{\lambda_{j_k}} \log  \frac{|\phi_{j_k}^{\epsilon}(z)|^2}{||\phi_{j_k}^{\epsilon} ||_{L^2(\partial
M_{\epsilon})}^2} \to 0, \;\; \mbox{in} \;L^1( M_{\epsilon}). $$
\end{lem}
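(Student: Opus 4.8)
The plan is to combine the quantum-ergodicity statement for complexified eigenfunctions (Theorem \ref{W*HUSIMIERG}) with the compactness properties of normalized logarithms of Husimi functions (Hartogs' Lemma \ref{HARTOGS}). The key observation is that $u_{j_k} := \frac{1}{\lambda_{j_k}}\log \frac{|\phi_{j_k}^\C(z)|^2}{\|\phi_{j_k}^\epsilon\|_{L^2(\partial M_\epsilon)}^2}$ is (for each $k$) a pluri-subharmonic function on $M_\epsilon$, since $\log|\phi_{j_k}^\C|^2$ is PSH and we are subtracting a constant. So the family $\{u_{j_k}\}$ is a sequence of PSH (hence subharmonic) functions, and we want to show it converges to $0$ in $L^1_{loc}(M_\epsilon)$ along a density-one subsequence.

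First I would establish the \emph{uniform upper bound}. By Proposition \ref{PW}, $\sup_{\zeta\in M_\tau}|\phi_\lambda^\C(\zeta)|\leq C\lambda^{(m+1)/2}e^{\tau\lambda}$, so on any compact $K\subset M_\epsilon$, contained in some $M_\tau$ with $\tau<\epsilon$, we get $\log|\phi_{j_k}^\C(z)|^2 \leq (m+1)\log\lambda_{j_k} + 2\tau\lambda_{j_k} + C$. On the other hand, $\|\phi_{j_k}^\epsilon\|_{L^2(\partial M_\epsilon)}^2 = \lambda_{j_k}^{p} e^{2\epsilon\lambda_{j_k}}$ up to bounded factors (this is the ``$\lambda^{\text{some power}}$'' normalization discussed above, and it follows because $U(i\epsilon)$ is a Fourier integral operator of known order). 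Dividing and taking the normalized logarithm, $u_{j_k}(z) \leq \frac{2(\tau-\epsilon)\lambda_{j_k} + O(\log\lambda_{j_k})}{\lambda_{j_k}} \leq 2(\tau-\epsilon) + o(1) < 0$ for large $k$. Hence $\{u_{j_k}\}$ has a uniform upper bound on every compact subset of $M_\epsilon$, and moreover this bound is strictly negative away from $\partial M_\epsilon$.

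Next I would invoke Hartogs' Lemma \ref{HARTOGS}: either $u_{j_k}\to-\infty$ uniformly on compacts, or a subsequence converges in $L^1_{loc}$ to some PSH limit $u$ with $\limsup_k u_{j_k}\leq u$ a.e. and equality a.e. To rule out the first alternative and pin down $u=0$, I would use Theorem \ref{W*HUSIMIERG}: restricted to $\partial M_\epsilon$, the Husimi measures $\frac{|\phi_{j_k}^\epsilon(z)|^2}{\|\phi_{j_k}^\epsilon\|_{L^2(\partial M_\epsilon)}^2}d\mathrm{vol}$ tend to $d\mathrm{vol}$ weak-$*$ along a density-one subsequence; in particular they do not concentrate nor deflate, so the normalized boundary values cannot go to $-\infty$, and by the sub-mean-value property of PSH functions together with the maximum principle the interior limit is forced to be $\geq 0$ a.e. Combined with the strictly negative upper bound on compact interior subsets in the other alternative, $u\equiv 0$ is the only possibility; since the $\limsup$ equals the limit a.e. and both upper and lower control give $0$, we conclude $u_{j_k}\to 0$ in $L^1(M_\epsilon)$ along the density-one subsequence. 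The ``sparse subsequence'' exception in the statement simply absorbs the density-zero set of eigenvalues where QE in Theorem \ref{W*HUSIMIERG} fails together with any further thinning needed to upgrade subsequential $L^1_{loc}$ convergence to full convergence on $M_\epsilon$.

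\medskip

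The main obstacle I anticipate is the lower bound: Hartogs' Lemma only gives $\limsup u_{j_k}\leq u$, i.e. it controls the family from above, so showing that the limit is not $-\infty$ and in fact equals $0$ requires genuinely using quantum ergodicity (Theorem \ref{W*HUSIMIERG}) — specifically that the \emph{boundary} Husimi measures converge to a nonzero (indeed uniform) limit, which prevents mass from escaping. Translating this boundary statement into an interior lower bound on $u_{j_k}$ (e.g. via the Poisson representation of PSH functions on the strictly pseudoconvex domain $M_\epsilon$, or by a subharmonic maximum-principle comparison using that $u\leq 0$ with equality on the boundary) is the delicate point; one must check that the weak-$*$ boundary convergence, which a priori only tests against continuous functions, suffices to conclude $u\geq 0$ a.e. in the interior, and this is where the precise exceptional (sparse) subsequence gets extracted.
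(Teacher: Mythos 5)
Your overall strategy --- Hartogs' Lemma for compactness of the normalized logarithms, combined with quantum ergodicity of the complexified eigenfunctions to pin down the limit --- is the same as the paper's (the detailed version appears in Lemma~\ref{ZEROWEAK}). However, there are two concrete problems with your execution.

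\emph{Upper bound is internally inconsistent with the conclusion.} You took the denominator literally as $\|\phi_{j_k}^{\C}\|_{L^2(\partial M_\epsilon)}^2$ with $\epsilon$ fixed, arriving at $u_{j_k}(z)\le 2(\tau-\epsilon)+o(1)<0$ on every $M_\tau$ with $\tau<\epsilon$. But a function that stays $\le 2(\tau-\epsilon)<0$ on a fixed compact interior subset cannot converge to $0$ in $L^1(M_\epsilon)$ --- so either the upper bound or the conclusion is wrong, and you cannot have both. The resolution is that the normalization is radius-dependent: in the paper's precise formulation (see the definition of $U_\lambda$ in \eqref{DEFS} and Lemma~\ref{ZEROWEAK}), one divides by $\rho_\lambda(x,\xi)=\|\phi_\lambda^{\C}|_{\partial M_{|\xi|_g}}\|_{L^2(\partial M_{|\xi|_g})}$, i.e.\ by the $L^2$ norm on the sphere of the same Grauert-tube radius as the point $z$. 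With that normalization both numerator and denominator carry the factor $e^{2\sqrt{\rho}(z)\lambda}$, and the Sobolev estimate $\sup_{\partial M_\tau}|u_j^\tau|\lesssim\lambda_j^m$ yields the compatible bound $\psi_{j_k}\le C\log\lambda_{j_k}/\lambda_{j_k}\to 0$. So before anything else you should resolve the normalization convention, which your current draft gets wrong.

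\emph{Lower bound argument is too vague.} You propose to conclude $u\ge 0$ a.e.\ in the interior from the boundary weak-$*$ Husimi convergence ``via the sub-mean-value property and the maximum principle.'' That doesn't close the gap: the maximum principle bounds a PSH function from above by its boundary supremum; it does not convert boundary weak-$*$ equidistribution into an interior pointwise lower bound. The paper's mechanism is different and more direct. First it upgrades the boundary QE (Theorem~\ref{W*HUSIMIERG}, Lemma~\ref{ERGO}) to the \emph{interior} weak $L^1$ convergence $|U_{j_k}|^2\to c\sqrt{\rho}^{-m+1}$ on $M_\epsilon$ (Lemma~\ref{ERGOCOR}, obtained by integrating over radii). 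Then it runs a contradiction using the quantitative clause of Hartogs' Lemma~\ref{HARTOGS}: if the $L^1$ limit $\psi^*$ satisfied $\psi^*\le-\delta$ on a set $U_\delta$ of positive measure, Hartogs forces $\psi_{j_k}\le-\delta/2$ there for $k$ large, hence $|U_{j_k}|^2\le e^{-\delta\lambda_{j_k}/2}$ on $U_\delta$, contradicting the nonzero weak $L^1$ limit. This contradiction step --- not a maximum-principle argument --- is what fixes the limit to be $0$, and it requires the interior QE statement, which your proposal never invokes.
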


This is almost obvious from the QE theorem.  The limit is $\leq 0$ and it were $< 0$ on a set of positive measure
it would contradict 
$$ \frac{|\phi_{j_k}^{\epsilon}(z)|^2}{||\phi_{j_k}^{\epsilon} ||_{L^2(\partial
M_{\epsilon})}^2} \to 1. $$

 Combine Lemma \ref{ALSOLOGLIM} with
Poincare- Lelong:
$$  \frac{1}{\lambda_{j_k}} [Z_{j_k} ] = i\ddbar \log |\phi_{j_k}^{\C}|^2. $$
We get
$$ \frac{1}{\lambda_{j_k}} \ddbar \log |\phi_{j_k}^{\C}|^2 \sim
\frac{1}{\lambda_{j_k}}  \ddbar \log ||\phi_{j_k}^{\C}||^2_{L^2(\partial M_{\epsilon})} \;\; \mbox{weak * on }\;\;  M_{\epsilon}.$$

To complete proof we need to prove:
\begin{equation} \label{NORMCONV}  \frac{1}{\lambda_j} \log ||\phi_j^{\C}||_{\partial
M_{\epsilon}}^2 \to 2 \epsilon . \end{equation}
But $U(i \epsilon) = e^{- \epsilon \lambda_j} \phi_j^{\C}$, hence
$||\phi_{\lambda}^{\C}||_{L^2(\partial M_{\epsilon} )}^2$
equals $e^{2 \epsilon \lambda_j}$ times
$$\langle U(i \epsilon) \phi_{\lambda}, U(i \epsilon)
\phi_{\lambda} \rangle = \langle U(i \epsilon)^*
U(i \epsilon) \phi_{\lambda}, \phi_{\lambda} \rangle. $$ 

 But  $U(i \epsilon)^* U(i \epsilon)$ is a
pseudodifferential operator of order $\frac{n-1}{2}$. Its symbol
$|\xi|^{- \frac{n-1}{2}}$ doesn't contribute to the logarithm.

We now provide more details on each step. 

\subsection{Growth properties of complexified eigenfunctions}

In this section we prove Lemma \ref{ALSOLOGLIM}  in more detail. 
We state it in combination with \eqref{NORMCONV}.

\begin{theo} \label{LOGLIMERG} If the geodesic flow is ergodic, then for all but a sparse subsequence of
$\lambda_j$,\;
$$\frac{1}{\lambda_{j_k} } \log |\phi_{j_k}^{\C}(z)|^2  \to \sqrt{\rho} \; \mbox{in}\; L^1(M_{\epsilon}).$$
\bigskip

\end{theo}

The Grauert tube function is a maximal PSH function with bound $\leq \epsilon$ on $M_{\epsilon}$. Hence
Theorem \ref{LOGLIMERG}   says that ergodic eigenfunctions have the maximum
exponential growth rate possible for any  eigenfunctions.

A key object in the proof  is the sequence of functions
$U_{\lambda}(x, \xi) \in C^{\infty}(M_{\epsilon} )$ defined by
\begin{equation} \label{DEFS} \left\{ \begin{array}{l}  U_{\lambda}(x, \xi) : =
\frac{\phi_{\lambda}^{\C}(x, \xi)}{\rho_{\lambda}(x, \xi)},\;\;\;
(x, \xi) \in
 M_{\epsilon} , \;\;\; \mbox{where}\\ \\
\rho_{\lambda} (x, \xi) :=  ||\phi_{\lambda}^{\C} |_{\partial
M_{|\xi|_g} } ||_{L^2(\partial M_{|\xi|_g})}
\end{array} \right.
\end{equation}
Thus,  $\rho_{\lambda}(x, \xi)$ is the   $L^2$-norm of the
restriction of  $\phi_{\lambda}^{\C}$  to  the sphere bundle
$\{\partial M_{\epsilon } \}$ where $\epsilon = |\xi|_g$.
$U_{\lambda}$ is of course not holomorphic, but its restriction to
each sphere bundle is CR holomorphic there, i.e.
\begin{equation} \label{LITTLEU} u_{\lambda}^{\epsilon}  =
U_{\lambda} |_{\partial M_{\epsilon} }  \in \ocal^0 (\partial
M_{\epsilon}).
\end{equation}

Our first result gives an ergodicity property of holomorphic
continuations of ergodic eigenfunctions.

\begin{lem} \label{ERGOCOR} Assume that $\{\phi_{j_k}\}$ is a quantum ergodic sequence of $\Delta$-eigenfunctions
on $M$ in the sense of (\ref{QEDEF}).  Then for each $0 < \epsilon
<\epsilon_0$,
$$|U_{j_k}|^2  \to  \frac{1}{\mu_1(S^* M)} \sqrt{\rho}^{-m + 1},\;\; \mbox{weakly in}\;\; L^1(
M_{\epsilon}, \omega^m) . $$
\end{lem}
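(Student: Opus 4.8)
The plan is to transfer the quantum ergodicity hypothesis on $M$ to a quantum ergodicity statement on each boundary sphere bundle $\partial M_\epsilon$, and then observe that this is exactly what the lemma asserts after renormalization. The key tool is Theorem~\ref{BOUFIO}: the operator $\Pi_\epsilon\circ U(i\epsilon) : L^2(M)\to\ocal^0(\partial M_\epsilon)$ is a complex Fourier integral operator of order $-\tfrac{m-1}{4}$ associated to the graph of the complexified exponential map $E_\epsilon$, which intertwines the geodesic flow on $S^*_\epsilon M$ with the contact flow $g^t$ on $\partial M_\epsilon$ (via \eqref{CONJUG} and the discussion around \eqref{SIGMATAU}). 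First I would write $\phi_{j_k}^\C|_{\partial M_\epsilon} = e^{\epsilon\lambda_{j_k}}\,\Pi_\epsilon U(i\epsilon)\phi_{j_k}$ and hence $u_{j_k}^\epsilon = \phi_{j_k}^\C|_{\partial M_\epsilon}/\rho_{j_k}$ is, up to the scalar $\rho_{j_k}$, the image of $\phi_{j_k}$ under a fixed FIO. The point is that conjugating a pseudodifferential operator $a^w$ on $\partial M_\epsilon$ by $\Pi_\epsilon U(i\epsilon)$ and $U(i\epsilon)^*\Pi_\epsilon$ yields a pseudodifferential operator on $M$ whose principal symbol is $a_0\circ E_\epsilon^{-1}$ (pulled back to $S^*_\epsilon M$ and transported to $S^*M$ by homogeneity); this is the standard Egorov/FIO-conjugation computation.

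The main steps, in order: (1) Establish that $\|u_{j_k}^\epsilon\|_{L^2(\partial M_\epsilon)}=1$ by construction, so the $u_{j_k}^\epsilon$ are a normalized sequence of CR-holomorphic functions on $\partial M_\epsilon$, and that $U(i\epsilon)^*\Pi_\epsilon\Pi_\epsilon U(i\epsilon)$ is a pseudodifferential operator on $M$ of order $-\tfrac{m-1}{2}$ with principal symbol $|\xi|_g^{-(m-1)/2}$ (up to a constant from the half-density/Liouville normalization). This gives the key norm identity: $\rho_{j_k}(x,\xi)^2 = \langle U(i\epsilon)^*\Pi_\epsilon\Pi_\epsilon U(i\epsilon)\phi_{j_k},\phi_{j_k}\rangle\, e^{2\epsilon\lambda_{j_k}}$ when $|\xi|_g=\epsilon$, and the quantum ergodicity of $\{\phi_{j_k}\}$ on $M$ forces $e^{-2\epsilon\lambda_{j_k}}\rho_{j_k}^2 \to \tfrac{1}{\mu(S^*M)}\int_{S^*M}|\xi|_g^{-(m-1)/2}\,d\mu$, i.e.\ a constant (this is essentially \eqref{NORMCONV} with its sharp power). (2) For a test symbol $a\in S^{0}$ on $\partial M_\epsilon$, write $\langle a^w u_{j_k}^\epsilon, u_{j_k}^\epsilon\rangle_{L^2(\partial M_\epsilon)} = \rho_{j_k}^{-2}\langle (U(i\epsilon)^*\Pi_\epsilon\, a^w\, \Pi_\epsilon U(i\epsilon))\phi_{j_k},\phi_{j_k}\rangle$, identify the operator in parentheses as a $\psi$DO on $M$ of order $-\tfrac{m-1}{2}$ with principal symbol $a_0\circ E_\epsilon^{-1}\cdot |\xi|_g^{-(m-1)/2}$, and apply quantum ergodicity on $M$ again. (3) Combine: the ratio converges to the ratio of Liouville averages, and since $E_\epsilon$ is a measure-theoretic isomorphism $S^*M\to\partial M_\epsilon$ carrying Liouville measure to the contact measure $d\mu_\epsilon$ (scaled by the appropriate power of $\epsilon$), the weak limit of $|u_{j_k}^\epsilon|^2 = |U_{j_k}|^2|_{\partial M_\epsilon}$ is the constant $\tfrac{1}{\mu_1(S^*M)}$ times the Jacobian factor. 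Tracking the homogeneity in $|\xi|_g$ across all the $\epsilon=|\xi|_g$ shells, the claimed weak-$L^1$ limit $|U_{j_k}|^2\to \tfrac{1}{\mu_1(S^*M)}\sqrt\rho^{\,-m+1}$ on $M_\epsilon$ drops out after integrating in $\epsilon$ against $\omega^m$ by the coarea/Fubini decomposition $M_\epsilon = \bigcup_{0<t<\epsilon}\partial M_t$.

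The main obstacle I expect is Step (1)--(2): keeping the bookkeeping of orders, half-densities and symbol normalizations straight so that the power $|\xi|_g^{-(m-1)/2}$ and the constant $\mu_1(S^*M)$ come out exactly right. The FIO $\Pi_\epsilon U(i\epsilon)$ is not unitary—it changes $L^2$ norms by a power of $\lambda$—so one must be careful that this power cancels in the ratio defining $u_{j_k}^\epsilon$ and that what survives is precisely the principal symbol of $U(i\epsilon)^*\Pi_\epsilon U(i\epsilon)$ evaluated on $S^*_\epsilon M$. A secondary technical point is upgrading the weak-* convergence on each fixed $\partial M_\epsilon$ (which is what QE on $M$ directly gives, along the common density-one subsequence extracted by the usual diagonalization over a countable dense set of symbols, as in \cite{Ze4}) to weak-$L^1$ convergence on the open tube $M_\epsilon$; this follows from the uniform $L^1$ (indeed $L^2$) bounds on $|U_{j_k}|^2$ over the shells together with dominated convergence in the $\epsilon$-variable, but it should be stated carefully. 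Everything else is a routine application of Egorov's theorem for FIOs and the quantum ergodicity hypothesis already in force.
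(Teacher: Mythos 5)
Your proposal is correct and follows essentially the same route as the paper: the paper first proves QE for the CR restrictions $u^\epsilon_{j_k}$ on each fixed $\partial M_\epsilon$ (Lemma~\ref{ERGO}) by conjugating the Toeplitz operator $\Pi_\epsilon a \Pi_\epsilon$ through $\Pi_\epsilon U(i\epsilon)$ to obtain a pseudodifferential operator on $M$ of order $-\tfrac{m-1}{2}$ with principal symbol $\tilde a\,|\xi|_g^{-(m-1)/2}$, applies the QE hypothesis to the resulting ratio of matrix elements, and then passes to the full tube by integrating over the shells $\partial M_r$, $0<r<\epsilon$, using $\mu_r(\partial M_r)=r^{m-1}\mu_\epsilon(\partial M_\epsilon)$ to produce the $\sqrt{\rho}^{-m+1}$ weight --- exactly your coarea step. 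The only spurious caveat in your sketch is the density-one diagonalization: the hypothesis already supplies a quantum ergodic sequence along $\{j_k\}$ (i.e.\ \eqref{QEDEF} holds for \emph{all} $A\in\Psi^0(M)$), so no further subsequence extraction is required.
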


We note that $\omega^m = r^{m-1} dr d\omega dvol(x)$ in polar
coordinates, so the right side indeed lies in $L^1$. The actual
limit function is otherwise irrelevant. The next step is to use a
compactness argument  to obtain
strong convergence of the normalized logarithms of the sequence
$\{|U_{\lambda}|^2\}$. The first statement of the following lemma
immediately implies the second.

\begin{lem} \label{ZEROWEAK} Assume that $|U_{j_k}|^2  \to  \frac{1}{\mu_{\epsilon}(\partial M_{\epsilon})}  \sqrt{\rho}^{-m + 1
},\;\; \mbox{weakly in}\;\; L^1(M_{\epsilon}, \omega^m). $
Then:

\begin{enumerate}

\item  $\frac{1}{\lambda_{j_k}}  \log |U_{j_k}|^2 \to 0$ strongly
in $L^1(M_{\epsilon}). $

\item $\frac{1}{\lambda_j} \ddbar \log |U_{j_k}|^2 \to 0,\;\;
\mbox{weakly in} \;\; \dcal^{'(1,1)}(M_{\epsilon}). $

\end{enumerate} \end{lem}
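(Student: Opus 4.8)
The plan is to deduce statement (1) from the weak $L^1$ convergence of $|U_{j_k}|^2$ by a standard potential-theoretic compactness argument for subharmonic (more precisely, plurisubharmonic) functions, exploiting the one-sided bound that the $U_{j_k}$ automatically satisfy, and then to obtain statement (2) by applying $\ddbar$, which is continuous for the weak topology on currents. First I would fix $0<\epsilon<\epsilon_0$ and work on a slightly smaller tube $M_{\epsilon'}$ with $\epsilon'<\epsilon$, so that all estimates take place on a relatively compact subdomain. Set
$$
v_{j_k} := \frac{1}{\lambda_{j_k}}\log |U_{j_k}|^2 = \frac{1}{\lambda_{j_k}}\log|\phi_{j_k}^{\C}|^2 - \frac{1}{\lambda_{j_k}}\log \rho_{j_k}^2 .
$$
Each $\frac{1}{\lambda_{j_k}}\log|\phi_{j_k}^{\C}|^2$ is plurisubharmonic (Poincar\'e--Lelong, \S \ref{PLLSEC}), and the subtracted term is a function of $|\xi|_g$ alone, so $v_{j_k}$ restricted to each sphere bundle $\partial M_r$ is a normalized log of a CR-holomorphic function. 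The key uniform upper bound comes from Proposition \ref{PW}: $\sup_{M_\epsilon}|\phi_{j_k}^{\C}| \le C\lambda_{j_k}^{(m+1)/2} e^{\epsilon\lambda_{j_k}}$, together with a lower bound on $\rho_{j_k}$ of the form $\rho_{j_k}^2 \gtrsim \lambda_{j_k}^{-N} e^{2|\xi|_g \lambda_{j_k}}$ (from \eqref{NORMCONV} and the fact that $U(i\epsilon)^*U(i\epsilon)$ is a pseudodifferential operator of order $-\frac{m-1}{2}$, whose symbol is bounded below). Combining these gives $v_{j_k} \le o(1)$ uniformly on compacta, i.e. the family $\{v_{j_k}\}$ has a uniform upper bound.

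Next I would invoke Hartogs' Lemma \ref{HARTOGS}: either $v_{j_k}\to-\infty$ uniformly on compacta, or a subsequence converges in $L^1_{loc}(M_\epsilon)$ to some plurisubharmonic $v$ with $\limsup v_{j_k}\le v$ and equality a.e. The first alternative is impossible because $\int_{\partial M_r}|U_{j_k}|^2\,d\mu_r = 1$ for every $r$ by construction (normalization), so $|U_{j_k}|$ cannot decay exponentially everywhere; more precisely, the weak $L^1$ convergence $|U_{j_k}|^2 \to \frac{1}{\mu_1(S^*M)}\sqrt{\rho}^{-m+1}$ forces $\frac{1}{\lambda_{j_k}}\log|U_{j_k}|^2$ to have nonnegative $\limsup$ on a set of positive measure (if it were $\le -\delta<0$ on a positive-measure set, then $|U_{j_k}|^2\to 0$ there, contradicting that the weak limit is strictly positive a.e.). Hence along any subsequence the $L^1_{loc}$ limit $v$ satisfies $v\le 0$ a.e. (from the upper bound) and $v\ge 0$ a.e. (from the argument just given applied to the a.e.-equality in Hartogs), so $v\equiv 0$. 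Since every subsequence has a further subsequence converging in $L^1_{loc}$ to $0$, the whole sequence converges: $v_{j_k}\to 0$ in $L^1_{loc}(M_\epsilon)$, which after shrinking slightly gives $L^1(M_{\epsilon'})$ convergence. This is statement (1) (with $\epsilon$ relabeled).

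Statement (2) then follows immediately: $L^1$ convergence implies distributional convergence, and $\ddbar$ is continuous from $\dcal'(M_\epsilon)$ to $\dcal'^{(1,1)}(M_\epsilon)$, so $\frac{1}{\lambda_{j_k}}\ddbar\log|U_{j_k}|^2 = \ddbar v_{j_k} \to \ddbar 0 = 0$ weakly in $\dcal'^{(1,1)}(M_\epsilon)$. The main obstacle I anticipate is making precise the lower bound on $\rho_{j_k}$ and the exact power of $\lambda_{j_k}$ it carries — i.e. confirming that $\frac{1}{\lambda_{j_k}}\log\rho_{j_k}^2 \to 2|\xi|_g$ uniformly (not merely in $L^1$), which is what decouples the two terms in $v_{j_k}$ and lets the upper-bound-from-Proposition-\ref{PW} argument go through pointwise. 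This is essentially the content of \eqref{NORMCONV} upgraded to hold on every sphere bundle simultaneously, and it relies on the fact that $U(i\epsilon)^*U(i\epsilon)\in\Psi^{-(m-1)/2}$ with elliptic principal symbol, so the Sobolev-mapping properties of Theorem \ref{BOUFIO} pin down the growth rate up to polynomial factors that die under the normalized logarithm.
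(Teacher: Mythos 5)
Your proof is correct and follows essentially the same strategy as the paper's: a uniform $o(1)$ upper bound on $\frac{1}{\lambda_{j_k}}\log|U_{j_k}|^2$, Hartogs' Lemma to extract an $L^1_{loc}$ limit, the weak-$L^1$ positivity of $\lim|U_{j_k}|^2$ to rule out both the $-\infty$ alternative and any negative set of the limit, and continuity of $\ddbar$ for part (2). The only cosmetic difference is how the uniform upper bound is obtained — you go via Proposition \ref{PW} and a lower bound on $\rho_{j_k}$, while the paper applies a Sobolev inequality in $\ocal^{(m-1)/4}(\partial M_\epsilon)$ directly to the normalized CR functions $u_j^\epsilon$ and then invokes Lemma \ref{NORM}; both yield the same polynomial-in-$\lambda$ bound on $|U_{j_k}|$, which dies under the normalized logarithm.
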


Separating out the numerator and denominator of $|U_j|^2$, we
obtain that
 \begin{equation} \label{SEPARATE} \frac{1}{\lambda_{j_k}} \ddbar  \log |\phi_{j_k}^{\C}|^2 - \frac{2}{\lambda_{j_k}} \ddbar
 \log \rho_{\lambda_{j_k}} \to 0,\;\;\; (\lambda_{j_k} \to \infty). \end{equation}  The next lemma shows that the second term
has a weak limit:

\begin{lem} \label{NORM}  For $0 < \epsilon < \epsilon_0$,
$$\frac{1}{\lambda{j_k}} \log \rho_{\lambda{j_k}}(x, \xi)  \to \; \sqrt{\rho},\;\;\;\mbox{ in }\;\;L^1(M_{\epsilon})\;\; \mbox{as}\;\; \lambda{j_k} \to
\infty.
$$
Hence, $$\frac{1}{\lambda_{j_k}} \ddbar
 \log \rho_{\lambda_{j_k}} \to \ddbar \sqrt{\rho}\;\;\; (\lambda_j \to \infty) \;\; \mbox{weakly in }\;\;  \dcal'(M_{\epsilon}).$$

\end{lem}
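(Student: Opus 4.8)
\textbf{Proof proposal for Lemma \ref{NORM}.}

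The plan is to reduce the statement to the mapping properties of the complexified Poisson operator established in Theorem \ref{BOUFIO} together with the already-proven ergodicity statement for the normalized Husimi densities. Recall from \eqref{DEFS} that $\rho_{\lambda}(x,\xi) = \|\phi_\lambda^{\C}|_{\partial M_{|\xi|_g}}\|_{L^2(\partial M_{|\xi|_g})}$, so the content of the first assertion is precisely a uniform-in-$\epsilon$ version of \eqref{NORMCONV}. First I would fix $\tau = |\xi|_g$ and write, using $U(i\tau)\phi_{\lambda} = e^{-\tau\lambda}\phi_{\lambda}^{\C}$,
\[
\rho_\lambda(x,\xi)^2 = e^{2\tau\lambda}\,\langle U(i\tau)^* U(i\tau)\phi_\lambda, \phi_\lambda\rangle_{L^2(M)}.
\]
By Theorem \ref{BOUFIO}, $U(i\tau)$ is a complex Fourier integral operator of order $-\tfrac{m-1}{4}$ mapping $L^2(M)$ isomorphically onto $\ocal^{(m-1)/4}(\partial M_\tau)$; hence $U(i\tau)^*U(i\tau)$ is a pseudodifferential operator on $M$ of order $\tfrac{m-1}{2}$, elliptic with principal symbol a positive power of $|\xi|_g$ (indeed $\simeq |\xi|^{-(m-1)/2}$ up to constants, by the calculus in \cite{Ze8}). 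Consequently there are constants $c_\tau, C_\tau > 0$, depending smoothly on $\tau$ and bounded on compact $\tau$-intervals, with
\[
c_\tau\, \lambda^{-(m-1)/2} \le \langle U(i\tau)^* U(i\tau)\phi_\lambda, \phi_\lambda\rangle \le C_\tau\, \lambda^{(m-1)/2}
\]
for all $L^2$-normalized eigenfunctions. Taking logarithms, dividing by $\lambda$, and noting $\tau = |\xi|_g = \sqrt{\rho}(x,\xi)$ on $\partial M_\tau$, we get $\tfrac{1}{\lambda}\log\rho_\lambda(x,\xi) = \sqrt{\rho}(x,\xi) + O(\tfrac{\log\lambda}{\lambda})$ pointwise, with the error uniform on $M_\epsilon$ since $\tau$ ranges over the compact interval $[0,\epsilon]$. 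This already gives the first displayed convergence, in fact in $L^\infty(M_\epsilon)$ and hence in $L^1(M_\epsilon)$.

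The second assertion — convergence of $\tfrac{1}{\lambda_{j_k}}\ddbar\log\rho_{\lambda_{j_k}} \to \ddbar\sqrt{\rho}$ weakly in $\dcal'(M_\epsilon)$ — then follows formally: $\ddbar$ is continuous on distributions, so $L^1$ convergence of the potentials passes to distributional convergence of their $\ddbar$. One must only check that the objects make sense, i.e. that $\log\rho_{\lambda}$ is locally integrable on $M_\epsilon$ (it is, being $\log$ of a continuous function that is nowhere zero on a fixed sphere bundle slice — and where it could vanish one uses that $\phi_\lambda^{\C}\ne 0$ generically on each $\partial M_\tau$, or simply that the $L^2$-norm over a positive-dimensional sphere bundle is strictly positive), and that $\sqrt\rho$ is $C^\infty$ away from $M$ and Lipschitz across $M$, so $\ddbar\sqrt\rho$ is a well-defined $(1,1)$-current. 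Both are standard facts recalled in \S\ref{PSH}.

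The main obstacle I anticipate is not the symbolic estimate but making the lower bound $\langle U(i\tau)^*U(i\tau)\phi_\lambda,\phi_\lambda\rangle \gtrsim \lambda^{-(m-1)/2}$ genuinely uniform in $\tau$ up to $\tau = \epsilon < \epsilon_0$, including the degenerate limit $\tau \to 0^+$ where $\partial M_\tau$ collapses onto $M$ and the FIO $U(i\tau)$ degenerates. The clean way around this is to avoid $\tau\to 0$ altogether: prove the estimate for $\tau$ in a fixed compact subinterval $[\delta,\epsilon]$ of $(0,\epsilon_0)$, which is all that is needed since the $L^1$ norm over $M_\epsilon$ only sees the region where $\sqrt\rho$ is bounded below away from a set of small measure, and then handle the thin collar $\sqrt{\rho} < \delta$ by the crude bound $0 \le \tfrac{1}{\lambda}\log\rho_\lambda \le \sqrt\rho + o(1) \le \delta + o(1)$ together with the fact that $|\{\sqrt\rho<\delta\}| \to 0$ as $\delta\to 0$. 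Alternatively — and this is the slicker route, pursued in \cite{Ze6,Ze8} — one invokes Lemma \ref{ERGOCOR} and Lemma \ref{ZEROWEAK}: from $|U_{j_k}|^2 \to \mu_1(S^*M)^{-1}\sqrt\rho^{-m+1}$ weakly in $L^1$ one gets $\tfrac{1}{\lambda_{j_k}}\log|U_{j_k}|^2 \to 0$ in $L^1(M_\epsilon)$, and since $\log|\phi_{j_k}^{\C}|^2 = \log|U_{j_k}|^2 + 2\log\rho_{\lambda_{j_k}}$ by \eqref{DEFS}, the desired limit for $\tfrac1\lambda\log\rho_\lambda$ is equivalent to Theorem \ref{LOGLIMERG}, i.e. to the statement $\tfrac{1}{\lambda_{j_k}}\log|\phi_{j_k}^{\C}|^2 \to \sqrt\rho$ in $L^1$. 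Thus Lemma \ref{NORM} can be taken either as the input to, or as a packaging of, the growth computation, and I would present it via the direct FIO estimate above, relegating the uniformity-as-$\tau\to 0$ point to the collar argument.
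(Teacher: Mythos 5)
Your proposal matches the paper's proof: both write $\rho_\lambda^2 = e^{2\tau\lambda}\,\langle U(i\tau)^*\Pi_\tau U(i\tau)\phi_\lambda,\phi_\lambda\rangle$ and conclude by observing that the matrix element of the pseudodifferential operator $U(i\tau)^*\Pi_\tau U(i\tau)$ contributes only $O(\log\lambda/\lambda)$ after taking $\tfrac1\lambda\log$, so that $\tfrac1\lambda\log\rho_\lambda\to|\xi|_{g_x}=\sqrt\rho$, with $\ddbar$ then applied by distributional continuity. Your added attention to the two-sided (lower as well as upper) polynomial bound and to uniformity as $\tau\to 0$ is a legitimate refinement of a point the paper elides, but it is the same argument.
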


It follows that the left side of (\ref{SEPARATE}) has the same
limit, and that will complete the proof of
 Theorem \ref{ZERO}.

\subsection{Proof of Lemma \ref{ERGOCOR} and Theorem \ref{W*HUSIMIERG}}

  We begin by proving a
 weak limit formula for the CR holomorphic functions $u_{\lambda}^{\epsilon}$  defined in
 (\ref{LITTLEU}) for fixed $\epsilon$.

\begin{lem} \label{ERGO} Assume that $\{\phi_{j_k}\}$ is a quantum ergodic sequence.
Then for each $0 < \epsilon < \epsilon_0$,
$$|u_{j_k}^{\epsilon}|^2 \to
\frac{1}{\mu_{\epsilon}(\partial M_{\epsilon})},\;\;
\mbox{weakly in}\;\; L^1(\partial M_{\epsilon},
d\mu_{\epsilon}).
$$
That is, for any   $a \in C(\partial M_{\epsilon})$,
$$\int_{\partial M_{\epsilon} } a(x, \xi) |u_{j_k}^{\epsilon}((x, \xi)|^2
d\mu_{\epsilon} \to \frac{1}{\mu_{\epsilon}(\partial
M_{\epsilon} )} \int_{\partial M_{\epsilon}} a(x, \xi)
d\mu_{\epsilon}.$$

\end{lem}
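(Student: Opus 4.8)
The plan is to deduce Lemma~\ref{ERGO} from the quantum ergodicity hypothesis \eqref{QEDEF} on $M$ by transferring the QE statement through the complexified Poisson operator $U(i\epsilon) = e^{-\epsilon\sqrt{\Delta}}$ analytically continued in the first variable. First I would recall from Theorem~\ref{BOUFIO} that $\Pi_\epsilon \circ U(i\epsilon): L^2(M) \to \ocal^0(\partial M_\epsilon)$ is an elliptic complex Fourier integral operator of order $-\frac{m-1}{4}$ associated to the canonical relation $\Gamma = \{(y,\eta,\iota_\epsilon(y,\eta))\}$, where $\iota_\epsilon$ is the diffeomorphism $S^*_\epsilon M \to \partial M_\epsilon$ coming from the imaginary-time exponential map. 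Since $U(i\epsilon)\phi_{\lambda_j} = e^{-\epsilon\lambda_j}\phi_{\lambda_j}^{\C}$, the CR function $u_{j}^\epsilon$ is exactly the $L^2(\partial M_\epsilon)$-normalization of the restriction $\phi_j^{\C}|_{\partial M_\epsilon}$, and so the measure $|u_j^\epsilon|^2\,d\mu_\epsilon$ is the push-forward under $\iota_\epsilon$ of a ``Husimi-type'' measure on $S^*_\epsilon M$ attached to $\phi_j$.

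The key step is to express, for $a \in C(\partial M_\epsilon)$, the quantity $\langle a\, \phi_j^{\C}|_{\partial M_\epsilon}, \phi_j^{\C}|_{\partial M_\epsilon}\rangle_{L^2(\partial M_\epsilon)}$ as $\langle B_\epsilon \phi_j, \phi_j\rangle_{L^2(M)}$, where $B_\epsilon = U(i\epsilon)^* M_a \Pi_\epsilon U(i\epsilon)$ and $M_a$ is multiplication by $a$. Since $U(i\epsilon)^*\Pi_\epsilon U(i\epsilon)$ is a pseudodifferential operator on $M$ of order $\frac{m-1}{2}$ (as noted in the sketch following \eqref{NORMCONV}), the composition $B_\epsilon$ is, after dividing by the ellipticity factor, a pseudodifferential operator whose principal symbol is a smooth multiple of $a\circ\iota_\epsilon$ pulled back from $S^*_\epsilon M \cong S^*M$ via the homogeneity of the geodesic flow; the normalization factor $\rho_{\lambda_j}^2 = \|\phi_j^{\C}|_{\partial M_\epsilon}\|^2$ has, by the same symbol computation with $a\equiv 1$, the asymptotics $\rho_{\lambda_j}^2 \sim c_\epsilon\,\lambda_j^{-(m-1)/2} e^{2\epsilon\lambda_j}\langle Q\phi_j,\phi_j\rangle$ for a suitable zeroth-order elliptic $\Psi$DO $Q$. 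Dividing, the exponential and the powers of $\lambda_j$ cancel, and we are left with a ratio of matrix elements $\langle A\phi_j,\phi_j\rangle / \langle Q\phi_j,\phi_j\rangle$ to which \eqref{QEDEF} applies along the density-one subsequence: both numerator and denominator converge to the corresponding Liouville averages, yielding the constant $\frac{1}{\mu_\epsilon(\partial M_\epsilon)}$ after identifying the measures via $\iota_\epsilon$ and the fact that the transfer $\Sigma_\epsilon \simeq T^*M$ carries $d\mu_\epsilon$ to (a constant times) Liouville measure.

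The main obstacle is bookkeeping the symbols through the FIO composition precisely enough to see that \emph{all} the geometric factors (the $|\xi|^{-(m-1)/2}$-type weights, the Jacobian of $\iota_\epsilon$, the power of $\lambda_j$) cancel between numerator and denominator so that the surviving limit is genuinely a constant independent of $a$ — equivalently, that the weak limit of $|u_j^\epsilon|^2\,d\mu_\epsilon$ is the \emph{normalized} measure and not merely proportional to some non-constant density on $\partial M_\epsilon$. This requires knowing that the ergodic limit of $\langle A\phi_j,\phi_j\rangle$ for the specific operators $A, Q$ arising here integrates the full symbol of $A$ against Liouville measure on $S^*M$ and that this symbol, transported to $\partial M_\epsilon$, is $a\circ\iota_\epsilon$ times exactly the reference symbol appearing in the $a\equiv 1$ case; the homogeneity properties of the complexified exponential map ($E^*\sqrt{\rho} = |\xi|$ and $E$ conjugating the geodesic flow to the $\sqrt{\rho}$-flow, from \eqref{CONJUG}) are what make this work, since they force the symbols on different sphere bundles $\partial M_r$ to be related by scaling. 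Once Lemma~\ref{ERGO} is established for each fixed $\epsilon$, Theorem~\ref{W*HUSIMIERG} is the case $\epsilon$ fixed restated in terms of $\phi_j^{\C}$, and Lemma~\ref{ERGOCOR} follows by integrating the $\epsilon$-slices: writing $\omega^m = r^{m-1}\,dr\,d\omega\,d\mathrm{vol}(x)$ in polar coordinates, one applies Lemma~\ref{ERGO} on each shell $\partial M_r$ for $r \leq \epsilon$ and uses dominated convergence in $r$ (the uniformity in $r$ of the symbol estimates, plus the bound from Proposition~\ref{PW}, controls the $r$-integral) to pass from the family of weak-$*$ limits on spheres to the weak-$L^1$ limit on the solid tube $M_\epsilon$.
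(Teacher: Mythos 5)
Your proposal matches the paper's argument step for step: conjugate the Toeplitz operator $\Pi_\epsilon a \Pi_\epsilon$ by the complex FIO $\Pi_\epsilon U(i\epsilon)$ to get a $\Psi$DO on $M$ of order $-\frac{m-1}{2}$ with symbol $\tilde a\,|\xi|^{-\frac{m-1}{2}}$, normalize by the $a\equiv 1$ case so the exponential and power-of-$\lambda$ factors cancel, and apply the standard QE theorem on $M$ to the resulting ratio of matrix elements. The symbol bookkeeping you flag as the ``main obstacle'' is indeed the crux, and your use of $E^*\sqrt{\rho}=|\xi|$ and the conjugation of flows is exactly the paper's mechanism for identifying Liouville measure on $\partial M_\epsilon$ with the reference measure, so this is the same proof.
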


\begin{proof}

It suffices to consider $a \in C^{\infty} (\partial
M_{\epsilon})$.  We then consider the Toeplitz operator
$\Pi_{\epsilon} a \Pi_{\epsilon}$ on $\ocal^0(\partial
M_{\epsilon})$. We have,
\begin{equation}\label{EQUIV} \begin{array}{lll}  \langle \Pi_{\epsilon} a \Pi_{\epsilon}
u_j^{\epsilon}, u_j^{\epsilon} \rangle & =  & e^{2 \epsilon
\lambda_j}\;  ||\phi_{\lambda}^{\C}||_{L^2(\partial M_{\epsilon})}^{-2} \langle \Pi_{\epsilon} a \Pi_{\epsilon} U(i \epsilon)
 \phi_j, U(i \epsilon) \phi_j
\rangle_{L^2(\partial M_{\epsilon})}\\  & & \\
& = &  e^{2 \epsilon \lambda_j}\;
||\phi_{\lambda}^{\C}||_{L^2(\partial M_{\epsilon})}^{-2}
\langle U(i \epsilon)^* \Pi_{\epsilon} a \Pi_{\epsilon} U(i
\epsilon) \phi_j, \phi_j \rangle_{L^2(M)}.
\end{array}
\end{equation}

It is not hard to see that $ U(i \epsilon)^* \Pi_{\epsilon} a
\Pi_{\epsilon} U(i \epsilon) $ is a pseudodifferential operator on
$M$ of order $- \frac{m-1}{2}$ with principal symbol $\tilde{a}
|\xi|_g^{ - \frac{m-1}{2}}$, where $\tilde{a}$ is the (degree $0$)
homogeneous extension of $a$ to $T^*M - 0$.  The normalizing
factor $ e^{2 \epsilon \lambda_j}\;
||\phi_{\lambda}^{\C}||_{L^2(\partial B^*_{\epsilon} M)}^{-2}$ has
the same form with $a = 1$. Hence, the expression on the right
side of  (\ref{EQUIV}) may be written as
\begin{equation} \frac{\langle U(i \epsilon)^* \Pi_{\epsilon} a \Pi_{\epsilon} U(i
\epsilon) \phi_j, \phi_j \rangle_{L^2(M)}}{\langle U(i \epsilon)^*
\Pi_{\epsilon} U(i \epsilon) \phi_j, \phi_j \rangle_{L^2(M)}}.
\end{equation}

  By the standard quantum ergodicity result on compact
Riemannian manifolds with ergodic geodesic flow (see \cite{Shn,
Ze4, CV} for proofs and references)  we have
\begin{equation} \label{QE} \frac{\langle U(i \epsilon)^* \Pi_{\epsilon} a \Pi_{\epsilon} U(i
\epsilon) \phi_{j_k}, \phi_{j_k}\rangle_{L^2(M)}}{\langle U(i
\epsilon)^* \Pi_{\epsilon} U(i \epsilon) \phi_{j_k}, \phi_{j_k}
\rangle_{L^2(M)}} \to \frac{1}{\mu_{\epsilon}(\partial
M_{\epsilon})} \int_{\partial M_{\epsilon}} a d
\mu_{\epsilon}.
\end{equation}
More precisely, the numerator is asymptotic to the right side
times $\lambda^{- \frac{m-1}{2}}$, while the denominator has the
same asymptotics when $a$ is replaced by $1$. We also use that
$\frac{1}{\mu_{\epsilon}(\partial M_{\epsilon})}
\int_{\partial M_{\epsilon}} a d \mu_{\epsilon}$ equals the
analogous average of $\tilde{a}$ over $\partial M_{\epsilon}$. Taking the ratio produces
(\ref{QE}).

Combining (\ref{EQUIV}), (\ref{QE}) and the fact that
$$ \langle \Pi_{\epsilon} a \Pi_{\epsilon}
u_j^{\epsilon}, u_j^{\epsilon} \rangle = \int_{\partial
B^*_{\epsilon} M} a |u_j^{\epsilon}|^2 d \mu_{\epsilon}$$
completes the proof of the lemma.

\end{proof}

We now complete the proof of Lemma \ref{ERGOCOR}, i.e. we prove
that
\begin{equation} \label{WEAKLIMA} \int_{M_{\epsilon}} a
|U_{j_k}|^2 \omega^m \to \frac{1}{\mu_{\epsilon}(\partial M_{\epsilon})}
\int_{M_{\epsilon}} a \sqrt{\rho}^{-m + 1} \omega^m
\end{equation} for any $a \in C(M_{\epsilon}).$
It is only necessary to relate the surface  Liouville measures $d\mu_r$
(\ref{LIOUVILLE}) to the K\"ahler volume measure. One may write
$d\mu_r = \frac{d}{dt} |_{t = r} \chi_t \omega^m$, where $\chi_t$
is the characteristic function of $M_t= \{ \sqrt{\rho} \leq t\}$.
By homogeneity of $|\xi|_g$, $\mu_{r}(\partial M_{r}) =
r^{m-1} \mu_{\epsilon}(\partial M_{\epsilon}) $. If  $a \in C(M_{\epsilon})$,
then $\int_{M_{\epsilon}} a \omega^m = \int_0^{\epsilon}
\{\int_{\partial M_{r}} a
 d \mu_{r}\} dr.  $ By Lemma \ref{ERGO}, we  have

\begin{equation} \label{LIOUVSYM} \begin{array}{lll} \int_{M_{\epsilon}} a |U_{j_k}|^2
\omega^m = \int_0^{\epsilon}\{ \int_{\partial M_{r}} a
|u_{j_k}^r|^2  d \mu_{r} \}  d r &\to & \int_0^{\epsilon}
\{\frac{1}{\mu_{r} (\partial B^*_{r})} \int_{\partial M_{r}} a
d \mu_r \}
dr \\ & & \\
& = &  \frac{1}{\mu_{\epsilon}(\partial
 M_{\epsilon})}\int_{M_{\epsilon}} a r^{-m + 1} \omega^m, \\ & & \\
& \implies & w^*-\lim_{\lambda \to \infty} |U_{j_k}|^2 =
 \frac{1}{\mu_1(\partial M_{\epsilon})} \sqrt{\rho}^{-m + 1}.
\end{array}
\end{equation}

\subsection{Proof of Lemma \ref{NORM}}

In fact, one has
$$\frac{1}{\lambda} \log \rho_{\lambda}(x, \xi)  \to \sqrt{\rho},\;\;\;\mbox{uniformly in }\;\;M_{\epsilon}\;\; \mbox{as}\;\; \lambda \to
\infty.
$$

\begin{proof}

Again using  $U(i \epsilon) \phi_{\lambda} = e^{-\lambda \epsilon}
\phi_{\lambda}^{\C}$, we have:
 \begin{equation} \label{ME} \begin{array}{lll} \rho^2_{\lambda}(x, \xi) &
 =&
 \langle \Pi_{\epsilon} \phi_{\lambda}^{\C}, \Pi_{\epsilon} \phi_{\lambda}^{\C} \rangle_{L^2(\partial B^*_{\epsilon} M)} \;\;(\epsilon = |\xi|_{g_x})\\ & & \\
 & = &
 e^{  2 \lambda
\epsilon}
 \langle  \Pi_{\epsilon} U(i \epsilon) \phi_{\lambda}, \Pi_{\epsilon} U(i \epsilon)
 \phi_{\lambda}
\rangle_{L^2(\partial B^*_{\epsilon} M)} \\ & & \\ & = &   e^{ 2
\lambda \epsilon} \langle U(i \epsilon)^* \Pi_{\epsilon} U(i
\epsilon) \phi_{\lambda},  \phi_{\lambda} \rangle_{L^2(M)}.
\end{array}
\end{equation}
Hence,
\begin{equation} \label{LOGAR} \frac{2}{\lambda} \log
\rho_{\lambda} (x, \xi)= 2 |\xi|_{g_x} + \frac{1}{\lambda} \log
\langle U(i \epsilon)^* \Pi_{\epsilon} U(i \epsilon)
\phi_{\lambda}, \phi_{\lambda} \rangle. \end{equation}
The second term on the right side is the matrix element of a pseudo-differential operator,
hence is bounded by some power of $\lambda$. Taking the logarithm gives a remainder of
order $\frac{\log \lambda}{\lambda}$.

\end{proof}

\subsection{Proof of Lemma \ref{ZEROWEAK}}

\begin{proof} We wish to prove that  $$\psi_j := \frac{1}{\lambda_j}
\log |U_j|^2 \to 0 \;\; \mbox{in}\; L^1(M_{\epsilon}).
$$
As we have said, this is almost obvious from Lemmas \ref{ERGOCOR} and \ref{ERGO}. 
 If the conclusion is not true, then
there exists a subsquence $\psi_{j_k}$ satisfying
$||\psi_{j_k}||_{L^1(B^*_{\epsilon} M)} \geq \delta > 0.$
To obtain a contradiction, we use Lemma \ref{HARTOGS}. 
% first observe that $\psi_j$ is
%quasi-plurisubharmonic
% (QPSH)
% on $B^*_{\epsilon} M$, i.e. may be locally written as the sum $v_j + \rho_j$ of a plurisubharmonic
% function $v_j$ and a smooth function $\rho_j$;  equivalently  $i \ddbar \psi_j$ is locally bounded below by a negative
% smooth $(1,1)$ form. Indeed we put
%$$v_j := \frac{1}{\lambda_j} \log |\phi_j^{\C}|^2,\;\; \rho_j
%:= - \rho_{\lambda_j}.$$  We   use the following fact about
%subharmonic functions (see \cite[Theorem~4.1.9]{HoI}):
%\medskip
%{\it Let $\{v_j\}$ be a sequence of subharmonic functions in an
%open set $X \subset \R^m$ which have a uniform upper bound on any
%compact set. Then either $v_j \to -\infty$ uniformly on every
%compact set, or else there exists a subsequence $v_{j_k}$ which is
%convergent in $L^1_{loc}(X)$. }
% Since the proof is
%local, it also holds for open sets in manifolds, and in particular
%for $X = B^*_{\epsilon} M$.
 
%\begin{itemize} \item (i)  the functions $v_j$ are
%uniformly bounded above on $B^*_{\epsilon} M$;
%\item (ii)  $\limsup_{j \rightarrow \infty}  v_j \leq 2 |\xi|_g $.
%\end{itemize}

To see that the hypotheses are satisfied in our example,
it suffices to prove these statements on each surface $\partial
M_{\epsilon}$ with uniform constants independent of
$\epsilon$. On the surface $\partial M_{\epsilon}$, $U_j =
u^{\epsilon}_j$. By the Sobolev inequality in
$\ocal^{\frac{m-1}{4}}(\partial M_{\epsilon})$, we have
$$\begin{array}{lll} \sup_{(x, \xi) \in \partial M_{\epsilon})} |u_j^{\epsilon} (x, \xi)| & \leq &
\lambda_j^m ||u_j^{\epsilon} (x, \xi)||_{L^2(\partial
M_{\epsilon})} \\ & & \\
& \leq & \lambda_j^m.
\end{array}$$
 Taking
the logarithm, dividing by $\lambda_j$, and combining with the
limit formula of Lemma \ref{NORM} proves (i) - (ii).

We now settle the dichotomy above by proving that the sequence
$\{\psi_j\}$ does not tend uniformly to $-\infty$ on compact sets.
That would imply that $\psi_j \to - \infty$ uniformly on the
spheres $\partial M_{\epsilon}$ for each $\epsilon <
\epsilon_0$. Hence, for each $\epsilon$, there  would exist $K
> 0$ such that for $k \geq K$,
\begin{equation}\frac{1}{\lambda_{j_k}} \log | u_{j_k}^{\epsilon} (z)| \leq -
1.\label{firstposs}\end{equation} However, (\ref{firstposs})
implies that
$$ | u_{j_k}(z)| \leq e^{- 2 \lambda_{j_k}}\;\;\;\;\forall z \in
\partial M_{\epsilon}\;, $$ which is inconsistent with the hypothesis that $|
u_{j_k}^{\epsilon} (z)| \rightarrow 1$ in $\dcal'(\partial
M_{\epsilon})$.

Therefore,
 there must exist a subsequence, which we continue to denote by
$\{\psi_{j_k}\}$, which converges in $L^1(M_{\epsilon_0})$ to some
$\psi \in L^1(M_{\epsilon_0}).$ Then,
$$\psi (z) = \limsup_{k \rightarrow \infty} \psi_{j_k}\leq 2 |\xi|_g\;\;\;\;\;\;{\rm (a.e)}\;.$$ Now let
$$\psi^*(z):= \limsup_{w \rightarrow z}  \psi (w) \leq 0 $$ be the
upper-semicontinuous regularization of $\psi$. Then $\psi^*$ is
plurisubharmonic on $M_{\epsilon}$ and $\psi^* = \psi$ almost
everywhere.

%Put $\psi^* : = v^* - 2 |\xi|_g.$ Then $\psi^* \leq 0$, and the
%assumption $||\psi_{j_k}||_{L^1(B^*_{\epsilon} M)} \geq \delta >
%0$ implies that
%$$U_{\delta} : = \{\zeta \in B^*_{\epsilon_0} M: \; \psi^* (\zeta)\; <  -
%\delta/2\} $$ has positive volume. Since $\psi_{j_k} \to \psi^*$
%in $L^1(U_{\delta})$, one has by \cite{HoI} Theorem 4.1.9 (b)  that
%\begin{equation} \limsup_{k \to \infty} \psi_{j_k} |_{U_{\delta}}
%\leq \psi^*|_{U_{\delta}} < - \delta/2.\end{equation} Hence, there
%exists a positive integer $K$ such that 
If  $\psi^* \leq 2 |\xi|_g - \delta$ on a set $U_{\delta}$  of positive measure, then $\psi_{j_k}(\zeta) \leq
-\delta/2$ for $\zeta \in U_{\delta},\ k\geq K$; i.e.,
\begin{equation} |\psi_{j_k}(\zeta)|\leq e^{- \delta
\lambda_{j_k}},\;\;\;\;\; \zeta\in U_{\delta}, \;\;k \geq K.
\end{equation} This  contradicts the weak convergence to $1$ and concludes the proof.

\end{proof}

%\subsection{Summing up}

%By 
%Poincare- Lelong:
%$$  [Z_j]  = \ddbar \log |\phi_j^{\C}|^2. $$
%By quantum ergodicity, 
%$$ \frac{1}{\lambda_{j_k}} \ddbar \log |\phi_{j_k}^{\C}|^2 \sim
%\frac{1}{\lambda_{j_k}}  \ddbar \log ||\phi_{j_k}^{\C}||_{\partial
%M_{\epsilon}}^2 \;\; \mbox{on }\;\; \partial M_{\epsilon}.$$
%Using the Poisson kernel, we have 
%$$ \frac{1}{\lambda_j} \log ||\phi_j^{\C}||_{\partial
%M_{\epsilon}}^2 \to 2 \epsilon .$$
%But $\epsilon$ is the tube radius.

%Conclusion: 
%$$\frac{1}{\lambda_{j_k}}   [Z_{j_k}]  \to i \ddbar \sqrt{\rho}. $$

\section{\label{NODALGEOS} Intersections of nodal sets and analytic curves  on real
analytic surfaces}

It is often possible to obtain more refined results on nodal sets
by studying their intersections with some fixed (and often
special) hypersurface. This has been most successful in dimension
two.  
In \S \ref{PLANEDOMAIN}  we discuss  upper bounds on the number of  intersection points of the nodal set with the bounary
of a real analytic plane domain and more general `good' analytic curves.  
To obtain lower bounds or asymptotics, we need to add some dynamical hypotheses. In case of 
ergodic geodesic flow, we can obtain equidistribution theorems for intersections of nodal
sets and geodesics on surfaces. The dimensional restriction
is due to the fact that the results are partly based on the quantum ergodic restriction theorems of \cite{TZ,TZ2},
which concern restrictions of eigenfunctions to hypersurfaces. Nodal sets and geodesics have complementary
dimensions and intersect in points, and therefore it makes sense to count the number of intersections. But
we do not yet have a mechanism for studying restrictions to geodesics when $\dim M \geq 3$.

\subsection{\label{PLANEDOMAIN} Counting nodal lines which touch the boundary in analytic
plane domains}
In this section, we review the results of \cite{TZ} giving upper bounds
on the number of intersections of the nodal set with the boundary of an analytic (or more
generally piecewise analytic) 
plane domain.  One may expect that the results of
this section can also be generalized to higher dimensions by measuring codimension two nodal hypersurface volumes within
the boundary. 

Thus we would like to  count the number of 
nodal lines (i.e. components of the nodal set)  which touch the boundary. Here we assume that $0$
is a regular value so that components of the nodal set are either loops in the interior (closed nodal
loops)  or curves
which touch the boundary in two points (open nodal lines).  It is known that  for generic piecewise analytic
plane domains, zero is a regular value of all the eigenfunctions
$\phi_{\lambda_j}$, i.e. $\nabla \phi_{\lambda_j} \not= 0$ on
$\ncal_{\phi_{\lambda_j}}$ \cite{U}; we then call the nodal set
regular.
Since the boundary lies in the nodal set for Dirichlet boundary
conditions, we remove it from the nodal set before counting
components. Henceforth, the number of components of the nodal set
in the Dirichlet case means the number of components of
$\ncal_{\phi_{\lambda_j}} \backslash \partial \Omega.$

We now sketch the proof of
Theorems \ref{INTREALBDYint}  in the case of Neumann boundary conditions.
%\begin{theo} \label{COR} Let $\Omega$ be a piecewise analytic domain
%and  let  $n_{\partial \Omega}(\lambda_j)$ be the number of
%components of the nodal set of the $j$th Neumann or Dirichlet
%eigenfunction which intersect $\partial \Omega$.  Then there
%exists  $C_{\Omega}$   such that $n_{\partial \Omega}(\lambda_j)
%\leq C_{\Omega} \lambda_j.$
%\end{theo}
By a piecewise
analytic domain $\Omega^2 \subset \R^2$,  we mean a
compact domain with piecewise analytic boundary, i.e. $\partial
\Omega$ is a union of a finite number of piecewise analytic curves
which intersect only at their common endpoints.
Such domains are often studied as archtypes of domains with ergodic billiards
and quantum chaotic eigenfunctions, in particular the 
 Bunimovich stadium or Sinai billiard. 
%Their nodal sets have been the subject of a
%number of numerical studies (e.g. \cite{BGS,FGS}).  

For the Neumann problem, the boundary nodal points are the same as
the zeros of the boundary values $\phi_{\lambda_j} |_{\partial
\Omega}$ of the eigenfunctions. The number of boundary nodal
points  is thus twice the number of open nodal lines. Hence in the
Neumann case, the  Theorem follows from:

\begin{theo}\label{BNP}  Suppose that $\Omega \subset \R^2$ is a piecewise real analytic  plane domain.
 Then the number $n(\lambda_j) = \# \ncal_{\phi_{\lambda_j}} \cap \partial
 \Omega$ of zeros of the boundary values  $\phi_{\lambda_j} |_{\partial
\Omega}$ of the $j$th Neumann  eigenfunction satisfies
$n(\lambda_j) \leq C_{\Omega}
 \lambda_j$, for some $C_{\Omega} > 0$.
\end{theo}
This is a more precise version of   Theorem \ref{INTREALBDYint}  since it does not assume that $0$ is
a regular value. 
We prove Theorem \ref{BNP}  by analytically
continuing the boundary values of the eigenfunctions and counting  {\it complex zeros and critical points} of analytic
continuations of Cauchy data of eigenfunctions. When $\partial
\Omega \in C^{\omega}$,  the eigenfunctions can be holomorphically
continued to an open tube domain  in $\C^2$ projecting over an
open neighborhood $W$ in $\R^2$ of $\Omega$ which is independent
of the eigenvalue.  We denote by $\Omega_{\C} \subset \C^2$ the
points $\zeta = x + i \xi \in \C^2 $ with $x \in \Omega$.  Then
$\phi_{\lambda_j}(x)$ extends to a holomorphic function
$\phi_{\lambda_j}^{\C}(\zeta)$ where $x \in W$ and where $ |\xi|
\leq \epsilon_0$ for some $\epsilon_0 > 0$.  

Assuming $\partial \Omega$ real analytic, we   define the
(interior) complex nodal set by
$$\ncal_{\phi_{\lambda_j}}^{\C} = \{\zeta \in  \Omega_{\C}:
\phi_{\lambda_j}^{\C}(\zeta) = 0 \}.  $$ 
%and the (interior)
%complex critical point set by
%$$\ccal_{\phi_{\lambda_j} }^{\C} = \{\zeta \in  \Omega_{\C}: d \phi_{\lambda_j}^{\C} (\zeta) =
%0\}.$$

\begin{theo} \label{mainthm} Suppose that $\Omega \subset \R^2$ is a piecewise real analytic  plane
domain, and denote by $(\partial \Omega)_{\C}$ the union of the
complexifications of its real analytic boundary components.

\begin{enumerate}

\item  Let
  $n(\lambda_j, \partial \Omega_{\C} ) = \# Z_{\phi_{\lambda_j}}^{\partial \Omega_{\C}}$ be the number
of complex zeros on the complex boundary. 
 Then there exists a constant $C_{\Omega} > 0$ independent of the radius of
  $(\partial \Omega)_{\C}$ such that
$n(\lambda_j, \partial \Omega_{\C})
 \leq C_{\Omega} \lambda_j. $

% \item Suppose that the Neumann eigenfunctions satisfy (\ref{NEUND}) and
%  let $n_{\mbox{crit}}(\lambda_j, \partial \Omega_{\C}) = \# \ccal_{\phi_{\lambda_j} }^{\partial
% \Omega_{\C}}$.
% Then there exists $C_{\Omega} > 0$ independent of the radius of
%  $(\partial \Omega)_{\C}$ such that  $n_{\mbox{crit}}(\lambda_j, \partial \Omega_{\C})
% \leq C_{\Omega} \lambda_j. $

\end{enumerate}

\end{theo}

The  theorems on real nodal lines and critical points  follow from
the fact that real zeros and critical points are also complex
zeros and critical points, hence
\begin{equation} n(\lambda_j) \leq n(\lambda_j, \partial \Omega_{\C} )
%  \;\;\;\; \nj \leq
%n_{\mbox{crit}}(\lambda_j, \partial \Omega_{\C})
. \end{equation}
All of the results are sharp, and are already obtained for certain
sequences of eigenfunctions  on a disc (see \S \ref{EXAMPLES}). 
%If
%the condition (\ref{NEUND}) is not satisfied, the boundary value
%of $\phi_{\lambda_j}$ must equal a constant $C_j$ modulo an error
%of the form $o(e^{- C \lambda_j})$.  We conjecture that this
%forces the boundary values to be constant. 

 To prove \ref{mainthm}, we   represent the analytic continuations of the boundary values of the
eigenfunctions  in terms of  layer potentials.
  Let $G(\lambda_j, x_1, x_2)$ be any `Green's function' for the Helmholtz equation
 on $\Omega$, i.e. a solution
 of $(- \Delta - \lambda_j^2) G(\lambda_j, x_1, x_2) = \delta_{x_1}(x_2)$ with $x_1, x_2  \in \bar{\Omega}$.
By Green's formula,
\begin{equation}\label{GREENSFORMULA}  \phi_{\lambda_j}(x, y) = \int_{\partial \Omega} \left(\partial_{\nu}
G(\lambda_j,   q, (x, y)) \phi_{\lambda_j}(q) - G(\lambda_j,   q,
(x, y))
\partial_{\nu} \phi_{\lambda_j}(q) \right) d\sigma(q), \end{equation}
 where $(x, y) \in \R^2$,   where  $d\sigma$ is
arc-length measure on $\partial \Omega$  and where
$\partial_{\nu}$ is the normal derivative by the interior unit
normal. Our aim is to analytically continue this formula.

  In the case of
   Neumann eigenfunctions $\phi_\lambda$ in $\Omega,$
\begin{equation} \label{green1}
\phi_{\lambda_j}(x, y)= \int_{\partial \Omega}
\frac{\partial}{\partial \nu_{q } }G(\lambda_j, q,
(x, y)) u_{\lambda_j}(q) d\sigma (q),\;\; (x, y)
\in \Omega^o\;\; (\mbox{Neumann}).
\end{equation}
% In the Dirichlet case,
%the corresponding formula is
%\begin{equation} \label{green1d}
%\phi_{\lambda_j}(x, y)= - \int_{\partial \Omega}  G(\lambda_j, q,
%(x, y)) u_{\lambda_j}(q) d\sigma (q),\;\; (\mbox{Dirichlet})
%\end{equation}
%where $u_{\lambda_j}$ are as in (\ref{CD}).

 To obtain
concrete representations we need to choose $G$. We choose the
real ambient Euclidean Green's function $S$ 
%(in the notation of
%\cite{G}, \S 5),
\begin{equation} \label{potential1}
S(\lambda_j, \xi, \eta;  x, y)  = - Y_0(\lambda_jr((x, y); (\xi,
\eta))),
\end{equation}
where $r = \sqrt{z z^*}$ is  the distance function (the square
root of $r^2$ above)  and where $Y_{0}$ is the Bessel function of
order zero of the second kind.
% (see  equation (\ref{bessel1}) in
%Appendix \ref{besselappendix}.) 
The Euclidean Green's function has
the form
\begin{equation} \label{GAB} S(\lambda_j,
\xi, \eta; x,y) = \linebreak A(\lambda_j,   \xi, \eta; x, y) \,
\log \frac{1}{r} + B(\lambda_j, \xi, \eta; x, y),\end{equation}
where $ A$ and $B$ are entire functions  of $r^2$.
% (see
%(\ref{bessel1}) and (\ref{GABa}) in Appendix \ref{besselappendix}
%for the formulas). 
The coefficient $ A = J_0(\lambda_jr)$ is known
as the Riemann function.

 By  the `jumps' formulae, the double layer
potential  $\frac{\partial}{\partial \nu_{\tilde{q}}} S(\lambda_j,
 \tilde{q}, (x, y))$ on $\partial \Omega \times \bar{\Omega}$  restricts to
 $\partial \Omega \times  \partial \Omega$ as $\frac{1}{2}
\delta_q(\tilde{q})  + \frac{\partial}{\partial \nu_{\tilde{q}}}
S(\lambda_j,  \tilde{q}, q)$ (see e.g. \cite{TI,TII}). Hence in the
Neumann case the boundary values $u_{\lambda_j}$ satisfy,
\begin{equation} \label{green1c} u_{\lambda_j}(q)=  2 \int_{\partial \Omega}
\frac{\partial}{\partial \nu_{\tilde{q}}} S(\lambda_j,\tilde{q},
q) u_{\lambda_j}(\tilde{q}) d\sigma(\tilde{q})\;\;
(\mbox{Neumann}).
\end{equation}
%In the Dirichlet case, one takes the normal derivative of $\phi_{\lambda_j}$
%at the boundary to get a similar formula for $\partial_{\nu}
%\phi_{\lambda_j} |_{\partial \Omega}$,  with a sign change on the right
%side.
 We have,
\begin{equation}  \begin{gathered}
\frac{\partial}{\partial \nu_{\tilde{q}}} S(\lambda_j, \tilde{q},
q) =  - \lambda_j Y_1 (\lambda_j r) \cos \angle(q -\tilde{q},
\nu_{\tilde{q}}).
\end{gathered}\label{Neumann-F}\end{equation}
%where the formula for $Y_{1}(z)$ is given in (\ref{bessel2}) in
%Appendix \ref{besselappendix}.
% As is  well-known,  the pole of $Y_1$   is cancelled by the $\cos
%\angle(q -\tilde{q}, \nu_{\tilde{q}})$ factor.

It is equivalent, and sometimes  more convenient, to use the
(complex valued) Euclidean outgoing Green's function  $\Ha_0(k
z)$, where $\Ha_0 = J_0 + i Y_0$ is the Hankel function of order
zero. It has the same form as (\ref{GAB}) and only differs by the
addition of the even entire function $J_0$ to the $B$ term. If  we
use the Hankel free outgoing Green's function, then in place of
(\ref{Neumann-F}) we have the kernel
\begin{equation} \label{HANKELINT} \begin{array}{lll} N(\lambda_j, q(s), q(s'))
&=& \frac{i}{2} \partial_{\nu_y} \Ha_{0}(\lambda_j|q(s) - y|)|_{y = q(s')} \\ &&\\
&=& -\frac{i}{2} \lambda_j\Ha_{1} (\lambda_j|q(s) - q(s')|) \cos \angle(q(s')
-q (s), \nu_{q(s')}),
\end{array} \end{equation}
and in place of (\ref{green1c}) we have the
formula
\begin{equation} \label{int1a}
u_{\lambda_j}(q(t))= \int_{0}^{2\pi}
N(\lambda_j, q(s), q(t)) \, u_{\lambda_j}(q(s))
ds.
\end{equation}

The next step is to analytically  continue the  layer potential
representations (\ref{green1c}) and (\ref{int1a}). The main point
is to express the analytic continuations of Cauchy data of Neumann
and Dirichlet eigenfunctions in terms of the real Cauchy data.
For brevity, we only consider (\ref{green1c}) but essentially the
same arguments apply to the free outgoing representation
(\ref{int1a}).

 As mentioned above,
both $ A(\lambda_j,\xi, \eta, x, y)$ and $B(\lambda_j,\xi, \eta,
x, y)$ admit analytic continuations. In the case of $A$, we use  a
traditional notation $R(\zeta, \zeta^*, z, z^*)$ for  the analytic
continuation and for simplicity of notation we omit the dependence
on $\lambda_j$.

The details of the analytic continuation are complicated when the curve is the boundary, and
they simplify when the curve is interior. So we only continue the sketch of the proof in the interior case.

As above, the  arc-length parametrization of
 $C$ is denoed by by
$q_{C}:[0,2\pi] \rightarrow C$ and the corresponding arc-length
parametrization  of the boundary, $\partial \Omega,$ by $q:[0,2\pi
] \rightarrow \partial \Omega$.  Since the boundary and $C$ do not
intersect, the logarithm $\log r^{2}(q(s);q_{C}^{C}(t))$ is well
defined and the
  holomorphic continuation of equation  (\ref{int1a}) is given by:
\begin{equation} \label{int1}
\phi_{\lambda_j}^{\C}(q_C^{\C}(t))= \int_{0}^{2\pi}
 N(\lambda_j,q(s), q_C^{\C}(t)) \, u_{\lambda_j}(q(s)) d\sigma(s),
\end{equation}

From the basic  formula  (\ref{HANKELINT}) for $N(\lambda_j,q,q_{C})$
 and the standard
integral formula for the Hankel function $\Ha_{1}(z)$,
 one easily gets an asymptotic expansion in $\lambda_j$ of the form:
\begin{equation} \label{potential3}
N( \lambda_j,q(s), q_{C}^{\C}(t))  = e^{i \lambda_j r(q(s);
q^{\C}_{C}(t)) }  \, \sum_{m=0}^{k} a_{m}(q(s), q^{\C}_{C}(t)) \, \lambda_j^{1/2 -m}  \end{equation}
$$+ O(e^{i
\lambda_jr( q(s); q^{\C}_{C}(t)) } \,
\lambda_j^{1/2-k-1}).$$
Note that the expansion in (\ref{potential3}) is valid since
for interior curves,
$$C_{0} := \min_{ (q_C(t), q(s)) \in C \times \partial \Omega } |q_C(t) - q(s)|^{2}  >0.$$
 Then,   $\Re r^{2}(q(s);q_{C}^{C}(t)) >0$ as long as
 \begin{equation} \label{holbranch}
 | \Im q^{\C}_{C}(t)|^{2} < C_{0}.
 \end{equation}
  So, the principal square root of $r^{2}$ has a well-defined holomorphic extension to the tube (\ref{holbranch}) containing $C$.  We have denoted this square root by $r$ in (\ref{potential3}).

 Substituting (\ref{potential3})
in the analytically continued single layer potential integral formula (\ref{int1}) proves that
for $t \in A(\epsilon)$ and $\lambda_j>0$ sufficiently large,
\begin{equation} \label{int2}
\phi^{\C}_{\lambda_j}(q_{C}^{\C}(t)) = 2\pi \lambda_j^{1/2} \int_{0}^{2\pi}
 e^{i\lambda_j r (q(s): q_{C}^{\C}(t)) }
 a_{0}(q(s), q_{C}^{\C}(t)) ( 1 + O(\lambda_j^{-1})  \, ) \, u_{\lambda_j}(q(s))  d\sigma(s).
\end{equation}
Taking absolute values of the integral on the RHS in (\ref{int2})
and applying the  Cauchy-Schwartz inequality proves

\begin{lem} \label{mainlemma1}
For $t \in [0,2\pi ] + i[-\epsilon,\epsilon]$ and $\lambda_j>0$ sufficiently large
$$|\phi^{\C}_{\lambda_j}(q_{C}^{\C}(t))| \leq C_{1} \lambda_j^{1/2} \exp \, \lambda_j \left( \max_{q(s) \in \partial \Omega} \Re  \,  i r(q(s);q_{C}^{\C}(t))   \right)  \cdot  \| u_{\lambda_j} \|_{L^{2}(\partial \Omega)}.$$
\end{lem}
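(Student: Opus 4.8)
The plan is to combine the analytically continued layer potential representation \eqref{int2} with a crude absolute-value estimate. Starting from
\[
\phi^{\C}_{\lambda_j}(q_{C}^{\C}(t)) = 2\pi \lambda_j^{1/2} \int_{0}^{2\pi}
 e^{i\lambda_j r (q(s): q_{C}^{\C}(t)) }
 a_{0}(q(s), q_{C}^{\C}(t)) ( 1 + O(\lambda_j^{-1})  ) \, u_{\lambda_j}(q(s))  d\sigma(s),
\]
I would simply take absolute values inside the integral. The modulus of the oscillatory factor is $|e^{i\lambda_j r(q(s);q_C^{\C}(t))}| = e^{-\lambda_j \operatorname{Im} r(q(s);q_C^{\C}(t))} = e^{\lambda_j \operatorname{Re}(i r(q(s);q_C^{\C}(t)))}$, so it is bounded by $\exp\left(\lambda_j \max_{q(s)\in\partial\Omega}\operatorname{Re}(i r(q(s);q_C^{\C}(t)))\right)$, which can be pulled out of the integral. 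The amplitude $a_0(q(s),q_C^{\C}(t))$ is bounded uniformly for $t$ in the strip $[0,2\pi]+i[-\epsilon,\epsilon]$, using that $C_0 = \min|q_C(t)-q(s)|^2 > 0$ guarantees $r$ and hence $a_0$ are holomorphic and bounded on the tube \eqref{holbranch}; the factor $(1+O(\lambda_j^{-1}))$ is bounded by a constant for $\lambda_j$ large. What remains is $\int_0^{2\pi} |u_{\lambda_j}(q(s))|\, d\sigma(s)$, and applying the Cauchy--Schwarz inequality on the compact curve $\partial\Omega$ bounds this by $(\operatorname{length}\partial\Omega)^{1/2}\|u_{\lambda_j}\|_{L^2(\partial\Omega)}$. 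Absorbing $2\pi$, the amplitude bound, the $O(\lambda_j^{-1})$ correction, and the length factor into a single constant $C_1$ yields exactly the claimed inequality
\[
|\phi^{\C}_{\lambda_j}(q_{C}^{\C}(t))| \leq C_{1} \lambda_j^{1/2} \exp \lambda_j \left( \max_{q(s) \in \partial \Omega} \operatorname{Re}  \, i r(q(s);q_{C}^{\C}(t))   \right)   \| u_{\lambda_j} \|_{L^{2}(\partial \Omega)}.
\]

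The only genuinely substantive input is the asymptotic expansion \eqref{potential3} for the Hankel kernel $N(\lambda_j,q(s),q_C^{\C}(t))$ and its validity after analytic continuation, together with the representation \eqref{int1}; but these are established in the preceding paragraphs and I would simply cite them. The main point to be careful about — and what I expect to be the only real obstacle — is the holomorphy and uniform boundedness of the phase $r(q(s);q_C^{\C}(t))$ and the amplitudes $a_m$ on the strip. This is precisely where the hypothesis that $C$ is an \emph{interior} curve enters: the condition $C_0 > 0$ ensures $\operatorname{Re} r^2 > 0$ for $|\operatorname{Im} q_C^{\C}(t)|^2 < C_0$, so the principal branch of $\sqrt{r^2}$ extends holomorphically to a fixed tube around $C$, independent of $\lambda_j$, and all the amplitude coefficients inherit uniform bounds there. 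Once that is in hand, the estimate is a one-line consequence of the triangle inequality and Cauchy--Schwarz, so there is no serious analytic difficulty remaining.

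I should note that the exponential rate $\max_{q(s)\in\partial\Omega}\operatorname{Re}(ir(q(s);q_C^{\C}(t)))$ is left in this implicit form deliberately: in the next stage of the argument one analyzes this quantity as a function of $\operatorname{Im} t$ to convert the bound into a growth estimate in the strip, which feeds (via a Jensen/Nevanlinna-type counting argument for complex zeros of the holomorphic function $t \mapsto \phi^{\C}_{\lambda_j}(q_C^{\C}(t))$) into the bound $n(\lambda_j,C) \leq A_{C,\Omega}\lambda_j$ of Theorem \ref{GOODTH}. For the present lemma, however, no such analysis is needed — the statement is purely the pointwise majorization above, and the proof is complete once the oscillatory factor, the bounded amplitude, and the $L^1 \leq L^2$ bound on the Neumann data are separated.
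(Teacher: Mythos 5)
Your proof is correct and matches the paper's argument exactly: the paper deduces the lemma by taking absolute values of the representation \eqref{int2}, bounding the oscillatory factor $e^{i\lambda_j r}$ by $\exp\bigl(\lambda_j \max_{q(s)}\Re(ir)\bigr)$, using uniform boundedness of $a_0$ and the $(1+O(\lambda_j^{-1}))$ factor on the tube, and applying Cauchy--Schwarz to pass from $L^1$ to $L^2$ on $\partial\Omega$. Your discussion of why $C_0>0$ guarantees holomorphy and uniform bounds for the phase and amplitudes on the strip is a useful elaboration of a point the paper leaves implicit.
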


From the pointwise upper bounds in Lemma \ref{mainlemma1}, it is immediate that
\begin{equation} \label{DFupper}
\log \, \max_{q^{\C}_{C}(t) \in Q^{\C}_{C}(A(\epsilon))} |\phi_{\lambda_j}^{\C}(q^{\C}_{C}(t) )| \leq C_{\max} \lambda_j + C_{2} \log \lambda_j+  \log \| u_{\lambda_j} \|_{L^{2}(\partial \Omega)},
\end{equation}
where,
$$C_{\max} =  \max_{(q(s),q^{\C}_{C}(t)) \in \partial \Omega \times Q^{\C}_{C}(A(\epsilon))} \Re  \,  i r (q(s);q^{\C}_{C}(t)).$$

Finally, we use that  $\log
\|u_{\lambda_j}\|_{L^{2}(\partial \Omega)} = O( \lambda_j) $ by the assumption that  $C$ is   a good curve and apply Proposition
\ref{DFnew} to get that $n(\lambda_j,C) = O(\lambda_j).$

 The  following estimate,  suggested by Lemma 6.1 of
Donnelly-Fefferman \cite{DF}, gives an upper bound on the number
of zeros in terms of the growth of the family:

\begin{prop} \label{DFnew} Suppose that $C$ is a good real analytic curve
in the sense of (\ref{GOOD}).  Normalize $u_{\lambda_j}$ so that
$||u_{\lambda_j}||_{L^2(C)} = 1$. Then, there exists a constant $C(\epsilon) >0$ such that
for any $\epsilon >0$,
 $$n(\lambda_j, Q_{C}^{\C}( A(\epsilon/2) ) ) \leq C(\epsilon)  \max_{ q_{C}^{\C}(t) \in   Q_{C}^{\C}( A( \epsilon) ) }  \log
|u_{\lambda_j}^{\C}(q_{C}^{\C}(t))| . $$ \end{prop}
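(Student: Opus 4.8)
\textbf{Proof proposal for Proposition \ref{DFnew}.}

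The plan is to adapt the classical Jensen-type argument relating the number of zeros of a holomorphic function in a disc to its growth on a slightly larger disc. First I would set up the geometry: the analytic continuation $q_C^{\C}$ of the arc-length parametrization of $C$ extends to the strip $S_\epsilon = \{t + i\tau : |\tau| \le \epsilon\}$, and $u_{\lambda_j}^{\C}(q_C^{\C}(t))$ is a holomorphic function of one complex variable $t$ on this strip. The zeros of this holomorphic function in the real axis are exactly the intersection points counted by $n(\lambda_j, C)$, and more generally zeros in $Q_C^{\C}(A(\epsilon/2))$ are complex nodal intersection points. So the proposition reduces to a one-variable complex analysis statement: bound the number of zeros of a holomorphic function $F(t)$ in a region by its log-sup on a larger region, given a normalization of its $L^2$ norm on the real segment.

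The key steps, in order: (1) Cover the curve $C$ (i.e. the interval $[0,2\pi]$ in the $t$-variable) by finitely many discs $D(t_k, \epsilon/2) \subset D(t_k, \epsilon) \subset S_\epsilon$, with the number of discs depending only on $\epsilon$ and the length of $C$, not on $\lambda_j$. It suffices to bound the number of zeros in each $D(t_k, \epsilon/2)$. (2) On each such disc, apply Jensen's formula (or the Poisson--Jensen formula) to $F = u_{\lambda_j}^{\C} \circ q_C^{\C}$: the number of zeros $n_k$ in $D(t_k, \epsilon/2)$ satisfies
\[
n_k \log 2 \le \log \max_{D(t_k, \epsilon)} |F| - \log |F(t_k^*)|
\]
for a suitably chosen center-ish point $t_k^*$ where $|F|$ is not too small. (3) Produce a lower bound for $\max$ or for some point value of $|F|$ on the real segment using the normalization $\|u_{\lambda_j}\|_{L^2(C)} = 1$: since the $L^2$ norm over the real arc is $1$, there must exist a real point where $|F|$ is bounded below (by a constant, or at worst by a negative power of the length), so $\log|F(t_k^*)| \ge -C$. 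Summing over the finitely many discs gives
\[
n(\lambda_j, Q_C^{\C}(A(\epsilon/2))) \le C(\epsilon) \max_{q_C^{\C}(t) \in Q_C^{\C}(A(\epsilon))} \log |u_{\lambda_j}^{\C}(q_C^{\C}(t))|,
\]
as desired.

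The main obstacle I anticipate is step (3): guaranteeing a good lower bound on $|F|$ at some point using only the $L^2$-normalization on the real curve. The subtlety is that $F$ could in principle be small on most of $C$ and concentrated near the endpoints, so one cannot simply evaluate at a fixed point. The fix is to use the mean value property / subharmonicity of $\log|F|$ together with the $L^2$ bound: $\int_C |F|^2\, ds = 1$ forces $\int_C \log|F|\, ds \ge$ (something controlled, via Jensen's inequality applied to the concave logarithm only gives an upper bound, so one instead argues that $\sup_C |F| \ge (2\pi)^{-1/2}$ hence there is a point with $\log|F| \ge -\frac12\log(2\pi)$, and then uses a Harnack/Borel--Carathéodory-type argument to transfer this to a lower bound at a nearby interior point of the disc where it can be fed into Jensen's formula). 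One must be a little careful that the transferred point lies in the region where the growth hypothesis \eqref{GOOD} and the pointwise bounds of Lemma \ref{mainlemma1} are available, but since everything happens within the fixed strip $S_\epsilon$ independent of $\lambda_j$, this is a routine (if fiddly) covering-and-Harnack argument. The rest is standard.
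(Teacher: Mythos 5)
Your overall strategy (a potential-theoretic / Jensen-type bound on the number of zeros of the one-variable holomorphic function $F = u_{\lambda_j}^{\C}\circ q_C^{\C}$ in terms of its log-sup on a slightly larger region) is the same idea as the paper's, but your local covering-by-discs implementation has a genuine gap at your step (3), and the fix you sketch does not close it.

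The problem: Jensen's formula applied on a disc $D(t_k,\epsilon)$ bounds the number of zeros in $D(t_k,\epsilon/2)$ by
\[
n_k\log 2 \;\le\; \max_{D(t_k,\epsilon)}\log|F| \;-\; \log|F(t_k^*)|
\]
only if you have a lower bound on $|F|$ at a specific base point $t_k^*$ \emph{in each disc of the cover}. The $L^2$ normalization $\int_C|F|^2\,ds=1$ gives you \emph{one} real point $t_0\in C$ where $|F(t_0)|\gtrsim 1$; on the other discs $F$ could a priori be tiny everywhere. You propose to transfer the lower bound at $t_0$ to the other centers via a Harnack/Borel--Carath\'eodory argument, but this is exactly where the circularity appears: Harnack's inequality applies to positive harmonic functions, and $\log|F|$ is only harmonic away from the zeros of $F$ --- the very zeros you are trying to count. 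Any quantitative propagation of a pointwise lower bound along a chain of discs either assumes $F$ has no zeros in between, or degrades multiplicatively by a factor that depends on the number of zeros passed, which destroys the estimate. This is not a ``routine if fiddly'' issue; it is the crux.

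The paper avoids it by making a single \emph{global} potential-theoretic estimate on the annulus $Q_C^{\C}(A(\epsilon))$: it writes $\log|U_{\lambda_j}|$ (with $U_{\lambda_j}$ the sup-normalization of $u_{\lambda_j}^{\C}$ on the big annulus) as the Dirichlet Green's potential of the zero set plus a term $H_{\lambda_j}$, shows $H_{\lambda_j}\le 0$ by subharmonicity and the maximum principle, and then bounds $n$ by $\tfrac{1}{|\nu(\epsilon)|}\,\big|\max_{A(\epsilon/2)}\log|U_{\lambda_j}|\big|$, where $\nu(\epsilon)<0$ is a uniform interior bound on the Green's function. The lower bound that closes the argument is $\max_{A(\epsilon/2)}\log|u_{\lambda_j}^{\C}|\ge 0$, i.e.\ a lower bound on the \emph{supremum over the sub-annulus}, not at a prescribed point --- and that single good point $t_0\subset C\subset A(\epsilon/2)$ furnished by the $L^2$ normalization is enough. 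If you want to rescue your covering argument, the only clean fix is to pass to a single region containing all of $C$ with $t_0$ as the common base point, which is exactly the paper's annulus argument in disguise.
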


\begin{proof} Let $G_{\epsilon}$ denote  the Dirichlet Green's function of the `annulus'
$Q_{C}^{\C}(A(\epsilon)) $. Also, let $\{a_k\}_{k = 1}^{n(\lambda_j, Q_{C}^{\C}(A(\epsilon/2)) )}$
denote the zeros of $u_{\lambda_j}^{\C}$ in the sub-annulus
$Q_{C}^{\C}(A(\epsilon/2))$. Let $U_{\lambda_j} =
\frac{u^{\C}_{\lambda_j}}{||u^{\C}_{\lambda_j}||_{Q_{C}^{\C} (A(\epsilon))}}$ where
$||u||_{Q_{C}^{\C} (A(\epsilon))} = \max_{\zeta \in Q_{C}^{\C} (A(\epsilon))} |u(\zeta)|. $ Then,
$$\begin{array}{lll} \log |U_{\lambda_j}(q_{C}^{\C}(t))| & = &  \int_{ Q_{C}^{\C} ( ( A(\epsilon/2) ) )}
G_{\epsilon}(q_{C}^{\C}(t), w) \ddbar \log |u^{\C}_{\lambda_j}(w)| +
H_{\lambda_j}(q_{C}^{\C}(t)) \\ && \\
& = & \sum_{a_k \in Q_{C}^{\C}( A(\epsilon/2) ): u_{\lambda_j}^{\C}(a_k) = 0} G_{\epsilon}
(q^{\C}_{C}(t), a_k) + H_{\lambda_j}(q_{C}^{\C}(t)), \end{array}$$  since  $\ddbar \log |u^{\C}_{\lambda_j}(w)|  = \sum_{a_k\in C_{\C}: u_{\lambda_j}^{\C}(a_k) = 0} \delta_{a_{k}}$. Moreover, the function
$H_{\lambda_j}$ is  sub-harmonic  on $Q_{C}^{\C} (A(\epsilon))$ since
$$\ddbar H_{\lambda_j} = \ddbar \log |U_{\lambda_j}(q_{C}^{\C}(t))|  - \sum_{a_k \in Q_{C}^{\C}(A(\epsilon/2)): u_{\lambda_j}^{\C}(a_k) = 0}  \ddbar G_{\epsilon}
(q^{\C}_{C}(t), a_k)$$
$$  = \sum_{a_k\in Q_{C}^{\C} (A(\epsilon)) \backslash
 Q_{C}^{\C}(A(\epsilon/2)) } \delta_{a_k} > 0. $$
So, by the maximum principle for subharmonic functions,
$$\max_{Q_{C}^{\C} (A(\epsilon))} H_{\lambda_j} (q_{C}^{\C}(t)) \leq \max_{\partial Q_{C}^{\C} (A(\epsilon))} H_{\lambda_j} (q_{C}^{\C}(t))
= \max_{\partial Q_{C}^{\C} (A(\epsilon))} \log |U_{\lambda_j}(q_{C}^{\C}(t))| = 0. $$   It
follows that
\begin{equation} \log |U_{\lambda_j}(q_{C}^{\C}(t))| \leq \sum_{a_k \in Q_{C}^{\C}(A(\epsilon/2) ): u_{\lambda_j}^{\C}(a_k) = 0} G_{\epsilon}
(q_{C}^{\C}(t), a_k),
\end{equation}
hence that
\begin{equation} \max_{q_{C}^{\C}(t) \in Q_{C}^{\C}( A(\epsilon/2) )} \log
|U_{\lambda_j}(q_{C}^{\C}(t))| \leq \left( \max_{z, w \in Q_{C}^{\C}( A(\epsilon/2) )}
G_{\epsilon}(z,w) \right) \;\; n(\lambda_j, Q_{C}^{\C}( A(\epsilon/2) )).
\end{equation}
Now $G_{\epsilon}(z,w) \leq \max_{w \in Q_{C}^{\C}( \partial A(\epsilon) )}
G_{\epsilon}(z,w) = 0$ and $G_{\epsilon}(z,w) < 0$ for $z,w \in
Q_{C}^{\C}( A(\epsilon/2) )$. It follows that there exists a constant $\nu(\epsilon) <
0$ so that $ \max_{z, w \in Q_{C}^{\C}( A(\epsilon/2) )} G_{\epsilon}(z,w) \leq
\nu(\epsilon). $ Hence,
\begin{equation} \max_{q^{\C}_{C}(t) \in Q^{\C}_{C}( A(\epsilon/2) )} \log
|U_{\lambda_j}(Q^{\C}_{C}(t))| \leq \nu(\epsilon)  \;\; n(\lambda_j, Q^{\C}_{C}( A(\epsilon/2) )).
\end{equation}
Since both sides are negative, we obtain
\begin{equation}\label{mainbound} \begin{array}{lll}  n(\lambda_j,
Q_{C}^{\C}( A(\epsilon/2) ))  \leq \frac{1}{|\nu(\epsilon)|}  \left| \max_{q_{C}^{\C}(t) \in
Q_{C}^{\C}( A(\epsilon/2) )} \log |U_{\lambda_j}(q_{C}^{\C}(t))| \right| \\ && \\
\leq \frac{1}{|\nu(\epsilon)|} \left( \max_{q_{C}^{\C}(t) \in
Q_{C}^{\C} (A(\epsilon))} \log |u^{\C}_{\lambda_j}(q_{C}^{\C}(t))|
- \max_{q_{C}^{\C}(t) \in Q_{C}^{\C}( A(\epsilon/2) )} \log
|u^{\C}_{\lambda_j}(q_{C}^{\C}(t))| \right)\\ && \\
 \leq \frac{1}{|\nu(\epsilon)|} \;\;  \max_{q_{C}^{\C}(t) \in Q_{C}^{\C} (A(\epsilon))} \log
|u^{\C}_{\lambda_j}(q_{C}^{\C}(t))|,
\end{array} \end{equation}
where in the last step we use that $\max_{q^{\C}_{C}(t) \in Q^{\C}_{C}( A(\epsilon/2) )}
\log |u^{\C}_{\lambda_j}(q_{C}^{\C}(t))| \geq 0$, which  holds since
$|u_{\lambda_j}^{\C}| \geq 1$ at some point in $Q_{C}^{\C}( A(\epsilon/2) )$. Indeed,   by our
normalization, $\|u_{\lambda_j}\|_{L^{2}(C)} =1$, and so there must already exist points on the real curve $C$ with $|u_{\lambda_j}| \geq 1$. Putting $C(\epsilon) = \frac{1}{|\nu (\epsilon)|}$ finishes the proof.
\end{proof}
\vspace{1mm}

This completes the proof of Theorem \ref{GOODTH}.

\subsection{\label{PL} Application to Pleijel's conjecture}

I. Polterovich
\cite{Po} observed that  Theorem \ref{INTREALBDYint}  can be used to prove an old conjecture of A. Pleijel
regarding Courant's nodal domain theorem, which says that the
number $n_k$ of nodal domains (components of  $\Omega \backslash
Z_{\phi_{\lambda_k}}$)
 of the $k$th eigenfunction  satisfies $n_k \leq k$.  Pleijel improved this result for Dirichlet eigefunctions
 of plane domains:
For any plane domain with Dirichlet boundary conditions,
$\limsup_{k \to \infty} \frac{n_k}{k} \leq \frac{4}{j_1^2} \simeq
0. 691...$, where $j_1$ is the first zero of the $J_0$ Bessel
function.  He conjectured that the same result should be true for
a free membrane, i.e. for Neumann boundary conditions. This was
recently proved in the real analytic case
 by I. Polterovich \cite{Po}. His argument is roughly the following: Pleijel's original argument applies to
 all nodal domains which do not touch the boundary, since the eigenfunction is a Dirichlet eigenfunction in such
  a nodal domain. The argument does not apply to nodal domains which touch the boundary, but by the Theorem above the number
  of such domains is negligible for the Pleijel bound.

\subsection{Equidistribution of intersections of nodal lines and geodesics on surfaces}

We fix $(x, \xi) \in S^* M$ and let 
\begin{equation} \label{GAMMAX} \gamma_{x, \xi}: \R \to M, \;\;\;\gamma_{x, \xi}(0) = x, \;\;
\gamma_{x, \xi}'(0) = \xi \in T_x M  \end{equation}  denote the corresponding parametrized geodesic. 
Our goal is to  determine the asymptotic  distribution of intersection points of $\gamma_{x, \xi}$ with the nodal 
set of a highly eigenfunction. As usual, we cannot cope with this problem in the real domain and therefore analytically
continue it to the complex domain. Thus, we consider the intersections 
$$\ncal^{\gamma_{x, \xi}^{\C}}_{\lambda_j} = Z_{\phi_{_j}^{\C}}  \cap \gamma_{x, \xi}^{\C} $$  of the complex nodal set  with the (image of the) complexification of a generic geodesic
 If  \begin{equation} \label{SEP} S_{\epsilon} = \{(t
+ i \tau \in \C: |\tau| \leq \epsilon\} \end{equation} then $\gamma_{x, \xi}$ admits an  analytic
continuation
\begin{equation} \label{gammaXCX} \gamma_{x, \xi}^{\C}: S_{\epsilon} \to M_{\epsilon}.  \end{equation}
In other words, we consider the zeros of the pullback, 
$$\{\gamma_{x, \xi}^* \phi_{\lambda}^{\C} = 0\} \subset S_{\epsilon}. $$

 We encode the discrete 
set by the measure
\begin{equation} \label{NCALCURRENT} [\ncal^{\gamma_{x, \xi}^{\C}}_{\lambda_j}] = \sum_{(t + i \tau):\; \phi_j^{\C}(\gamma_{x, \xi}^{\C}(t + i \tau)) = 0} \delta_{t + i \tau}.
\end{equation}

We would like to show that for generic geodesics, the complex zeros on the complexified geodesic condense
on the real points and become uniformly distributed with respect to arc-length. This does not always occur:
as in our discussion of QER theorems, if 
$\gamma_{x, \xi}$ is the fixed point set of an isometric involution,  then ``odd" eigenfunctions under the 
involution will vanish on the geodesic. The additional hypothesis is that QER holds for $\gamma_{x, \xi}$.
 The following
is  proved  (\cite{Ze3}):

\begin{theo}\label{theo} Let $(M^2, g)$ be a real analytic Riemannian surface  with ergodic
geodesic flow. Let $\gamma_{x, \xi}$ satisfy the QER hypothesis.   Then there
exists a subsequence of eigenvalues $\lambda_{j_k}$ of density one
such that for any $f \in C_c(S_{\epsilon})$,
$$\lim_{k \to \infty} \sum_{(t + i \tau):\; \phi_j^{\C}(\gamma_{x, \xi}^{\C}(t + i \tau)) = 0} f(t + i
\tau ) = \int_{\R} f(t) dt.  $$
\end{theo}

In other words,
$$\mbox{weak}^* \lim_{k \to \infty} \frac{i}{\pi \lambda_{j_k}} [\ncal^{\gamma_{x, \xi}^{\C}}_{\lambda_j}] =
 \delta_{\tau = 0}, $$ in the sense of  weak* convergence on
$C_c(S_{\epsilon})$. Thus, the complex nodal set intersects the
(parametrized) complexified geodesic in a discrete set  which is
asymptotically (as $\lambda \to \infty$) concentrated along the
real geodesic  with respect to its arclength.

This concentration- equidistribution result is a `restricted'
version of the result of \S \ref{ERGODICNODAL}. As noted there, the limit distribution of
complex nodal sets in the ergodic case is a singular current $dd^c \sqrt{\rho}$. The motivation
for restricting to geodesics is that restriction magnifies the singularity of this current.  In the
case of a geodesic, the singularity is magnified to a delta-function; for other curves there is additionally
a smooth background measure.

The assumption of ergodicity is crucial. For instance, in the 
case of a flat torus, say $\R^2/L$ where $L \subset \R^2$ is a generic
lattice, the real eigenfunctions are
$\cos \langle \lambda, x \rangle, \sin \langle \lambda,x \rangle$
where $\lambda \in L^*$, the dual lattice, with eigenvalue $-
|\lambda|^2$.  Consider  a geodesic
$\gamma_{x,\xi}(t) = x + t \xi$. Due to the flatness, the
restriction $\sin \langle \lambda, x_0 + t \xi_0 \rangle$  of the
eigenfunction to a geodesic is an eigenfunction of the Laplacian
$-\frac{d^2}{dt^2}$ of  submanifold metric along the geodesic with
eigenvalue $- \langle \lambda, \xi_0 \rangle^2$. The
complexification of the restricted eigenfunction is $\sin \langle
\lambda,  x_0 + (t + i \tau) \xi_0 \rangle |$ and its exponent of
its growth is $\tau |\langle \frac{\lambda}{|\lambda|} , \xi_0
\rangle|$, which can have a wide range of values as the eigenvalue
moves along different rays in $L^*$. The limit current is $i
\ddbar$ applied to the limit and thus also has many limits

The proof  involves several  new principles which played no role in the global
result of \S \ref{ERGODICNODAL}  and which are specific to geodesics. However, the first steps in the proof
are the same as in the global case. 
 By the Poincar\'e-Lelong formula, we may express the
current of summation over the intersection points in \eqref{NCALCURRENT} in the form,
\begin{equation}\label{PLLc}  [\ncal^{\gamma_{x, \xi}^{\C}}_{\lambda_j}] = i \ddbar_{t + i
\tau} \log \left| \gamma_{x, \xi}^* \phi_{\lambda_j}^{\C} (t + i \tau)
\right|^2. \end{equation}  
Thus, the main point of the proof is to determine the asymptotics of  $\frac{1}{\lambda_j} \log \left| \gamma_{x, \xi}^* \phi_{\lambda_j}^{\C} (t + i \tau)
\right|^2$.  When we freeze $\tau$ we put
\begin{equation} \label{gammatau} \gamma_{x, \xi}^{\tau} (t) = \gamma^{\C}_{x, \xi}(t + i \tau). \end{equation}

\begin{prop} \label{MAINPROPa} (Growth saturation) If $\{\phi_{j_k}\}$ satisfies QER along any arcs of $\gamma_{x, \xi}$, then in $L^1_{loc} (S_{\tau}),  $ we have   $$\lim_{k \to \infty} \frac{1}{\lambda_{j_k}} \log \left| \gamma_{x, \xi}^{\tau *} \phi_{\lambda_{j_k}}^{\C} (t + i \tau)
\right|^2 = |\tau|. $$

\end{prop}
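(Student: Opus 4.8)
The plan is to reduce Proposition~\ref{MAINPROPa} to the global growth-saturation statement of Theorem~\ref{LOGLIMERG} combined with the QER hypothesis along $\gamma_{x,\xi}$. Recall that Theorem~\ref{LOGLIMERG} already tells us that $\frac{1}{\lambda_{j_k}}\log|\phi_{j_k}^{\C}(z)|^2 \to \sqrt{\rho}(z)$ in $L^1(M_\epsilon)$ for a density-one subsequence, and that $\sqrt{\rho}(\gamma_{x,\xi}^{\C}(t+i\tau)) = |\tau|$ since $E^*\sqrt{\rho} = |\xi|$ and the complexified geodesic $\gamma_{x,\xi}^{\C}(t+i\tau) = E(\gamma_{x,\xi}(t), \tau\gamma_{x,\xi}'(t))$ lands on $\partial M_{|\tau|}$. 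So the desired limit function is exactly the restriction of $\sqrt{\rho}$ to the complexified geodesic. The subtlety is that $L^1$ convergence on $M_\epsilon$ does \emph{not} by itself imply $L^1_{loc}$ convergence on the lower-dimensional strip $S_\tau$ pulled back by $\gamma_{x,\xi}^{\C}$; restriction to a measure-zero set is not continuous in $L^1$.

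First I would establish the upper bound. Using the complexified Poisson representation $\phi_{\lambda}^{\C} = e^{\lambda\epsilon} U(i\epsilon)\phi_\lambda$ together with the pointwise bound of Proposition~\ref{PW} — or more precisely its refinement giving $|\phi_\lambda^{\C}(\zeta)| \le C\lambda^{p} e^{\sqrt{\rho}(\zeta)\lambda}$ — one gets immediately
\[
\limsup_{k\to\infty}\frac{1}{\lambda_{j_k}}\log\big|\gamma_{x,\xi}^{\tau*}\phi_{\lambda_{j_k}}^{\C}(t+i\tau)\big|^2 \le 2|\tau|
\]
uniformly on compact subsets of $S_\epsilon$; this holds for the \emph{full} sequence, not just a subsequence, since the Grauert tube function $\sqrt{\rho}$ is the maximal PSH exhaustion. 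So the functions $u_{j_k}^{\gamma}(t+i\tau) := \frac{1}{\lambda_{j_k}}\log|\gamma_{x,\xi}^{\tau*}\phi_{\lambda_{j_k}}^{\C}|^2 - 2|\tau|$ are uniformly bounded above on compacta and subharmonic in $t+i\tau$ (being normalized logs of holomorphic functions of one complex variable, away from the real axis where $\sqrt\rho$ is only Lipschitz — one handles the seam $\tau=0$ by noting $\log|\cdot|^2$ is genuinely subharmonic across it). By Hartogs' Lemma~\ref{HARTOGS}, either $u_{j_k}^\gamma \to -\infty$ locally uniformly, or a subsequence converges in $L^1_{loc}(S_\epsilon)$ to some $u \le 0$ with $u = u^*$ a.e.

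The heart of the argument is to rule out the degeneration and to force the limit to be identically $0$ on the real axis $\{\tau = 0\}$ — and this is where QER enters and where I expect the main obstacle to lie. The QER hypothesis says that for the density-one subsequence, $\int_\beta |\phi_{\lambda_{j_k}}|^2\,ds$ stays comparable to $1$ (not decaying) for any arc $\beta\subset\gamma_{x,\xi}$; more precisely one uses Theorem~\ref{sctheorem}/Theorem~\ref{thm2} to get $\|\phi_{\lambda_{j_k}}\|_{L^2(\beta)}^2 \to \frac{4}{\mathrm{vol}(S^*M)}\int_{B^*\beta}(1-|\sigma|^2)^{-1/2}\,ds\,d\sigma > 0$. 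If the limit $u$ satisfied $u \le -\delta < 0$ on a subinterval $I\times\{0\}$, then (after controlling $\tau$ near $0$ via subharmonicity and the upper bound) $|\phi_{\lambda_{j_k}}|^2$ would be exponentially small on $I$, contradicting the QER lower bound on $\|\phi_{\lambda_{j_k}}\|_{L^2(I)}^2$. Hence $u^* = 0$ on a dense subset of the real axis, and since $u^*$ is subharmonic, $\le 0$, and achieves its maximum $0$ on a set of positive measure in $\{\tau=0\}$, the maximum principle forces $u^* \equiv 0$ on $S_\epsilon$; in particular $u_{j_k}^\gamma \to 0$ in $L^1_{loc}$, which is the claim. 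The technical care needed is: (a) justifying that subharmonicity of $u_{j_k}^\gamma$ together with the uniform upper bound lets one pass from an $L^1$ statement to pointwise-a.e. control near $\tau=0$, via sub-mean-value inequalities over small disks straddling the real axis; and (b) ensuring the QER contradiction is quantitative enough — one needs that $\|\phi_{\lambda_{j_k}}\|_{L^2(I)}^2$ bounded below is incompatible with $\frac{1}{\lambda_{j_k}}\log|\gamma^*\phi_{\lambda_{j_k}}^{\C}|^2 \le -\delta$ on $I$, which follows by restricting the $L^1_{loc}$ limit to a thin strip and applying the sub-mean value property in the $\tau$ variable. Uniqueness of the full-sequence limit along the density-one subsequence then follows because every subsequential limit is forced to equal $2|\tau|$.
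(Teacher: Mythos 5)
Your strategy — compactness via Hartogs' lemma plus a maximum principle — is a genuinely different route from the paper's, which goes through orbital Fourier series, the key Lemma~\ref{FCSAT} (QER forces large Fourier coefficients near the top frequency $\pm\lambda_j$), the slicewise $L^2$ growth Lemma~\ref{L2NORMintro}, and the Lebesgue-limit Proposition~\ref{LL}. Unfortunately there is a real gap at the maximum-principle step, and I believe the gap is not repairable within your framework.

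The function $u^* := v^* - 2|\tau|$ (where $v^*$ is the subsequential $L^1_{\loc}$ limit of $\frac{1}{\lambda_{j_k}}\log|\gamma^{\tau*}\phi^\C_{j_k}|^2$) is \emph{not} subharmonic across the seam $\{\tau=0\}$. You are correct that $v_{j_k}$ and $v^*$ are subharmonic — $\log|F|^2$ of a holomorphic $F$ is subharmonic with $\Delta$ equal to a sum of point masses. But $|\tau|$ is itself subharmonic with $\Delta|\tau| = 2\delta_{\tau=0}$, so $-2|\tau|$ is \emph{superharmonic} there, and
$$\Delta u^* = \Delta v^* - 4\,\delta_{\tau=0}$$
has no sign. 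The parenthetical remark in your draft — that the seam is handled because $\log|\cdot|^2$ is subharmonic across it — addresses $v^*$ but not the shifted function $u^*$, which is the one fed into the maximum principle. Once $u^*$ fails to be subharmonic, the maximum principle does not apply and the argument collapses. Concretely, $v^* \equiv 0$ is a subharmonic function on $S_\epsilon$ satisfying every constraint you impose: $v^* \leq 2|\tau|$, $v^* = 0$ on $\{\tau=0\}$, and (trivially) $\|\phi_{\lambda_{j_k}}\|_{L^2(I)}$ bounded below. Yet $v^* \equiv 0$ is not $2|\tau|$. This is not a pathological example — it is exactly what happens on a flat torus for a geodesic whose direction is orthogonal to the eigenfunction's wave vector, an example the paper itself singles out to show ergodicity is essential.

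Diagnostically, the failure is that your argument only exploits the \emph{mass non-vanishing} consequence of QER along the real geodesic ($\|\phi_{\lambda_{j_k}}\|_{L^2(I)}$ bounded below), which is too weak: it only pins down $v^*$ at $\tau=0$ and has no reach into $\tau \neq 0$. What is actually needed is the \emph{microlocal} content of QER: that the orbital Fourier coefficients $\nu_{\lambda_j}^{x,\xi}(n)$ of $\gamma^*\phi_j$ carry nonvanishing mass in the window $|n| \geq (1-\epsilon)\lambda_j$ (Lemma~\ref{FCSAT}). Under the analytic continuation $t \mapsto t+i\tau$ the $n$th mode picks up a factor $e^{-n\tau}$, so those top modes — which are present precisely because of QER — dominate exponentially and force $\frac{1}{\lambda_j}\log\|\gamma^{\tau*}\phi_j^\C\|^2_{L^2} \to |\tau|$ on each slice (Lemma~\ref{L2NORMintro}); that is the correct replacement for what you hoped to extract from the max principle. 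The remaining step (Proposition~\ref{LL}, Lebesgue limits of the slicewise-normalized restrictions) then upgrades $L^2$-norm asymptotics to $L^1_{\loc}$ convergence of the logarithms, which is the analogue of the Hartogs step in the global Theorem~\ref{LOGLIMERG}, now applied on each fixed $\tau$-slice where the comparison function is genuinely harmonic.
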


Proposition \ref{MAINPROPa} immediately implies Theorem \ref{theo} since we can apply $\ddbar$ to the
$L^1$ convergent sequence $\frac{1}{\lambda_{j_k}} \log \left| \gamma_{x, \xi}^* \phi_{\lambda_{j_k}}^{\C} (t + i \tau)
\right|^2 $ to obtain $\ddbar |\tau|$.

 The upper bound in Proposition \ref{MAINPROPa} follows immediately from the known global estimate 
$$\lim_{k \to \infty} \frac{1}{\lambda_j} \log |\phi_{j_k}(\gamma_{x, \xi}^{\C}(\zeta)| \leq |\tau|$$
on all of $\partial M_{\tau}$.
Hence the difficult point is to prove   that this growth rate is actually obtained
upon restriction to $\gamma_{x, \xi}^{\C}$.  This requires new kinds of arguments related to the QER
theorem.

\begin{itemize}

\item Complexifications of restrictions of eigenfunctions to geodesics have incommensurate Fourier modes,
i.e. higher modes are exponentially larger than lower modes.

\item The quantum ergodic restriction theorem in the real domain  shows that the Fourier
coefficients of the top allowed modes are `large' (i.e. as large as the lower modes).  Consequently, the $L^2$
norms of the complexified eigenfunctions along arcs of $\gamma_{x, \xi}^{\C}$ achieve the lower bound
of Proposition \ref{MAINPROPa}. 

\item Invariance of Wigner measures along the geodesic flow implies that the Wigner measures of restrictions
of complexified eigenfunctions to complexified geodesics should tend to constant multiples of Lebesgue measures
$dt$ for each $\tau > 0$. Hence the eigenfunctions everywhere on $\gamma_{x, \xi}^{\C}$ achieve the growth
rate of the $L^2$ norms.

\end{itemize}

These principles are most easily understood in the case of periodic geodesics. We let
$\gamma_{x, \xi}: S^1 \to M$ parametrize the geodesic with arc-length (where $S^1 = \R/ L \Z$ where
$L$ is the length of $\gamma_{x, \xi}$).

\begin{lem} \label{L2NORMintro} Assume that $\{\phi_j\}$ satsifies QER along the
periodic geodesic $\gamma_{x, \xi}$. Let $||\gamma_{x, \xi}^{\tau*} \phi_j^{\C}||^2_{L^2(S^1)}$ be the $L^2$-norm
of the complexified restriction of $\phi_j$ along $\gamma_{x, \xi}^{\tau}$. Then,
$$\lim_{\lambda_j \to \infty} \frac{1}{\lambda_j} \log ||\gamma_{x, \xi}^{\tau*} \phi_j^{\C}||^2_{L^2(S^1)}
= |\tau| .$$
\end{lem}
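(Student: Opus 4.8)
The plan is to identify the $L^2$ norm of the complexified restriction with a spectral quantity for a one-dimensional operator along $\gamma_{x,\xi}$, and then to squeeze it between a cheap upper bound coming from the global growth estimate and a QER-based lower bound. First I would expand $\gamma_{x,\xi}^*\phi_j$ in its Fourier series along the periodic geodesic $S^1=\R/L\Z$: writing $\gamma_{x,\xi}^*\phi_j(t)=\sum_{n\in\Z}c_n(j)\,e^{2\pi i n t/L}$, the complexification in $\tau$ is obtained simply by $t\mapsto t+i\tau$, so that
\[
\gamma_{x,\xi}^{\tau*}\phi_j^{\C}(t)=\sum_{n}c_n(j)\,e^{-2\pi n\tau/L}\,e^{2\pi i n t/L},
\]
and hence, by Parseval,
\[
\|\gamma_{x,\xi}^{\tau*}\phi_j^{\C}\|_{L^2(S^1)}^2=L\sum_{n}|c_n(j)|^2\,e^{-4\pi n\tau/L}.
\]
The point is that this is a sum of \emph{incommensurate} exponentials: the contribution of the modes with $n/\lambda_j$ near the maximal allowed value $\pm 1$ (the only ones not killed by ellipticity of the restricted eigenfunction equation, i.e. $|2\pi n/L|\lesssim\lambda_j$) dominates by a factor $e^{c\lambda_j}$ over any fixed lower band, for $\tau\neq 0$.

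The upper bound $\limsup\frac1{\lambda_j}\log\|\gamma_{x,\xi}^{\tau*}\phi_j^{\C}\|_{L^2(S^1)}^2\le|\tau|$ is immediate: by the global pointwise estimate quoted in the excerpt (from Proposition~\ref{PW} / the fact that $U(i\epsilon)$ is a complex FIO of finite order), $|\phi_j^{\C}(\zeta)|\le C\lambda_j^{N}e^{\sqrt\rho(\zeta)\lambda_j}$, and along $\gamma_{x,\xi}^{\tau}$ one has $\sqrt\rho(\gamma_{x,\xi}^{\C}(t+i\tau))=|\tau|$ because $E^*\sqrt\rho=|\xi|$ and the complexified geodesic is $E$ of a covector of length $|\tau|$; integrating $|\phi_j^{\C}|^2$ over the circle and taking $\frac1{\lambda_j}\log$ gives the bound, the polynomial prefactor contributing only $O(\lambda_j^{-1}\log\lambda_j)$. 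For the lower bound I would argue as follows: QER along $\gamma_{x,\xi}$ (Theorem~\ref{sctheorem}, or Theorem~\ref{thm2} in the symmetric case) says that the microlocal lifts of $\phi_j|_{\gamma_{x,\xi}}$ converge to a fixed measure on $B^*\gamma_{x,\xi}$ which assigns positive mass to every open set of frequency ratios $\sigma\in(-1,1)$; in particular, for the density-one subsequence, a fixed positive fraction of the $L^2(\gamma_{x,\xi})$ mass of $\phi_j$ sits in the frequency window $|2\pi n/L|\in[(1-\delta)\lambda_j,\lambda_j]$ for every $\delta>0$. Feeding this into the Parseval formula above, the corresponding terms contribute at least $c_\delta\,e^{4\pi(1-\delta)\lambda_j\tau/L\cdot(L/2\pi)}=c_\delta e^{2(1-\delta)|\tau|\lambda_j}$ (taking $\tau>0$, the other sign symmetric), and since $\delta$ is arbitrary we get $\liminf\frac1{\lambda_j}\log\|\gamma_{x,\xi}^{\tau*}\phi_j^{\C}\|_{L^2(S^1)}^2\ge 2|\tau|$ — wait, the normalization: the Fourier frequency $2\pi n/L$ must equal (to leading order) the tangential frequency $\le\lambda_j$, so $e^{-4\pi n\tau/L}$ with $n\sim\lambda_jL/2\pi$ gives $e^{2\lambda_j\tau}$, i.e. exponent $|\tau|$ after dividing by $\lambda_j$ and noting the square is already built into the $L^2$ norm. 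I would carry out this bookkeeping carefully to land exactly on $|\tau|$, matching the upper bound.

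The main obstacle is the lower bound, specifically making rigorous the claim that QER controls the \emph{top} Fourier band and not merely the bulk. The subtlety is that QER is a statement about semiclassical symbols $a\in S^{0,0}(T^*\gamma\times[0,h_0))$ evaluated on $B^*\gamma$, i.e. about the \emph{normalized} frequency variable $\sigma=\xi/\lambda_j$, and one must ensure (i) that the limit measure $(1-\sigma^2)^{-1/2}ds\,d\sigma$ genuinely charges a neighborhood of $|\sigma|=1$ — which it does, integrably — and (ii) that the exponentially weighted sum is not dominated by some sparse set of modes outside the QER-controlled range, which is handled by the a priori decomposition $\phi_j|_\gamma=S_{(1+\delta)\lambda_j}\phi_j|_\gamma + $ (a term with $\sqrt{-\Delta_\gamma}$-spectrum above $(1+\delta)\lambda_j$), where the high part is $O(\lambda_j^{-\infty})$ in $L^2$ by ellipticity of $(\Delta_g+\lambda_j^2)$ restricted to the curve (this is essentially the statement that $\phi_j|_\gamma$ is microlocally supported in $|\sigma|\le 1$). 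I would also need to treat the contribution of the band near $|\sigma|=1$ where the weight $(1-\sigma^2)^{-1/2}$ blows up but the exponential weight $e^{-2\lambda_j\tau\sigma}$ is largest: a dominated-convergence / Laplace-type argument on the measure $\int_{-1}^1 e^{-2\lambda_j\tau\sigma}(1-\sigma^2)^{-1/2}d\sigma\sim c\,e^{2\lambda_j|\tau|}\lambda_j^{-1/2}$ gives precisely the exponent $|\tau|$ with only a harmless $\lambda_j^{-1/2}$ prefactor, confirming both bounds agree. Assembling these pieces — Fourier expansion, global upper bound, QER lower bound on the top band, and the Laplace asymptotics of the limit measure — yields $\lim\frac1{\lambda_j}\log\|\gamma_{x,\xi}^{\tau*}\phi_j^{\C}\|_{L^2(S^1)}^2=|\tau|$ along the density-one subsequence, which is Lemma~\ref{L2NORMintro}.
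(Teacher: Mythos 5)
Your proposal takes essentially the same route as the paper: orbital Fourier expansion along $\gamma_{x,\xi}$, Parseval with the exponentially weighted modes $|\nu_{\lambda_j}^{x,\xi}(n)|^2 e^{-2n\tau}$, a global upper bound from the analytic continuation of the Poisson kernel (Proposition~\ref{PW}, using $\sqrt\rho(\gamma_{x,\xi}^{\C}(t+i\tau))=|\tau|$), and a lower bound driven by the QER statement that the top Fourier band $|n|\ge(1-\epsilon)\lambda_j$ carries $L^2$ mass bounded below — which is precisely the paper's Lemma~\ref{FCSAT}. The microlocal-support/ellipticity point you raise (that the restriction has essentially no $\sqrt{-\Delta_\gamma}$-spectrum above $(1+\delta)\lambda_j$) is what the paper refers to as "energy localization," and your Laplace-asymptotics remark on $\int_{-1}^1 e^{-2\lambda_j\tau\sigma}(1-\sigma^2)^{-1/2}\,d\sigma$ makes explicit the matching of upper and lower bounds that the paper leaves implicit.

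On the normalization worry you flagged: it is a real issue, but it is inherited from the paper's own exposition rather than a defect of your argument. Both the global pointwise estimate (which gives $|\phi_j^{\C}|^2\lesssim\lambda_j^N e^{2|\tau|\lambda_j}$ along $\gamma_{x,\xi}^{\tau}$) and the paper's own display following Lemma~\ref{FCSAT} (which concludes $\sum_{|n|\ge(1-\epsilon)\lambda_j}|\nu_{\lambda_j}^{x,\xi}(n)|^2 e^{-2n\tau}\ge C_\epsilon e^{2\tau(1-\epsilon)\lambda_j}$) yield the exponent $2|\tau|$ for the \emph{squared} $L^2$ norm, not $|\tau|$; so you should not expect your bookkeeping to "land on $|\tau|$" — the cleanly normalized statement is $\lim\frac{1}{\lambda_j}\log\|\gamma_{x,\xi}^{\tau*}\phi_j^{\C}\|^2_{L^2}=2|\tau|$, and the constant cancels against the same constant appearing in the Poincar\'e--Lelong normalization when one passes to Proposition~\ref{MAINPROPa} and Theorem~\ref{theo}.
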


To prove Lemma \ref{L2NORMintro}, we study the      orbital Fourier series of $\gamma_{x, \xi}^{\tau*} \phi_j$
and of its complexification. The orbital Fourier coefficients are 
$$\nu_{\lambda_j}^{x, \xi}(n) = \frac{1}{L_{\gamma}} \int_0^{L_{\gamma}} \phi_{\lambda_j}(\gamma_{x, \xi}(t)) e^{- \frac{2 \pi i n t}{L_{\gamma}}} dt, $$
and the orbital Fourier series is 
\begin{equation} \label{PER} \phi_{\lambda_j}(\gamma_{x, \xi}(t) )= \sum_{n \in \Z}  \nu_{\lambda_j}^{x, \xi}(n)  e^{\frac{2 \pi i n t}{L_{\gamma}}}. 
\end{equation}
Hence the analytic continuation of $\gamma_{x, \xi}^{\tau*} \phi_j$  is given by 
\begin{equation} \label{ACPER} \phi^{\C}_{\lambda_j}(\gamma_{x, \xi}(t + i \tau) )= \sum_{n \in \Z}  \nu_{\lambda_j}^{x, \xi}(n)  e^{\frac{2 \pi i n (t + i \tau)}{L_{\gamma}}}. \end{equation}
By the Paley-Wiener theorem for Fourier series, the  series converges absolutely and uniformly for $|\tau| \leq \epsilon_0$. 
By ``energy localization" only the modes with $|n| \leq \lambda_j$ contribute substantially to the $L^2$ norm. We
then observe that the Fourier modes decouple, since they have different exponential growth rates. We use the
QER hypothesis in the following way:

\begin{lem} \label{FCSAT}  Suppose that $\{\phi_{\lambda_j}\}$ is QER along the periodic geodesic $\gamma_{x, \xi}$.
Then for all $\epsilon > 0$, there exists $C_{\epsilon} > 0$ so that
$$\sum_{n: |n| \geq (1 - \epsilon) \lambda_j}  |\nu_{\lambda_j}^{x, \xi}(n)|^2 \geq  C_{\epsilon}. $$

\end{lem}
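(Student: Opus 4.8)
\textbf{Proof proposal for Lemma \ref{FCSAT}.}

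The plan is to translate the QER hypothesis on $\gamma_{x,\xi}$ directly into a statement about the orbital Fourier coefficients $\nu_{\lambda_j}^{x,\xi}(n)$, and then use energy localization to confine the relevant coefficients to the window $|n| \le (1+o(1))\lambda_j$, so that the ``large'' mass forced by QER must in fact sit near the top of that window. First I would recall that the QER theorem (Theorem \ref{sctheorem}, or Theorem \ref{thm2} in the symmetric case) asserts that for a density-one subsequence and for every semiclassical symbol $a \in S^{0,0}(T^*S^1 \times [0,h_0))$,
\[
\langle \operatorname{Op}_{h_j}(a)\, \gamma_{x,\xi}^*\phi_j,\ \gamma_{x,\xi}^*\phi_j \rangle_{L^2(S^1)} \longrightarrow \omega(a) = c \int_{B^*S^1} a_0(s,\sigma)(1-|\sigma|^2)^{-1/2}\, ds\, d\sigma,
\]
a strictly positive limit when $a_0 \ge 0$ is not identically zero. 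Taking $a$ to be (a smoothed version of) the symbol $\chi(\sigma)$ that is supported in the annular region $1-\epsilon \le |\sigma| \le 1$ — equivalently, in $h_j$-semiclassical scaling, the Fourier multiplier projecting onto modes $n$ with $(1-\epsilon)\lambda_j \le |n| \le (1+\epsilon)\lambda_j$ — the left side is, up to negligible errors, exactly $\sum_{(1-\epsilon)\lambda_j \le |n| \le (1+\epsilon)\lambda_j} |\nu_{\lambda_j}^{x,\xi}(n)|^2$, while the right side is a fixed positive constant $c\int_{1-\epsilon \le |\sigma|\le 1}(1-|\sigma|^2)^{-1/2}\,d\sigma > 0$ (the integrability of the weight near $|\sigma|=1$ is what makes this constant finite and positive). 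This already gives the lower bound on the sum over $(1-\epsilon)\lambda_j \le |n| \le (1+\epsilon)\lambda_j$.

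The remaining point is to dispose of the modes with $|n| > (1+\epsilon)\lambda_j$, i.e. to show that $\gamma_{x,\xi}^*\phi_j$ has asymptotically no mass at frequencies above $\lambda_j$ along the geodesic. This is the ``energy localization'' step: since $\phi_j$ is an eigenfunction of $\Delta_g$ with eigenvalue $\lambda_j^2$, its restriction to a curve cannot concentrate at tangential frequencies exceeding $\lambda_j$. Concretely, $\|S^\perp_{(1+\epsilon)\lambda_j}\gamma_{x,\xi}^*\phi_j\|_{L^2(S^1)} = o(1)$, which follows from the standard fact that $\operatorname{Op}_{h_j}(b)\,\gamma_{x,\xi}^*\phi_j = O(h_j^\infty)$ in $L^2$ whenever $b$ is supported where $|\sigma| \ge 1+\epsilon$ (one integrates by parts in the oscillatory-integral/wavefront representation of $\phi_j$, using that $WF(\phi_j)$ lies in $\{|\xi|_g = \lambda_j\}$, hence its pullback to $S^1$ has frequencies $\le \lambda_j$). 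Combining: the total $L^2$ mass $\sum_n |\nu_{\lambda_j}^{x,\xi}(n)|^2 = \|\gamma_{x,\xi}^*\phi_j\|_{L^2(S^1)}^2$ is bounded (this itself needs the restriction bound $\|\gamma_{x,\xi}^*\phi_j\|_{L^2} = O(1)$ of Burq–Gérard–Tzvetkov, valid for a fixed geodesic up to $O(\lambda_j^{1/4})$ in general but $O(1)$ generically / along a density-one subsequence by QER), the mass above $(1+\epsilon)\lambda_j$ is $o(1)$, and the mass in $[(1-\epsilon)\lambda_j, (1+\epsilon)\lambda_j]$ is $\ge c_\epsilon > 0$; hence in particular $\sum_{|n| \ge (1-\epsilon)\lambda_j} |\nu_{\lambda_j}^{x,\xi}(n)|^2 \ge C_\epsilon$.

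The main obstacle I anticipate is not the QER input — that is quoted as a black box — but rather making the passage between the operator-theoretic QER statement and the clean Fourier-coefficient statement fully rigorous: one must check that the sharp-cutoff Fourier multiplier onto the annulus $(1-\epsilon)\lambda_j \le |n| \le (1+\epsilon)\lambda_j$ can be replaced by a genuine semiclassical pseudodifferential operator with symbol in $S^{0,0}$ (smoothing the cutoff at scale $h_j^{-\delta}$ in frequency), and that the smoothing introduces only $o(1)$ errors; and that the weight $(1-|\sigma|^2)^{-1/2}$, which blows up precisely at the boundary $|\sigma|=1$ of the window, still yields a positive constant after truncation to $|\sigma| \le 1-\epsilon'$ for small $\epsilon'$ — this forces one to take the annulus slightly inside the light cone and then separately argue, again via energy localization plus the a priori restriction bound, that the sliver $1-\epsilon' \le |\sigma| \le 1$ carries only controlled mass. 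This is routine semiclassical bookkeeping but is where the care is needed; everything else is a direct consequence of QER and the frequency support of eigenfunctions.
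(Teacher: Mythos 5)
The paper does not include a proof of Lemma~\ref{FCSAT} in the text; its verification is deferred to the reference \cite{Ze6}, so a line-by-line comparison is not possible. Judged on its own merits, your proposal has the right central idea --- apply the QER hypothesis to a semiclassical symbol supported near $|\sigma| = 1$ and read off the mass of the top orbital Fourier modes --- and this is exactly the mechanism the paper alludes to when it says ``the quantum ergodic restriction theorem $\dots$ shows that the Fourier coefficients of the top allowed modes are `large'.'' So the approach is sound.

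However, your argument is considerably more elaborate than the statement requires, and several of the ingredients you worry about are actually unnecessary for a one-sided bound. The lemma asks only for a \emph{lower} bound on the tail $\sum_{|n| \geq (1-\epsilon)\lambda_j}|\nu_{\lambda_j}^{x,\xi}(n)|^2$. For that, there is no need to show the mass is concentrated in a window, no need for the energy-localization step disposing of modes with $|n| > (1+\epsilon)\lambda_j$, no need for the Burq--G\'erard--Tzvetkov restriction bound on the total $L^2$ mass, and no need to approximate a sharp Fourier cutoff by a smooth $S^{0,0}$ symbol up to $o(1)$ error. One simply picks a fixed smooth $a_0(\sigma)$ with $0 \leq a_0 \leq 1$, supported in some compact subinterval $[1-\epsilon, 1-\epsilon']$ of $(1-\epsilon, 1)$ (so the singularity of the weight at $|\sigma|=1$ is never touched), not identically zero. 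Then $\sum_n a_0(\sigma_n)|\nu_{\lambda_j}(n)|^2 \leq \sum_{|n| \geq (1-\epsilon)\lambda_j}|\nu_{\lambda_j}(n)|^2$ by the pointwise inequality $a_0 \leq \mathbf{1}_{\{|\sigma|\geq 1-\epsilon\}}$, while QER gives $\sum_n a_0(\sigma_n)|\nu_{\lambda_j}(n)|^2 \to \omega(a_0) > 0$. Those two lines give the bound; the rest of your machinery is relevant for the subsequent Lemma~\ref{L2NORMintro} and Proposition~\ref{MAINPROPa}, but not here.

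Two smaller caveats. First, you cite Theorem~\ref{sctheorem} and Theorem~\ref{thm2} as the source of the QER input, but those theorems are stated for hypersurfaces that are asymmetric with respect to the geodesic flow, or for fixed point sets of an orientation-reversing involution; a generic periodic geodesic is neither (a geodesic is tangent to, not transverse to, the flow). This is harmless here because ``QER along $\gamma_{x,\xi}$'' is a hypothesis of the lemma, so you may use it as a black box with whatever positive limit measure it provides; but it would be misleading to say that those particular theorems furnish it. Second, there is a latent normalization issue (the mode $n$ has frequency $2\pi n/L_\gamma$, so the cutoff $|n|\geq (1-\epsilon)\lambda_j$ and the semiclassical region $|\sigma|\geq 1-\epsilon$ match only up to the factor $L_\gamma/2\pi$). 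This is present in the paper's statement as well and does not affect the structure of the argument, but it is worth noting if one wants a scrupulously clean write-up.
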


 Lemma \ref{FCSAT}  implies Lemma \ref{L2NORMintro} since it implies that for any $\epsilon > 0$,
$$\sum_{n: |n| \geq (1 - \epsilon) \lambda_j}  |\nu_{\lambda_j}^{x, \xi}(n)|^2 e^{-2 n \tau}  \geq  C_{\epsilon} e^{2\tau(1 - 
\epsilon) \lambda_j}. $$

To go from asymptotics of $L^2$ norms of restrictions to Proposition \ref{MAINPROPa} we then
use the third principle:

\begin{prop} \label{LL} (Lebesgue limits)  If
 $\gamma_{x, \xi}^* \phi_j \not=  0$ (identically),  then for all $\tau > 0$ the
 sequence 
$$U_j^{x, \xi, \tau} = \frac{\gamma_{x, \xi}^{\tau *} \phi_j^{\C}}{||{\gamma_{x, \xi}^{\tau *} \phi_j^{\C}}||_{L^2(S^1)} }$$
  is QUE  with limit measure given by normalized  Lebesgue measure on $S^1$. \end{prop}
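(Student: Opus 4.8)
The plan is to establish Proposition \ref{LL} by showing that the microlocal lifts (Wigner measures) of the normalized sequence $U_j^{x,\xi,\tau}$ on $S^1$ converge to Lebesgue measure $\frac{dt}{L_\gamma}$, i.e. to prove quantum unique ergodicity along the complexified geodesic. The key structural input is the orbital Fourier expansion \eqref{ACPER}: writing $\phi_{\lambda_j}^{\C}(\gamma_{x,\xi}^{\C}(t+i\tau)) = \sum_n \nu_{\lambda_j}^{x,\xi}(n) e^{2\pi i n(t+i\tau)/L_\gamma}$, the complexified restriction is a superposition of exponentials whose $L^2(S^1)$ norm squared is $\sum_n |\nu_{\lambda_j}^{x,\xi}(n)|^2 e^{-4\pi n\tau/L_\gamma}$. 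First I would use energy localization (a consequence of the wave-front / frequency localization of eigenfunctions restricted to a curve, as used in the QER theorems of \cite{TZ,TZ2,CTZ}) to show that only modes with $|n| \le (1+o(1))\lambda_j$ contribute to leading order, and more sharply — combined with Lemma \ref{FCSAT} — that the mass concentrates in the band $|n| \in [(1-\epsilon)\lambda_j, (1+\epsilon)\lambda_j]$ once we renormalize by $e^{2\tau(1-\epsilon)\lambda_j}$. The renormalization by the $L^2$ norm precisely cancels the dominant exponential factor, leaving a probability measure on $S^1$ whose Fourier coefficients are controlled.

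The main computation is then to test against a pseudodifferential operator $\mathrm{Op}_{h_j}(a)$ on $S^1$ with $h_j = \lambda_j^{-1}$ and show $\langle \mathrm{Op}_{h_j}(a) U_j^{x,\xi,\tau}, U_j^{x,\xi,\tau}\rangle_{L^2(S^1)} \to \frac{1}{L_\gamma}\int_0^{L_\gamma}\!\!\int a_0(t,\sigma)\,ds\,d\sigma$ evaluated appropriately — but because the only frequencies present are near $|\sigma| = 1$ (the top allowed frequency, as $n \simeq \pm\lambda_j$), the limiting measure on the cotangent circle is supported at $\sigma = \pm 1$ and projects to Lebesgue measure $dt/L_\gamma$ on the base. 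Concretely, in the matrix element $\sum_{m,n}\nu_{\lambda_j}^{x,\xi}(m)\overline{\nu_{\lambda_j}^{x,\xi}(n)} e^{2\pi(m+n)\tau/L_\gamma}\,e^{-4\pi n \tau /L_\gamma}\cdot(\widehat{a}\text{-type factor})_{m-n}$, the off-diagonal terms $m \ne n$ are suppressed: either by oscillation in $t$ after integration (the standard almost-orthogonality of Fourier modes) for the piece of $a$ independent of $\sigma$, or because the exponential weights $e^{2\pi(m-n)\tau/L_\gamma}$ for $|m-n|$ bounded are harmless while large gaps $|m-n|$ are killed by smoothness of the symbol. What survives is the diagonal $\sum_n |\nu_{\lambda_j}^{x,\xi}(n)|^2 e^{-4\pi n\tau/L_\gamma}\,\sigma_0$-average, and after normalization this is exactly the constant $\int a_0$.

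I would carry this out in the order: (1) record the orbital Fourier series and its analytic continuation, citing \eqref{PER}–\eqref{ACPER} and Paley–Wiener for absolute convergence on $|\tau| \le \epsilon_0$; (2) prove energy localization, so the effective sum is over $|n| \lesssim \lambda_j$; (3) invoke Lemma \ref{FCSAT} (itself following from the real-domain QER theorems \ref{sctheorem}/\ref{thm2}) to see that a definite fraction of the $L^2$ mass sits at the top band $|n| \sim \lambda_j$, which pins down the normalizing constant and forces the limiting microlocal measure to live over $|\sigma| = 1$; (4) compute the matrix element against $\mathrm{Op}_{h_j}(a)$, discarding off-diagonal terms, to get convergence to $\frac{1}{L_\gamma}\int_0^{L_\gamma} a_0(t, \pm 1)\,dt$, hence the pushforward to the base is Lebesgue measure. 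The non-periodic case follows by the same analysis with the orbital Fourier series replaced by a continuous (Fourier-integral) decomposition along $\gamma_{x,\xi}^{\C}$, or alternatively by a microlocal partition into long-but-finite arcs.

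The hard part will be step (3)–(4): proving that the QER hypothesis in the real domain — which gives control of $\sum_{|n| \ge (1-\epsilon)\lambda_j} |\nu_{\lambda_j}^{x,\xi}(n)|^2$ — actually propagates to a \emph{unique} limit (Lebesgue) for the Wigner measures of the \emph{complexified} restriction at every fixed $\tau > 0$, rather than merely a lower bound on mass. The mechanism is that the exponential weights $e^{-4\pi n\tau/L_\gamma}$ act as a strong low-pass–to–high-pass reweighting that, after normalization, selects only the extreme band $n \approx +\lambda_j$ (for $\tau>0$), and on that band the invariance of the limit Wigner measure under the geodesic flow (a consequence of the eigenfunction equation, cf. the third bullet before Proposition \ref{LL}) forces it to be flow-invariant and supported on a single energy level, hence Lebesgue on $S^1$. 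Making the decoupling of Fourier modes and the flow-invariance argument rigorous — in particular controlling cross terms uniformly in $j$ and showing no mass escapes to $|\sigma| < 1$ after renormalization — is where the real work lies; the rest is bookkeeping with the stationary-phase / Paley–Wiener estimates already available from \S\ref{POISSONSZEGOSECT}.
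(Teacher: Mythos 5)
Your proposal correctly reproduces the paper's own sketch: orbital Fourier series \eqref{PER}--\eqref{ACPER}, energy localization, Lemma~\ref{FCSAT} to pin the normalization, and an appeal to flow invariance. But the step where the real work lies — you flag it yourself — has a genuine gap, and the mechanism you offer for it does not close it.

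Write $c_n = \nu_{\lambda_j}^{x,\xi}(n)\,e^{-2\pi n\tau/L_\gamma}/\|\gamma_{x,\xi}^{\tau*}\phi_j^{\C}\|_{L^2}$, so $U_j^{x,\xi,\tau}(t)=\sum_n c_n e^{2\pi int/L_\gamma}$ with $\sum|c_n|^2=1$. QUE with limit Lebesgue means precisely that every nonzero Fourier coefficient of $|U_j|^2$ tends to zero: $\sum_n c_{n+k}\bar c_n\to 0$ for each $k\neq 0$ (plus concentration in $\sigma$). Your reweighting argument concentrates the \emph{diagonal} mass $|c_n|^2$ in a band of effective width $O(1)$ around $n\approx -\lambda_j$ (note the sign: for $\tau>0$ the factor $e^{-2\pi n\tau/L_\gamma}$ amplifies the \emph{negative} modes, not $n\approx+\lambda_j$ as you wrote). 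But within that band, the $k$-th off-diagonal $c_{n+k}\bar c_n$ carries weight $\sim e^{-2\pi k\tau/L_\gamma}$ relative to the diagonal — a fixed nonzero constant, independent of $j$. Nothing in the exponential reweighting makes these cross terms vanish; if, for instance, all the mass sat on two adjacent modes $n_0, n_0+1$ with comparable $|\nu_j|$'s and nondegenerate phases, then $|U_j|^2$ would converge to $1+ce^{\pm 2\pi it/L_\gamma}$ for a fixed $c\neq 0$, not to a constant.

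Your appeal to flow invariance via ``the eigenfunction equation'' does not rescue this. $U_j$ is not an eigenfunction of $-d^2/dt^2$ on $S^1$, and it is not a quasi-mode of a strong enough order: taking $N_j$ to be the center of mass of the $|c_n|^2$'s, one has $\|(-d^2/dt^2 - (2\pi N_j/L_\gamma)^2)U_j\|_{L^2} = O(\lambda_j)$ (the variance in $n$ is $O(1)$ but the factor $(n+N_j)\sim\lambda_j$ is large), which means the relative quasi-mode error is $O(h_j)$, exactly the borderline where the Egorov/commutator argument \emph{fails} to give flow invariance of the limit Wigner measure. In other words, $U_j$ being concentrated in a thin Fourier band pins the limit measure's $\sigma$-support at $-1$, but says nothing about its marginal on the base $S^1$. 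What you actually need — and what is missing — is a decorrelation statement for the off-diagonal correlations $\nu_j(n+k)\overline{\nu_j(n)}$ that survives after the exponential reweighting concentrates the sum on an $O(1)$-width band. The QER theorems \ref{sctheorem}/\ref{thm2} do control such off-diagonal sums, but only averaged against a \emph{fixed} semi-classical symbol $b(\sigma)$ over $\sim\lambda_j$ modes; after reweighting, the effective test function is exponentially peaked at $\sigma=-1$ and shrinking with $j$, so QER as stated does not apply directly. You use Lemma~\ref{FCSAT} (a diagonal lower bound) to fix the $L^2$ normalization, but you never actually bring in the off-diagonal content of QER, which is where the flow-invariance of the \emph{global} Wigner measure on $M$ (or the Toeplitz/wave-group representation of Proposition~\ref{TOEPROP}) must enter. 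That propagation step — from flow invariance on $S^*M$ to decorrelation of the reweighted Fourier coefficients along $\gamma$ — is the real content of the proposition, and it is not carried out.
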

The proof of Proposition \ref{MAINPROPa} is completed by combining Lemma \ref{L2NORMintro}  and
Proposition \ref{LL}.   Theorem \ref{theo} follows easily from Proposition \ref{MAINPROPa}.

The proof for non-periodic geodesics is considerably more involved, since one cannot use Fourier analysis
in quite the same way.

\subsection{Real zeros and complex analysis}

\begin{mainprob}

An important but apparently rather intractable problem is, how to obtain information
on the real zeros from knowledge of the complex nodal distribution? There are several
possible approaches:

\begin{itemize}

\item Try to intersect the nodal current with the current of integration over the real
points $M \subset M_{\epsilon}$. I.e. try to slice the complex nodal set with the
real domain.
\bigskip

\item Thicken the real slice slightly by studying the behavior of the nodal
set in $M_{\epsilon}$ as $\epsilon \to 0$. The sharpest version is to try to re-scale the nodal set by a factor of $\lambda^{-1}$ to zoom in on the zeros
which are within $\lambda^{-1}$ of the real domain. They may not be real but at least
one can control such ``almost real" zeros.  Try to understand (at least in real dimension 2)
how the complex nodal set  `sprouts' from the real nodal set. How do the connected
components of the real nodal set fit together in the complex nodal set? 
\bigskip

\item Intersect the nodal set with geodesics. This magnifies the singularity along the real 
domain and converts nodal sets to isolated points. 

\end{itemize}
\end{mainprob}

\section{\label{Lp} $L^p$ norms of eigenfuncions}

In \S \ref{LBL1} we pointed out that lower bounds on $||\phi_{\lambda}||_{L^1}$ lead to improved
lower bounds on Hausdorff measures of nodal sets. In this section we consider  general
$L^p$-norm problems for eigenfunctions.

\subsection{Generic upper bounds on $L^p$ norms}

We have already explained that the pointwise Weyl law \eqref{PLWL} and remainder jump estimate \eqref{03} leads to the general
sup norm bound for $L^2$-normalized eigenfunctions, 
\begin{equation} \label{SUPNORM} ||\phi_{\lambda}||_{L^{\infty}} \leq C_g \lambda^{\frac{m-1}{2}}, \;\; (m = \dim M). 
\end{equation}
The upper bound  is  achieved  by  zonal spherical harmonics. In \cite{Sog} (see also \cite{Sogb, Sogb2})
C.D. Sogge proved general $L^p$ bounds: 

\begin{theo}\label{SOGTH}  (Sogge, 1985) 
\begin{equation}\label{i.8}
\sup_{\phi\in
V_\lambda}\frac{\|\phi\|_p}{\|\phi\|_2}=O(\lambda^{\delta(p)}),
\quad 2\le p\le \infty
\end{equation}

 where
\begin{equation} \delta(p)=
\begin{cases}
n(\tfrac12-\tfrac1p)-\tfrac12, \quad \tfrac{2(n+1)}{n-1}\le p\le
\infty
\\
\tfrac{n-1}2(\tfrac12-\tfrac1p),\quad 2\le p\le
\tfrac{2(n+1)}{n-1}.
\end{cases}
\end{equation}

\end{theo}

The upper bounds are sharp in the class of all $(M, g)$ and are saturated on
the round sphere: \bigskip

\begin{itemize}

\item For $p>\tfrac{2(n+1)}{n-1}$, 
zonal (rotationally invariant) spherical harmonics saturate the $L^p$ bounds. Such eigenfunctions
also occur on surfaces of revolution. \bigskip

\item For  $L^p$ for $2\leq p\leq \tfrac{2(n+1)}{n-1}$  the bounds
are saturated by highest weight spherical harmonics, i.e.  Gaussian beam along a
stable elliptic geodesic. Such eigenfunctions also occur on surfaces of revolution.

\end{itemize}

The zonal has high $L^p$ norm due to its high peaks on balls of radius $\frac{1}{N}$. The balls are so small
that they do not have high $L^p$ norms for small p. The Gaussian beams are not as high but they are relatively
high over an entire geodesic.

\subsection{Lower bounds on $L^1$ norms}

The $L^p$ upper bounds are the only known tool for obtaining lower bounds on $L^1$ norms. 
We now prove   Proposition \ref{CS}:
\begin{proof}

Fix a function $\rho\in {\mathcal S}({\mathbb R})$ having the properties
that $\rho(0)=1$ and $\hat \rho(t)=0$ if $t\notin [\delta/2,\delta]$, where
$\delta>0$ is smaller than the injectivity radius of $(M,g)$.
If we then set
$$T_\lambda f = \rho(\sqrt{-\Delta}-\lambda)f,$$
we have that $T_\lambda \phi_\lambda=\phi_\lambda$.  Also,
by Lemma 5.1.3 in \cite{Sogb}, $T_\lambda$ is an oscillatory
integral operator of the form
%We define the oscillatory integral  $T_{\lambda}$ as
%$\rho(\sqrt{\Delta} - \lambda)$ where $\hat{\rho}(0) = 0$ and
%$\rho \in \scal$. Then
$$T_{\lambda} f(x) = \lambda^{\frac{n-1}{2}} \int_M e^{i \lambda
r(x,y)} a_\lambda(x, y) f(y) dy,$$
with $|\partial^\alpha_{x,y}a_\lambda(x,y)|\le C_\alpha$.
%Then $T_{\lambda}
%\phi_{\lambda} = \phi_{\lambda}$ and $||T_{\lambda}
%\phi_{\lambda}||_{\infty} = C \lambda^{\frac{n-1}{2}}
%||\phi_{\lambda}||_{L^1}. $ It follows that
Consequently,
$||T_{\lambda}
\phi_{\lambda}||_{L^\infty} \le C \lambda^{\frac{n-1}{2}}
||\phi_{\lambda}||_{L^1}$, with $C$ independent of $\lambda$,
and so
$$1 = ||\phi_{\lambda}||_{L^2}^2 = \langle T \phi_{\lambda},
\phi_{\lambda} \rangle \leq ||T \phi_{\lambda}||_{L^\infty}
||\phi_{\lambda}||_{L^1} \leq  C \lambda^{\frac{n-1}{2}}
||\phi_{\lambda}||_{L^1}^2. $$

We can give another proof based on the eigenfunction
estimates  (Theorem \ref{SOGTH}),  which say that
$$\|\phi_\lambda\|_{L^p}\le C \lambda^{\frac{(n-1)(p-2)}{4p}},
\quad 2<p\le \tfrac{2(n+1)}{n-1}.
$$
If we pick such a $2<p<\tfrac{2(n+1)}{n-1}$, then by H\"older's inequality, we have
$$1=\|\phi_\lambda\|_{L^2}^{1/\theta}\le \|\phi_\lambda\|_{L^1} \, \|\phi_\lambda\|_{L^p}^{\frac1\theta-1}\le  \|\phi_\lambda\|_{L^1}\bigl(\, C\lambda^{\frac{(n-1)(p-2)}{4p}}\, \bigr)^{\frac1\theta-1},
\quad \theta=\tfrac{p}{p-1}(\tfrac12-\tfrac1p)=\tfrac{(p-2)}{2(p-1)},$$
which  implies $\|\phi_\lambda\|_{L^1}\ge c\lambda^{-\frac{n-1}4}$, since
$(1-\tfrac1\theta) \tfrac{(n-1)(p-2)}{4p}=\tfrac{n-1}4$.
\end{proof}

We remark that this lowerbound for $\|\phi_\lambda\|_{L^1}$ is sharp on the standard sphere, since
$L^2$-normalized highest weight spherical harmonics of degree $k$ with eigenvalue
$\lambda^2 = k(k+n-1)$ have $L^1$-norms which are bounded above
and below by $k^{(n-1)/4}$ as $k\to \infty$.  Similarly, the $L^p$-upperbounds that we used in the
second proof of this $L^1$-lowerbound is also sharp because of these functions.

\bigskip

\subsection{Riemannian manifolds with maximal eigenfunction growth}

Although the general sup norm bound \eqref{SUPNORM} is achieved by some sequences of
eigenfunctions on some Riemannian manifolds (the standard sphere or a surface of revolution),
it is very rare that $(M, g)$ has such sequences of eigenfunctions. We say that such $(M, g)$
have maximal eigenfunction growth. In a series of articles \cite{SoZ,STZ, SoZ2}, ever more
stringent conditions are given on such $(M, g)$. We now go over the results.

Denote the eigenspaces by
$$V_{\lambda} = \{\phi: \Delta \phi = -\lambda^2 \phi\}.$$
 We measure the growth rate of $L^p$ norms by 
 \begin{equation} L^{p}(\lambda, g) = \sup_{\phi\in V_{\lambda}: ||\phi||_{L^2}
= 1 }
 ||\phi||_{L^{p}}. \end{equation}
\bigskip

\begin{defin} Say that $(M, g)$ has maximal $L^p$ eigenfunction growth if it possesses a sequence
of eigenfunctions $\phi_{\lambda_{j_k}}$ which saturates the $L^p$ bounds. When $p = \infty$ we say
that it has maximal sup norm growth. \end{defin}

\begin{prob}

\begin{itemize}

\item Characterize $(M, g)$  with maximal $L^{\infty}$ eigenfunction growth. The same sequence
of eigenfunctions should saturate all $L^p$ norms with $p \geq p_n:=   \tfrac{2(n+1)}{n-1}$. 
\bigskip

\item Characterize $(M, g)$  with maximal $L^{p}$ eigenfunction growth for $2 \leq p \leq  \tfrac{2(n+1)}{n-1}. $
\bigskip

\item 
Characterize $(M, g)$ for which $||\phi_{\lambda}||_{L^1} \geq C > 0$. 

\end{itemize}

\end{prob}

In \cite{SoZ},  it was shown that $(M, g)$ of maximal $L^{p}$ eigenfunction growth for $p \geq p_n$
have self-focal points. The terminology is non-standard and several different terms are used.

\begin{defin}

 We call a point $p$ a {\it self-focal point} or {\it blow-down point} if all geodesics leaving
$p$ loop back to $p$ at a common time $T$. That is, $\exp_p  T\xi = p$ (They do not have to be closed geodesics.)

We call a point $p$ a partial self-focal point if there exists a positive measure in $S^*_x M$
of directions $\xi$ which loop back to $p$. 

\end{defin}

The poles of  a surface of revolution are self-focal and all geodesics close up smoothly (i.e. are
closed geodesics). The umbilic points of an ellipsoid are self-focal but only two directions give
smoothly closed geodesics (one up to time reversal).

\begin{center}
\includegraphics[scale=0.6]{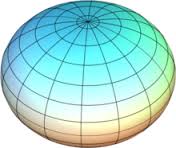}\hspace{1cm} \includegraphics[scale=0.8 ]{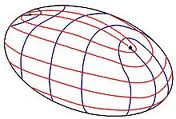}
\end{center}

In \cite{SoZ} is proved:

\begin{theo}   \label{SoZthm} Suppose $(M, g)$ is a $C^{\infty}$ Riemannian manifold with
maximal eigenfunction growth, i.e. having a sequence $\{\phi_{\lambda_{j_k}}\}$ of eigenfunctions
which achieves (saturates) the bound $||\phi_{\lambda_{j_k}}||_{L^{\infty}} \geq C_0 \lambda_{j_k}^{(n-1)/2}$
for some $C_0 > 0$ depending only on $(M, g)$.

Then there  must exist a point $x \in M$ for which the set
\begin{equation}  {\mathcal L}_x = \{ \xi \in S^*_xM : \exists T: \exp_x T \xi = x\} \end{equation}
of directions of geodesic loops at $x$ has positive 
measure in $S^*_x M$.  Here, $\exp$ is the exponential map, and the measure
$|\Omega|$ of a set $\Omega$ is the one  induced by the metric
$g_x$ on $T^*_xM$. For instance, the poles $x_N, x_S$ of a surface
of revolution $(S^2, g)$ satisfy $|{\mathcal L}_x| = 2 \pi$.
\end{theo}

Theorem \ref{SoZthm}, Theorem  \ref{TL}, as well as the
 results of \cite{SoZ,
STZ},  are proved by studying the remainder term $R(\lambda, x)$  in the pointwise Weyl law,
\begin{equation} \label{LWL} N(\lambda, x) = \sum_{j: \lambda_j \leq \lambda} |\phi_j(x)|^2 = C_m \lambda^m + R(\lambda, x). \end{equation}  The first term $N_W(\lambda) = C_m \lambda^m $ is called the Weyl term.
It is classical that the remainder is of one lower order, $R(\lambda, x) = O(\lambda^{m-1})$.
% and it is proved
%in \cite{Saf,SoZ} that the remainder is $o_x(\lambda^{n-1})$ if $|\lcal_x| = 0$. 
The  relevance of the remainder  to maximal eigenfunction growth is through the following  well-known Lemma
(see e.g.  \cite{SoZ}):

\begin{lem}\label{Reig} Fix $x\in M$. Then if $\lambda\in
\text{spec }\sqrt{-\Delta}$
\begin{equation}\label{eig3}
\sup_{\phi\in
V_\lambda}\frac{|\phi(x)|}{\|\phi\|_2}=\sqrt{R(\lambda,x)-R(\lambda-0,x)}.
\end{equation} 

\end{lem}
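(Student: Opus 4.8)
The plan is to establish the pointwise identity \eqref{eig3} by directly analyzing the jump of the spectral function $E(\lambda,x,x)=N(\lambda,x)$ across an eigenvalue $\lambda$ in the spectrum of $\sqrt{-\Delta}$. First I would record that, by definition,
\[
R(\lambda,x)-R(\lambda-0,x) = N(\lambda,x)-N(\lambda-0,x) = \sum_{j:\lambda_j=\lambda}|\phi_j(x)|^2,
\]
since the Weyl term $C_m\lambda^m$ is continuous in $\lambda$ and hence does not contribute to the jump; this is exactly \eqref{03}. So the whole statement reduces to the claim that
\[
\sum_{j:\lambda_j=\lambda}|\phi_j(x)|^2 = \left(\sup_{\phi\in V_\lambda:\|\phi\|_2=1}|\phi(x)|\right)^2 .
\]

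The key step is therefore a finite-dimensional linear algebra fact: if $\{\phi_1,\dots,\phi_d\}$ is any orthonormal basis of the (finite-dimensional) eigenspace $V_\lambda$, then the linear evaluation functional $\ell_x:\phi\mapsto\phi(x)$ on $V_\lambda$ has operator norm (with respect to the $L^2$ inner product) equal to $\bigl(\sum_{j=1}^d|\phi_j(x)|^2\bigr)^{1/2}$. I would prove this by writing $\ell_x(\phi)=\langle \phi, K_x\rangle_{L^2}$ where $K_x=\sum_{j=1}^d \overline{\phi_j(x)}\,\phi_j$ is the reproducing kernel of $V_\lambda$ at $x$ (the Schwartz kernel of the projection onto $V_\lambda$, evaluated at the second variable $x$); note $K_x$ is independent of the choice of orthonormal basis. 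By Cauchy–Schwarz, $|\ell_x(\phi)|\le\|\phi\|_2\|K_x\|_2$ with equality when $\phi$ is the unit vector proportional to $K_x$ (assuming $K_x\neq0$; if $K_x=0$ both sides vanish and the identity is trivial). Finally $\|K_x\|_2^2 = \langle K_x,K_x\rangle = \sum_{j,k}\overline{\phi_j(x)}\phi_k(x)\langle\phi_j,\phi_k\rangle = \sum_j|\phi_j(x)|^2$, which is the desired quantity; taking square roots gives \eqref{eig3}.

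The remaining routine point is simply to assemble these two observations: the jump of $R$ equals $\|K_x\|_2^2 = \sup_{\|\phi\|_2=1}|\phi(x)|^2$, so $\sqrt{R(\lambda,x)-R(\lambda-0,x)}=\sup_{\phi\in V_\lambda}|\phi(x)|/\|\phi\|_2$. I do not anticipate a genuine obstacle here; the only things to be careful about are (i) that $V_\lambda$ is finite-dimensional (which follows from discreteness of the spectrum on a compact manifold, already recalled in the excerpt), so the supremum is attained and the reproducing kernel exists, and (ii) that the sup is unchanged whether one takes it over all of $V_\lambda\setminus\{0\}$ or over unit vectors, which is immediate by homogeneity. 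One could alternatively phrase the argument Fourier-analytically using $E(t,x,x)=\sum\cos t\lambda_j|\phi_j(x)|^2$ and reading off the jumps of its ``integral'' via a Tauberian/Fourier argument, but the direct reproducing-kernel computation above is cleaner and self-contained.
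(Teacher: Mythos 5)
Your argument is correct. The paper itself does not supply a proof of this lemma (it is labelled ``well-known'' and cited to \cite{SoZ}), but your reproducing-kernel computation --- identifying the evaluation functional $\phi\mapsto\phi(x)$ on $V_\lambda$ with pairing against $K_x=\sum_{j:\lambda_j=\lambda}\overline{\phi_j(x)}\,\phi_j$, so that the sup equals $\|K_x\|_2=\bigl(\sum_j|\phi_j(x)|^2\bigr)^{1/2}$, which in turn equals $\sqrt{R(\lambda,x)-R(\lambda-0,x)}$ by continuity of the Weyl term --- is exactly the standard proof, and the two points you flag (finite-dimensionality of $V_\lambda$ on a compact manifold, homogeneity of the ratio) are the only hypotheses one needs to check.
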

\noindent Here, for a right continuous function $f(x)$ we denote by $f(x + 0) -
f(x - 0)$ the jump of $f$ at $x$.
Thus, Theorem \ref{SoZthm} follows from

\begin{theo}\label{maintheorem} Let $R(\lambda,x)$ denote
the remainder for the local Weyl law at $x$.  Then
\begin{equation}\label{M20}
R(\lambda,x)=o(\lambda^{n-1}) \, \, \text{if } \, \, |\lcal_x|=0.
\end{equation}
Additionally, if $|{\lcal}_x|=0$ then, given $\varepsilon>0$,
there is a neighborhood ${\ncal}$ of $x$ and a $\Lambda=
<\infty$, both depending on $\varepsilon$ so that
\begin{equation}\label{M3}
|R(\lambda,y)|\le \varepsilon \lambda^{n-1}, \, \, y\in {\ncal},
\, \, \lambda\ge \Lambda.
\end{equation}
\end{theo}

\subsection{Theorem \ref{TL}}
However, Theorem \ref{SoZthm} is not sharp: on  a tri-axial ellipsoid (three distinct axes), the  umbilic points are
self-focal points. But the eigenfunctions which maximize the sup-norm only have $L^{\infty}$ norms
of order $\lambda^{\frac{n-1}{2}}/\log \lambda$. 
An improvement is given in \cite{STZ}.

Recently, Sogge and the author have further improved the result in the case of real analytic $(M,g)$
In this case $|\lcal_x| > 0$ implies that $\lcal_x = S^*_x M$ and the geometry simplifies. In
\cite{SoZ2}, we prove Theorem \ref{TL}, which we restate in terms of the jumps of the remainder:

\begin{theo} \label{REM}  Assume that  $U_x$ has no invariant
$L^2$ function for any $x$.  Then  $$N(\lambda + o(1), x) - N(\lambda, x) =
o(\lambda^{n-1}), \; \mbox{ uniformly in x}. $$
Equivalently, 
\begin{equation} \label{REMEST}  R(\lambda + o(1), x) - R(\lambda, x) =
o(\lambda^{n-1}) \end{equation}
uniformly in $x$.
  \end{theo}

% \begin{theo}\label{SoZ2thm}  (Z, Sogge 2013 +)  If $(M, g)$ is real analytic and has maximal eigenfunction growth, then it
%possesses  a self-focal point $p$ whose first return map \eqref{Phix} has an invariant function  $f \in L^2(S^*_x M, \mu_x)$,
%where $\mu_x$ is the Euclidean surface measure. Equivalently, it has an invariant measure
%$|f|^2 d\mu_x$ which is absolutely continuous and in $L^1$ with respect to $d\mu_x$  \end{theo}
%\bigskip

Before discussing the proof we note that the conclusion gives very stringent conditions on $(M, g)$. 
First, there  are topological restrictions on manifolds possessing a self-focal point. 
If  $(M, g)$ has a focal point $x_0$ then the rational cohomology $H^*(M, \Q)$ has a single generator (Berard-Bergery). 
But even in this case there are many open problems:
\bigskip

\begin{prob}
All known examples of $(M, g)$ with maximal eigenfunction growth have completely integrable geodesic
flow, and indeed quantum integrable Laplacians. Can one prove that maximum eigenfunction growth only
occurs in the integrable case? Does it only hold if there exists a point $p$ for which $\Phi_p = Id$?

A related purely geometric problem: Do there exist  $(M, g)$ with $\dim M \geq 3$ possessing
 self-focal points wth $\Phi_x  \not= Id$. I.e. do there exist
generalizations of umbilic points of ellipsoids in dimension two. There do not seem to exist any known
examples; higher dimensional ellipsoids do not seem to have such points.

\end{prob}

%\begin{prob} It does not appear that ellipsoids of
%dimension $\geq 3$ have self-focal points. In fact, we do not know  an example of a smooth $(M, g)$
%of dimension $\geq 3$ which possesses a self-focal point with twisted first return map. \end{prob}

Despite these open questions, Theorem \ref{REM} is in a sense sharp. If there exists a self-focal
point $p$ with a smooth invariant function, then one can construct a  {\it quasi-mode} of order zero which lives
on the flow-out Lagrangian
$$\Lambda_p : = \bigcup_{t \in [0, T]} G^t S^*_p M $$
where $G^t$ is the geodesic flow and $T$ is the minimal common return time. The `symbol' is the flowout
of the  smooth
invariant density.
% Theorem \ref{SoZ2thm} is valid for quasi-modes as well as eigenfunctions. Indeed, most
%microlocal methods cannot distinguish modes and quasi-modes.

\begin{prop} \label{CONVERSE} Suppose that $(M, g)$ has a point $p$ which is a self-focal point whose first return map $\Phi_x$
at the return time $T$ 
is the identity map of $S^*_p M$. Then there exists a quasi-model of order zero associated to the sequence
$\{\frac{2}{\pi} T k + \frac{\beta}{2}: k = 1, 2, 3, \dots\}$ which concentrates microlocally on the flow-out
of $S^*_p M$. (See \S \ref{CONVERSE} for background and more precise information).

\end{prop}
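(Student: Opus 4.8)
The plan is to construct the quasi-mode directly as a superposition of the complexified Poisson--wave kernel applied to the delta-function (or a smooth approximation thereof) at the self-focal point $p$, exploiting the fact that when $\Phi_p = \mathrm{Id}$ all geodesics leaving $p$ return to $p$ at the common time $T$, so the wave kernel $U(t,x,y)$ is singular along $\{t = kT\}$ in a very structured way. Concretely, first I would recall the construction from the theory of Zoll-type manifolds near a single focal point: the operator $\sqrt{-\Delta}$ has, microlocally near $\Lambda_p = \bigcup_{t\in[0,T]} G^t S^*_p M$, an almost-periodicity of period $T$ in the bicharacteristic flow, with a Maslov/subprincipal shift $\beta$ accumulated over one loop. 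One then forms
\[
\psi_k(x) = \int_{\mathbb R} \chi(t)\, e^{i\mu_k t}\, U(t,x,p)\, dt, \qquad \mu_k = \tfrac{2\pi}{T}\,k + \tfrac{\beta}{2},
\]
where $\chi$ is a suitable cutoff localizing near $t=0$ (or one sums the contributions of $t = 0, T, 2T, \dots$ with the correct phases). The key step is to show $\|(\Delta + \mu_k^2)\psi_k\|_2 = o(\mu_k)$, i.e. that $\psi_k$ is an admissible quasi-mode of order zero in the sense of \eqref{admissible}, and that its microlocal lift (Wigner measure) concentrates on $\Lambda_p$, with ``symbol'' the flow-out under $G^t$ of the Euclidean volume density $\mu_p$ on $S^*_p M$ (this last point uses $\Phi_p = \mathrm{Id}$ so that $\mu_p$ is trivially $\Phi_p$-invariant and the flow-out density is well-defined and smooth on $\Lambda_p$).

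The second main step is the stationary-phase / parametrix analysis that produces the $o(\mu_k)$ error and identifies the symbol. Here I would use the Hörmander parametrix for $U(t)$ near $t=0$ (reviewed in the excerpt's Appendix on the wave group, and used already in the proof of Lemma \ref{lem} and Proposition \ref{Kaprop}) to write $U(t,x,p)$ as an oscillatory integral over $T^*_p M$ with phase $t|\xi|_{g_p} + \langle \xi, \exp_p^{-1}(x)\rangle$, valid for $x$ near $p$; for $x$ away from $p$ one propagates along $\Lambda_p$ using Egorov's theorem. The $t$-integral against $e^{i\mu_k t}\chi(t)$ then performs frequency localization to $|\xi|_{g_p} = \mu_k$, and applying this near each return time $t = \ell T$ (using that every geodesic from $p$ has come back to $p$) produces a coherent sum whose leading symbol is exactly the pushforward of $\mu_p$ under the loop. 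The phase-shift bookkeeping — that successive loops contribute a factor $e^{-i(\pi/2)\beta}$ per loop so that constructive interference occurs precisely at the arithmetic progression $\mu_k = \frac{2\pi}{T}k + \frac{\beta}{2}$ — is the place where the hypothesis $\Phi_p = \mathrm{Id}$ is essential (otherwise the return map twists the density and the loops do not add coherently, which is exactly why the twisted case of Theorem \ref{TL} has no such quasi-mode).

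I expect the main obstacle to be the careful treatment of the singularity of $U(t,x,p)$ at the \emph{focal} times $t = \ell T$, and in particular the appearance of the blow-down singularity of the projection $\pi : \Lambda_p \to M$ at $t \equiv 0 \pmod T$ (the map $\exp_p$ degenerating on all of $S^*_p M$). Near those times the phase function has a degenerate critical manifold rather than isolated nondegenerate critical points, so ordinary stationary phase must be replaced by a more delicate analysis of the pushforward of a conormal distribution through a blow-down map (this is the content of \cite{STZ}); one must verify that the amplitude extends smoothly across the singularity and that the resulting function is genuinely $O(1)$ in $L^2$ and concentrates on $\Lambda_p$ rather than developing a lower-dimensional concentration at $p$ itself. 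A secondary technical point is verifying the $o(\mu_k)$ bound on $\|(\Delta+\mu_k^2)\psi_k\|_2$: since $\psi_k$ is built from $U(t)$ only microlocally, the commutator $(\Delta + \mu_k^2)\psi_k$ picks up both the $t$-boundary terms from $\chi$ (controllable by choosing $\chi$ with small support and smooth) and the error in the parametrix away from the characteristic variety, and one must show these are lower order uniformly in $k$. Once these are in hand, the conclusion — that $\psi_k$ is an admissible quasi-mode associated to $\{\frac{2}{\pi}Tk + \frac{\beta}{2}\}$ concentrating on $\Lambda_p$ — follows, and I would refer to the section indicated in the statement for the remaining bookkeeping and precise normalization of the symbol.
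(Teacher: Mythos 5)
Your proof is correct in outline, but it takes a genuinely different route from what the paper intends. The paper's (implicit) argument, pointed to by the sentence preceding the proposition and by \S\ref{QM}, is the invariant Lagrangian-quantization construction of Colin de Verdi\`ere and Duistermaat: since $\Phi_p = \mathrm{Id}$, the flow-out $\Lambda_p = \bigcup_{t\in[0,T]} G^t S^*_p M$ is a smooth closed $G^t$-invariant Lagrangian, and the push-forward of the Euclidean density $\mu_p$ on $S^*_p M$ along the flow is a smooth $G^t$-invariant half-density $\sigma$ on $\Lambda_p$; one then quantizes the pair $(\Lambda_p,\sigma)$ as an oscillatory integral whose leading amplitude solves the homogeneous transport equation (this is precisely where $\Phi_p=\mathrm{Id}$ is needed: for a twisted return map no smooth invariant density on $\Lambda_p$ exists, so the transport equation is not globally solvable), and the Bohr--Sommerfeld/Maslov condition along a loop forces the arithmetic progression $\frac{2\pi}{T}k+\frac{\beta}{4}$. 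You instead build the quasi-mode concretely by frequency-localizing the wave kernel emanating from $p$, i.e. $\psi_k(x)=\int \chi(t)e^{i\mu_k t}U(t,x,p)\,dt$, which is the ``shrinking spectral projection at $p$'' of \S\ref{QMintro}; the constructive interference of successive loops at $t=\ell T$ then reproduces the same quantization condition. Both are legitimate: your version yields an explicit formula and makes the phase bookkeeping across returns transparent, while the Lagrangian approach identifies the symbol and the obstruction to order-zero quasi-modes more invariantly (and scales to higher order in the transport hierarchy). You correctly flag the delicate point in your route --- the blow-down of $\pi:\Lambda_p\to M$ at the focal times, handled as in \cite{STZ} --- which the Lagrangian version sidesteps by working with half-densities on $\Lambda_p$ rather than with the pushed-forward distribution on $M$. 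One small bookkeeping remark: the Maslov shift that comes out of either argument should be compared carefully with the cluster formula $\tfrac{2\pi}{T}(N+\tfrac{\beta}{4})$ quoted in the introduction for Zoll manifolds; the constant $\tfrac{\beta}{2}$ in the proposition (and in your $\mu_k$) should be reconciled against a fixed normalization of $\beta$, and the $\frac{2}{\pi}T$ in the statement is almost certainly a typo for $\frac{2\pi}{T}$, as you implicitly assume.
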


%In particular, 
%\begin{maincor} \label{INTROCOR} $(M, g)$ does not have maximal eigenfunction growth
%(or maximal quasi-mode growth)  if $N(\lambda + o(1), x) -
%N(\lambda, x) = o(\lambda^{n-1})$ uniformly in $x$. \end{maincor}

\subsection{Sketch of proof of Theorem \ref{TL}}

We first outline the proof. A key issue is the uniformity of remainder estimates of $R(\lambda, x)$
as $x$ varies. Intuitively it is obvious that the main points of interest are the self-focal points. 
But at this time of writing, we cannot exclude the possibility, even in the real analytic setting, that there are an infinite number of such
points with twisted return maps. Points which isolated from the set of self-focal points are easy to deal with,
but there may be non-self-focal points which lie in the closure of the self-focal points. 
We introduce some notation.

\begin{defin} \label{POINTS} We say that $x \in M$

\begin{itemize}

\item is an $\lcal$ point ($x \in \lcal$)  if 
$\lcal_x = \pi^{-1}(x) \simeq S^*_x M$. Thus, $x$ is a self-focal point. 

\item is a $\ccal \lcal$ point $(x \in \ccal \lcal$)  if $x \in \lcal$ and $\Phi_x = Id$. Thus,
all of the loops at $x$ are smoothly closed.

\item is a $\tcal \lcal$ point $(x \in \tcal \lcal)$ if $x \in \lcal$ but $\Phi_x \not= Id$, i.e. $x$
is a twisted self-focal point. Equivalently,   $\mu_x
\{\xi \in \lcal_x: \Phi_x(\xi) = \xi \} = 0; $ All directions
are loop directions, but  almost none are directions of smoothly closed loops.

\end{itemize}

\end{defin}

To prove Theorem \ref{TL} we may (and henceforth will) assume that $\ccal \lcal = \emptyset$. Thus,
$\lcal = \tcal \lcal$. We also let $\overline{\lcal}$ denote the closure of the set of self-focal points. At this
time of writing, we do not know how to exclude that $\overline{\lcal} = M$, i.e. that the set of self-focal
points is dense. 

\begin{prob} Prove (or disprove) that if $\ccal \lcal = \emptyset$ and if $(M, g)$ is real analytic, then $\lcal$ is a finite
set. \end{prob}

We also need to further specify times of returns. It is well-known and easy to prove that if all $\xi \in \pi^{-1} (x)$
are loop directions, then the time $T(x, \xi)$ of first return is constant on $\pi^{-1}(x)$. This is because an analytic function
is constant on its critical point set. 

\begin{defin} \label{QUANTDEF} We say that $x \in M$

\begin{itemize}

\item is a $\tcal \lcal_T$ point $(x \in \tcal \lcal_T$)  if  $x \in \tcal \lcal$ and if  $ T(x,
\xi) \leq T$ for all $\xi \in \pi^{-1}(x) $.  We
denote the set of such points by $\tcal \lcal_T$.

\end{itemize}

\end{defin}

\begin{lem} \label{FINITE} If $(M, g)$ is real analytic, then $\tcal \lcal_T$ is a finite set. \end{lem}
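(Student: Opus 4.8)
If $(M,g)$ is real analytic, then $\tcal\lcal_T$ is a finite set.

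The plan is to prove this by a compactness-plus-real-analyticity argument, showing that $\tcal\lcal_T$ has no accumulation points in $M$; combined with the fact that it is contained in the compact manifold $M$, this forces finiteness. First I would reformulate membership in $\tcal\lcal_T$ in terms of the exponential map. Since $(M,g)$ is real analytic, the map $(x,\xi,t)\mapsto \exp_x t\xi$ is real analytic on a suitable domain in $TM\times\R$, and for each fixed $t\le T$ the ``loop at time $t$'' condition $\exp_x t\xi = x$ cuts out a real analytic subvariety of $S^*M$. A point $x\in\tcal\lcal_T$ is one for which there is a single return time $T(x)\le T$ (constant on the fiber, as noted in the text, because an analytic function is constant on its critical point set) with $\exp_x T(x)\xi = x$ for \emph{all} $\xi\in S^*_xM$.

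The key step is to analyze a putative accumulation point $x_0$ of $\tcal\lcal_T$. Pick a sequence $x_n\in\tcal\lcal_T$ with $x_n\to x_0$ and return times $T_n := T(x_n)\in(0,T]$. By compactness of $[0,T]$, passing to a subsequence, $T_n\to T_0\le T$; one checks $T_0>0$ using a uniform lower bound on first return times (geodesics of length below the injectivity radius scale cannot loop). By continuity of the exponential map, $\exp_{x_0}T_0\xi = x_0$ for all $\xi\in S^*_{x_0}M$, so $x_0$ is itself a self-focal point with return time $T_0$, i.e. $x_0\in\lcal_{T_0}\subset\lcal_T$. Now I would argue that, near such an $x_0$, the self-focal condition is so rigid that it cannot hold on a sequence accumulating at $x_0$ unless it holds on a whole neighborhood — and holding on a neighborhood is impossible. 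Concretely: consider the real analytic function $F(x,\xi) := $ (a defining expression for $\exp_x T(x)\xi - x$ in local coordinates, using that $T(x)$ extends analytically near $x_0$ as the solution of the loop equation for a fixed reference direction). The set $\{x : F(x,\xi)=0 \ \forall \xi\}$ is a real analytic subvariety $Z$ of a neighborhood $U$ of $x_0$; it contains the accumulating sequence $x_n$, hence $x_0$ is not isolated in $Z$, so $Z$ has positive dimension at $x_0$. The hard part will be ruling out a positive-dimensional set of self-focal points: I would show this contradicts known structural facts, e.g. that a self-focal point forces $H^*(M,\Q)$ to have a single generator (Bérard-Bergery, cited in the excerpt) and, more quantitatively, that the flow-out Lagrangian $\Lambda_{x_0}$ in \eqref{Lambdap} is an embedded Lagrangian submanifold with a blow-down singularity \emph{only} over $x_0$ — a continuum of such points would produce incompatible blow-down loci, or equivalently the ``blow-down'' projection $\pi:\Lambda_{x_0}\to M$ could not be a local diffeomorphism away from $t=0,T(x_0)$. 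Alternatively, and perhaps more cleanly, I would invoke that on a neighborhood of a hypothetical curve of self-focal points the geodesic flow would be periodic on an open set of $S^*M$, forcing (by the Wadsley/C.~T.~Yang theorem) the flow to be periodic on all of $S^*M$ with the topological consequences that contradict the hypothesis $\ccal\lcal=\emptyset$ (or simply are excluded for the given $(M,g)$).

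So the skeleton is: (1) express the $\tcal\lcal_T$-condition via the real analytic exponential map and note the first-return time is locally analytic; (2) take an accumulation point $x_0$ and pass to limits to see $x_0$ is self-focal with return time $\le T$; (3) show the self-focal locus near $x_0$ is a real analytic set that, by the accumulation, is positive-dimensional; (4) derive a contradiction from positive-dimensionality of the self-focal locus, using the rigidity of focal points (single cohomology generator / periodicity of the flow on an open set implies periodicity everywhere). I expect step (4) to be the main obstacle — one needs a clean argument that a one-parameter family of self-focal points cannot coexist, and the most robust route is the Wadsley-type ``periodic on an open set $\Rightarrow$ periodic everywhere'' dichotomy applied to the analytic geodesic flow, after which the hypothesis $\ccal\lcal=\emptyset$ (no smoothly closed loops) yields the contradiction. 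Finiteness then follows since a closed subset of the compact manifold $M$ with no accumulation points is finite.
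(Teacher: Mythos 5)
Your steps (1)--(3) are fine and essentially mirror the paper's second suggested route (take an accumulation point, pass to the limit in the loop equation), but step (4) — which you rightly flag as the main obstacle — does not go through as written, and this is exactly where the paper's actual argument departs from yours. The Wadsley/Yang dichotomy ("periodic on an open set $\Rightarrow$ periodic everywhere") requires actual periodic orbits of the geodesic flow, but a \emph{twisted} self-focal point $p$ produces no periodic orbits at all: $G^{T(p)}(p,\xi)=(p,\Phi_p(\xi))$ with $\Phi_p(\xi)\ne\xi$ for a.e.\ $\xi$, so the geodesic comes back to $p$ with a different direction and the orbit in $S^*M$ is not closed. A hypothetical curve of twisted self-focal points therefore would \emph{not} give an open set of periodic orbits, and Wadsley/Yang never gets started. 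Likewise, the B\'erard--Bergery constraint that $H^*(M,\Q)$ has a single generator is a topological restriction on $M$ itself; it holds already when a single self-focal point exists and says nothing against having a one-parameter family of them. So neither of your two proposed mechanisms for ruling out a positive-dimensional self-focal locus actually does the job.

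What the paper does instead is to work in $TM$ with the loop set $\ecal=\{(x,\xi):\exp_x\xi=x\}$ and to exploit a Jacobi-field computation that encodes precisely the \emph{twistedness} $\Phi_p\ne\mathrm{Id}$: it implies that each sphere $\{|\xi|=kT(p)\}\subset T_pM$ is an \emph{isolated connected component} of $\ecal$, so compactness of $B^*_TM$ allows only finitely many such components, hence finitely many points of $\tcal\lcal_T$. (Equivalently, in the accumulation-point version, the Jacobi-field analysis forces the limit $x_0$ of a sequence in $\tcal\lcal_T$ to have $\Phi_{x_0}=\mathrm{Id}$, contradicting the standing assumption $\ccal\lcal=\emptyset$ — note your step (2) never checks whether $\Phi_{x_0}$ is or is not the identity, which is the whole point.) The ingredient you are missing is thus the quantitative use of the twisted return map via Jacobi fields, rather than topological or flow-periodicity rigidity, to make the self-focal condition unstable under perturbation of the basepoint.
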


There are several ways to prove this. One is to consider the set of all loop points,
$$ \ecal = \{(x, \xi) \in TM: \exp_x \xi = x \},$$
where as usual we identify vectors and co-vectors with the metric. Then at a self-focal point $p$, 
$\ecal \cap T_p M$ contains a union of spheres of radii $k T(p)$, $k = 1, 2, 3, \dots$. The condition
that $\Phi_p \not= I$ can be used to show that each sphere is a component of $\ecal$, i.e. is isolated
from the rest of $\ecal$. Hence in the compact set $B_T^* M = \{(x, \xi): |\xi| \leq T\}$, there can only
exist a finite number of such components. Another way to prove it is to show that any limit point 
$x$, with  $p_j \to x$ and  $p_j \in \tcal \lcal_T$  must be a $\tcal \lcal_T$ point whose first
return map is the identity.  Both proofs involve the study of Jacobi fields along the looping geodesics.

%The main steps are:

%\begin{itemize}

%\item We use a parametrix formula for the long time wave group to give an oscillatory integral
%representation of the remainder (\eqref{rhoTFORM1} and \eqref{rhoT}).\bigskip

%\item At a self-focal point it has a special form \eqref{SAFFORM} which has a dynamical interpretation
%in terms of the first return map at a self-focal point. \bigskip

%\item Ergodic theory shows that if $\Phi_p$ has no $L^2$-invariant function, then the remainder
%is small at the point. Essentially, $R(\lambda, p) = o(\lambda^{m-1})$.
%\bigskip

%\item For each $T > 0$, there  only exist a finite number $M_T$ of self-focal points p  with $\Phi_p \not= Id $
%and first return time $\leq T$  on $S^*_p M$
%in the real analytic case. Due to the finite number, there is a uniform upper bound of $o(\lambda^{m-1})$
%at self-focal points.  \bigskip

%\item A ``non-stationary phase" analysis (integration by parts once)  shows that $R(\lambda, x) =o(\lambda^{m-1})$ %uniformly if
%$r(x, p) \geq \lambda^{-\half} \log \lambda$ for all self-focal points $p$. \bigskip

%\item Remaining step: $R(\lambda, x) =o(\lambda^{m-1})$ uniformly if
%$r(x, p) \leq \lambda^{-\half} \log \lambda$ assuming that $R(\lambda, p) =o(\lambda^{m-1})$. 

%\end{itemize}

To outline the proof,
let $\hat{\rho}
\in C_0^{\infty}$ be an even function  with $\hat{\rho}(0) = 1$, $\rho(\lambda) > 0$
for all $\lambda \in \R$, and $\hat{\rho}_T(t) = \hat{\rho}(\frac{t}{T})$. 
The classical cosine Tauberian method to determine  Weyl asymptotics  is to study 
%\begin{equation} \label{DGS} \rho_T * d N(\lambda, x)  = a_0(x) \lambda^{n-1} + 
%\lambda^{n-1} R(\lambda, x, T),  \end{equation} 
% where  $a_0(x)$ is a smooth density ( = a constant $C_m$ in our case). 

One starts from the smoothed spectral expansion \cite{DG,SV}
\begin{equation} \label{rhoTFORM1}  \begin{array}{lll} \rho_T * d N(\lambda, x)   & = & \int_{\R} \hat{\rho}(\frac{t}{T}) e^{i \lambda t} U(t, x, x) dt \\&&\\ & = &
 a_0 \lambda^{n-1}
+ a_1 \lambda^{n-2} +  \lambda^{n-1}\sum_{j =
1}^{\infty}  R_j(\lambda, x, T) + o_{T}(\lambda^{n - 1}), 
\end{array} \end{equation}
with uniform remainder in $x$. The sum over $j$ is a sum over charts needed to parametrize the
canonical relation of $U(t, x, y)$, i.e. the graph of the geodesic flow. By the usual parametrix construction  for $U(t) = e^{i t\sqrt{\Delta}}$, one proves that 
there exist   phases $\tilde{t}_j$  and amplitudes $a_{j0}$ such that
\begin{equation} \label{Rj}  \begin{array}{lll}  R_j(\lambda, x, T)   
&\simeq &  \lambda^{n-1}  \int_{S^*_x
M} e^{i \lambda\tilde{t}_j(x, \xi) }   \left( (\hat{\rho}_T a_{j0} ) \right) |d \xi| + O(\lambda^{n-2}). \end{array} \end{equation}
As in \cite{DG,Saf,SV}  we  use polar coordinates in $T^* M$, and stationary
phase in $dt dr$ to reduce to integrals over $S^*_x M$.   The phase $\tilde{t}_j $ is the value of the phase
$\phi_j(t, x, x, \xi)$ of $U(t, x, x)$ at the critical point. The loop directions are those $\xi$ such
that $\nabla_{\xi} \tilde{t}_j(x, \xi) = 0$.

\begin{mainex} Show that $\rho_T * dN(\lambda, x)$ is a semi-classical Lagrangian distribution
in the sense of \S \ref{LAGAPP}.  What is its principal symbol?

\end{mainex}

To illustrate the notation, we consider   a 
 flat torus $\R^n/\Gamma$ with $\Gamma \subset \R^n$ a full rank lattice. As is well-known, the wave kernel
then has the form
$$U(t, x, y) = \sum_{\gamma \in \Gamma}  \int_{\R^n} e^{i \langle x - y - \gamma, \xi \rangle} e^{it |\xi|} d \xi. $$
Thus,  the indices $j$ may be taken to 
be the lattice points $\gamma \in \Gamma$, and
$$\begin{array}{lll} \rho_T * d N(\lambda, x)  & = &  \sum_{\gamma \in \Gamma} \int_{\R} \int_0^{\infty} \int_{S^{n-1}}
\hat{\rho}(\frac{t}{T}) e^{i  r \langle \gamma, \omega \rangle} e^{i t r}  e^{- i t \lambda} r^{n-1} dr dt d \omega
%\\ && \\ & = &  \sum_{\gamma \in \Gamma} \int_{\R} \int_{\R} \int_0^{\infty} \int_{S^{n-1}}
%\hat{\rho}(\frac{t}{T}) e^{i  r \langle \gamma, \omega \rangle} e^{i t r} r^{n-1} dr dt d \omega . 
\end{array}$$
We change variables $r \to \lambda r$ to get a full phase $\lambda (r \langle \gamma, \omega \rangle
+ t r - t)$ . The stationary phase points in $(r, t)$ are $\langle \gamma, \omega \rangle = t$ and
$r = 1$.  Thus,
$$\tilde{t}_{\gamma}(x, \omega) = \langle \gamma, \omega \rangle. $$
 
The geometric interpretation of $t_{\gamma}^*(x, \omega)$ is that it is the value of $t$ for which the geodesic
$\exp_x t \omega = x + t \omega$  comes closest to the representative   $x + \gamma$ of $x$
in the $\gamma$th chart.   Indeed, 
 the line $x + t \omega$  is `closest'  to $x + \gamma$  when
 $t \omega$ closest to $\gamma$, since  $|\gamma - t \omega|^2 = |\gamma|^2 - 2t \langle \gamma, \omega \rangle + t^2. $
On a general $(M, g)$ without conjugate points, 
$$ \tilde{t}_{\gamma}(x, \omega) =  \langle \exp_x^{-1} \gamma x, \omega \rangle. $$

\subsection{Size of the remainder at a self-focal point}

The first key observation is that \eqref{Rj} takes a special form at a self-focal point. 
At a self-focal point $x$ define $U_x$ as in \eqref{UX}.  
%\begin{equation} \label{UX} U_x: L^2(S^*_x M, d \mu_x) \to L^2(S^*_x M, d\mu_x),
%\;\;\;U_x f(\xi)  : = \begin{array}{l}  f(\Phi_x(\xi))
%\sqrt{J_x(\xi)}.
%\end{array}  \end{equation}
%Here, $J_x$ is the Jacobian of the map $\Phi_x$, i.e.
%$\Phi_x^*|d\xi| = J_x(\xi) |d\xi|$. 
Also define
\begin{equation} \label{UXLAMBDA} U_x^{\pm} (\lambda) = e^{i \lambda T_x^{\pm}} U_x^{\pm}.  \end{equation}
%The loopset  integral is  a matrix element of this operator. 
The following observation is due to Safarov \cite{Saf} (see also \cite{SV}).

\begin{lem}

Suppose that $x$ is a self-focal point. If  $\hat{\rho} = 0$ in a
neighborhood of $t = 0$ then
\begin{equation} \label{SAFFORM} \rho_T' * N(\lambda, x) = \lambda^{n-1} \sum_{k \in
\Z \backslash 0} \int_{S^*_x M}  \hat{\rho}(\frac{k
T(\xi)}{T}) \overline{U_{x}(\lambda)^k \cdot 1} d \xi +
O(\lambda^{n-2}).
\end{equation}
\end{lem}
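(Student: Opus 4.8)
The plan is to start from the smoothed spectral expansion \eqref{rhoTFORM1} together with the chart-by-chart decomposition \eqref{Rj}, and to show that at a self-focal point $x$, almost all the mass in each $R_j$ is carried by the charts corresponding to iterates of the looping geodesic. First I would differentiate \eqref{rhoTFORM1} in $\lambda$ to pass from $\rho_T * dN(\lambda,x)$ to $\rho_T' * N(\lambda,x)$; the hypothesis $\hat\rho = 0$ near $t = 0$ is exactly what kills the Weyl term $a_0\lambda^{n-1} + a_1\lambda^{n-2} + \cdots$ (those come from the singularity of $U(t,x,x)$ at $t = 0$, which is now excised by the support condition on $\hat\rho$). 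So the only surviving contributions come from the ``return times'' $t = kT(\xi)$, $k \in \Z \setminus 0$, at which the geodesic starting at $(x,\xi)$ loops back to $x$.

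Next I would organize the oscillatory-integral analysis. Using the parametrix for $U(t) = e^{it\sqrt{\Delta}}$ (as reviewed in \S\ref{WAVEAPP}), write $U(t,x,x)$ near each return time as an oscillatory integral over $T_x^* M$, pass to polar coordinates $(r,\omega) \in \R_+ \times S_x^* M$, change variables $r \to \lambda r$, and apply stationary phase in the $(t,r)$ variables exactly as in \eqref{Rj} and as in Duistermaat--Guillemin \cite{DG} and Safarov--Vassiliev \cite{SV}. The stationary set in $(t,r)$ is $r = 1$, $t = $ (a return time), and what remains after stationary phase is an integral over $S_x^* M$ of an amplitude times $e^{i\lambda \tilde t(x,\xi)}$. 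The key point is that at a self-focal point, $\tilde t$ restricted to the $k$-th return branch is the constant $kT(\xi)$, and the transport/amplitude factor along the $k$-fold looping geodesic is precisely $U_x(\lambda)^k \cdot 1$, where $U_x$ is the Perron--Frobenius operator \eqref{UX} dressed with the phase $e^{i\lambda T_x}$ as in \eqref{UXLAMBDA}: the $\sqrt{J_x}$ in the definition of $U_x$ is the half-density factor coming from $\Phi_x^* |d\xi| = J_x |d\xi|$, and the $k$-th power accounts for $k$ passes. The cutoff $\hat\rho(kT(\xi)/T)$ simply records the value of $\hat\rho_T$ at the $k$-th return time. Summing over $k$ (a finite sum, since $\hat\rho$ has compact support and $T(\xi)$ is bounded below) gives the stated formula, with the $O(\lambda^{n-2})$ absorbing the subprincipal terms in the stationary-phase expansion and the contributions of all charts not associated to loops at $x$ (those have no stationary point and contribute $O(\lambda^{-\infty})$, or contribute only to the already-excised $t \approx 0$ part).

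The main obstacle I anticipate is bookkeeping the half-densities and the composition law for the wave group under iteration of the loop: one must check carefully that the symbol produced by composing the parametrix $k$ times along the looping geodesic is literally $\overline{U_x(\lambda)^k \cdot 1}$ and not merely something proportional to it, i.e. that the Maslov factors and the Jacobian factors assemble into the operator $U_x$ as defined in \eqref{UX}. This is a local computation in Fermi-type coordinates along each looping geodesic (the kind of Jacobi-field analysis alluded to after Lemma \ref{FINITE}), and the self-focal hypothesis $\exp_x T(x,\xi)\xi = x$ for all $\xi$ is what makes the phase $\tilde t$ exactly constant on each return branch so that the $S_x^* M$ integral does not collapse further by a second stationary phase. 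Once the symbol is identified, the remaining steps are the standard Tauberian/stationary-phase estimates, and the uniformity in $x$ (needed later) follows because all constants depend only on a neighborhood of $x$ and on $T$.
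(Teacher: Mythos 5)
Your plan is essentially the approach that Safarov \cite{Saf} and Safarov--Vassiliev \cite{SV} take, and it is consistent with the parametrix/stationary-phase setup the paper lays out in \eqref{rhoTFORM1} and \eqref{Rj} — the paper itself does not reprove this lemma but cites it out. One small but real slip in your first step: you do not \emph{differentiate} to pass from $\rho_T * dN(\lambda,x)$ to $\rho_T' * N(\lambda,x)$; these two expressions are already identically equal, because the derivative can be moved across the convolution: $\rho_T' * N = (\rho_T * N)' = \rho_T * N' = \rho_T * dN$. Differentiating \eqref{rhoTFORM1} as you describe would instead produce $\rho_T' * dN$, which is one derivative too many and gives an expansion of order $\lambda^n$ rather than $\lambda^{n-1}$. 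Once that is corrected, the rest of your outline is sound; the step you correctly flag as the real work — checking that the composed half-density, Jacobian, and Maslov factors along the $k$-fold loop assemble into exactly $\overline{U_x(\lambda)^k\cdot 1}$ with $U_x$ as in \eqref{UX}--\eqref{UXLAMBDA}, rather than something merely proportional to it — is indeed where the content lies, and the Fermi-coordinate/Jacobi-field computation you sketch is the right way to do it. You might also note explicitly that the $k<0$ terms arise from the analogous stationary points at negative return times $t=kT(\xi)$ and are complex conjugates of the $k>0$ terms (this is where the bar over $U_x(\lambda)^k\cdot 1$ ultimately comes from), and that since $T(\xi)$ is constant on $S_x^*M$ at a self-focal point, the phase $e^{i\lambda kT}$ factors out of the $\xi$-integral, which is why no further stationary phase in $\xi$ occurs and the leading power is $\lambda^{n-1}$.
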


%Here, $\lcal_x = \{\xi \in S^*_x M: \exp_x  \xi = x\}$ is the set of loops at $x$.   Recall that
%$$U_x f(\xi) = f(\Phi_x(\xi)) \sqrt{J_{\Phi}},$$ and that
% $U_x(\lambda) f = e^{i \lambda \tilde{t}(x, \xi)} U_x f. $

Here is the main result showing that $R(\lambda, x)$ is small at the self-focal points if there
do not exist invariant $L^2$ functions. $T_x^{(k)}(\xi)$ is the $k$th return time of $\xi$ for $\Phi_x$.

\begin{prop}\label{MAINPROP} Assume that $x$ is a self-focal point and that $U_x$ has no invariant $L^2$ function. Then,  for all $\eta
> 0$, there exists $T$ so that \begin{equation} \label{INEQ3}  \frac{1}{T} \left| \int_{S^*_x M} \sum_{k =
0}^{\infty} \hat{\rho} (\frac{T_x^{(k)}(\xi)}{T}) U_x^k \cdot 1 |d\xi|
\right| \leq \eta.   \end{equation}
\end{prop}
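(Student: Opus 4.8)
The statement we want is Proposition \ref{MAINPROP}: at a twisted self-focal point $x$ whose first-return operator $U_x$ on $L^2(S^*_x M, d\mu_x)$ has no invariant $L^2$ function, one has, for every $\eta > 0$, a choice of $T$ making
$$\frac{1}{T}\Bigl|\int_{S^*_x M} \sum_{k=0}^{\infty}\hat{\rho}\bigl(\tfrac{T_x^{(k)}(\xi)}{T}\bigr) U_x^k\cdot 1\; |d\xi|\Bigr| \le \eta.$$
The guiding principle is that $U_x$ is a unitary operator (it is the Perron--Frobenius/Koopman operator of the measure-class-preserving map $\Phi_x$, normalized by the Jacobian factor $\sqrt{J_x}$ to be unitary on $L^2$), so this is really a statement about Cesàro-type averages of a unitary operator having no fixed vector. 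The first step is to rewrite the displayed quantity as (essentially) a pairing $\langle A_T(U_x)\,\mathbf 1,\mathbf 1\rangle_{L^2(S^*_x M)}$ where $A_T$ is an averaging operator built from the cutoff $\hat\rho$ and the return-time function. Because $\hat\rho$ is compactly supported and $T_x^{(k)}(\xi)\asymp k\,T(x)$ (the return time is bounded below on the compact sphere $S^*_x M$, since $x$ is self-focal so the first return time $T(x)$ is a positive constant), the sum over $k$ is effectively a sum over $0\le k \lesssim T/T(x)$; thus $A_T(U_x)$ is, up to the normalization $1/T$, an average of roughly $N\sim T/T(x)$ powers of $U_x$ — a genuine ergodic average.

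**Key steps, in order.** First I would make precise the reduction just described: using the Hadamard/parametrix form \eqref{Rj} specialized at a self-focal point (the Safarov lemma \eqref{SAFFORM} already stated), identify $\int_{S^*_x M}\sum_k \hat\rho(T_x^{(k)}(\xi)/T)\,\overline{U_x(\lambda)^k\cdot 1}\,d\xi$ with $\langle A_T\,\mathbf 1,\mathbf 1\rangle$ for an explicit $A_T = \sum_k \hat\rho(T_x^{(k)}(\cdot)/T)\,U_x^k$; the $\lambda$-dependent phases $e^{i\lambda T_x^{\pm}}$ in $U_x^\pm(\lambda)$ only help (they cause further oscillation) and for the upper bound it suffices to bound with them stripped off, or to treat them as an additional unitary twist that does not affect the no-invariant-vector hypothesis. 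Second, I would invoke the mean ergodic theorem (von Neumann) for the unitary $U_x$: the orthogonal projection $P$ onto $\ker(U_x - I)$ is, by hypothesis, zero, hence $\frac1N\sum_{k=0}^{N-1}U_x^k \to 0$ strongly on $L^2(S^*_x M)$. Third — and this is the technical heart — I would upgrade the plain Cesàro average to the weighted average $A_T/T$ with weights $w_k(\xi) = \hat\rho(T_x^{(k)}(\xi)/T)$: since $0\le k\le CT$ in the sum, $\|w_k\|_\infty \le \|\hat\rho\|_\infty$, and $1/T \sim C/N$, one writes $A_T/T$ as a convex-type combination (an Abel/Cesàro summation by parts in $k$) of the partial averages $S_N U_x := \frac1N\sum_{k<N}U_x^k$, controlling $\|A_T\,\mathbf 1\|/T$ by $\sup_{N\ge N_0}\|S_N U_x\,\mathbf 1\|$ plus an $O(N_0/T)$ head term. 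Choosing first $N_0$ large so the sup is $<\eta/2$ (mean ergodic theorem), then $T$ large so the head is $<\eta/2$, gives the claim; the pairing $\langle A_T\mathbf 1,\mathbf 1\rangle$ is then $\le \|A_T\mathbf 1\|\cdot\|\mathbf 1\| \le \eta\cdot\mu_x(S^*_xM)^{1/2}\cdot\|\mathbf 1\|$, and one absorbs the fixed constant into $\eta$.

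**The main obstacle.** The genuinely delicate point is the summation-by-parts step turning the $\xi$-dependent, $\hat\rho$-weighted sum into controlled Cesàro averages, because the return time $T_x^{(k)}(\xi)$ depends on $\xi$ through the twisted dynamics and need not be linear in $k$; one must use that $T_x^{(k)}(\xi) = \sum_{j=0}^{k-1} T(x)\circ(\text{consecutive returns})$ is comparable to $kT(x)$ uniformly, and that the level sets $\{T_x^{(k)}/T \in \operatorname{supp}\hat\rho\}$ organize the sum into blocks on which the weight is essentially flat — here the hypothesis that $\Phi_x$ is \emph{twisted} (so $\Phi_x \ne \mathrm{Id}$) is what excludes the alternative — a smooth invariant density, i.e. an invariant $L^2$ function — and keeps the ergodic average genuinely decaying. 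A secondary technical nuisance, which I would handle by a routine Egorov/stationary-phase argument already present in \cite{Saf,SV}, is making the identification of \eqref{Rj} with the operator pairing uniform in $x$ near the self-focal point, but that uniformity is not needed for the single-point Proposition \ref{MAINPROP} itself and can be deferred to the proof of Theorem \ref{TL}. The upshot is that Proposition \ref{MAINPROP} is, after the parametrix bookkeeping, exactly the mean ergodic theorem for $U_x$ applied with weights, and no-invariant-$L^2$-function is precisely the hypothesis that makes the weighted average small.
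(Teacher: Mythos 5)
Your argument is the same as the paper's: Proposition \ref{MAINPROP} is exactly the von Neumann mean ergodic theorem applied to the unitary $U_x$, with the $\hat\rho$ cutoff converted to Cesàro averages by Abel summation, and the hypothesis that $U_x$ has no invariant $L^2$ function enters only to make the limiting projection $P_x$ vanish. The one thing to correct is the ``main obstacle'' you flag: since $x$ is a \emph{self-focal} point, the first return time $T(x,\xi)$ is constant on $S^*_x M$ (the paper observes that an analytic function is constant on its critical set), so $T_x^{(k)}(\xi) = k\,T(x)$ is independent of $\xi$ and exactly linear in $k$; consequently the weight $\hat\rho(T_x^{(k)}(\xi)/T) = \hat\rho(kT(x)/T)$ pulls out of the $d\xi$-integral, the sum has $O(T/T(x))$ uniformly cut-off terms, and the summation-by-parts step you worried about is entirely routine, so the paper's one-line justification is fully accurate.
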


This is a simple application of the von Neumann mean ergodic theorem to the unitary operator $U_x$.
Indeed, $\frac{1}{N} \sum_{k = 0}^N U_x ^k \to P_x,$ where $P_x: L^2(S^*_x M) \to L^2_0(S^*_x M)$ is
the orthogonal projections onto the invariant $L^2$ functions for $U_x$. By our assumption, $P_x = 0$.

Proposition \ref{MAINPROP} is not apriori uniform as $x$ varies over self-focal points, since there is no
obvious relation between $\Phi_x$ at one self-focal point and another. It would of course be uniform
if we knew that there only exist a finite number of self-focal points. As mentioned above, this is
currently unknown. However, there is a second mechanism
behind Proposition \ref{MAINPROP}. Namely, if the first common return time $T_x^{(1)}(\xi)$ is larger than $T$,
then there is only one term $k = 0$  in the sum with the cutoff $\hat{\rho}_T$ and the sum is $O(\frac{1}{T})$. 
%Thus for a given $\eta,$ we may first choose $T$ so that $\frac{1}{T}  \leq \eta$, and then we only need to
%consider the finite number of  self-focal points with $T^{(1)}_x \leq T. $

\subsection{Decomposition of the remainder into almost loop directions and far from loop directions}

We now consider non-self-focal points. Then the function $\tilde{t}_j(x,\xi)$ has almost no critical
points in $S^*_x M$.

 Pick  
  $f \geq 0 \in C_0^{\infty}(\R)$ which equals $1$
on $|s| \leq 1$ and zero for $|s| \geq 2$ and split up the $j$th term      into two terms using   $f(\epsilon^{-2} |\nabla_{\xi} \tilde{t}_j|^2) $ and
$1 - f(\epsilon^{-2} |\nabla_{\xi} \tilde{t}_j|^2) $:
\begin{equation} \label{DECOMPRj}  R_j(\lambda, x, T)  = R_{j1}(\lambda, x, T, \epsilon) + R_{j2}(\lambda, x, T, \epsilon), \end{equation}
where
\begin{equation} \label{Rj1}  \begin{array}{l} R_{j1}(\lambda, x, T, \epsilon) : =  \int_{S^*_x
M} e^{i \lambda \tilde{t}_j } f(\epsilon^{-1} |\nabla_{\xi}
\tilde{t}_j(x, \xi)|^2)
(\hat{\rho} (T_x(\xi))) a_{0}(T_x(\xi), x,  \xi)  d \xi
\end{array} \end{equation}
The second term  $R_{j2}$ comes from the  $1 -
f(\epsilon^{-2} |\nabla_{\xi} T_x(\xi)|^2) $ term. By one integration by parts, one easily has

\begin{lem} \label{LOG1}   For all $T>0$ and  $\epsilon \geq \lambda^{-\half} \log \lambda$ we have
$$\sup_{x \in M} |R_{2}(\lambda, x, T,\epsilon)| \leq C (\epsilon^{2} \lambda)^{-1} .$$
 \end{lem}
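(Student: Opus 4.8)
The plan is to prove Lemma~\ref{LOG1} by non-stationary phase applied to the $\xi$-integral defining $R_{j2}$ in \eqref{DECOMPRj}. Write $\chi_{j,\epsilon}(x,\xi) = 1 - f(\epsilon^{-2}|\nabla_\xi \tilde t_j(x,\xi)|^2)$, so that
$$R_{j2}(\lambda, x, T, \epsilon) = \int_{S^*_x M} e^{i\lambda \tilde t_j(x,\xi)}\, \chi_{j,\epsilon}(x,\xi)\, \hat\rho_T\!\big(T_x(\xi)\big)\, a_{j0}\!\big(T_x(\xi), x, \xi\big)\, |d\xi|.$$
On $\operatorname{supp}\chi_{j,\epsilon}$ one has $|\nabla_\xi \tilde t_j| \ge \epsilon$, so the phase has no stationary points there and, with respect to the round metric on the sphere $S^*_x M$, its gradient is bounded below by $\epsilon$. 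First I would introduce the transport operator $L_j = \tfrac{1}{i\lambda}\, V\cdot \nabla_\xi$, where $V := \nabla_\xi \tilde t_j/|\nabla_\xi \tilde t_j|^2$; it satisfies $L_j(e^{i\lambda \tilde t_j}) = e^{i\lambda \tilde t_j}$, and the field $\chi_{j,\epsilon} V$ extends smoothly by zero across the locus where $|\nabla_\xi\tilde t_j|$ degenerates. Integrating by parts once over the closed manifold $S^*_x M$,
$$R_{j2} = \int_{S^*_x M} e^{i\lambda \tilde t_j}\; {}^tL_j\big(\chi_{j,\epsilon}\, \hat\rho_T\, a_{j0}\big)\, |d\xi|, \qquad {}^tL_j g = -\tfrac{1}{i\lambda}\big( V\cdot\nabla_\xi g + g\,\operatorname{div}_\xi V\big).$$

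Second, I would estimate ${}^tL_j$ of the amplitude pointwise. Every term carries the gain $\lambda^{-1}$, and the accompanying coefficient is $O(\epsilon^{-2})$ on $\operatorname{supp}\chi_{j,\epsilon}$: there $|\nabla_\xi\tilde t_j|\ge\epsilon$ forces $|V|\le\epsilon^{-1}$ and $|\operatorname{div}_\xi V|\le C\epsilon^{-2}$, while $\nabla_\xi\chi_{j,\epsilon} = -2\epsilon^{-2}f'(\epsilon^{-2}|\nabla_\xi\tilde t_j|^2)(\nabla^2_\xi\tilde t_j)\nabla_\xi\tilde t_j$ is supported where $|\nabla_\xi\tilde t_j|\asymp\epsilon$ and is $O(\epsilon^{-1})$, so $|V\cdot\nabla_\xi\chi_{j,\epsilon}|\le C\epsilon^{-2}$; derivatives falling on $\hat\rho_T a_{j0}$ contribute only bounded factors (depending on $T$). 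Hence $|{}^tL_j(\chi_{j,\epsilon}\hat\rho_T a_{j0})|\le C_j(\lambda\epsilon^2)^{-1}$ pointwise, and integrating over the fixed-volume sphere $S^*_x M$ gives $|R_{j2}| \le C_j(\lambda\epsilon^2)^{-1}$. Because $\hat\rho_T$ is compactly supported, only finitely many indices $j$ contribute (those whose branch of the geodesic flow has return time inside $\operatorname{supp}\hat\rho_T$), and summing over them yields $\sup_{x\in M}|R_2(\lambda, x, T,\epsilon)| \le C(\lambda\epsilon^2)^{-1}$. The hypothesis $\epsilon \ge \lambda^{-1/2}\log\lambda$ enters only to make the right-hand side $o(1)$; in fact iterating the integration by parts $N$ times improves the bound to $O_N((\lambda\epsilon^2)^{-N})$, so a single step already suffices for the stated estimate.

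The main obstacle is the uniformity in $x$ of the constants $C_j$. These are controlled by a fixed number of $\xi$-derivatives of the phases $\tilde t_j$ and amplitudes $a_{j0}$ appearing in the parametrix for $U(t)=e^{it\sqrt\Delta}$ on a compact time interval $|t|\le T'$, together with the lower bound $|\nabla_\xi\tilde t_j|\ge\epsilon$ on $\operatorname{supp}\chi_{j,\epsilon}$, which is automatic. The delicate point is that near conjugate/caustic directions the elementary geometric-optics form of $U(t,x,x)$ breaks down, so one must appeal to the Duistermaat--Guillemin and Safarov--Vassiliev parametrix already underlying \eqref{rhoTFORM1}--\eqref{Rj}, whose amplitudes $a_{j0}$, after the stationary-phase reduction in $(t,r)$ to $S^*_x M$, are smooth with all derivatives bounded uniformly in $(x,\xi)$ for $|t|\le T'$. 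Granting that input, the integration-by-parts argument above is routine.
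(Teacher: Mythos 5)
Your proof is correct and takes essentially the same approach as the paper, which states the lemma with the one-line remark ``By one integration by parts, one easily has...''; you carry out precisely that integration by parts via the transport operator $L_j=\tfrac{1}{i\lambda}V\cdot\nabla_\xi$ with $V=\nabla_\xi\tilde t_j/|\nabla_\xi\tilde t_j|^2$, and the bookkeeping ($|V|\le\epsilon^{-1}$, $|\operatorname{div}_\xi V|\le C\epsilon^{-2}$, $|V\cdot\nabla_\xi\chi_{j,\epsilon}|\le C\epsilon^{-2}$, finitely many $j$ for fixed $T$) is exactly what the paper is eliding. The only cosmetic point is that the closing remark about iterating to get $O_N((\lambda\epsilon^2)^{-N})$ is not needed for the stated bound and is left unjustified, but this does not affect the validity of the proof.
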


The $f$ term
involves the contribution of the almost-critical points of $\tilde{t}_j$.  They are estimated
by the measure of the almost-critical set.

\begin{lem}\label{AC}  There exists a  uniform positive constant  $C$ so that for all $(x, \epsilon)$,
\begin{equation} |R_{j1}(x; \epsilon) | \leq C   \mu_x\left(
\{\xi: 0 < |\nabla_{\xi} \tilde{t}_j(x, \xi)|^2 < \epsilon^2\}
\right), \end{equation}
%and
%\begin{equation} |R_{j1}|( x; \epsilon)| \leq C \mu_x\left(
%\{\xi: 0  < |\nabla_{\xi} \tilde{t}_j(x, \xi)|^2 < \epsilon\}
%\right) + \int_{\lcal_x} \sum_{k =
%1}^{\infty} \hat{\rho} (\frac{T_x^{(k)}(\xi)}{T}) U_x^k \cdot 1
%|d\xi|. 
%\end{equation} 
\end{lem}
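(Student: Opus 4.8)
The plan is to prove Lemma \ref{AC} as a soft ``support $\times$ sup-norm'' estimate for the oscillatory integral $R_{j1}(\lambda,x,T,\epsilon)$, with no stationary phase involved. By construction (see \eqref{Rj1} and \eqref{DECOMPRj}) the integrand of $R_{j1}$ carries the cutoff factor $f(\epsilon^{-2}|\nabla_\xi\tilde{t}_j(x,\xi)|^2)$, and since $f$ is supported in $\{|s|\le 2\}$ this factor vanishes identically unless $|\nabla_\xi\tilde{t}_j(x,\xi)|^2\le 2\epsilon^2$. Hence the domain of integration may be replaced by the sublevel set $E_{j,\epsilon}(x):=\{\xi\in S^*_xM:\ |\nabla_\xi\tilde{t}_j(x,\xi)|^2\le 2\epsilon^2\}$, and, using $|e^{i\lambda\tilde{t}_j}|=1$,
\[
|R_{j1}(\lambda,x,T,\epsilon)|\ \le\ \int_{E_{j,\epsilon}(x)}\ \bigl| f(\epsilon^{-2}|\nabla_\xi\tilde{t}_j|^2)\,\hat{\rho}(T_x(\xi))\, a_{j0}(T_x(\xi),x,\xi)\bigr|\ |d\xi|.
\]

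The second step is to bound the amplitude pointwise, uniformly in $x,\xi,\lambda$ and $j$. One has $0\le f\le1$, and $\hat{\rho}$ (equivalently $\hat{\rho}_T$) is a fixed compactly supported smooth function, hence bounded. The only substantive point is that the principal amplitude $a_{j0}$ coming from the Hadamard--H\"ormander parametrix for $U(t)=e^{it\sqrt{\Delta}}$ --- after the reduction to polar coordinates and stationary phase in $(t,r)$ as in \cite{DG,Saf,SV} --- is a smooth symbol of order $0$, bounded with bounded derivatives uniformly on the compact manifold $S^*M$. This is because for $|t|$ below the fixed size of $\operatorname{supp}\hat{\rho}_T$ the canonical relation of $U(t)$ (the graph of the geodesic flow) is covered by a finite atlas of charts in each of which the parametrix amplitude is a smooth symbol; conjugate points merely force passage to a new chart, within which $a_{j0}$ remains smooth. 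Compactness of $S^*M$ and of the $t$-interval then yields a single constant $C=C(M,g,T)$ with $\sup_{x,\xi,\lambda,j}|\hat{\rho}(T_x(\xi))\,a_{j0}(T_x(\xi),x,\xi)|\le C$. Combining with Step 1,
\[
|R_{j1}(\lambda,x,T,\epsilon)|\ \le\ C\,\mu_x\!\bigl(\{\xi\in S^*_xM:\ |\nabla_\xi\tilde{t}_j(x,\xi)|^2\le 2\epsilon^2\}\bigr).
\]

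Finally I would reconcile this with the precise form of the statement. The harmless factor $2$ is removed by either taking $f\equiv1$ on $[-1/2,1/2]$ with support in $[-1,1]$, or by rescaling $\epsilon\mapsto\epsilon/\sqrt2$ and absorbing the change into $C$ (recall $\epsilon$ is a free parameter in \eqref{DECOMPRj}, only later coupled to $\lambda$ via Lemma \ref{LOG1}). Replacing $\{|\nabla_\xi\tilde{t}_j|^2\le\epsilon^2\}$ by $\{0<|\nabla_\xi\tilde{t}_j|^2<\epsilon^2\}$ is legitimate in the way the lemma is used: it is invoked at points $x$ that are not self-focal, so $\mathcal{L}_x$ --- and hence each exact critical set $\{\nabla_\xi\tilde{t}_j=0\}$ --- has $\mu_x$-measure zero (the self-focal points being treated separately by the mean ergodic argument of Proposition \ref{MAINPROP}), whence the two sublevel sets agree up to a null set. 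I expect no genuine obstacle in the estimate itself; the one place demanding care is the uniform-in-$x$ control of $a_{j0}$ together with the bookkeeping of the finite chart decomposition, which is standard but must be spelled out so that the constant $C$ is manifestly independent of $x$ and $\epsilon$.
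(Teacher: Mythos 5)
Your argument is correct and is essentially what the paper has in mind: the paper gives no separate proof of this lemma, only the one-line gloss that the almost-critical contribution ``is estimated by the measure of the almost-critical set,'' and your support-times-sup-norm estimate (kill the oscillation with $|e^{i\lambda\tilde{t}_j}|=1$, restrict to $\operatorname{supp} f$, and bound $f\cdot\hat\rho\cdot a_{j0}$ uniformly using compactness of $S^*M$ and the finite chart cover) is the natural elaboration of that remark. Your closing observation is also the right one --- the strict inequality $0<|\nabla_\xi\tilde{t}_j|^2$ in the statement cannot hold literally at a self-focal point (there the right side can vanish while the left side does not), and since in the real-analytic setting $|\mathcal{L}_x|$ is either zero or full, the lemma is in practice only invoked where the exact critical set $\{\nabla_\xi\tilde{t}_j=0\}$ is $\mu_x$-null, so the open and closed sublevel sets agree up to a null set and the apparent discrepancy is harmless.
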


\subsection{Points in $M\backslash \overline{\tcal \lcal}$}

If $x$ is isolated from $\overline{\tcal \lcal}$ then there is a uniform bound on the size of the remainder near $x$.

\begin{lem} Suppose that $x \notin \overline{\tcal \lcal}$. Then given $\eta > 0$ there  exists a
ball $B(x, r(x, \eta))$ with radius $r(x, \eta) > 0$ and $\epsilon > 0$ so that
$$\sup_{y \in B(x, r(x, \eta))} |R(\lambda, y, \epsilon) | \leq \eta. $$
\end{lem}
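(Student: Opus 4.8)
The statement to prove is: if $x \notin \overline{\tcal\lcal}$, then for every $\eta > 0$ there exist a radius $r(x,\eta) > 0$ and an $\epsilon > 0$ such that $\sup_{y \in B(x,r(x,\eta))} |R(\lambda, y, \epsilon)| \le \eta$ (for $\lambda$ large, with the implicit $T = T(\eta)$ fixed first). First I would unwind the definitions: $R(\lambda, y)$ is controlled by $\rho_T * dN(\lambda, y)$ via a Tauberian argument together with the fact that $\hat\rho_T$ can be split into its piece near $t = 0$ (which gives the smooth Weyl term and its derivative, i.e. the $a_0, a_1$ terms, uniformly in $y$) and its piece away from $t = 0$ (which is exactly the oscillatory sum $\lambda^{n-1}\sum_j R_j(\lambda, y, T)$ from \eqref{rhoTFORM1}). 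So the task reduces to: make $\sum_j R_j(\lambda, y, T)$ small, uniformly for $y$ in a small ball around $x$, by first choosing $T$, then $\epsilon$, then shrinking the ball.

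**Key steps.** Step 1: Fix $T$. For the finitely many charts $j$ with $\tilde{t}_j$ bounded by $T$ on $S^*_xM$, use the decomposition \eqref{DECOMPRj}, $R_j = R_{j1} + R_{j2}$. By Lemma \ref{LOG1}, $R_{j2}(\lambda, y, T, \epsilon) = O((\epsilon^2\lambda)^{-1})$ uniformly in $y$, so once $\epsilon \ge \lambda^{-1/2}\log\lambda$ this term is negligible as $\lambda \to \infty$; it imposes no constraint on the ball. Step 2: Control $R_{j1}$ near $x$. By Lemma \ref{AC}, $|R_{j1}(y;\epsilon)| \le C\,\mu_y(\{\xi : 0 < |\nabla_\xi \tilde{t}_j(y,\xi)|^2 < \epsilon^2\})$. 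The crucial point is that $x \notin \overline{\tcal\lcal}$, and I claim this forces the loop set $\lcal_x = \{\xi \in S^*_xM : \exp_x T(x,\xi)\xi = x\}$ to have measure zero — indeed, since $\ccal\lcal = \emptyset$ (reduction made in the proof of Theorem \ref{TL}) and $x$ is not in the closure of the twisted focal set, $x$ is not a self-focal point at all and in fact only a measure-zero (even lower-dimensional, by real-analyticity) set of directions can loop. So $\{\xi : \nabla_\xi \tilde{t}_j(x,\xi) = 0\}$ has measure zero in $S^*_xM$. Step 3: Upgrade to a uniform bound on a ball. Since $\tilde{t}_j$ is real-analytic (or at least $C^\infty$ with real-analytic critical structure) in $(y,\xi)$, and the critical set at $y = x$ has measure zero, a continuity/compactness argument shows that $\mu_y(\{\xi : |\nabla_\xi \tilde{t}_j(y,\xi)|^2 < \epsilon^2\}) \to 0$ as $\epsilon \to 0$, uniformly for $y$ in a sufficiently small ball $B(x, r)$ around $x$ — this is where one uses that the family of functions $\{|\nabla_\xi \tilde{t}_j(y,\cdot)|^2\}_y$ has critical sets that depend continuously (in measure) on $y$, which in turn relies on the real-analytic structure of the loop set near $x$ (cf. the argument behind Lemma \ref{FINITE}). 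Choose first $T = T(\eta)$ so the sum over the finitely many relevant $j$ contributes at most $\eta/3$ from the "honest-critical" part, then $\epsilon = \epsilon(\eta, T)$ small enough that the $R_{j1}$ sum is $\le \eta/3$, then $r = r(x,\eta)$ small enough that this persists on $B(x,r)$; the $R_{j2}$ terms and the tail of charts with $\tilde{t}_j > T$ (handled by the $\hat\rho_T$ cutoff) contribute the remaining $\eta/3$ for $\lambda$ large.

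**Main obstacle.** The delicate point is Step 3: transferring the measure-zero-critical-set fact at the single point $x$ to a uniform-in-$y$ smallness statement on a whole ball. A priori, as $y$ moves slightly off $x$, the near-critical set of $\tilde{t}_j(y,\cdot)$ could in principle fail to shrink uniformly (imagine critical sets of positive measure appearing for $y$ arbitrarily close to $x$ — but that would say $y \in \overline{\tcal\lcal \cup \ccal\lcal}$, contradicting the hypothesis once we know $\ccal\lcal=\emptyset$). So the real content is: (a) show that $y \mapsto \mu_y(\{|\nabla_\xi\tilde{t}_j(y,\cdot)|^2 \le \epsilon^2\})$ is upper semicontinuous and tends to $\mu_x(\{\nabla_\xi\tilde{t}_j(x,\cdot) = 0\}) = 0$, using that the hypothesis $x \notin \overline{\tcal\lcal}$ (together with $\ccal\lcal=\emptyset$) excludes a sequence $y_k \to x$ with persistently large loop sets; and (b) handle the fact that infinitely many charts $j$ occur, by using the support of $\hat\rho_T$ to truncate to finitely many $j$ with $\tilde{t}_j \lesssim T$ — standard, but needs the no-conjugate-point-free uniformity of the parametrix. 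I expect (a) to be the genuine technical heart, and it is precisely the place where real-analyticity of $(M,g)$ is used, exactly as in the proof of Lemma \ref{FINITE}.
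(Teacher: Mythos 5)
Your identification of the technical heart---getting the smallness of $\mu_y\bigl(\{\xi : 0 < |\nabla_\xi\tilde{t}_j(y,\xi)|^2 < \epsilon^2\}\bigr)$ uniformly in $y$ over a ball---is exactly right, and your observation that the hypothesis $x \notin \overline{\tcal\lcal}$ (together with $\ccal\lcal = \emptyset$) rules out nearby points with positive-measure loop sets is the correct ingredient. However, your order of quantifiers is reversed relative to the paper's, and the reversal creates an avoidable difficulty. You propose to fix $\epsilon = \epsilon(\eta, T)$ first so that $F_\epsilon(x) \le \eta/3$, and then shrink $r$ so that ``this persists on $B(x,r)$.'' For this to work at a fixed $\epsilon$ you would need $y \mapsto F_\epsilon(y)$ to be \emph{upper} semicontinuous at $x$. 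But the set $\{(y,\xi) : 0 < |\nabla_\xi\tilde{t}_j(y,\xi)|^2 < \epsilon^2\}$ is open, so the slice measure $F_\epsilon(y)$ is \emph{lower} semicontinuous in $y$---the wrong direction---and establishing upper semicontinuity requires extra work (passing to the closed version, separating off the trivial $j=0$ chart, etc.), which is precisely why you flag it as ``the genuine technical heart.''

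The paper sidesteps this entirely by choosing the ball first and $\epsilon$ last. Since $\overline{\tcal\lcal}$ is closed and $x$ lies in its open complement, one may fix $r(x,\eta)$ so that the \emph{closed} ball $\overline{B(x,r(x,\eta))}$ is disjoint from $\overline{\tcal\lcal}$. Then \emph{every} $y$ in this compact set has $|\lcal_y| = 0$ (using $\ccal\lcal = \emptyset$ and real analyticity), so the family $\epsilon \mapsto F_\epsilon(y)$ decreases monotonically to $0$ as $\epsilon \downarrow 0$ for each $y$ in the closed ball. Dini's theorem---monotone decrease to a continuous (here, identically zero) limit on a compact set---then gives the uniform convergence $\sup_{y \in \overline{B(x,r)}} F_\epsilon(y) \to 0$, after which one picks $\epsilon$ small. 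This replaces your semicontinuity-in-$y$ step (a) with a one-line appeal to Dini, at the cost only of needing to place the closed ball inside $M \setminus \overline{\tcal\lcal}$, which your hypothesis supplies for free. Your argument can probably be pushed through, but the paper's quantifier order is the more economical one and I'd encourage you to adopt it.
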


Indeed, we pick $r(x, \eta)$ so that the closure of $B(x, r(x, \eta))$   is disjoint from $\overline{\tcal \lcal}$. Then
the one-parameter family of functions   $F_{\epsilon}(y) =  \to  \mu_y\left(
\{\xi \in S^*_y M: 0 < |\nabla_{\xi} \tilde{t}_j(y, \xi)|^2 < \epsilon^2\}
\right)$ is decreasing to zero as $\epsilon \to 0$ for each $y$. By Dini's theorem, the family tends to zero
uniformly on $\overline{B(x, r(x, \eta))}$.

\subsection{Perturbation theory of the remainder}

So far, we have good remainder estimates at each self-focal point and in balls around points isolated
from the self-focal points. We still need to deal with the uniformity issues as $p$ varies among self-focal
points and  points in  $\overline{\tcal \lcal}$.

We now compare remainders at nearby points. Although $R_j(\lambda, x, T)$ is oscillatory, the estimates
on $R_{j 1}$ and the ergodic estimates do not use the oscillatory factor $e^{i \lambda \tilde{t}}$, which 
in fact is only used in Lemma \ref{LOG}. Hence we compare absolute remainders $|R|(x, T)|$, i.e. where we
take the absolute under the integral sign. They are independent of $\lambda$. The integrands of the remainders
vary smoothly with the base point and only involve integrations over different fibers $S^*_x M$ of 
$S^* M \to M$. 

\begin{lem} We have,
$$| |R|(x,  T) - |R|(y, T) |\leq C e^{a T}  \mbox{dist}(x,y). $$

\end{lem}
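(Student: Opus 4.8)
The plan is to establish the Lipschitz-in-base-point estimate
$$\bigl|\, |R|(x,T) - |R|(y,T)\, \bigr| \le C\, e^{aT}\,\mathrm{dist}(x,y)$$
for the \emph{absolute} remainders, by writing each side as an explicit fiber integral over $S^*_xM$ (resp. $S^*_yM$) of a smooth integrand built from the Hadamard/Lax parametrix for $U(t) = e^{it\sqrt{\Delta}}$, and then comparing the two fiber integrals by transporting them to a common fiber. The starting point is the expansion \eqref{rhoTFORM1}--\eqref{Rj}: after stationary phase in $(t,r)$, the $j$th piece of $\rho_T * dN(\lambda,x)$ is, up to lower order, $\lambda^{n-1}\int_{S^*_xM} e^{i\lambda \tilde t_j(x,\xi)}\,\bigl(\hat\rho_T a_{j0}\bigr)(T_x(\xi),x,\xi)\,|d\xi|$. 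The \emph{absolute} remainder $|R|(x,T)$ is obtained by deleting the oscillatory factor $e^{i\lambda\tilde t_j}$ and integrating the modulus of the amplitude; this is independent of $\lambda$, as noted just before the statement, and that is precisely why a clean Lipschitz bound is available (there is no $\lambda$-dependent cancellation to track).

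First I would fix $T$ and work on the compact ball bundle $B^*_T M = \{(x,\xi): |\xi|\le T\}$, on which the geodesic flow and all the objects entering the parametrix ($\tilde t_j$, the return times $T_x(\xi)$ truncated by $\hat\rho_T$, the transport amplitudes $a_{j0}$, and their $\xi$-derivatives) are smooth with bounds controlled by the geometry. The number of charts $j$ needed to parametrize the graph of the geodesic flow over time $\le T$ is finite (again by compactness of $B^*_TM$), so it suffices to bound the difference of a single $|R_j|(x,T)$ and $|R_j|(y,T)$ and sum. For a single chart, choose a smooth local trivialization of $S^*M\to M$ near the geodesic segment joining $x$ to $y$ (or more invariantly, use parallel transport along the minimizing geodesic to identify $S^*_xM \cong S^*_yM$); under this identification the two integrals are over the \emph{same} domain and the difference of integrands is estimated by the supremum of the base-point derivative of the integrand times $\mathrm{dist}(x,y)$. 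Thus I would show
$$\sup_{(x,\xi)\in B^*_TM}\ \bigl|\nabla_x\bigl[(\hat\rho_T\, a_{j0})(T_x(\xi),x,\xi)\bigr]\bigr| \le C\, e^{aT},$$
which is where the exponential factor enters.

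The main obstacle — and the only place the $e^{aT}$ is genuinely needed rather than a crude $T$-power — is controlling how fast the parametrix data vary with the base point over a time window of length $T$. The relevant quantities (the return-time function $T_x(\xi)$, the geodesic-flow map and hence $\tilde t_j$, the Jacobian factors and transport coefficients in $a_{j0}$) are solutions of the geodesic and Jacobi equations, whose derivatives with respect to initial conditions grow at most exponentially in $t$ with rate governed by a bound on the curvature and its derivatives; propagating these through $t$ up to $T$ yields constants of size $e^{aT}$ for a suitable $a = a(M,g)$. Concretely I would: (i) bound $|\nabla_x \exp_x t\xi|$ and its needed higher analogues by $C e^{aT}$ via Gronwall applied to the Jacobi equation along each geodesic of length $\le T$; (ii) deduce the same bound for $\nabla_x \tilde t_j$ and for $\nabla_x$ of the amplitude $a_{j0}$, which is a smooth function of the $2$-jet of the flow; (iii) note $\hat\rho_T(s) = \hat\rho(s/T)$ contributes an extra harmless factor $T^{-1}\le C$; (iv) combine via the mean value theorem along the minimizing geodesic from $x$ to $y$ (valid once $\mathrm{dist}(x,y)$ is below the injectivity radius; for larger distances the bound is trivial after adjusting $C$) and sum over the finitely many charts $j$. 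This yields the claimed inequality with $C$ and $a$ depending only on $(M,g)$ and, crucially, not on $\lambda$.
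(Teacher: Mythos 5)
Your proof is correct and follows essentially the same route as the paper's. The paper's argument is exactly this: write the difference as the integral of the base-point derivative along the minimizing geodesic (your mean value theorem step after the fiber identification), note that this derivative involves the variation of the geodesic flow (equivalently, the iterates $\Phi^n_x$ of the return map) up to time $T$, and bound it by the sup norm $e^{aT}$ of the first derivative of $G^t$ for $|t|\le T$; you supply the supporting details — the parametrix/chart decomposition, parallel transport to identify nearby fibers, Gronwall via the Jacobi equation, finiteness of the chart cover over $B^*_TM$ — that the paper leaves implicit.
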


Indeed, we write the difference as the integral of its derivative. The derivative involves the change in
$\Phi^n_x$ as $x$ varies over iterates up to time $T$ and therefore is estimated by the sup norm $e^{a T}$ of
the first derivative of the geodesic flow up to time $T$. If we choose a ball of radius $\delta e^{- a T}$ around a focal point, we obtain'
\begin{cor} For any $\eta >0$, $T > 0$  and any focal point $p \in \tcal \lcal$ there exists $r(p, \eta)$ so that
$$\sup_{y \in \overline{B(p, r(p, \eta))} }|R(\lambda, y, T)| \leq \eta. $$
\end{cor}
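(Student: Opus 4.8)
The plan is to derive the Corollary from two ingredients already assembled in this section: (a) the smallness of the smoothed remainder \emph{at} the focal point $p$ itself, coming from the von Neumann mean ergodic theorem, and (b) the quantitative Lipschitz-type dependence of the remainder on the base point, which lets one propagate (a) to a whole ball. Fix $\eta>0$ and $p\in\tcal\lcal$, so that $U_p$ (see \eqref{UX}, \eqref{UXLAMBDA}) has no invariant $L^2$ function — this is the standing assumption in the contrapositive proof of Theorem~\ref{TL}, i.e. the hypothesis of Theorem~\ref{REM}. Taking $\hat\rho$ vanishing near $t=0$, the Safarov reduction \eqref{SAFFORM} writes $\rho_T'\!*N(\lambda,p)=\lambda^{n-1}\sum_{k\neq 0}\int_{S^*_pM}\hat\rho(kT(\xi)/T)\,\overline{U_p(\lambda)^k\cdot 1}\,d\xi+O(\lambda^{n-2})$; since the looping time is constant on $\pi^{-1}(p)$, the residual $\lambda$-dependence sits only in the unimodular scalars $e^{i\lambda kT_p}$, so the bound I extract from this expression will be uniform in $\lambda$. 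Because $U_p$ is unitary with trivial invariant subspace, $\frac1N\sum_{k=0}^{N-1}U_p^k\to P_p=0$ strongly on $L^2(S^*_pM)$; feeding this into the above exactly as in Proposition~\ref{MAINPROP} (together with the second mechanism noted there: if the first return time exceeds $T$ only the $k=0$ term survives) produces a threshold $T_0=T_0(\eta,p)$ with $|R|(p,T)\le\eta/2$ for all $T\ge T_0$, uniformly in $\lambda$. Here $|R|(\cdot,T)$ denotes the $\lambda$-free "absolute remainder," obtained by replacing the oscillatory phases by their moduli, so that $|R(\lambda,y,T)|\le|R|(y,T)$ for every $\lambda$ and $y$.

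Next I would invoke the continuity Lemma $\big|\,|R|(x,T)-|R|(y,T)\,\big|\le Ce^{aT}\,\mathrm{dist}(x,y)$, proved by writing the difference as the integral of its $x$-derivative and bounding the variation of the iterates $\Phi_x^{k}$, for $k$ with $kT(\xi)\le 2T$, by the $C^1$-norm $e^{aT}$ of the geodesic flow up to time $\le 2T$. Combining with $|R(\lambda,y,T)|\le|R|(y,T)$ and the previous step gives, for $T\ge T_0$ and any $y$,
\[
|R(\lambda,y,T)|\;\le\;|R|(p,T)+Ce^{aT}\,\mathrm{dist}(y,p)\;\le\;\tfrac{\eta}{2}+Ce^{aT}\,\mathrm{dist}(y,p),
\]
so that the choice $r(p,\eta):=\eta\,(2C)^{-1}e^{-aT}$ yields $\sup_{y\in\overline{B(p,r(p,\eta))}}|R(\lambda,y,T)|\le\eta$, as claimed. (For the statement as written one reads $T$ as having been taken $\ge T_0(\eta,p)$; the radius then legitimately depends on $\eta$, on $p$, and — through $T$ and the cocycle bound — on the chosen window.)

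The step I expect to cause the real difficulty is not this ball estimate in isolation but the uniformity required to splice it into Theorem~\ref{REM}/\ref{TL}: both $T_0$ and $r(p,\eta)$ a priori depend on the focal point $p$, and a priori there is no control relating $\Phi_p$ at distinct self-focal points. This is precisely where Lemma~\ref{FINITE} (finiteness of $\tcal\lcal_T$ in the real-analytic setting) is indispensable — with finitely many focal points in $B^*_TM$ one takes the worst $T_0$ and the smallest $r$ — and where the companion analysis off $\overline{\tcal\lcal}$ must be run in parallel, via the split $R_j=R_{j1}+R_{j2}$ with the integration-by-parts bound of Lemma~\ref{LOG1} on $R_{j2}$, the almost-loop measure bound of Lemma~\ref{AC} on $R_{j1}$, and Dini's theorem to upgrade pointwise to local uniform smallness near non-focal points. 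So for the present Corollary the one thing to verify carefully is the genuine $\lambda$-uniformity of the ergodic estimate at $p$ (and accepting the exponential loss $e^{aT}$ in the radius); the deeper structural obstacles are quarantined in Lemma~\ref{FINITE} and the non-focal estimates, which the excerpt treats as separate inputs.
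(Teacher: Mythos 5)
Your proof is correct and follows essentially the same route as the paper: Proposition~\ref{MAINPROP} gives smallness of the (absolute) remainder at the focal point $p$ once $T$ is large, and the preceding Lipschitz Lemma $\bigl||R|(x,T)-|R|(y,T)\bigr|\leq Ce^{aT}\mathrm{dist}(x,y)$ propagates that smallness to a ball of radius $\sim e^{-aT}$, which is exactly the paper's one-line indication (``choose a ball of radius $\delta e^{-aT}$''). You also correctly flag that the ``$T>0$'' in the statement should really be read as ``$T\geq T_0(\eta,p)$,'' with the radius then depending on $T$ through the cocycle constant — a genuine imprecision in the printed statement that your reading resolves in the intended way.
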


To complete the proof of Theorem \ref{TL} we prove 
\begin{lem} Let $ x \in \overline{\tcal \lcal} \backslash \tcal \lcal$. Then for any $\eta >0$
there exists $r(x, \eta) > 0$ so that 
$$\sup_{y \in \overline{B(x, r(x, \eta))}} |R(\lambda, y, T)| \leq \eta. $$ \end{lem}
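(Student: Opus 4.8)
The statement concerns a point $x$ which lies in the closure $\overline{\tcal\lcal}$ of the twisted self-focal points but is not itself a twisted self-focal point. The plan is to show that such an $x$ is, in fact, only mildly ``focal'': not all of $S^*_x M$ consists of loop directions, and moreover the measure of almost-loop directions near $x$ is uniformly small on a small ball. The key structural input is Lemma \ref{FINITE}: for each fixed $T$, the set $\tcal\lcal_T$ of twisted self-focal points with first return time $\le T$ is \emph{finite}. Consequently, if $x \in \overline{\tcal\lcal}\setminus\tcal\lcal$, then $x$ cannot be a limit of points in any single $\tcal\lcal_T$ (such sets being finite and hence closed), so $x$ is approached only by self-focal points whose return times diverge to $+\infty$. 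This is the geometric fact I would exploit.

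First I would fix $\eta > 0$ and the Tauberian cutoff scale $T$. Using the decomposition \eqref{DECOMPRj} of the remainder into $R_{j1}$ (almost-loop directions) and $R_{j2}$ (directions far from loops), Lemma \ref{LOG1} already gives $\sup_y |R_{j2}(\lambda,y,T,\epsilon)| \le C(\epsilon^2\lambda)^{-1}$, which is $o(1)$ once $\epsilon \ge \lambda^{-1/2}\log\lambda$. So the whole issue is to bound $R_{j1}$, and by Lemma \ref{AC} it suffices to control the measure
\[
\mu_y\bigl(\{\xi\in S^*_yM:\ 0<|\nabla_\xi\tilde t_j(y,\xi)|^2<\epsilon^2\}\bigr)
\]
uniformly for $y$ in a small ball around $x$. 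The plan here is: the critical set $\{\nabla_\xi\tilde t_j(x,\cdot)=0\}$ is the loop set $\lcal_x$, and since $x \notin \lcal$ (it is not even self-focal), $\lcal_x$ has measure zero in $S^*_xM$; by real analyticity it is in fact a proper analytic subvariety. Then the family of functions $F_\epsilon(y) = \mu_y(\{0<|\nabla_\xi\tilde t_j(y,\xi)|^2<\epsilon^2\})$ decreases pointwise to $0$ as $\epsilon\to 0$, for each $y$ in a neighborhood of $x$ on which no point is self-focal --- and such a neighborhood exists precisely because $x$ is approached only by self-focal points of unboundedly large return time, so on the relevant time scale $T$ the nearby points have no loop directions with return time $\le T$. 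By Dini's theorem the convergence $F_\epsilon \to 0$ is uniform on $\overline{B(x,r)}$ for $r$ small, giving $\sup_{y\in\overline{B(x,r)}}|R_{j1}(\lambda,y,T,\epsilon)| \le \eta/2$ once $\epsilon$ is small enough.

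The main obstacle, and the step requiring the most care, is justifying that there is a ball around $x$ free of self-focal points \emph{with return time $\le T$}. A priori $x$ could be a limit of twisted self-focal points $p_j\to x$ with $T(p_j)\to\infty$; I must rule out that infinitely many of these have $T(p_j)\le T$ --- but this is exactly Lemma \ref{FINITE} (finiteness of $\tcal\lcal_T$), so for the \emph{fixed} $T$ in the Tauberian argument, only finitely many self-focal points have return time $\le T$, and $x$ (not being one of them) has positive distance to that finite set. Combining this with the perturbation estimate $||R|(y,T)-|R|(y',T)| \le Ce^{aT}\mathrm{dist}(y,y')$ (the preceding Lemma), we can further shrink $r$ to $r(x,\eta)$ of size $\delta e^{-aT}$ so that the remainder stays within $\eta$ across the ball. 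Assembling the pieces --- $R_{j2}$ small by Lemma \ref{LOG1}, $R_{j1}$ small by Dini plus the measure-zero loop set, summed over the finitely many charts $j$ parametrizing the geodesic flow up to time $T$, and a final $o_T(\lambda^{n-1})$ from \eqref{rhoTFORM1} --- yields $\sup_{y\in\overline{B(x,r(x,\eta))}}|R(\lambda,y,T)|\le\eta$, which completes the proof of the Lemma and hence, via the covering argument over the compact $M$, of Theorem \ref{TL}.
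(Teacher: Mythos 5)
Your proposal is correct in spirit and pivots on the same essential observation as the paper — namely that Lemma~\ref{FINITE} (finiteness of $\tcal\lcal_T$) forces any sequence $p_j\in\tcal\lcal$ with $p_j\to x$ to have $T(p_j)\to\infty$, so for the fixed Tauberian scale $T$ you can shrink to a ball disjoint from $\tcal\lcal_T$ — but the route you then take is genuinely different from the paper's. You re-run the Dini argument of the preceding lemma directly on the ball, justified by observing that the measure-zero dichotomy for loop directions at times $\le T$ persists for every $y$ in the ball (including the self-focal points of large return time that necessarily remain inside it). The paper instead applies the Safarov formula \eqref{SAFFORM} at the approximating self-focal points $p_j$, notes that for $j$ large the sum over $k\neq 0$ collapses to at most one term (since $\hat\rho(kT(p_j)/T)=0$ once $T(p_j)$ exceeds the support of $\hat\rho_T$), passes to $x$ by continuity of $\lambda\mapsto R(\lambda,\cdot,T)$, and only then uses the perturbation estimate $\bigl|\,|R|(y,T)-|R|(y',T)\,\bigr|\le Ce^{aT}\mathrm{dist}(y,y')$ to propagate the bound from the single point $x$ to a ball of radius $\sim\delta e^{-aT}$. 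Your Dini route buys you a direct, uniform estimate on the whole ball in one shot, which makes the final appeal to the perturbation lemma redundant in your argument (it is load-bearing in the paper's). The paper's route, by contrast, exploits the clean algebraic form of the Safarov expansion at a self-focal point, which concentrates all the combinatorics into the mean ergodic behavior of $U_x^k$.

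Two points of phrasing in your write-up are worth tightening, because taken literally they misstate the geometry. You say there is ``a neighborhood of $x$ on which no point is self-focal'' and that nearby points ``have no loop directions with return time $\le T$.'' Neither is true: since $x\in\overline{\tcal\lcal}$, every ball around $x$ contains self-focal points (with return time exceeding $T$), and generic points $y$ do have loop directions at time $\le T$; what is true, and what the Dini argument needs, is that the set of such directions has measure zero in $S^*_yM$ for every $y$ in the ball (which follows from disjointness from $\tcal\lcal_T$ together with the real-analytic ``all or nothing'' dichotomy for $\lcal_y$). You in fact correctly isolate the former fact as the ``main obstacle'' two sentences later, so the confusion is presentational rather than conceptual, but the two statements should be replaced by the measure-zero statement to avoid the appearance of reducing this Lemma to the preceding one (which would beg the question, as the preceding lemma's hypothesis $x\notin\overline{\tcal\lcal}$ is precisely what fails here).
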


Indeed, let $p_j \to x$ with $T(p_j) \to \infty$. Then the remainder is given at each $p_j$ by
the left side of  \eqref{SAFFORM}. But for any fixed $T$, the first term of \eqref{SAFFORM}  has 
at most one term for $j$ sufficiently large. Since the remainder is continuous, the remainder at $x$
is the limit of the remainders at $p_j$ and is therefore $O(T^{-1}) + O(\lambda^{-1})$. 

  By the perturbation estimate, one has the same remainder estimate in a sufficiently small ball around $x$. 

\subsection{Conclusions}

\begin{itemize}

\item No eigenfunction $\phi_j(x)$ can be maximally large at a point $x$ which is $\geq \lambda_j^{-\half} \log \lambda_j$
away from the self-focal points. \bigskip

\item When there are no invariant measures, $\phi_j$  also cannot be large at a self-focal point. \bigskip

\item If $\phi_j$  is not large at any self-focal point, it is also not large nera a self-focal point.

\end{itemize}

%We then define  the Safarov-Koopman operator by
%$$U_x^{\pm} (\lambda) = e^{i \lambda T_x^{\pm}} U_x^{\pm}. $$

% We further define Safarov's operator-valued function (cf.
%\begin{equation} \QQ(\lambda, x) = \sum_{n = 1}^{\infty} (iT_{x}^{(n)})^{-1} e^{- i \lambda \txn}
%U_x^n - (i \txn)^{-1} e^{ i \lambda \txn } U_x^{*n}.
%\end{equation}
% Then for any zeroth
%order pseudo-differential operator $P$ on $M$ we define
%\begin{equation} \left\{ \begin{array}{l} \QQ_P(\lambda, x) = \langle \QQ
%(\sigma_P|_{S^*_xM}), (\sigma_P)|_{S^*_x M} \rangle, \\ \\
%\QQ_{PQ}(\lambda, x) = \langle \QQ (\sigma_P|_{S^*_xM}),
%(\sigma_Q)|_{S^*_x M} \rangle \end{array} \right. \end{equation}

%The key facts proved by Safarov et al are the following: In
%Theorem 1.8.14 of \cite{SV} it is proved that
%\begin{equation}\begin{array}{lll}  c_{0, P}(x) \lambda^{n} + c_{1, P}(x) \lambda^{n
%- 1} + \QQ_P(\lambda - o(\lambda), x) \lambda^{n-1} -
%o(\lambda^{n-1}) &  \leq &  E_{P, P}(\lambda, x, x) \\ && \\
% \leq c_{0, P}(x) \lambda^{n} + c_{1, P}(x) \lambda^{n - 1} +\QQ_P(\lambda + o(\lambda), x) \lambda^{n-1} + o(\lambda^{n-1})
%&&.
%\end{array} \end{equation}
% \medskip

\section{\label{GEOFLOWAPP} Appendix on the phase space and the geodesic flow}

Classical mechanics takes place in phase space $T^* M$ (the cotangent bundle). Let $(x, \xi)$ be Darboux  coordinates on $T^* M$, i.e. $x_j$ are local coordinates on $M$ and $\xi_j$ are the functions on $T^*M$ which pick out components
with respect to $dx_j$.  Hamilton's equations for the Hamiltonian $H(x, \xi) = \half |\xi|^2 + V(x): T^* M \to \R$ are
$$\frac{dx}{dt} = \frac{\partial H}{\partial \xi}, \;\;\;  \frac{d \xi}{dt} =- \frac{\partial H}{\partial \xi}. $$
The Hamilton flow is $\Phi^t(x, \xi) = (x_t, \xi_t)$ with $(x_0, \xi_0) = (x, \xi)$.
Let us recall the symplectic interpreation of Hamilton's equations.

The cotangent bundle $T^*M$ of any manifold $M$ carries a
canonical action form $\alpha$ and symplectic form $\omega = d
\alpha$.  Given any local coordinates $x_j$ and associated frame
$dx_j$ for $T^*M$, we put: $\alpha = \sum_j \xi_j dx_j$. The form
is independent of the choice of coordinates and is called the
action form. \bigskip

It is invariantly defined as follows: At a covector $\xi \in
T_x^*M$ define $\alpha_{x, \xi}(X) = \xi(\pi_* X)$ where $X$ is a
tangent vector to $T^*M$ at $(x, \xi)$ and $\pi: T^*M \to M$ is
the projection $(x, \xi) \to x.$ Another way to define $\alpha$ is
that for any 1-form $\eta$, viewed as a section of $T*M$, $\eta^*
\alpha = \eta. $ \bigskip

By definition $\omega = d \alpha = \sum_{j = 1}^n dx_j \wedge
d\xi_j.$

A symplectic form is a non-degenerate closed 2-form. Thus it is an
anti-symmetric bilinear form on tangent vectors at each point $(x,
\xi) \in T^*M$.  We note that
\begin{itemize}

\item $\omega(\frac{\partial}{\partial x_j},
\frac{\partial}{\partial x_k}) = 0$; one says that the vector
space Span$\{\frac{\partial}{\partial x_j}\}$ at each $(x, \xi)
\in T^*M$ is Lagrangian;

\item  $\omega(\frac{\partial}{\partial x_j},
\frac{\partial}{\partial \xi_k}) = \delta_{j k}$; one says that
$\frac{\partial}{\partial x_j}, \frac{\partial}{\partial \xi_j}$
are symplectically paired;

\item  $\omega(\frac{\partial}{\partial \xi_j},
\frac{\partial}{\partial \xi_k}) = 0$.  Thus, the vector space
Span$\{\frac{\partial}{\partial \xi_j}\}$ is Lagrangian.

\end{itemize}

A Hamiltonian is a smooth function $H(x, \xi)$ on $T^*M$. We say
it is homogeneous of degree $p$ if $H(x, r \xi) = r^p H(x, \xi)$
for $r > 0$. \bigskip

The Hamiltonian vector field $\Xi_H$ of $H$ is the symplectic
gradient of $H$. That is, one takes $dH$, a 1-form on $T^*M$ and
uses the symplectic form $\omega$ to convert it to a vector field.
That is,
$$\omega(\Xi_H, \cdot) = d H. $$
We claim:
$$\Xi_H = \sum_{j = 1}^n (\frac{\partial H}{\partial \xi_j}
\frac{\partial}{\partial x_j} - \frac{\partial H}{\partial x_j}
\frac{\partial}{\partial \xi_j}).$$

We note that $dH = \frac{\partial H}{\partial \xi_j} d \xi_j  +
\frac{\partial H}{\partial x_j} dx_j$, so the equation
$\omega(\Xi_H, \cdot) = d H $ is equivalent to:
\begin{itemize}

%item $\omega(\frac{\partial}{\partial x_j}, \cdot ) = d\xi_j;$

\item $\omega(\frac{\partial}{\partial \xi_j}, \cdot ) = -  dx_j;$

\end{itemize}

The flow of the Hamiltonian vector field is the one-parameter
group $$\Phi^t: T^*M \to T^*M $$ defined by
$$\Phi^t(x_0, \xi_0) = (x_t, \xi_t)$$
where $(x_t, \xi_t)$ solve the ordinary differential equation:
$$\left\{ \begin{array}{l} \frac{dx_j}{dt} = \frac{\partial
H}{\partial \xi_j}; \\ \\
\frac{d \xi_j}{dt} = - \frac{\partial H}{\partial x_j}.
\end{array} \right. ,  $$
with initial conditions $x(0) = x_0, \xi(0) = \xi_0.$

The symplectic form induces the following Lie bracket on
functions:
$$\{f, g\}(x) = \Xi_f (g) = \sum_{j = 1}^n (\frac{\partial f}{\partial \xi_j}
\frac{\partial g}{\partial x_j} - \frac{\partial f}{\partial x_j}
\frac{\partial g}{\partial \xi_j}).$$ We see that $\{f, g\} = -
\{g, f\}.$

Consider the coordinate functions $x_j, \xi_k$ on $T^*M$.
Exercise: Show:
\begin{itemize}

\item $\{x_j, x_k \} = 0$;

\item $\{x_j, \xi_k \} = \delta_{jk}$;

\item $\{\xi_j, \xi_k \} = 0. $

\end{itemize}

These are the ``canonical commutation relations".

\

Classical  phase space =  cotangent bundle $T^*M$ of $M$, equipped
with its canonical symplectic form $\sum_i dx_i \wedge d\xi_i$.
The metric defines the Hamiltonian $H(x,\xi) = |\xi|_g =
\sqrt{\sum_{ij = 1}^n g^{ij}(x) \xi_i \xi_j}$ on $T^*M$, where
$g_{ij} = g(\frac{\partial}{\partial x_i},\frac{\partial}{\partial
x_j}) $, $[g^{ij}]$ is the inverse matrix to $[g_{ij}]$.
Hamilton's equations:
$$\left\{\begin{array}{l} \frac{dx_j}{dt} = \frac{\partial
H}{\partial \xi_j} \\ \\
\frac{d\xi_j}{dt} = - \frac{\partial H}{\partial x_j}. \end{array}
\right. $$

Its flow is the `geodesic flow'

$$G^t: S^*_g M \to S^*_g M $$  restricted to the energy surface $\{H = 1\} : = S^*_g M$.

%The geodesic flow is a one-parameter group of canonical (= symplectic) transformations:
%$(g^t)^* \omega  = \omega$. Indeed the time derivative of the left side is $$L_{\Xi_H} \omega
%= d \omega(\Xi_H, \cdot) = d d H = 0. $$\bigskip

%\section{Appendix: Quantization of the geodesic flow}

%The quantization of the geodesic flow  should be the one-parameter  unitary group $e^{-i t \hat{H}}$ defined by the spectral %theorem,
%where $\hat{H}$ is the self-adjoint operator $\hat{H} $ which quantizes $H(x, \xi) = |\xi|$. We construct the
%quantization so that
%\begin{itemize}

%\item  $\hat{H}  = \sqrt{\Delta}$. Thus, the quantization of $|\xi|^2$ is the positive Laplacian $\Delta$.

%\item $e^{i t \hat{H}}$ is the half-wave group $e^{it \sqrt{\Delta}}$.  
%\end{itemize}

%Quantization of Hamiltonian flows has a long history starting with: 

%\begin{itemize}

%\item Quantization of Hamiltonian flows of linear Hamiltonians on $\R^{2n}$ = The irreducible representations of the %Heisenberg group of `phase space
%translations": Schr\"odinger representation on $L^2(\R^n)$ $\simeq $ Bargmann-Fock on $H^2(\C^n, e^{- |z|^2} dL(z))$. 

%\item 
%\item Quantization of Hamiltonian flows of quadratic  Hamiltonians on $\R^{2n}$  = Metaplectic representation of the %symplectic group (linear symplectic transformations):
%chr\"odinger on $L^2(\R^n)$ $\simeq $ Bargmann-Fock on $H^2(\C^n, e^{- |z|^2} dL(z))$.

%\item Fourier integral operator theory is the generalization of these to all $T^* M$ and all Hamiltonian
%systems. 

%\end{itemize}

\section{\label{WAVEAPP} Appendix: Wave equation and Hadamard parametrix}

The Cauchy problem for the wave equation on $\R \times M$ ($\dim M = n$) is the initial value problem
(with Cauchy data $f, g$ ) 
$$\left\{ \begin{array}{l} \Box u(t, x) = 0, \\ \\
u(0, x) = f, \;\; \frac{\partial}{\partial t} u(0, x) = g(x), \end{array} \right.. $$

The solution operator of the Cauchy problem (the ``propagator") is the wave group,
$$\ucal(t) = \begin{pmatrix} \cos t \sqrt{\Delta} & \frac{\sin t \sqrt{\Delta}}{\sqrt{\Delta}} \\ & \\
\sqrt{\Delta} \sin t \sqrt{\Delta}  & \cos t \sqrt{\Delta} \end{pmatrix}. $$
The solution of the Cauchy problem with data $(f, g)$ is $\ucal(t) \begin{pmatrix} f  \\ g \end{pmatrix}. $

\begin{itemize}

\item Even part
$\cos t\sqrt{\Delta}$ which solves the initial value
problem
\begin{equation} \left\{ \begin{array}{ll} (\frac{\partial}{\partial t}^2 - \Delta) u = 0& \\
u|_{t=0} = f & \frac{\partial}{\partial t} u |_{t=0} = 0
\end{array}\right .\end{equation} 

\item  Odd part $\frac{\sin t\sqrt{\Delta}}{\sqrt{\Delta}}$ is the
operator solving
\begin{equation} \left\{ \begin{array}{ll} (\frac{\partial}{\partial t}^2 - \Delta) u = 0& \\
u|_{t=0} = 0 & \frac{\partial}{\partial t} u |_{t=0} = g
\end{array}\right .\end{equation} 

\end{itemize}

The forward  half-wave group is the solution operator of the Cauchy problem
$$(\frac{1}{i } \frac{\partial}{\partial t} - \sqrt{-\Delta} ) u = 0, \;\;\;  u(0, x) = u_0. $$

The solution is given by 
$$u(t, x) = U(t) u_0(x), $$
with $$U(t) = e^{i t \sqrt{-\Delta}}$$
the unitary group  on $L^2(M)$ generated by the self-adjoint elliptic operator $\sqrt{-\Delta}$.

A fundamental solution of the wave equation is a solution of 
$$\Box E(t, x, y) = \delta_0(t) \delta_x(y). $$
The right side is the Schwartz kernel of the identity operator on $\R \times M$. 

There exists a unique fundamental solution with support in the forward light cone, called
the advanced (or forward) propagator. It is given by
$$E_+(t) = H(t) \frac{\sin t \sqrt{\Delta}}{\sqrt{\Delta}}, $$
where $H(t) = {\bf 1}_{t \geq 0}$ is the Heaviside step function.

\subsection{Hormander parametrix}

We would like to construct a parametrix of the form
$$\int_{T^*_x M} e^{i \langle \exp_y^{-1} x, \eta \rangle} e^{i t |\eta|_y}
A(t, x, y, \eta)d \eta.$$
This is a homogeneous Fourier integral operator kernel (see \S \ref{LAGAPP}).

H\"ormander actually constructs one of the form
$$\int_{T^*_x M} e^{i \psi(x, y, \eta)} e^{i t |\eta|}
A(t, x, y, \eta)d \eta,$$ where $\psi$ solves the Hamilton Jacobi
Cauchy problem,
$$\left\{ \begin{array}{l} q(x, d_x \psi(x, y, \eta)) = q(y, \eta),
\\ \\
\psi(x, y, \eta)  = 0 \iff \langle x - y, \eta \rangle = 0, \\ \\
d_x \psi(x, y, \eta)  = \eta, \;\;(\mbox{for} \; x = y \end{array}
\right. $$

The question is whether $\langle \exp_y^{-1} x, \eta \rangle$
solves the equations for $\psi$. Only the first one is unclear. We
need to understand $\nabla_x \langle \exp_y^{-1} x, \eta \rangle.
$ We are only interested in the norm of the gradient at $x$ but it
is useful to consider the entire expression. If we write $\eta =
\rho \omega$ with $|\omega|_y = 1$, then $\rho$ can be eliminated
from the equation by homogeneity. We fix $(y, \eta) \in S^*_y M$
and consider $\exp_y: T_y M \to M$.  We wish to vary $\exp_y^{-1}
x(t)$ along a curve. Now the level sets of $\langle \exp_y^{-1} x,
\eta \rangle$ define a notion of  local `plane waves' of $(M, g)$
near $y$. They are actual hyperplanes normal to $\omega$ in flat
$\R^n$ and in any case are far different from distance spheres.
Having fixed $(y, \eta)$, $\nabla_x \langle \exp_y^{-1} x, \omega
\rangle$ are  normal to the plane waves defined by $(y, \eta)$. To
determine the length we need to see how $\nabla_x \langle
\exp_y^{-1} x, \omega \rangle$  changes in directions normal to
plane waves.

The level sets of $\langle \exp_y^{-1} x, \eta \rangle$ are images
under $\exp_y$ of level sets of $\langle \xi, \eta \rangle = C$ in
$T_y M$. These are parallel hyperplanes normal to $\eta$. The
radial geodesic in the direction $\eta$ is of course normal to the
exponential image of the hyperplanes. Hence, this radial geodesic
is parallel to $\langle \exp_y^{-1} x, \eta \rangle$ when $\exp_y
t \eta = x$. It follows that $|\nabla_x \langle \exp_y^{-1} x,
\eta \rangle$ at this point equals $\frac{\partial}{\partial t}
\langle \exp_y^{-1} \exp_y t \frac{\eta}{|\eta|}, \eta \rangle = t
|\eta|_y.$ Hence $|\nabla_x \langle \exp_y^{-1} x, \eta \rangle|_x
= 1$ at such points.

\subsection{\label{HADASECT} Wave group: $r^2 - t^2$}

%The wave group of a Riemannian manifold is the unitary group $U(t)
%= e^{i t \sqrt{\Delta}}$ where $\Delta = -
%\frac{1}{\sqrt{g}}\sum_{i,j=1}^n \frac{\partial}{\partial
%x_i}g^{ij} g \frac{\partial}{\partial x_j} $ is the Laplacian of
%$(M,g)$.  Here, $g_{ij} = g(\frac{\partial}{\partial
%x_i},\frac{\partial}{\partial x_j}) $, $[g^{ij}]$ is the inverse
%matrix to $[g_{ij}]$ and $g = {\rm det} [g_{ij}].$ On a compact
%manifold, $\Delta$ has a discrete spectrum
%\begin{equation} \Delta \phi_j = \lambda_j^2 \phi_j,\;\;\;\;\;\;\;\;\langle \phi_j, \phi_k
%\rangle = \delta_{jk} \end{equation}
% of eigenvalues and eigenfunctions. The (Schwartz) kernel of the
%wave group can be constructed in two very different ways: in terms
%of the spectral data
%\begin{equation} U(t) (x,y) = \sum_j e^{it \lambda_j} \phi_j(x) \phi_j(y). \end{equation}

We now review the construction of a Hadamard parametrix,
\begin{equation} U(t) (x,y) = \int_0^{\infty} e^{ i \theta (r^2(x,y) - t^2)} \sum_{k =
0}^{\infty} W_k (x,y) \theta^{\frac{n-3}{2} - k}
d\theta\;\;\;\;\;\;\;\;\;(t < {\rm inj}(M,g)) \end{equation} where
$U_o(x,y) = \Theta^{-\half}(x,y)$ is the volume 1/2-density, where
the higher coefficients are determined by transport equations, and
where  $\theta^r$ is regularized at $0$ (see below). This formula
is only valid for times $t < inj(M,g)$ but using the group
property of $U(t)$ it determines the wave kernel for all times. It
shows that for fixed $(x,t)$ the kernel $U(t)(x,y)$ is singular
along the distance sphere $S_t(x)$ of radius $t$ centered at $x$,
with singularities propagating along geodesics. It only represents
the singularity and in the analytic case only converges in a
neighborhood of the characteristic conoid.

Closely related but somewhat simpler is the even part of the wave
kernel, $\cos t\sqrt{\Delta}$ which solves the initial value
problem
\begin{equation} \left\{ \begin{array}{ll} (\frac{\partial}{\partial t}^2 - \Delta) u = 0& \\
u|_{t=0} = f & \frac{\partial}{\partial t} u |_{t=0} = 0
\end{array}\right .\end{equation} Similar, the odd part of the
wave kernel, $\frac{\sin t\sqrt{\Delta}}{\sqrt{\Delta}}$ is the
operator solving
\begin{equation} \left\{ \begin{array}{ll} (\frac{\partial}{\partial t}^2 - \Delta) u = 0& \\
u|_{t=0} = 0 & \frac{\partial}{\partial t} u |_{t=0} = g
\end{array}\right .\end{equation} These kernels only really
involve $\Delta$ and may be constructed by the Hadamard-Riesz
parametrix method. As above they have the form
\begin{equation}  \int_{0}^{\infty} e^{i \theta (r^2-t^2)}
\sum_{j=0}^{\infty} W_j(x,y) \theta_{reg}^{\frac{n-1}{2} - j}
d\theta
 \;\;\;\mbox{mod}\;\;C^{\infty}  \end{equation}
where $W_j$ are the Hadamard-Riesz coefficients determined
inductively by the transport equations
\begin{equation}\begin{array}{l}
 \frac{\Theta'}{2 \Theta} W_0 + \frac{\partial W_0}{\partial r} = 0\\ \\
4 i r(x,y) \{(\frac{k+1}{r(x,y)} +  \frac{\Theta'}{2 \Theta})
W_{k+1} + \frac{\partial W_{k + 1}}{\partial r}\} = \Delta_y W_k.
\end{array}\end{equation} The solutions are given by:
\begin{equation}\label{HD} \begin{array}{l} W_0(x,y) = \Theta^{-\half}(x,y) \\ \\
W_{j+1}(x,y) =  \Theta^{-\half}(x,y) \int_0^1 s^k \Theta(x,
x_s)^{\half} \Delta_2 W_j(x, x_s) ds
\end{array} \end{equation}
where $x_s$ is the geodesic from $x$ to $y$ parametrized
proportionately to arc-length and where $\Delta_2$ operates in the
second variable.

According to \cite{GS},  page 171,
$$\int_0^{\infty} e^{i \theta \sigma} \theta_+^{\lambda} d \lambda
= i e^{i \lambda \pi/2} \Gamma(\lambda + 1) (\sigma + i
0)^{-\lambda - 1}.
$$

One has,
\begin{equation}  \int_{0}^{\infty} e^{i \theta (r^2-t^2)}
 \theta_+^{\frac{d-3}{2} - j}
d\theta = i e^{i (\frac{n-1}{2} - j) \pi/2} \Gamma(\frac{n-3}{2} -
j + 1) (r^2-t^2 + i0)^{j-\frac{n -3}{2} - 2}
\end{equation}
Here there is a problem  when $n$ is odd since $
\Gamma(\frac{n-3}{2} - j + 1)$ has poles at the negative integers.

One then uses
$$\Gamma(\alpha + 1 - k) = (-1)^{k+1} (-1)^{[\alpha]}
\frac{\Gamma(\alpha +1 - [\alpha]) \Gamma ([\alpha] + 1 -
\alpha)}{\alpha + 1} \frac{1}{\alpha - [\alpha]} \frac{1}{\Gamma(k
- \alpha)}. $$

We note that
$$\Gamma(z) \Gamma(1 - z) = \frac{\pi}{\sin \pi z}. $$

Here and above $t^{-n}$ is the distribution defined by $t^{-n} =
Re (t + i0)^{-n}$ (see \cite{Be}, [G.Sh., p.52,60].)  We recall
that $(t+i0)^{-n} =  e^{-i\pi \frac{n}{2}}\frac{1}{\Gamma(n)}
\int_0^{\infty} e^{itx} x^{n-1} dx$.

We also need that $(x + i 0)^{\lambda}$ is entire and
$$(x + i 0)^{\lambda} = \left\{ \begin{array}{ll} e^{i \pi
\lambda} |x|^{\lambda}, & x < 0 \\ & \\
x_+^{\lambda}, & x > 0. \end{array} \right.$$

The imaginary part cancels the singularity of $\frac{1}{\alpha -
[\alpha]}$ as $\alpha \to \frac{d-3}{2}$ when $n = 2m + 1$. There
is no singularity in even dimensions. In odd dimensions the real
part is $\cos \pi \lambda x_-^{\lambda} + x_+^{\lambda}$ and we
always seem to have a pole in each term!

But in any dimension, the imaginary part is well-defined and we
have
\begin{equation} \frac{\sin t\sqrt{\Delta}}{\sqrt{\Delta}}(x,y) =
 C_o sgn(t) \sum_{j=0}^{\infty}(-1)^j w_j(x,y)\frac{
(r^2-t^2)_{-}^{j-\frac{n - 3}{2} - 1}}{4^j \Gamma(j -
\frac{n-3}{2})}
 \;\;\;\mbox{mod}\;\;C^{\infty} \end{equation}

By taking the time derivative we also have,
\begin{equation} \cos t\sqrt{\Delta}(x,y) =
 C_o |t| \sum_{j=0}^{\infty}(-1)^j w_j(x,y)\frac{
(r^2-t^2)_{-}^{j-\frac{n -3}{2} - 2}}{4^j \Gamma(j - \frac{n-3}{2}
- 1)}
 \;\;\;\mbox{mod}\;\;C^{\infty} \end{equation}
where $C_o$ is a universal constant and where $W_j=\tilde{C}_o
e^{-ij\frac{\pi}{2}} 4^{-j}w_j(x,y),$.

\subsection{Exact formula in spaces of constant curvature}

The Poisson kernel of $\R^{n+1}$ is the kernel of $e^{- t
\sqrt{\Delta}}$, given by
$$\begin{array}{lll} K(t, x, y)& =& t^{-n}  (1 + |\frac{x -
y}{t}|^2)^{-\frac{n+1}{2}}\\ & & \\
& = & t \;   (t^2 + |x - y|^2)^{-\frac{n+1}{2}}.
\end{array}$$
It is defined only for $t > 0$, although formally it appears to be
odd.

Thus, the kernel of $e^{i t \sqrt{\Delta}}$ is
$$\begin{array}{lll} U(t, x, y)
& = & (i t)  \;   ( |x - y|^2 - t^2 )^{-\frac{n+1}{2}}.
\end{array}$$

One would conjecture that the Poisson kernel of any Riemannian
manifold would have the form \begin{equation} K(t, x, y) =  t \;
\sum_{j = 0}^{\infty} (t^2 + r(x, y)^2)^{-\frac{n+1}{2} + j}
U_j(x, y)
\end{equation}
for suitable $U_j$.

\subsection{$\Ss^n$}

One can determine the kernel of $e^{i t \sqrt{\Delta}}$ on $\Ss^n$
from the Poisson kernel of the unit ball $B \subset \R^{n+1}$. We
recall that the Poisson integral formula for the unit ball is:
$$u(x) = C_n \int_{\Ss^n} \frac{1 - |x|^2}{|x - \omega'|^2}
f(\omega') dA(\omega'). $$

Write $x = r \omega$ with $|\omega| = 1$ to get:
$$P(r, \omega, \omega') = \frac{1 - r^2}{(1 - 2 r \langle \omega,
\omega' \rangle + r^2)^{\frac{n+1}{2}}}. $$

A second formula for $u(r \omega)$ is
$$u(r, \omega) = r^{A - \frac{n-1}{2}} f(\omega),$$
where $A = \sqrt{\Delta + (\frac{n-1}{4})^2}. $ This follows from
by writing the equation $\Delta_{\R^{n+1}} u = 0$ as an Euler
equation: $$\{r^2 \frac{\partial^2}{\partial r^2}  + n
r\frac{\partial}{\partial r }- \Delta_{\Ss^n}\} u = 0. $$

Therefore, the Poisson operator $e^{- t A}$  with  $r = e^{-t}$ is
given by
$$\begin{array}{lll} P(t, \omega, \omega') &= &  C_n \frac{\sinh t }{(\cosh t - \cos
r(\omega, \omega'))^{\frac{n+1}{2}}} \\ & & \\
& = &  C_n \frac{\partial}{\partial t}  \frac{1}{(\cosh t - \cos
r(\omega, \omega'))^{\frac{n-1}{2}}}. \end{array}$$

Here, $r(\omega, \omega')$ is the distance between points of
$\Ss^n$.

We analytically continue the expressions to $t > 0$ and obtain the
wave kernel as a boundary value:

$$\begin{array}{l} e^{i t A} =
 \lim_{\epsilon \to 0^+} C_n i \sin t (\cosh \epsilon
\cos t -  i \sinh \epsilon \sin t - \cos r(\omega,
\omega')^{-\frac{n +1}{2}}\\ \\
= \lim_{\epsilon \to 0^+} C_n i   \sinh (it - \epsilon) (\cosh (i
t - \epsilon ) - \cos r(\omega, \omega')^{-\frac{n+1}{2}} .
\end{array} $$

If we formally put $\epsilon = 0$ we obtain:

$$\begin{array}{l} e^{i t A} =
 C_n i  \sin t (
\cos t -  \cos r(\omega, \omega'))^{-\frac{n+1}{2}}. \end{array}
$$

This expression is  singular when $\cos t = \cos r$. We note that
$r \in [0, \pi]$ and that it is singular on the cut locus $r =
\pi$. Also,  $\cos: [0, \pi] \to [-1, 1]$ is decreasing, so the
wave kernel  is singular when $t = \pm r$ if $t \in [- \pi, \pi]$.

When $n$ is even, the expression appears to be pure imaginary but
that is because we need to regularize it on the set $t = \pm r$.
When $n$ is odd, the square root is real if $\cos t \geq \cos r$
and pure imaginary if $\cos t < \cos r. $

We see that the kernels of $\cos t A, \frac{\sin t A}{A}$ are
supported inside the light cone $|r| \leq |t|.$ On the other hand,
$e^{i t A}$ has no such support property (it has infinite
propagation speed). On odd dimensional spheres, the kernels are
supported on the distance sphere (sharp Huyghens phenomenon).

The Poisson kernel of the unit sphere is then

$$\begin{array}{l} e^{- t A} =
 C_n   \sinh t (
\cosh t -  \cos r(\omega, \omega'))^{-\frac{n+1}{2}}. \end{array}
$$

It is singular on the complex characteristic conoid  when  $\cosh
t - \cos r(\zeta, \bar{\zeta}') = 0$.

\subsection{Analytic continuation into the complex}

If we write out the eigenfunction expansions of $\cos t
\sqrt{\Delta}(x, y)$ and $\frac{\sin t
\sqrt{\Delta}}{\sqrt{\Delta}}(x,y)$ for $t = i \tau$, we would not
expect convergence since the eigenvalues are now exponentially
growing. Yet the majorants argument seems to indicate that these
wave kernels admit an analytic continuation into a complex
neighborhood of the complex characteristic conoid. Define the
characteristic conoid in $\R \times M \times M$ by $r(x,y)^2 - t^2
= 0$. For simplicity of visualization, assume $x$ is fixed. Then
analytically the conoid to $\C \times M_{\C} \times M_{\C}$. By
definition $(\zeta, \bar{\zeta}, 2 \tau$ lies on the complexified
conoid. That is, the series also converge after analytic
continuation, again if $r^2 - t^2$ is small. If $t = i \tau$ then
we need $r(\zeta, y)^2 + \tau^2$ to be small, which either forces
$r(\zeta, y)^2$ to be negative and close to $\tau$ or else forces
both $\tau$ and $r(\zeta, y)$ to be small.

If we wish to use orthogonality relations on $M$ to sift out
complexifications of eigenfunctions, then we need $U(i \tau,
\zeta, y)$ to be holomorphic in $\zeta$ no matter how far it is
from $y$!. So far, we do not have a proof that $U(i \tau, \zeta,
y)$ is globally holomorphic for $\zeta \in M_{\tau}$ for every
$y$.

Regimes of analytic continuation. Let $E(t, x, y)$ be any of the
above kernels. Then analytically continue to $E(i \tau, \zeta,
\bar{\zeta'})$ where $r(\zeta, \zeta')^2 + \tau^2$ is small. For
instance if $\zeta' = \zeta$ and $\sqrt{\rho}(\zeta) =
\frac{\tau}{2}$, then $r(\zeta, \zeta')^2 + \tau^2 = 0$.

Is there a neighborhood of the characteristic conoid into which
the analytic continuation is possible? We need to have
$$r^2(\zeta, \zeta') \pm \tau^2 << \epsilon. $$
If we analytically continue in $\zeta$ and anti-analytically
continue in $\zeta'$, we seem to get a neighborhood of the conoid.

We would like to analytically continue the Hadamard parametrix to
a small neighborhood of the characteristic conoid. It is singular
on the conoid.

\section{\label{LAGAPP} Appendix:  Lagrangian distributions,  quasi-modes  and Fourier integral operators}

In this section, we go  over the definitions of Lagrangian distributions, both semi-classical and
homogeneous. A very detailed treatment of the homogeneous Lagrangian distributions (and Fourier
integral operators) can be found in \cite{HoIV}. The semi-classical case is almost the same and
the detailed treatments  can be found in \cite{D,DSj,GSj, CV2,Zw}. We also continue the discussion
in \S \ref{QMintro} of quasi-modes. 

\subsection{Semi-classical Lagrangian distributions and Fourier integral operators}

Semi-classical
Fourier integral operators with large parameter $\lambda = \frac{1}{\hbar}$  are operators whose Schwartz kernels are defined by semi-classical
Lagrangian distributions,
$$I_{\lambda}(x, y) = \int_{\R^N} e^{i \lambda \phi (x, y, \theta) }
a(\lambda, x, y, \theta) d \theta. $$
More generally, semi-classical Lagrangian distributions are defined by oscillatory integrals (see \cite{D}),
\begin{equation} \label{uk} u(x, \hbar) = \hbar^{- N/2} \int_{\R^N} e^{\frac{i}{\hbar} \phi(x,
\theta)} a(x, \theta, \hbar) d \theta. \end{equation} We assume that $a(x,
\theta, \hbar)$ is a semi-classical symbol,
$$a(x, \theta, \hbar) \sim \sum_{k = 0}^{\infty} \hbar^{\mu + k}
a_k(x, \theta). $$

The critical set of the phase is given by
$$C_{\phi} = \{(x, \theta): d_{\theta} \phi = 0\}. $$
 The phase is called non-degenerate if
 $$d (\frac{\partial \phi}{\partial \theta_1}), \dots, d (\frac{\partial \phi}{\partial
 \theta_N})$$
 are independent on $C_{\phi}$. Thus, the map
 $$\phi'_{\theta} : = \left( (\frac{\partial \phi}{\partial \theta_1}), \dots,  (\frac{\partial \phi}{\partial
 \theta_N}) \right): X \times \R^N \to \R^N $$
 is locally a submersion near $0$ and $(\phi'_{\theta})^{-1}(0)$ is
 a manifold of codimension $N$ whose tangent space is $\ker D
 \phi'_{\theta}$.
Then $$T_{(x_0, \theta_0)} C_{\phi} = \ker d_{x, \theta} d_{\theta}
\phi. $$

We write a tangent vector to $M \times \R^N$ as $(\delta_x,
\delta_{\theta})$.
  The kernel of
$$  D \phi'_{\theta} = \begin{pmatrix}
\phi''_{\theta x} & \phi''_{\theta \theta} \end{pmatrix}$$ is
$T_{(x, \theta)} C_{\phi}$. I.e. $(\delta_x, \delta_{\theta}) \in
TC_{\phi}$ if and only if $\phi''_{\theta x} \delta_x +
\phi''_{\theta \theta} \delta_{\theta} = 0$. Indeed,
$\phi'_{\theta} $ is the defining function of $C_{\phi}$ and $d
\phi_{\theta}$ is the defining function of $T C_{\phi}$. From
\cite{HoIV} Definition 21.2.5: The number of linearly
independent differentials $d \frac{\partial \phi}{\partial
\theta}$ at a point of $C_{\phi}$  is $N - e$ where $e$ is the
excess. Then $C \to \Lambda$ is locally a fibration with fibers of
dimension $e$. So to find the excess we need to compute  the rank
of $\begin{pmatrix} \phi''_{x \theta} & \phi''_{\theta \theta}
\end{pmatrix}$ on  $T_{x,\theta} (\R^N \times M)$.

Non-degeneracy is thus the condition that
$$\begin{pmatrix}
\phi''_{\theta x} & \phi''_{\theta \theta} \end{pmatrix} \;\;
\mbox{is surjective on } \;\; C_{\phi} \iff
\begin{pmatrix}
\phi''_{\theta x} \\ \\  \phi''_{\theta \theta} \end{pmatrix} \;\;
\mbox{is injective  on } \;\; C_{\phi}. $$
 If $\phi$ is non-degenerate, then
$\iota_{\phi}(x, \theta) = (x, \phi'_{x}(x, \theta))$ is an
immersion from $C_{\phi} \to T^* X$. Note that  $$d
\iota_{\phi}(\delta_x, \delta_{\theta}) = (\delta_x, \phi''_{xx}
\delta_x + \phi''_{x \theta} \delta_{\theta}). $$ 
%Indeed, if $d
%\iota_{\phi} (\delta_x, \delta_{\theta}) = 0$ then $\delta_x = 0$
%and $\phi''_{x \theta} \delta_{\theta} = 0$  but then if
%$(\delta_x, \delta_{\theta}) \in T C_{\phi}$, then also
%$\phi''_{\theta \theta} \delta_{\theta} = 0$. 
So if
$\begin{pmatrix} \phi''_{\theta x} \\ \\  \phi''_{\theta \theta}
\end{pmatrix}$ is injective, then $\delta_{\theta} = 0$.

 If $(\lambda_1, \dots,
\lambda_n)$ are any local coordinates on $C_{\phi}$, extended as
smooth functions in  neighborhood, 
%then
%$$ d_{C_{\phi}}: = \frac{|d \lambda|}{|D(\lambda,
%\phi_{\theta}')/D(x, \theta)|}, $$ where $d \lambda$ is the
%Lebesgue density.
the delta-function on $C_{\phi}$ is defined by
$$ d_{C_{\phi}}: = \frac{|d \lambda|}{|D(\lambda,
\phi_{\theta}')/D(x, \theta)|} = \frac{dvol_{T_{x_0} M} \otimes dvol_{\R^N}}{d
\frac{\partial \phi}{\partial \theta_1} \wedge \cdots \wedge d
\frac{\partial \phi}{\partial \theta_N} } $$ where the denominator
can be regarded as the pullback of $dVol_{\R^N}$ under the map
$$d_{\theta, x} d_{\theta} \phi (x_0, \theta_0). $$

%Under ideal conditions, the map
%$$\iota_{\phi} : C_{\phi} \to \Lambda_{\phi} \subset T^*(X), \;\;\; \iota_{\phi}(x, 
%\theta) = (x, d_x \phi) $$ is an embedding, or at
%least an immersion. In this case the phase is called
%non-degenerate.  Less restrictive, although still an ideal
%situation, is where the phase is clean. This means that the map
%$\iota_{\phi} : C_{\phi} \to \Lambda_{\phi} $, where
%$\Lambda_{\phi} $ is the image of $\iota_{\phi}$,  is locally a
%fibration with fibers of dimension $e$.  From \cite{HoIV}
%Definition 21.2.5, the number of linearly independent
%differentials $d \frac{\partial \phi}{\partial \theta}$ at a point
%of $C_{\phi}$ is $N - e$ where $e$ is the excess.

The symbol $\sigma(\nu)$ of a Lagrangian (Fourier integral)
distributions is a section of the bundle $\Omega_{\half} \otimes
\mcal_{\half}$ of the bundle of half-densities (tensor the Maslov
line bundle). In terms of a Fourier integral representation it is
the square root $\sqrt{d_{C_{\phi}}}$ of the delta-function on
$C_{\phi}$ defined by $\delta(d_{\theta} \phi)$, transported to
its image in $T^* M$ under $\iota_{\phi}$

\begin{defin} The principal symbol $\sigma_u(x_0, \xi_0)$ is
$$\sigma_u(x_0, \xi_0) = a_0(x_0, \xi_0) \sqrt{d_{C_{\phi}}}.  $$
It is a $\half$ density on $T_{(x_0, \xi_0)} \Lambda_{\phi}$ which
depends on the choice of a density
on $T_{x_0} M$). \end{defin}

\subsection{Homogeneous Fourier integral operators}
A homogeneous  Fourier integral operator $A: C^{\infty}(X) \to
C^{\infty}(Y)$ is an operator whose Schwartz kernel may be
represented by an oscillatory integral
$$K_A(x,y) = \int_{\R^N} e^{i \phi(x, y, \theta)} a(x, y, \theta) d\theta$$
where the phase $\phi$ is homogeneous of degree one in $\theta$.
We 
assume $a(x, y, \theta) $ is a zeroth order
classical polyhomogeneous symbol   with $a \sim
\sum_{j=0}^{\infty} a_j, \, a_j$ homogeneous of degree $-j$. 
We refer to \cite{DSj,GSt1} and especially to 
\cite{HoIV}  for background on Fourier integral operators. We
use the notation $I^m(X \times Y, C)$ for the class of Fourier
integral operators of order $m$ with wave front set along the
canonical relation $C$, and $WF'(F)$ to denote the canonical
relation of a Fourier integral operator $F$.

When
$$\iota_{\phi} : C_{\phi} \to \Lambda_{\phi} \subset T^*(X, Y), \;\;\; \iota_{\phi}(x, y,
\theta) = (x, d_x \phi, y, - d_y \phi) $$ is an embedding, or at
least an immersion, the phase is called
non-degenerate.  Less restrictive, although still an ideal
situation, is where the phase is clean. This means that the map
$\iota_{\phi} : C_{\phi} \to \Lambda_{\phi} $, where
$\Lambda_{\phi} $ is the image of $\iota_{\phi}$,  is locally a
fibration with fibers of dimension $e$.  From \cite{HoIV}
Definition 21.2.5, the number of linearly independent
differentials $d \frac{\partial \phi}{\partial \theta}$ at a point
of $C_{\phi}$ is $N - e$ where $e$ is the excess.

We a recall that the order of $F: L^2(X) \to L^2(Y)$ in the
non-degenerate case  is given in terms of a local oscillatory
integral formula by $m +
 \frac{N}{2} - \frac{n}{4},$, where $n = \dim X + \dim Y, $ where
$m$ is the order of the amplitude, and $N$ is the number of phase
variables in the local Fourier integral representation (see
\cite{HoIV}, Proposition 25.1.5); in the general clean case with
excess $e$, the order goes up by $\frac{e}{2}$ (\cite{HoIV},
Proposition 25.1.5'). Further, under clean composition of
operators of orders $m_1, m_2$, the order of the composition is
$m_1 + m_2 - \frac{e}{2}$ where $e$ is the so-called excess (the
fiber dimension of the composition); see \cite{HoIV}, Theorem
25.2.2.

%\begin{defin} The principal symbol $\sigma_u(x_0, \xi_0)$ is the transport to $\Lambda_{\phi}$ under
%$\iota_{\phi}$ of the half-density,
%$$\sigma_u(x_0, \xi_0) = a_0(x_0, \xi_0) \sqrt{d_{C_{\phi}}}.  $$
%It is a $\half$ density on $T_{(x_0, \xi_0)} \Lambda_{\phi}$ which
%is independent of $\psi$ (but depending on the choice of a density
%on $T_{x_0} M$). \end{defin}

The definition of the principal symbol is essentially the same as in the semi-classical case. 
 As discussed in \cite{SV},
(see  (2.1.2) and ((2.2.5) and Definition 2.7.1)), if an
oscillatory integral is represented as
$$\ical_{\phi, a}(t, x, y) = \int e^{i \phi(t, x,y, \eta)} a(t,x ,
y, \eta) \zeta(t, x, y, \eta) d_{\phi}(t, x, y, \eta) d \eta, $$
where
$$d_{\phi}(t, x, y, \eta) = |\det  \phi_{x, \eta}|^{\half} $$ and
where the  number of phase variables equals the number of $x$
variables, then the  phase is  non-degenerate if and only if
$(\phi_{x \eta}(t, x, y, \eta)$ is non-singular. Then $ a_0 |\det
\phi_{x, \eta}|^{- \half}$ is the symbol.

  The behavior of symbols under pushforwards and pullbacks of Lagrangian
submanifolds are described in \cite{GSt1}, Chapter IV. 5 (page 345).
The main statement (Theorem 5.1, loc. cit.) states that the symbol
map $\sigma: I^m(X, \Lambda) \to S^m(\Lambda)$ has the following
pullback-pushforward  properties under maps $f: X \to Y$
satisfying  appropriate transversality conditions,
\begin{equation} \label{PFPB} \left\{\begin{array}{l} \sigma (f^* \nu) =
f^* \sigma(\nu), \\ \\
\sigma(f_* \mu) = f_* \sigma(\mu), \end{array} \right.
\end{equation}  Here, $f_* \sigma(\mu)$ is integration over the
fibers of $f$ when $f$ is a submersion. In order to define a
pushforward, $f$ must be a ``morphism'' in the language of
\cite{GSt2}, i.e. must be accompanied by a map $r(x):
|\bigwedge|^{\half} TY_{f(x)} \to |\bigwedge|^{\half} TX_{x}$, or
equivalently a half-density on $N^*(\mbox{graph}(f))$, the
co-normal bundle to the graph of $f$ which is constant long the
fibers of $N^*(\mbox{graph}(f)) \to \mbox{graph}(f))$.

\subsection{\label{QM} Quasi-modes}

We consider here Lagrangian quasi-modes of order zero, i.e. semi-classical oscillatory integrals \eqref{uk}
which solves $\Delta u_k = O(1)$. If we pass $\Delta$ under the integral sign we obtain a leading
order term whose amplitude contains the factor $k^2 |\nabla_x \phi(x, \theta)|^2. $ The integral
of this term must vanish and hence must vanish on the stationary phase set $\nabla_{\theta} \phi(x, \theta) = 0$.
Hence $\phi$ must generate a Lagrangian submanifold $\Lambda_{\phi} \subset S^* M$. Such a Lagrangian
submanifold must be invariant under the geodesic flow $G^t: S^* M \to S^*M$ because the Hamilton vector
field $\Xi_H$ of a function $H$ which is constant on  a Lagrangian submanifold is tangent to $\Lambda$.
The Lagrangian submanifold may only be locally defined and its global extension might be dense in $S^*M$. 
The amplitude $a$ must then be assumed to be compactly supported and the distribution is $O(k^{- M})$
for all $M$ outside of its support. 

The $O(k)$ term has an invariant interpretation as $\lcal_{\Xi_H} a_0 \sqrt{d_{C_{\phi}}}$, the Lie derivative
of the principal symbol, which is a half-density on $\Lambda_{\phi}$. To define a quasi-mode of order zero,
the principal symbol must define a global half-density invariant under the geodesic flow. Here we suppress
the role of the Maslov bundle and refer to \cite{D,DSj,Zw} for its definition. 

To obtain a quasi-mode of higher order, one must solve recursively a sequence of transport equations,
which are homogeneous equations of the form  $\lcal_{\Xi_H}  a_{j + 1} = \dcal_j a_j$ for various
operators $\dcal_k$. In general there are obstructions to solving the inhomogeneous equations. The integral
of the left side over $\Lambda$ with respect to the invariant density equals zero, and therefore so must
the right side. Here we express all half-densities in terms of the invariant one. 

In sum, a zeroth order quasi-mode is an oscillatory integral \eqref{uk} quantizing a pair $(\Lambda, \sigma)$
where $\Lambda \subset S^* M$ is a closed invariant Lagrangian submanifold and $\sigma$ is a $G^t$-invariant
half-density along it. 

There are additional quasi-modes associated to isotropic submanifolds of $S^*M$, i.e. manifolds on which
the symplectic form vanishes but which have dimension $< \dim M$. Gaussian beams are of this kind. 
We refer to \cite{BB,R1,R2} for their definition. They are constructed along stable elliptic  closed geodesics,
and may be regarded as Lagrangian distributions with complex phase. 

In the case of toric completely integrable systems, the joint eigenfunctions are automatically semi-classical Lagrangian
distributions. Indeed, they may be expressed as Fourier coefficients of the unitary Fourier integral operator
quantizing the torus action. It is also true that joint eigenfunctions in the general case of quantum integrable
systems are Lagrangian. We refer to \cite{TZ3} for background and references.

We should emphasize that it is very rare that eigenfunctions of the Laplacian are quasi-modes or that
quasi-modes are genuine eigenfunctions of the Laplacian.  In \S \ref{SHAPPQM} we will see that
the standard eigenfunctions on $S^2$ are Lagrangian quasi-modes.

\section{\label{SHAPP}Appendix on Spherical Harmonics}

Spherical harmonics furnish the extremals for $L^p$ norms of eigenfunctions $\phi_{\lambda}$ as $(M,g)$
ranges over Riemannian manifolds and $\phi_{\lambda}$ ranges over its eigenfunctions.  They are not unique in this respect: surfaces of revolution
and their higher dimensional analogues also give examples where extremal eigenfunction 
bounds are achieved. In this appendix
we review the definition and properties of spherical harmonics.

Eigenfunctions of the Laplacian  $\Delta_{S^n}$ on the standard
sphere  $S^n$  are restrictions of harmonic homogeneous
polynomials on $\R^{n+1}$.

Let $\Delta_{\R^{n+1}} = - ( \frac{\partial^2}{\partial x_1^2} +
\cdots + \frac{\partial^2}{\partial x_{n+1}^2})$ denote the
Euclidean Laplacian. In  polar coordinates $(r, \omega)$ on
$\R^{n+1}$, we have  $\Delta_{\R^{n+1}} = - \left( \frac{\partial^2}{\partial r^2} +
\frac{n}{r} \frac{\partial}{\partial r}\right) + \frac{1}{r^2}
\Delta_{S^{n}}. $ A polynomial $P(x) = P(x_1, \dots, x_{n+1})$ on
$\R^{n+1}$  is called:

\begin{itemize}

\item  homogeneous of degree $k$ if $P(r x ) = r^k P(x).$ We
denote the space of such polynomials by $\pcal_k$. A basis is
given by the monomials $$x^{\alpha} = x_1^{\alpha_1} \cdots
x_{n+1}^{\alpha_{n+1}}, \;\;\; |\alpha| = \alpha_1 + \cdots +
\alpha_{n+1} = k.$$

\item Harmonic if $\Delta_{\R^{n+1}} P(x) = 0.$ We denote the
space of harmonic homogeneous polynomials of degree $k$ by
$\hcal_k$.

\end{itemize}

Suppose that $P(x)$ is a homogeneous harmonic polynomial of degree
$k$ on $\R^{n+1}$. Then,
$$\begin{array}{l} 0 = \Delta_{\R^{n+1}} P  = - \{\frac{\partial^2}{\partial r^2} + \frac{n}{r}
\frac{\partial}{\partial r} \}  r^k P(\omega)   + \frac{1}{r^2}
\Delta_{S^{n}} P (\omega)  \\ \\
\implies \Delta_{S^{n}} P (\omega) = (k (k - 1) + n k) P(\omega).
\end{array}
$$
Thus, if we restrict $P(x)$ to the unit sphere $S^n$ we obtain an
eigenfunction of eigenvalue $k (n + k - 1). $
 Let $\hcal_k \subset L^2(S^n)$ denote the space of spherical harmonics
of degree $k$.  Then:
\begin{itemize}

\item $L^2(S^n) = \bigoplus_{k = 0}^{\infty} \hcal_k$. The sum is
orthogonal.

\item $Sp(\Delta_{S^n}) = \{ \lambda_k^2 = k ( n + k - 1) \}. $

\item $\dim \hcal_k$ is given by $$d_k = { n + k - 1  \choose k}
  - { n + k - 3 \choose k -
2}$$

\end{itemize}

The Laplacian $\Delta_{S^n}$ is quantum integrable. For
simplicity, we restrict to $S^2$. Then the  group $SO(2) \subset
SO(3)$ of rotations around the $x_3$-axis commutes with the
Laplacian. We denote its infinitesimal generator by $L_3 =
\frac{\partial }{i
\partial \theta}$. The standard basis of spherical harmonics
is given by the joint eigenfunctions $(|m| \leq k$)
$$\left\{ \begin{array}{l} \Delta_{S^2} Y^k_m = k ( k + 1) Y^k_m; \\ \\
 \frac{\partial }{i \partial \theta} Y^k_m  = m Y^k_m. \end{array} \right.$$

Two basic spherical harmonics are:

\begin{itemize}

\item The highest weight spherical harmonic $Y^k_k$. As a
homogeneous polynomial it is given up to a normalizing constant by
$ (x_1 + i x_2)^k$ in $\R^3$ with coordinates $(x_1, x_2, x_3)$.
It is a  `Gaussian beam' along the equator $\{x_3 = 0\}$, and is
also a quasi-mode associated to this stable elliptic orbit. 
\medskip

\item The zonal spherical harmonic $Y^k_0$. It may be expressed in
terms of the orthogonal projection $\Pi_k: L^2(S^2) \to \hcal_k. $

\end{itemize}

\begin{center}
\includegraphics[scale=0.9]{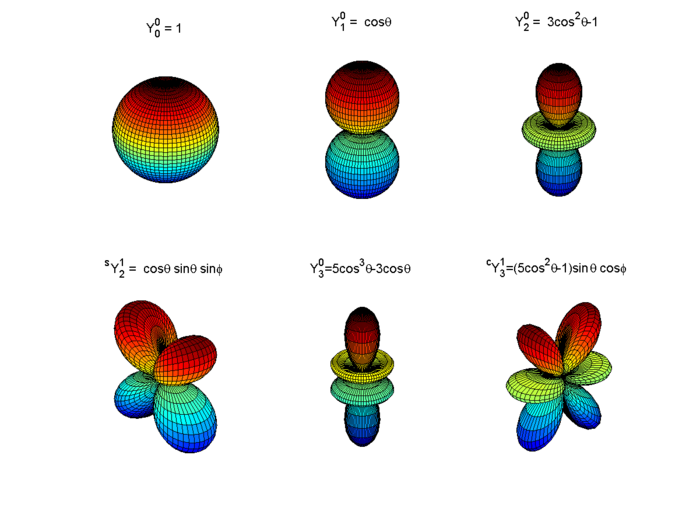}
\end{center}

\begin{center}
\includegraphics[scale=0.9]{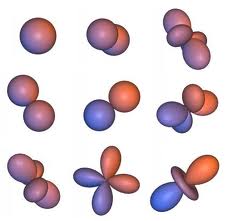}
\end{center}

We now explain the last statement:  For any $n$, the  kernel  $\Pi_k(x, y)$ of
$\Pi_k$ is defined by
$$\Pi_k f(x) = \int_{S^n} \Pi_k(x, y) f(y) dS(y), $$
where $dS$ is the standard surface measure.  If $\{Y^k_m\}$ is an
orthonormal basis of $\hcal_k$ then
$$\Pi_k(x, y) = \sum_{m = 1}^{d_k} Y^k_m(x) Y^k_m(y). $$
Thus for each $y$, $\Pi_k(x, y) \in \hcal_k.$ We can $L^2$
normalize this function by dividing by the square root of
$$||\Pi_k(\cdot, y)||_{L^2}^2 = \int_{S^n} \Pi_k(x, y) \Pi_k(y, x)
dS(x) = \Pi_k(y, y). $$

We note that $\Pi_k(y, y) = C_k$ since it is rotationally
invariant and $O(n + 1)$ acts transitively on $S^n$. Its integral
is $\dim \hcal_k$, hence, $\Pi_k(y, y) = \frac{1}{Vol(S^n)} \dim
\hcal_k.$ Hence the normalized projection kernel with `peak' at
$y_0$ is
$$Y^k_0(x) = \frac{\Pi_k(x, y_0) \sqrt{Vol(S^n)}}{\sqrt{\dim
\hcal_k}}.$$ Here, we put $y_0$ equal to the north pole $(0, 0
\cdots, 1).$  The resulting function is called a zonal spherical
harmonic since it is invariant under the group $O(n + 1)$ of
rotations fixing $y_0$.

One can rotate $Y^k_0(x) $ to $Y^k_0(g \cdot x)$ with $g \in
O(n+1)$ to place the `pole' or `peak point' at any point in $S^2$.

\subsection{Highest weight spherical harmonics}

As mentioned above, the highest weight spherical harmonic is an extremal for
the $L^6$ norm and for all $L^p$ norms with $2 < p < 6$ it is the unique extremal. Let us
verify that it achieves the maximum.

We claim that $||(x + i y)^k||_{L^2(S^2)} \sim k^{-1/4}. $ Indeed
we compute it using Gaussian integrals:
$$\begin{array}{l} \int_{\R^3} (x^2 + y^2)^{k} e^{-(x^2 + y^2 +
z^2)}  = ||(x + i y)^k||_{L^2(S^2)}^2  \int_0^{\infty} r^{2k}
e^{-
r^2} r^2 dr, \\ \\
 \int_{\R^3} (x^2 + y^2)^{k} e^{-(x^2 + y^2 +
z^2)} = \int_{\R^2} (x^2 + y^2)^{k} e^{-(x^2 + y^2)} \\ \\
 =   \int_0^{\infty} r^{2k} e^{-
r^2} r dr,
\implies  ||(x + i y)^k||_{L^2(S^2)}^2 = \frac{\Gamma(k +
1)}{\Gamma(k + \frac{3}{2})} \sim k^{-1/2}. \end{array}$$

Thus,  $k^{1/4} (x + i y)^k$ is the $L^2$-normalized   highest weight
spherical harmonic. It achieves its $L^{\infty}$ norm at $(1,0, 0)$ where it has
size $k^{1/4}. $

To see that it is an extremal for $L^p$ for $2 \leq p
\leq 6$, we use Gaussian integrals:  $$\begin{array}{l} \int_{\R^3} (x^2 +
y^2)^{3k} e^{-(x^2 + y^2 + z^2)}  = ||(x + i
y)^k||_{L^6(S^2)}^6 \int_0^{\infty} r^{6k} e^{-
r^2} r^2 dr, \\ \\
 \int_{\R^3} (x^2 + y^2)^{3 k} e^{-(x^2 + y^2 +
z^2)}  = \int_{\R^2} (x^2 + y^2)^{3 k} e^{-(x^2 + y^2)} 
 =   \int_0^{\infty} r^{6k} e^{-
r^2} r dr, \\ \\
\implies  ||(x + i y)^k||_{L^6(S^2)}^6 = \frac{\Gamma(6k +
1)}{\Gamma(6k + \frac{3}{2})} \sim k^{-1/2}. \end{array}$$
Hence, the $L^6$ norm of $k^{1/4} (x + i y)^k$ equals
$$k^{1/4} k^{-1/12} = k^{1/6}. $$
Since $\lambda_k \sim k$ and $\delta(6) = \frac{1}{6}$ in
dimension $2$, we see that it is an extremal.
\bigskip

\noindent{\bf Problem:}  If a surface $(M^2, g)$ has maximal $L^p$ growth for $2 < p < 6$, must
it have a Gaussian beam? Here we may insist that the Gaussian beam be a sequence of eigenfunctions
or we may relax the definition and allow it to be a quasi-mode. We might also ask, if there exists 
a quasi-mode Gaussian beam, does $(M^2, g)$ has maximal $L^p$ growth for $2 < p < 6$?

\subsection{\label{SHAPPQM} Spherical harmonics as quasi-modes}

The normalized joint eigenfunctions on the standard sphere are given by
\begin{equation} \label{YMN} Y^N_m(\theta, \phi) = \sqrt{(2 N + 1) \frac{(N - m)!}{(N + m)!}} P^N_m(\cos \phi) e^{ im \theta}, 
\end{equation} where
$$P_m^{N} (\cos \phi) = \frac{1}{2 \pi} \int_0^{ 2 \pi} (i \sin \phi \cos \theta + \cos \phi)^N
e^{- i m \theta} d \theta $$
are the Legendre polynomials. \bigskip

To obtain a Lagrangian distribution, we consider a sequence of $Y_m^N$ with $\frac{m}{N} \to C$ for
some $C$. I.e. we consider pairs $k (m_0, N_0)$ lying on a ray in the lattice in $\Z^2$ of $(m, N)$
with $|m| \leq N$.

The Lagrangian submanifold of $T^* S^2$ associated to $Y^N_m$ with $m/N \to c$ is
the torus in $S^*S^2$ defined by 
$$p_{\theta}(x, \xi): = \langle \xi, \frac{\partial}{\partial \theta}) = c. $$
This is a level set of the Clairaut integral
$$p_{\theta} : S^*S^2 \to [-1,1].$$
\bigskip

Examples: 

\begin{itemize}

\item (i) The Lagrangian submanifold associated to the zonal spherical harmonic is the
``meridian torus'' consisting of geodesics from the north to south poles. 

\item (ii) The Gaussian beam is associated to the unit vectors along the equator-- a degenerate Lagrangian torus
of dimension 1.

\end{itemize}

Note that we also express the standard spherical harmonics as
$$Y_m^N (x) = \int_{0}^{2 \pi} \Pi_N(x, r_{\theta} y) e^{- i m \theta} d \theta, $$
where $r_{\theta}$ are rotations around the third axis.  Since $\Pi_N(x, y)$ is a semi-classical 
Lagrangian quasi-mode, this also exhibits $Y^N_m$ as one.

\end{document}